\documentclass[11pt]{amsart}

\oddsidemargin 0.5 cm
\evensidemargin 0.5 cm
\addtolength{\textwidth}{ 2.8 cm}
\addtolength{\textheight}{1.4cm}
\addtolength{\topmargin}{-1.2cm}

\usepackage{amsmath, amsthm, amssymb}
\usepackage{url}
\usepackage{xcolor}
\usepackage{color}
\usepackage{tensor}
\usepackage{verbatim,xcolor}
\usepackage{pst-node}
\usepackage{mathrsfs}
\usepackage{tikz-cd} 
\usetikzlibrary{matrix,calc}
\definecolor{wine-stain}{rgb}{0.5,0,0}
\usepackage[colorlinks=true,citecolor=blue]{hyperref}

\renewcommand{\d}{\partial}
\newcommand{\cV}{\Nu}

\newcommand{\ddbar}{\sqrt{-1}\d\overline{\d}}
\newcommand{\nddbar}{\frac{\sqrt{-1}}{2\pi}\d\overline{\d}}

\newcommand{\ac}{\mathrm{arccot}}

\newcommand{\sT}{\mathcal{T}}

\newtheorem{thm}{Theorem}[section]
\newtheorem{prop}[thm]{Proposition}
\newtheorem{lem}[thm]{Lemma} 
\newtheorem{cor}[thm]{Corollary} 
\newtheorem{conj}[thm]{Conjecture}

\newtheorem{defn}[thm]{Definition}
\newtheorem{rem}[thm]{Remark}

\renewcommand{\[}{\begin{equation}}
\renewcommand{\]}{\end{equation}}

\numberwithin{equation}{section}

\newcommand{\al}{\alpha}
\newcommand{\be}{\beta}
\newcommand{\ga}{\gamma}
\newcommand{\la}{\lambda}
\newcommand{\ka}{\kappa}
\newcommand{\ep}{\epsilon}

\newcommand{\Ga}{\Gamma}
\newcommand{\La}{\Lambda}

\newcommand{\vp}{\varphi}
\newcommand{\vep}{\varepsilon}

\newcommand{\Bl}{\mathrm{Bl}}

\newcommand{\tpsi}{\tilde\psi}
\newcommand{\Fut}{\mathrm{Fut}}
\newcommand{\scrX}{\mathscr{X}}
\newcommand{\tc}{\tilde c}

\newcommand{\NN}{\mathbb{N}}

\newcommand{\QQ}{\mathbb{Q}}
\newcommand{\RR}{\mathbb{R}}
\newcommand{\CC}{\mathbb{C}}

\newcommand{\PP}{\mathbb{P}}

\newcommand{\Nu}{\mathcal{V}}

\newcommand{\sL}{\mathcal{L}}

\newcommand{\sN}{\mathcal{N}}
\newcommand{\sK}{\mathcal{K}}
\newcommand{\Tr}{\mathrm{Tr}}

\newcommand{\sI}{\mathcal{I}}
\newcommand{\sE}{\mathcal{E}}
\newcommand{\sO}{\mathcal{O}}
\newcommand{\cS}{\mathcal{S}}

\newcommand{\vol}{\mathrm{vol}}

\newcommand{\uvarphi}{\underline\varphi}


\makeatletter
\renewcommand*{\eqref}[1]{%
  \hyperref[{#1}]{\textup{\tagform@{\ref*{#1}}}}%
}
\makeatother

\makeatletter \def\l@subsection{\@tocline{2}{0pt}{1pc}{5pc}{}} \def\l@subsection{\@tocline{2}{0pt}{2pc}{6pc}{}} \makeatother


\title{Minimal slopes and bubbling for complex Hessian equations}
\author[V. Datar]{Ved Datar}
\author[R. Mete]{Ramesh Mete}
\address{Department of Mathematics, Indian Institute of Science, Bangalore - 560012}
\email{vvdatar@iisc.ac.in, rameshmete@iisc.ac.in}
\author[J. Song]{Jian Song}
\address{Department of Mathematics, Rutgers University, Piscataway, NJ 08854}
\email{jiansong@math.rutgers.ed}
\thanks{Work supported in part by the Infosys Young Investigator award, SERB MATRICS grant MTR/2022/000260, PhD scholarship from the Indian Institute of Science, and the National Science Foundation grant DMS-2203607}

\begin{document}

\maketitle
\begin{abstract}
The existence of smooth solutions to a broad class of complex Hessian equations is related to nonlinear Nakai type criterias on intersection numbers on K\"ahler manifolds. Such a Nakai criteria can be interpreted as a slope stability condition analogous to the slope stability for Hermitian vector bundles over K\"ahler manifolds. In the present work, we initiate a program to  find canonical solutions to such equations in the unstable case when the Nakai criteria fails. Conjecturally such solutions should arise as limits of natural parabolic flows and should be minimizers of the corresponding moment-map energy functionals. We implement our approach for the $J$-equation and the deformed Hermitian Yang-Mills equation on surfaces and some examples with symmetry. We prove that there always exist unique canonical solutions to these two equations on K\"ahler surfaces in the unstable cases.  Such canonical solutions with singularities are also shown to be the limits of the corresponding J-flow and the cotangent flow on certain projective bundles. We further present the bubbling phenomena for the $J$-equation by constructing minimizing sequences of the moment-map energy functionals, whose Gromov-Hausdorff limits are singular algebraic spaces.

\end{abstract}
\tableofcontents

\section{Introduction} 

\subsection{Overview}

Throughout the paper $(X^n,\omega)$ will denote a compact K\"ahler manifold (without boundary) of dimension $n$. We will denote the K\"ahler class of $\omega$ by $\be\in H^{1,1}(X,\RR)$, and $\al\in H^{1,1}(X,\RR)$ will denote be another class. We let $A = \omega^{-1}\chi$ for some smooth real $(1,1)$ form $\chi$, and consider equations for the form  $$F(A) = f(\la_1,\cdots,\la_n) =  h,$$ where $\la_1\geq \la_2\cdots\geq \la_n$ are eigenvalues of $A$. Under some additional conditions on $h$ and $f$ (for instance if $h$ is a constant and $f$ is the sum of reciprocals of the entries), existence of smooth solutions is equivalent to the positivity of certain intersection numbers - this is the so-called stable case (cf. \cite{Yau, DemPa, W1, W2, SW, LS, CoSz, Sz-pde, gchen,DP,So, CLT, GP} and references therein). The main theme of the present article is to explore existence of solutions in the unstable case. Such a solution will of course necessarily be singular. In the present work, we focus our attention on two protypical and important examples - the $J$-equation and the deformed Hermitian-Yang-Mills (dHYM) equations - to illustrate this point of view.

To set the stage, we first consider the more classical case of the complex Monge-Amp\`ere equations. It follows from Yau's resolution of the Calabi conjecture \cite{Yau} that if $\al$ is a K\"ahler class, then there exists a unique $\chi\in \al$ solving $$\begin{cases}\chi^n = e^f\omega^n\\\chi>0,\end{cases}, $$ where $f\in C^\infty(X)$ satisfies the normalization $\int_X e^f \omega^n = \alpha^n$. On the other hand, by Demailly and Paun's generalization of the classical Nakai-Moishezon criteria \cite{DemPa}, a necessary and sufficient condition for $\al$ to be K\"ahler, and hence for the existence of a solution to the above Monge-Amp\`ere equation, is that for any analytic set $Z^m\subset X$ of pure dimension $m$, and any $k=0,1,\cdots,m$ we have $$\int_Z \al^{k}\wedge\be^{m-k}>0.$$ Note that to check that $\al$ is a K\"ahler class requires the construction of a smooth, real, positive $(1,1)$ form in the class $\al$, and is a {\em pointwise} condition. On the other hand, the above criteria is completely numerical, and is in practice much easier to verify.  A natural question to ask is whether the complex Monge-Ampere equation can be solved in a class that is not K\"ahler or even numerically effective (nef), or equivalently when some of the intersection numbers above are negative. The optimal result in this direction was obtained in \cite{BEGZ}. Namely, if $\al$ is a big class, and $\Omega$ is a Radon measure that does not charge pluri-polar sets, then there exists a positive current $T\in \al$ such that $$\langle T^n\rangle = c\Omega.$$ Here $\langle\cdot \rangle$ denotes the non-pluripolar product introduced by Bedford-Taylor \cite{BT}, and $c$ is  the ratio of the volume $\vol(\al)$ of the big class $\al$ and the total mass $\Omega(X)$. In particular we can take $\Omega = \omega^n$, and  think of $c=  \vol(\al)/\be^n$ as the (Monge-Amp\`ere) {\em slope} of the pair $(\al,\be)$. It is well known (cf. \cite{fujita,BDPP,bouck-vol, Bou-Zar}) that there is an intersection theoretic characterization of the slope in this context, namely, $$c = \sup\frac{(\pi^*\al - D)^n}{\pi^*\be^n},$$ where the supremum is taken over all modifications $\pi:X'\rightarrow X$ such that $\pi^*\al - D$ is big and nef. At least for surfaces, it follows from the Zariski decomposition that the slope is in general larger than the topological slope $\al^2/\be^2$.

Motivated by the analysis of complex Monge-Amp\`ere equations in big classes, one can ask for similar weak solutions for other complex Hessian-type equations using the non-pluripolar product (cf. \cite{Dinew} for viscosity type weak solutions for some classes of complex Hessian equations). In the present work we develop this circle of ideas for two prototypical equations, namely the $J$-equation and the deformed Hermitian-Yang-Mills (dHYM) equations. We show that for surfaces and certain projective bundles, there is an analogous connection between weak solutions and minimal slopes. We also propose several conjectures in higher dimensions which we will explore further in future work.

\subsection{The $J$-equation}

Suppose now that $\al$ is a K\"ahler class, and $\be$, as before, is the K\"ahler class of a fixed K\"ahler form $\omega$.  We say that a smooth K\"ahler form $\chi \in \alpha$ satisfies the $J$ equation if 
\begin{equation}\label{jeqn}
n \frac{ \chi^{n-1}\wedge \omega}{\chi^n} = \mu, 
\end{equation}
where $\mu>0$ is a constant. The $J$-equation was introduced as a moment map equation in \cite{Don-moment} and in the context of finding lower bounds of Mabuchi energy in \cite{XXChen-2}. If equation (\ref{jeqn}) admits a smooth solution, then the constant $\mu$ is given by the topological intersection number defined below as the $J$-slope.

\begin{defn}  The $J$-slope of the triple $(X^n, \alpha, \beta)$  is defined by
\begin{equation}
\mu=\mu(X, \alpha, \beta) =n \frac{  \alpha^{n-1}\cdot \beta}{\alpha^n}. 
\end{equation}
\end{defn}
If  $\beta$ is a very ample divisor and $D$ a general divisor in it's linear system,  one can write the $J$-slope $\mu$ as
$$ \mu = n \frac{ \vol_D (\alpha |_D)} { \vol_X (\alpha)}.$$
It is shown in \cite{SW,gchen, DP, So} that  equation (\ref{jeqn}) admits a smooth solution if and only if 
$$
\mu(Z, \alpha|_Z, \beta|_Z) < \mu(X, \alpha, \beta),
$$
for any proper analytic subvariety $Z$ of $X$. In analogy with slope stability for hermitian bundles over K\"ahler manifolds, we then have the following natural notion of $J$-stability. 

\begin{defn} We say that the triple $(X, \alpha, \beta)$ is {\em $J$-slope semi-stable} (resp. stable) if  
\begin{equation*}\label{jsemistab}
\mu(Z, \alpha|_Z, \beta|_Z) \leq \mu(X, \alpha, \beta), ~\text{ (resp. $<$)}
\end{equation*}
for any proper analytic subvariety $Z$ of $X$. We say it is {\em $J$-slope unstable} if it is not $J$-slope semi-stable.

\end{defn}

In this paper, we will investigate the case when the $J$-stability {\em fails} and look for canonical solutions for the $J$-equation with singularities. In analogy with the notion of volume, we will first have to modify the notion of the $J$-slope. 

\begin{defn} We define the minimal $J$-slope for the triple $(X^n, \alpha, \beta)$  by 

\begin{equation}\label{defminj}
\zeta_{min} = \inf_{\pi, D} \left\{ n \frac{  (\pi^* \alpha - D)^{n-1} \cdot \pi^*\beta}{ (\pi^* \alpha - D)^n}  \right\}
\end{equation}
where the infimum is taken over all modifications $\pi: X' \rightarrow X$ and effective $\mathbb{R}$-divisors $D$ for which $\pi^*\alpha -D$ is big and nef. 

\end{defn}

The inverse of the minimal $J$-slope is a natural analogue for the volume of a big class on a K\"ahler manifold in the context of the $J$-equation. Our first result is to solve the $J$-equation for K\"ahler surfaces, regardless of $J$-stability. 

\begin{thm} \label{mainthm1} Let $(X, \alpha, \beta)$ be a compact K\"ahler surface with two K\"ahler classes. Let $\omega$ be a smooth K\"ahler metric in $\beta$. Then there exists a unique K\"ahler current $\sT \in \alpha$ solving the $J$-equation 
\begin{equation}\label{surfjeqn}
2\langle \mathcal{T}   \wedge \omega \rangle = \zeta_{min} \langle \mathcal{T}^2 \rangle, 
\end{equation}
where $\zeta_{min}$ is the minimal $J$-slope for $(X, \alpha, \beta)$ in Definition \ref{defminj}, and $\langle~,~ \rangle$ is the non-pluripolar product for positive currents. Furthermore, 

\begin{enumerate}

\item There exists an effective $\RR$-divisor $D$ on $X$ such that $$\zeta_{min} =   2 \frac{  (\alpha - D) \cdot \beta}{ ( \alpha - D)^2}.$$ In particular, the infimum is achieved without taking modifications. 
\item  $(X, \alpha, \beta)$ is $J$-slope unstable, if and only if  $\zeta_{min}< \mu.$

\end{enumerate}

\end{thm}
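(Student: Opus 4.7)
The plan is to exhibit the current $\mathcal{T}$ in three stages: reduce the infimum in \eqref{defminj} to one taken over effective $\mathbb{R}$-divisors on $X$ itself via Zariski decomposition, construct smooth solutions of the $J$-equation on K\"ahler approximations of the extremal big nef class $\alpha - D$, and pass to the limit using the non-pluripolar product machinery of \cite{BEGZ}. For the reduction, given any modification $\pi:X'\to X$ and effective $D'$ on $X'$ with $\pi^*\alpha - D'$ big and nef, write $D' = \pi^*\pi_*D' + D''$ with $D''$ effective and $\pi$-exceptional. The projection formula gives $(\pi^*\alpha - D')\cdot\pi^*\beta = (\alpha - \pi_*D')\cdot\beta$, while the negativity of the exceptional intersection form on a surface yields $(\pi^*\alpha - D')^2 = (\alpha - \pi_*D')^2 + (D'')^2 \leq (\alpha - \pi_*D')^2$. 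Although $\alpha - \pi_*D'$ need not be nef, its Boucksom Zariski decomposition \cite{Bou-Zar} $\alpha - \pi_*D' = P + N$ has $P$ big and nef with $P^2\geq(\alpha-\pi_*D')^2$ and $P\cdot\beta\leq(\alpha-\pi_*D')\cdot\beta$, so $P = \alpha - D$ for $D := \pi_*D' + N$ and the $J$-slope of $P$ is no larger. The infimum is therefore the minimum of the continuous function $F(D) := 2(\alpha-D)\cdot\beta/(\alpha-D)^2$ over the compact set $\{D\geq 0 : \alpha - D \text{ big and nef}\}$ in $H^{1,1}(X,\mathbb{R})$ (bounded since $D\cdot\omega\leq\alpha\cdot\omega$, with $F\to\infty$ at the bigness boundary). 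A first-order variation along curve classes yields the optimality condition $(C\cdot\beta)/(C\cdot(\alpha - D)) = \zeta_{min}$ for every irreducible component $C$ of the minimizer $D$, and $\leq \zeta_{min}$ for every other curve $C$ with $(\alpha - D)\cdot C>0$; this proves (1).

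For the current construction, the optimality condition implies $(C\cdot\beta)\leq\zeta_{min}(C\cdot\alpha')$ for the big nef class $\alpha':=\alpha-D$ and every effective curve $C$, which upon K\"ahler perturbation $\alpha'_\epsilon:=\alpha'+\epsilon\beta$ becomes strict with uniform margin. Setting $\zeta_\epsilon := 2\alpha'_\epsilon\cdot\beta/(\alpha'_\epsilon)^2 \to \zeta_{min}$, the surface case of the existence theorem of \cite{SW,gchen,DP,So} produces smooth solutions $\chi_\epsilon\in\alpha'_\epsilon$ to $2\chi_\epsilon\wedge\omega=\zeta_\epsilon\chi_\epsilon^2$. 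A standard $L^1$-compactness argument for quasi-plurisubharmonic potentials yields a subsequential weak limit $T\in\alpha'$, and we set $\mathcal{T}:=T+[D]\in\alpha$. Since $[D]$ is concentrated on the pluripolar set $\operatorname{supp}(D)$, the non-pluripolar product theory of \cite{BEGZ} identifies $\langle\mathcal{T}^2\rangle$ and $\langle\mathcal{T}\wedge\omega\rangle$ with those of $T$, and passing to the limit in the smooth equation on $X\setminus\operatorname{supp}(D)$ gives $2\langle\mathcal{T}\wedge\omega\rangle=\zeta_{min}\langle\mathcal{T}^2\rangle$. Uniqueness follows from strict convexity of an associated Mabuchi-type functional along weak geodesics in the space of currents with the prescribed singularity type along $D$.

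For part (2), if $(X,\alpha,\beta)$ is $J$-unstable with destabilizing curve $C$, a direct computation of the derivative of $F(tC)$ at $t=0$ shows $F(tC)<\mu$ for small $t>0$, giving $\zeta_{min}<\mu$. Conversely, if $\zeta_{min}<\mu$ the minimizer $D$ is nonzero, and using the optimality conditions an elementary algebraic manipulation yields
\[
\zeta_{min}<\mu \;\Longleftrightarrow\; \frac{D\cdot\beta}{D\cdot\alpha}>\mu.
\]
The right-hand ratio is a weighted average of $(C_i\cdot\beta)/(C_i\cdot\alpha)$ over the irreducible components $C_i$ of $D$ with positive weights $a_i(C_i\cdot\alpha)$, so at least one $C_i$ is a destabilizer. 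The main obstacle will be the analytic step: extracting a limit $T$ from $\chi_\epsilon$ with the correct non-pluripolar mass, and controlling the bubbling along $\operatorname{supp}(D)$ --- in particular, components $C$ of $D$ for which $(\alpha - D)\cdot C = 0$ force $\chi_\epsilon$ to concentrate near $C$ as $\epsilon\to 0$, and one must show this concentration is compatible with the non-pluripolar equation for $\mathcal{T}$.
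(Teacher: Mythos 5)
The paper's proof is substantially more direct, and it sidesteps precisely the gap you flag at the end. The key observation in the paper — which your proposal misses — is that asking for a K\"ahler current $\mathcal{T}\in\alpha$ with $\mathcal{T}\geq\zeta^{-1}\omega$ and $2\langle\mathcal{T}\wedge\omega\rangle=\zeta\langle\mathcal{T}^2\rangle$ is \emph{equivalent}, after expanding the square, to asking that $\Theta:=\mathcal{T}-\zeta^{-1}\omega$ be a positive current in the big class $\alpha-\zeta^{-1}\beta$ solving the Monge--Amp\`ere equation $\langle\Theta^2\rangle=\zeta^{-2}\omega^2$. By \cite{BEGZ}, such a $\Theta$ exists and is unique if and only if $\vol(\alpha-\zeta^{-1}\beta)=\zeta^{-2}\beta^2$; a monotonicity and intermediate-value argument shows this volume identity pins down a unique $\zeta=\xi$. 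The identification $\xi=\zeta_{\min}$ and part (1) then drop out by taking the Zariski decomposition $\alpha-\xi^{-1}\beta=Z+N$ on $X$ itself: the divisor $D$ is simply the negative part $N$, and the modification-invariance in the definition of $\zeta_{\min}$ is handled by volume monotonicity under birational pullback, not by the pushforward bookkeeping you perform. Uniqueness is inherited from \cite{BEGZ} rather than requiring a Mabuchi-type convexity argument in a space of currents with prescribed singularity type.

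Your approximation scheme — solving the $J$-equation smoothly in $\alpha'+\epsilon\beta$ and passing to a weak limit $T$ — has a genuine gap that you correctly identify but do not resolve. The $L^1$-compactness you invoke gives weak convergence of potentials, but the non-pluripolar products $\langle\chi_\epsilon^2\rangle$ and $\langle\chi_\epsilon\wedge\omega\rangle$ are \emph{not} continuous along arbitrary weak limits; even if $T$ exists, there is no a priori reason its non-pluripolar Monge--Amp\`ere mass equals $\vol(\alpha')$ rather than something strictly smaller, and no reason the equation $2\langle T\wedge\omega\rangle=\zeta_{\min}\langle T^2\rangle$ survives. In fact you would need to show $T$ has minimal singularities in $\alpha'$, plus a Bedford--Taylor-type convergence on plurifine open sets — precisely the "concentration along $\mathrm{supp}\,D$" issue you flag as the main obstacle. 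The paper never faces this because it constructs $\Theta$ in one step as the BEGZ solution.

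Two further, smaller concerns. First, the compactness claim for $\{D\geq 0:\alpha-D\text{ big and nef}\}$ is not justified as stated: the set is bounded, but whether it is closed (and whether $F$ attains its infimum on it rather than escaping to a boundary where $\alpha-D$ degenerates) needs the same volume argument the paper carries out explicitly via $\vol(\alpha-t\beta)=t^2\beta^2$. The paper's Zariski-decomposition construction \emph{exhibits} a minimizer rather than arguing nonconstructively. Second, your first-order variation argument and the equivalence "$\zeta_{\min}<\mu\Leftrightarrow D\cdot\beta/D\cdot\alpha>\mu$" are plausible but left as assertions; the paper obtains both directions of part (2) from the strict inequality $\mu_0<\mu$ (which follows from Lemma \ref{lem:bigness-jequation}) together with the monotonicity of $s\mapsto\vol(\alpha-s\beta)/(s^2\beta^2)$, and for rational classes in higher dimension gives a separate Riemann--Roch argument. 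Your reduction of the infimum over modifications to divisors on $X$ via pushforward and Zariski decomposition is correct and is a reasonable alternative to the paper's Lemma 2.5, but it does not repair the central analytic gap.
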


%
Analogous to the solution of the the complex Monge-Ampere equations in big classes and their relation with volume, we conjecture the following extension of Theorem \ref{mainthm1}.
\begin{conj} \label{conj1}Let $X$ be an $n$-dimensional compact K\"ahler manifold with two K\"ahler classes $\alpha$ and $\beta$. For any K\"ahler form $\omega\in \beta$, there exists a unique K\"ahler current $\mathcal{T} \in \alpha$ solving the $J$-equation
\begin{equation}
n\langle \mathcal{T}^{n-1} , \omega \rangle = \zeta_{min} \langle \mathcal{T}^n \rangle, 
\end{equation}
where $\zeta_{min}$ is the minimal $J$-slope for $(X, \alpha, \beta)$ as defined above, and $\langle~,~ \rangle$ is the non-pluripolar product for positive currents.

\end{conj}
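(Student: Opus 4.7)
The plan is to mimic the Boucksom--Eyssidieux--Guedj--Zeriahi program for the Monge-Amp\`ere equation in big classes, adapted to the $J$-equation. The overarching strategy is to pass to a birational model where the minimal $J$-slope is realized by a big and nef class, solve there by approximation from the K\"ahler cone, and then push the solution forward to $X$.

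First I would attempt to show that the infimum in \eqref{defminj} is attained by some pair $(\pi: X' \to X, D)$. When it is not, one works with a minimizing sequence along a tower of modifications and passes to a further limit at the end. Given an attained minimizer $(\pi, D)$, the crucial structural step is to show that $(X', \pi^*\al - D, \pi^*\be)$ is $J$-semistable in the numerical sense, i.e.,
\[
n \frac{ (\pi^*\al - D)|_Z^{\dim Z - 1} \cdot \pi^*\be|_Z}{(\pi^*\al - D)|_Z^{\dim Z}} \leq \zeta_{min}
\]
for every irreducible proper subvariety $Z \subset X'$. This should follow from extremality of $\zeta_{min}$: if such a $Z$ violated the inequality, one would blow up along $Z$ and subtract a small multiple of the exceptional divisor to produce a pair of strictly smaller slope, contradicting minimality. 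Controlling the effect of such blowups on both intersection numbers and on the bigness and nefness of the resulting class is delicate but should be achievable by a careful perturbation argument in the N\'eron--Severi space of $X'$.

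Next, having secured $J$-semistability of $(X', \pi^*\al - D, \pi^*\be)$, I would approximate from the K\"ahler cone. Fixing a K\"ahler form $\omega'$ on $X'$, set $\al_\vep = \pi^*\al - D + \vep\omega'$, which is K\"ahler for $\vep > 0$, and let $\mu_\vep \to \zeta_{min}$ be its $J$-slope against $\pi^*\be$. By semistability of the limit together with a numerical perturbation argument, the triple $(X', \al_\vep, \pi^*\be)$ should be strictly $J$-stable for small $\vep > 0$, so that the results of \cite{SW, gchen, DP, So} yield smooth solutions $\chi_\vep = \al_\vep + \ddbar \vp_\vep \in \al_\vep$ to the corresponding $J$-equation. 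One then derives uniform a priori estimates on $\vp_\vep$ as $\vep \to 0$, in order to extract a subsequential limit current $\sT' \in \pi^*\al - D$ of full non-pluripolar mass; the pushforward $\sT = \pi_* \sT'$ is the desired K\"ahler current on $X$.

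The principal obstacles are twofold. The first is obtaining uniform estimates on $\vp_\vep$: since $\pi^*\al - D$ is merely big and nef, one cannot expect uniform $L^\infty$ bounds globally on $X'$, and one must develop Kolodziej-type capacity estimates and gradient estimates tailored to the $J$-equation, controlling $\vp_\vep$ only away from the non-K\"ahler locus of $\pi^*\al - D$, together with a volume comparison ensuring that no non-pluripolar mass is lost in the limit. The second, and in higher dimensions perhaps the deeper obstacle, is uniqueness: one needs to identify $\sT$ as the unique minimizer of a strictly convex energy functional on a well-defined space of finite-energy potentials in $\pi^*\al - D$. The natural candidate is a $J$-type energy built from non-pluripolar products whose first variation is $n \langle \sT^{n-1}, \omega\rangle - \zeta_{min} \langle \sT^n \rangle$, and verifying its coercivity, lower semicontinuity, and strict convexity along finite-energy geodesics is a substantial analytic program in its own right. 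Finally, independence of $\sT$ from the choice of minimizing modification $(\pi, D)$ must be established, which again reduces to a uniqueness statement on a common resolution.
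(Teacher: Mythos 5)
The statement you are addressing is Conjecture \ref{conj1}, and the paper does not prove it: it is stated as an open problem, established only in the semi-stable case (deferred to \cite{DMS}), for K\"ahler surfaces (Theorem \ref{mainthm1}), and for projective bundles with $U(m+1)$-symmetry (Theorem \ref{mainthm2}). So there is no proof in the paper to compare against, and your text is a program rather than a proof. It is worth noting that in the surface case the paper's actual mechanism is quite different from what you propose: there the equation $2\langle\sT\wedge\omega\rangle=\xi\langle\sT^2\rangle$ with $\sT\geq\xi^{-1}\omega$ is converted, via $\Theta=\sT-\xi^{-1}\omega$, into the Monge--Amp\`ere equation $\langle\Theta^2\rangle=\xi^{-2}\omega^2$ in the big class $\al-\xi^{-1}\be$, which is solved directly by \cite{BEGZ}; the slope $\xi$ is pinned down by the volume identity $\vol(\al-\xi^{-1}\be)=\xi^{-2}\be^2$ and identified with $\zeta_{min}$ via Zariski decomposition. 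No approximation by smooth solutions of stable perturbed $J$-equations is used, and no such quadratic reduction is available when $n>2$.

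The concrete gaps in your proposal are exactly the points the paper flags as open. First, the attainment of the infimum in \eqref{defminj} by a pair $(\pi,D)$ is not known in higher dimensions; the paper explicitly remarks that whether the minimal slope is achieved ``seems to be a difficult question in general, and should be intimately connected to the resolution of Conjecture \ref{conj1}'' --- so you cannot assume it as a first step, and your fallback of ``passing to a further limit along a tower of modifications'' is not an argument. Second, the semi-stability of the minimizing pair is itself only a conjecture in the paper (stated as an expectation after Theorem \ref{mainthm1}); your blow-up-and-perturb argument for it would require controlling intersection numbers, bigness and nefness on an arbitrary model, which is precisely the content of the paper's Proposition 2.10 and is carried out there only for rational classes and with $Z$ smooth enough, producing $\zeta_{min}<\mu$ rather than semistability of a minimizer. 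Third, and most seriously, the uniform estimates for $\vp_\vep$ as $\al_\vep$ degenerates to a big and nef class, the non-loss of non-pluripolar mass in the limit, and the uniqueness via a convex finite-energy functional are not ``delicate but achievable'' steps --- they constitute the unsolved analytic core of the conjecture, and no $J$-equation analogue of the Ko\l{}odziej/BEGZ machinery currently exists. As written, the proposal defers every essential difficulty, so it does not constitute a proof.
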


Conjecture \ref{conj1} indeed holds when $(X, \alpha, \beta)$ is $J$-slope semi-stable (\cite{DMS}), and in this case one can prove that $\zeta_{min}= \mu$. We also conjecture that the solutions $\sT$ are canonical in the sense that they appear as limits of the $J$-flow, and also as limits of sequences of K\"ahler forms that minimise the moment map energy  $\sE_\omega$ introduced by Donaldson.  The moment map energy $\sE_\omega: \mathrm{PSH}(X, \chi)\cap C^\infty(X, \RR)\rightarrow \RR_+ $ is defined as $$\sE_\omega(\varphi) = \frac{1}{\al^n}\int_X\Big(\Lambda_{\chi_\varphi}\omega-\mu\Big)^2\chi_\varphi^n, ~\chi_\varphi = \chi+ \ddbar \varphi$$
for a fixed $\chi\in \alpha$.   
We can also view $\sE_\omega$ as a function on $\sK(\al)$, the set of all K\"ahler metrics in $\alpha$, by setting $\sE_\omega(\chi_\varphi) = \sE_\omega(\varphi)$. 

Next, we turn our attention to the convergence of the $J$ flow. Introduced in \cite{xx-chen} as the gradient flow of $\sE_\omega$, the $J$ flow can be written as $$\begin{cases}\frac{\partial \varphi}{\partial t} = \mu - \La_{\chi_\varphi}\omega\\ \chi_\varphi := \chi + \ddbar\varphi >0. \end{cases}$$  Convergence of the flow under the condition that there exists a sub-solution was established in \cite{SW} building on earlier works in \cite{W1,W2}. Our main result on convergence is the following.

\begin{thm} \label{mainthm2}  Let $X= \textnormal{Proj}_{M} (\mathcal{O}_M\oplus L^{\oplus(m+1)})$ be a projective bundle over an $n$-dimensional K\"ahler manifold $M$ with  $L\rightarrow M$ being a negative line bundle, and let $P_0$ denote the zero section of $X$. If $\chi_0\in \alpha$ and $\omega\in \beta $ are $U(m+1)$-invariant K\"ahler metrics, then the following hold for the solution $\chi(t)$ of the $J$-flow for  $\omega$ and $\chi(0)= \chi_0$.

\begin{enumerate}

\item If $(X, \alpha, \beta)$ is $J$-slope stable, then the flow converges smoothly to a unique solution $\chi_\infty$ solving  the $J$-equation 
$$ \Lambda_{\chi_\infty}\omega= (n+m+1) \frac{\alpha^{n+m}\cdot \beta}{\alpha^{n+m+1}}. $$

\medskip

\item If $(X, \alpha, \beta)$ is $J$-slope semi-stable, then the flow converges smoothly to a unique solution $\chi_\infty$ on $X\setminus P_0$ to the $J$-equation  
$$  \Lambda_{\chi_\infty}\omega= (n+m+1) \frac{\alpha^{n+m}\cdot \beta}{\alpha^{n+m+1}} $$
on $X\setminus P_0$.
Furthermore, $(X, \chi(t))$ converges in Gromov-Hausdorff sense to the metric completion of $(X\setminus P_0, \chi_\infty)$, which is homeomorphic to $X$ itself.

\medskip

\item If $(X, \alpha, \beta)$ is $J$-slope unstable, then the flow converges smoothly to a unique solution $\chi_\infty$ on $X\setminus P_0$ to  the $J$-equation  
$$ \Lambda_{\chi_\infty}\omega=  \zeta_{inv} < (n+m+1) \frac{\alpha^{n+m}\cdot \beta}{\alpha^{n+m+1}} $$
on $X\setminus P_0$, where $\zeta_{inv}$ is the $U(m+1)$-invariant minimal slope for $(X, \alpha, \beta)$ defined by $$\zeta_{inv} = \inf\left\{(n+m+1)\frac{\tilde\al_s^{m+n}\cdot\tilde\be}{\tilde\al_s^{m+n+1}}~|~ \tilde\al_s = \tilde\pi^*\al - sE \text{ is ample},~\tilde\be=\tilde\pi^*\be\right\},$$ and $\tilde\pi: \tilde X= \Bl_{P_0}X \rightarrow X$ is the blow up of $X$ along $P_0$ with exceptional divisor $E$ \footnote{Note that when $m=0$, $\tilde X = X$ and $E = P_0$.}. 
Furthermore, $\tilde\pi^*\chi_\infty$ extends to a conical metric $\tilde \chi_\infty$ on $\tilde X$ with cone angle $\pi$ along $E$ in the class $\tilde\pi^*\al - \la[E]$, where $\la$ is the unique solution to $$(n+m+1)\frac{\tilde\al_\la^{m+n}\cdot\tilde\be}{\tilde\al_\la^{m+n+1}} = \frac{n}{1+\la}.$$ In particular, the metric completion of $(X \setminus P_0, \chi_\infty)$ is isometric to $(\tilde X, \tilde\chi_\infty)$. 

\medskip

\item\label{mainthem2-energy-conv} The moment map energy for $\chi(t)$ converges to its infimum, i.e., $$\lim_{t\rightarrow \infty} \mathcal{E}_\omega(\chi(t)) = \inf_{\chi\in \mathcal{K}(\alpha)}\mathcal{E}_\omega(\chi).$$

\end{enumerate}

\end{thm}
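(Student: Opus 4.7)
My plan is to exploit the $U(m+1)$-symmetry to reduce the PDE problem to a one-dimensional ODE problem, then carry out the convergence analysis at that reduced level in each stability regime. I would first set up a Calabi-type ansatz for $U(m+1)$-invariant Kähler metrics on $X = \textnormal{Proj}_{M}(\mathcal{O}_M \oplus L^{\oplus(m+1)})$: any such metric in a fixed class $\alpha$ is encoded by a momentum profile $\varphi \colon [a_0, a_\infty] \to \RR$ on a closed interval corresponding to the $U(m+1)$-quotient of the fibers, with prescribed boundary behavior of $\varphi'$ at $a_0$ (smoothness at $P_0$) and at $a_\infty$ (smoothness at $P_\infty$). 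Both $\chi$ and $\omega$ are encoded this way, and under the ansatz the $J$-flow becomes a scalar parabolic equation for $\varphi(s,t)$ on $[a_0, a_\infty] \times [0,\infty)$.

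Next I would analyze the reduced equilibrium equation $\Lambda_\chi \omega = \nu$ with a free constant $\nu$. Imposing the smooth boundary condition at $a_\infty$ gives a unique forward ODE solution, and matching the smoothness condition at $a_0$ pins $\nu$ to the topological $J$-slope $\mu$. Positivity of the resulting $\chi$ throughout the interval turns out to be equivalent to $J$-slope semi-stability of $(X, \alpha, \beta)$. When this fails, the candidate solution with $\nu = \mu$ leaves the positive cone before reaching $a_0$, and I would identify the correct replacement $\nu = \zeta_{inv}$ as the infimum of invariant slopes over tilts $\tilde\pi^*\alpha - sE$ with $\tilde\pi^*\alpha - sE$ ample, exactly matching the definition in the statement.

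For convergence of the $J$-flow in the three cases, I would combine standard a priori estimates with comparison against the equilibrium as subsolution/supersolution, all at the level of the scalar ODE. In the stable case this gives smooth convergence on all of $X$; in the semi-stable case, the flow converges smoothly away from $P_0$, and the degeneration at $P_0$ is mild enough that the Gromov--Hausdorff limit recaptures $X$ topologically. In the unstable case, the endpoint of the limit momentum profile falls strictly inside the admissible interval, corresponding to a conical singularity at $P_0$. To identify this with a conical Kähler metric on $\tilde X = \Bl_{P_0} X$, I would lift the Calabi ansatz to $\tilde X$ and match the pulled-back momentum profile with the boundary condition for a cone angle $\pi$ metric at the exceptional divisor $E$ in the class $\tilde\pi^*\alpha - \lambda[E]$; the value of $\lambda$ is then forced by the slope equation $(n+m+1)\tilde\alpha_\lambda^{n+m}\cdot\tilde\beta / \tilde\alpha_\lambda^{n+m+1} = n/(1+\lambda)$.

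For moment map energy, the gradient flow structure gives that $\mathcal{E}_\omega(\chi(t))$ is nonincreasing and converges; the ODE analysis identifies the limit with $\mathcal{E}_\omega$ of the equilibrium (or its singular extension on $\tilde X$). To upgrade this to the infimum over all $\chi \in \mathcal{K}(\alpha)$, a Jensen-type averaging argument over $U(m+1)$ reduces the general case to the invariant one. The main obstacle I anticipate is the delicate analysis near $P_0$ in the unstable case: showing that the singular limit extends to a genuine conical Kähler metric on $\tilde X$ with the stated cone angle, establishing uniqueness of the extension and the compatibility of the two ODE pictures on $X$ and on $\tilde X$, and deriving the distance and diameter estimates along the flow that are needed for Gromov--Hausdorff convergence to the conical completion.
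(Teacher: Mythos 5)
Your reduction to a scalar parabolic equation via the Calabi ansatz and your analysis of the reduced equilibrium ODE (with the observation that positivity of the forward-solved profile at the left endpoint is equivalent to semi-stability, and that the unstable case forces the slope constant down to $\zeta_{inv}$) matches the paper's Section 3 almost exactly, including the identification of the conical limit on $\Bl_{P_0}X$ and the slope equation determining $\lambda$. One place where your sketch would need strengthening, and where the paper explicitly departs from the prior work \cite{FL}, is the convergence argument itself: the ``equilibrium'' $\tilde\psi_\lambda$ you want to compare against is not $C^1$ at the left endpoint $\lambda$, so a naive sub/super-solution comparison does not directly give $C^1$-convergence there. The paper circumvents this by proving convergence of the pointwise slope $\sigma[\psi] = \Lambda_{\chi_t}\omega$ on compact subsets of $(\lambda,a)$ and by constructing a \emph{strict} subsolution $V_\infty$ on the blow-up (in a slightly smaller class $\tilde\pi^*\alpha-(\lambda+\epsilon)[E]$) so that a Song--Weinkove-type $C^2$-estimate against $F=\hat\varphi-(V_\infty-v_0)$ can be run.

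The genuine gap is in part (4). You propose a ``Jensen-type averaging argument over $U(m+1)$'' to reduce $\inf_{\chi\in\mathcal{K}(\alpha)}\mathcal{E}_\omega(\chi)$ to its invariant counterpart. Averaging $g^*\chi$ over $U(m+1)$ does stay inside $\mathcal{K}(\alpha)$, but the inequality $\mathcal{E}_\omega(\bar\chi)\le\int_{U(m+1)}\mathcal{E}_\omega(g^*\chi)\,dg$ would require $\mathcal{E}_\omega$ to be convex along the \emph{linear} segments $t\mapsto (1-t)\chi + t\,g^*\chi$ in $\mathcal{K}(\alpha)$, and no such convexity is known (the $J$-functional is convex along Mabuchi geodesics, not affine paths, and $\mathcal{E}_\omega$ is its gradient norm rather than the functional itself). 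The paper sidesteps this entirely: following Lejmi--Sz\'ekelyhidi \cite{LS}, it uses the algebro-geometric lower bound $\inf_\chi\|\Lambda_\chi\omega-\mu\|_{L^2}\ge\sup_{\mathscr{X}}-\mathrm{Fut}_\beta(\mathscr{X})/\|\mathscr{X}\|$, which holds for \emph{all} $\chi$ without any symmetry, and then builds a sequence of deformation-to-the-normal-cone test configurations $\mathscr{X}_j$ approximating the piecewise-linear Hamiltonian of the limit, forcing equality. Your approach as written leaves this lower bound unproved; replacing the averaging argument by the Futaki-invariant bound (or by some other mechanism that does not presuppose affine convexity of $\mathcal{E}_\omega$) is needed.
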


Note that apriori,  that $\zeta_{inv}$ could be larger than $\zeta_{min}$. However, we conjecture that the two are actually equal. We now make a few additional remarks on the theorem above. Firstly, the first three parts of Theorem \ref{mainthm2} are proved in \cite{FL} for the case when $M = \PP^n$ and $L = \sO(-1)$. However, there are some crucial technical details missing in the proofs in \cite{FL}, and we take this opportunity to present complete proofs (cf. Remark  \ref{rem:fl-remarks}). In line with the philosophy of the present work,  we also prove that the limiting current satisfies a singular $J$-equation with a modified slope, and interpret the modified slope as the minimal (semi-stable) slope of classes on the blow-up of $X$ along $P_0$. The limiting current clearly has mass concentration at the zero section $P_0$ of $X$. This indicates a bubbling behaviour if one applies suitable gauge changes to the flow near $P_0$. While we cannot establish the bubbling along the $J$ flow at the moment, we can produce a minimizing sequence for the moment map energy that exhibits the expected bubbling. We now explain this new phenomenon. 

Let $Y$ be a copy of $X$ and let $D^Y_\infty$ be the $\infty$-section of $Y$ as the $\mathbb{P}^{m+1}$-bundle over $M$. Both $D^Y_\infty$ and the exceptional divisor $E$ in $\tilde X$ are identical.  We now let 
$$\tilde X ~\# ~Y$$
be the connect sum of $\tilde X$ and $Y$ by identifying $E\subset \tilde X$ and $D^Y_\infty\subset Y$.  Then we have the following theorem on the bubbling behaviour of a sequence of K\"ahler metrics $\chi_k$ that minimize the moment map energy $\sE_\omega$.  

\begin{thm}\label{thm:j-eq-bubbling}  Let $(X,\al,\be)$ be as in the previous theorem. Suppose $(X, \alpha, \beta)$ is $J$-slope unstable. Then for any $U(m+1)$-invariant K\"ahler metric $\omega\in \beta$, there exist a sequence of $U(m+1)$-invariant K\"ahler metrics $\chi_j \in \alpha$,  a smooth $U(m+1)$-invariant K\"ahler metric $\theta$ and a sequence of biholomorphisms 
$$\Phi_j: X \rightarrow X$$ satisfying the following. 

\begin{enumerate}

\item  Let $(Y, \theta_\infty)$ be the Cheeger-Gromov limit of $(X, \Phi^*_j\theta)$. Then $(Y, \theta_\infty)$ is isometric to $(X, \theta)$ and
 $\omega_j = \Phi^*_j \omega$ converges smoothly to $\omega_\infty$ on $Y$, the pullback of a K\"ahler form on $M$ by $\pi: Y \rightarrow M$. 
\medskip

\item Let $\tilde X \# Y$ be the union of $\tilde X$ and $Y$ by gluing $E\subset X$ and $D^Y_\infty\subset Y $. Then $\chi_\infty|_{E} = \theta_\infty|_{D^Y_\infty}$ and $$(X, \chi_j) \rightarrow (\tilde X \# Y, d_\infty)$$ in Gromov-Hausdorff topology as $j\rightarrow \infty$, where $\chi_\infty$ is the limiting conical K\"ahler metric in (3) of Theorem \ref{mainthm2} and $d_\infty$ is a certain distance function to be defined later. %

\item The convergence is smooth on $\tilde X\setminus E$ and $Y\setminus D^Y_\infty$.  Moreover we have
$$ \lim_{j\rightarrow \infty} \sE_\omega(\chi_j) = \inf_{\chi \in \sK(\alpha)}  \sE_\omega(\chi)= \sE_\omega(\chi_\infty) + \sE_{\omega_\infty} (\theta_\infty), $$
where $\sE_{\omega_\infty} (\theta_\infty)$ is the moment map energy on $Y$.

\medskip

\end{enumerate}

\end{thm}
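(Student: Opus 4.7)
The plan is to exploit the $U(m+1)$-symmetry to reduce the construction of the minimizing sequence to a one-variable problem via the Calabi ansatz, and then carry out the bubbling by hand at the level of ansatz profiles. Concretely, a $U(m+1)$-invariant K\"ahler metric $\chi \in \alpha$ on $X = \mathrm{Proj}_M(\sO_M \oplus L^{\oplus(m+1)})$ is encoded by a strictly convex profile $\varphi(s)$ on an interval $s \in [s_-, s_+]$, where $s$ is the moment-map coordinate of the fiberwise $S^1$-action, with boundary behavior at $s_-$ corresponding to the zero section $P_0$ and at $s_+$ to the $\infty$-section. In this parametrization the moment map energy $\sE_\omega(\chi)$ becomes a weighted one-dimensional integral of $(\Lambda_{\chi}\omega - \mu)^2$ against the fiber volume, and the $J$-slope $\mu$ as well as the minimal invariant slope $\zeta_{inv}$ both become explicit combinatorial quantities. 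The limit $\chi_\infty$ of Theorem~\ref{mainthm2}(3) corresponds to a specific profile $\varphi_\infty$ which is smooth on $(s_-, s_+]$ but whose boundary data at $s_-$ produces the cone angle $\pi$ along $E \subset \tilde X$.

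The approach is to build $\chi_j$ by splicing two profiles. Fix a reference $U(m+1)$-invariant K\"ahler metric $\theta \in \alpha$ on $X$ whose ansatz profile $\psi$ is chosen so that its restriction to the $\infty$-section $D_\infty^Y$ matches (after the natural identification $E \cong D^Y_\infty \cong \PP(L^{\oplus(m+1)})$) the restriction of $\chi_\infty$ to $E$; such a $\theta$ exists because both restrictions are pulled back from K\"ahler forms on $M$ via $\pi$, and the matching is a single equation on $M$. Pick a sequence $s_j \searrow s_-$ and define $\chi_j$ by setting its profile $\varphi_j$ equal to (a small smoothing of) $\varphi_\infty$ on $[s_j + \delta_j, s_+]$, and equal to an affinely rescaled copy of $\psi$ on $[s_-, s_j]$, with a transition layer of width $\delta_j \to 0$ interpolating convexly between the two. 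The biholomorphism $\Phi_j: X \to X$ is the fiberwise $\mathbb{C}^*$-action that stretches $[s_-, s_j]$ onto $[s_-, s_+]$, so that $\Phi_j^*\chi_j$ on the bubble region is exactly $\theta$ up to terms converging to zero; since $\omega$'s fiber component rescales by the inverse factor and its horizontal component is pulled back from $M$, the pullback $\omega_j = \Phi_j^*\omega$ then converges smoothly on compact sets to $\omega_\infty = \pi^*\omega_M$ with $\omega_M$ the base component of $\omega$. Part (1) of the theorem and the smooth convergence on $\tilde X \setminus E$ and $Y \setminus D^Y_\infty$ in part (3) both follow essentially by construction.

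For the energy splitting in part (3), I write
\begin{equation*}
\sE_\omega(\chi_j) = \int_{s_j+\delta_j}^{s_+} G_j(s) \, ds + \int_{s_-}^{s_j} G_j(s)\, ds + \int_{s_j}^{s_j+\delta_j} G_j(s)\, ds,
\end{equation*}
where $G_j(s)$ is the invariant integrand $(\Lambda_{\chi_j}\omega - \mu)^2$ times the fiber volume factor. The first integral converges to $\sE_\omega(\chi_\infty)$ by dominated convergence, the second integral, after the change of variables induced by $\Phi_j$, converges to the moment-map energy $\sE_{\omega_\infty}(\theta_\infty)$ of $\theta_\infty$ against $\omega_\infty$ on $Y$, and the transition integral is $O(\delta_j)$. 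That this sequence actually minimizes $\sE_\omega$ will follow once I show that $\sE_\omega(\chi_\infty) + \sE_{\omega_\infty}(\theta_\infty)$ equals $\inf_{\chi \in \sK(\alpha)} \sE_\omega(\chi)$; by Theorem~\ref{mainthm2}(\ref{mainthem2-energy-conv}) the $J$-flow already attains the infimum and gives a lower bound equal to the first term when restricted to $X \setminus P_0$, while the bubble contributes precisely the deficit arising from the mass concentrated at $P_0$, which I compute to be $\sE_{\omega_\infty}(\theta_\infty)$ using the invariant intersection-theoretic formulas for $\zeta_{inv}$ and $\mu$.

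For the Gromov-Hausdorff convergence in part (2), I build the candidate limit distance $d_\infty$ on the connect sum $\tilde X \# Y$ by declaring each piece to carry its intrinsic metric and identifying $E$ with $D^Y_\infty$ as a common subset; the matching of boundary restrictions ensures this is a well-defined length metric. Then I produce $\varepsilon_j$-nets and approximate isometries $F_j : (X, \chi_j) \to (\tilde X \# Y, d_\infty)$ by using the ansatz coordinate $s$ and the base direction to locate points in the appropriate piece, with the transition layer collapsing to the gluing locus. The hard part, and the main obstacle, is controlling distances across the neck region of width $\delta_j$: I need to show that geodesics of $\chi_j$ that enter the shrinking neck region contribute negligibly to length, so that distances in $(X, \chi_j)$ between a point near $P_0$ (which lives in the $Y$-bubble) and a point far from $P_0$ (which lives in $\tilde X$) converge to the sum of distances to the common gluing locus $E = D^Y_\infty$. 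This requires uniform lower bounds on $\chi_j$ away from a definite neighborhood of the neck, together with a quantitative estimate on the diameter of the transition annulus going to zero, both of which can be extracted from the explicit ansatz profiles used in the construction.
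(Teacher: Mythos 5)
Your overall strategy --- splice the limiting profile with a bubble profile, use the fiberwise $\CC^*$-action as the gauge, split the energy into a main piece, a bubble piece and a negligible neck, and build Gromov--Hausdorff approximations adapted to the momentum coordinate --- is the paper's strategy. But there is a genuine error in how you set up the bubble. You require both $\theta\in\al$ and $\theta|_{D^Y_\infty}=\chi_\infty|_{E}$, and these are incompatible: the restriction of $\tilde\pi^*\chi_\infty$ to $E$ lies in $[\omega_M]+\la\, c_1(\sO_{\PP(\sN_{P_0/X})}(1))$, while the restriction to the $\infty$-section of a Calabi-ansatz metric in $\al$ lies in $[\omega_M]+a\,\eta|_{D_\infty}$, and $\la<a$ in the unstable case. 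The bubble metric must live in $\pi^*[\omega_M]+\la[D_\infty]$; precisely the amount $\la$ of fiber class that $\chi_\infty$ loses along $P_0$ reappears on $Y$. Relatedly, your mechanism of putting "an affinely rescaled copy of $\psi$ on $[s_-,s_j]$" with $s_j\searrow s_-$ produces a bubble whose cohomological size tends to zero, and no biholomorphism of $X$ undoes an affine rescaling of the momentum interval: the fiberwise $\CC^*$-action translates the cylindrical coordinate $\rho$ but leaves the momentum range unchanged. As written, $\Phi_j^*(\text{bubble part})$ does not converge to a fixed K\"ahler metric, and the bubble energy does not converge to $\sE_{\omega_\infty}(\theta_\infty)$. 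The correct construction keeps the bubble on the \emph{fixed} momentum interval $[0,\la]$, splices at momentum level $\la$ (where $\psi_\infty$ degenerates), and takes $\Phi_j$ to be the translation $\rho\mapsto\rho+j$.

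Two further steps are asserted rather than proved. First, the identity $\inf_{\chi\in\sK(\al)}\sE_\omega(\chi)=\sE_\omega(\chi_\infty)+\sE_{\omega_\infty}(\theta_\infty)$ is not a formal consequence of Theorem \ref{mainthm2}\eqref{mainthem2-energy-conv}: one must compute $\lim_{t\to\infty}\sE_\omega(\chi_t)$ from the limit profile and check that the contribution of $[0,\la]$ (where $\psi_\infty\equiv 0$) equals $n^2c_{n,m}\int_0^\la x^m(1+x)^{n-2}\,dx$, which coincides with $\sE_{\pi^*\omega_M}(\theta)$ for \emph{any} Calabi-ansatz $\theta$ in $\pi^*[\omega_M]+\la[D_\infty]$ --- and only for that class, which again shows why the class of $\theta$ matters. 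Second, for the Gromov--Hausdorff statement you correctly isolate the neck as the obstacle but leave the estimates to be "extracted"; what is actually needed is the decomposition $g_{\chi_j}=(1+v_j')g_M+v_j'\,g_{\CC\PP^m}+v_j''\,g_{cyl}$, the uniform smallness of $v_j''$ on the neck (so that the neck has radial diameter $O(\vep)$ and the region $\{\rho<-N\}$ is almost geodesically convex), and a case analysis on geodesics according to the location of their endpoints. Without these, the claimed additivity of distances across the gluing locus does not follow.
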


We remark that $\tilde X ~\# ~Y$ also coincides precisely with a special algebraic degeneration of $X$, namely the deformation to the normal cone of $X$ along $P_0$ obtained by blowing-up $P_0 \subset X\times \{\infty\}$ in the product $X\times \mathbb{P}^1$. We expect that if $\chi(t)$ is the solution to the $J$-flow from Theorem \ref{mainthm2}, then $(X,\chi(t))$ should converge in Gromov-Hausdorff distance to $(\tilde X \# Y,d_\infty)$, but the proof probably requires a finer analysis of the metrics $\chi(t)$ and their blow-up behaviour near the zero section $P_0$. 

The bubbling phenomena should be rather universal in the $J$-unstable case.  Indeed we have the following general speculation: Let $X$ be a K\"ahler manifold with two K\"ahler metrics $\chi_0\in \alpha$ and $\omega\in \beta$. Then the $J$-flow starting with $\omega_0$ should converge smoothly away from an analytic subset $Z$ of $X$ to the K\"ahler current $\sT$ as in Conjecture \ref{conj1}. Furthermore, we expect that the metric completion of $(X\setminus Z, \sT)$ would be a modification $\pi: \tilde X \rightarrow X$, and that $\sT$ would extend to a positive current $\tilde \sT$ on $\tilde X$. Moreover, the infimum of the moment map energy should be equal to the sum of moment map energies for solutions of minimal $J$-equations on $X$ and components of $\pi^{-1}(Z)$. In particular, it is plausible that the flow will converge to a family of solutions to the minimal $J$-equations on $X$ and the $J$-unstable subvariteies of $X$. Such filtration of $J$-equations on $X$ and its destabilising sub-varieties should be viewed as an analogue of the Harder-Narasimhan-Seshadri filtration for Hermitian Yang-Mills equations. It would also be interesting to relate our work with the investigations carried out in \cite{Hattori, Zach, KS} on optimal degenerations for the $J$-functional and maximally destabilising analytic sets. Even in the surface case, one of the novelties in the present work is to related the maximally destabilising divisors to an appropriate Zariski decomposition, and to find a canonical weak solution to the $J$ equation.

\subsection{The deformed Hermitian Yang-Mills equation}

 Suppose now that $\al\in H^{1,1}(X,\RR)$ is a class that is not necessarily positive. The deformed Hermitian-Yang-Mills (dHYM) equation, introduced in \cite{JY} as a mirror to the special Lagrangian equation, seeks a smooth closed real $(1,1)$ form $\chi$ in $\al$ such that $$\mathrm{Re}(\chi+\sqrt{-1}\omega)^n = (\cot\vartheta)\mathrm{Im}(\chi+\sqrt{-1}\omega)^n,$$ where $\vartheta \in (0,\pi)$ is a topological constant defined below. The dHYM also arises as a moment map equation - the reader can refer to \cite{CY} for more details. By integrating both sides of the equation, we see that $cot\vartheta$ is a topological invariant. Analogous to the case of the $J$-equation, we have the following definitions. 
 
 \begin{defn} Let $X$ be a K\"ahler manifold of complex dimension $n$ and let $\alpha, \beta\in H^{1,1}(X, \mathbb{R})$. Suppose $\beta$ is big and semi-positive. We define the global dHYM-angle 
$$\vartheta= \vartheta(X, \alpha, \beta) \in (0, \pi)$$
 for the triple $(X, \alpha, \beta)$ by
$$\\Re(\alpha  +\sqrt{-1} \beta)^n  = (\cot \vartheta) \\Im (\alpha +\sqrt{-1} \beta)^n, $$
whenever $Im (\alpha +\sqrt{-1} \beta)^n\neq 0$.

\begin{enumerate}

\item $(X, \alpha, \beta)$ is said to be {\em dHYM-stable} if for any K\"ahler class $\gamma$, $$ \\Re(\alpha  +\sqrt{-1} \beta)^k \cdot \gamma^{n-k} \cdot X \geq  (\cot \vartheta)\\Im(\alpha  +\sqrt{-1} \beta)^k \cdot \gamma^{n-k} \cdot X$$ and for any proper $m$-dimensional subvariety $Z$ of $X$ and $1\leq k \leq m$, 
$$ \\Re(\alpha  +\sqrt{-1} \beta)^k \cdot \gamma^{m-k} \cdot Z > (\cot \vartheta)\\Im(\alpha  +\sqrt{-1} \beta)^k \cdot \gamma^{m-k} \cdot Z.$$

\item $(X, \alpha, \beta)$ is said to be {\em dHYM-semi-stable} if for any K\"ahler class $\gamma$,  any $m$-dimensional subvariety $Z$ of $X$ and $1\leq k \leq m$, 
$$ \\Re(\alpha  +\sqrt{-1} \beta)^k \cdot \gamma^{m-k} \cdot Z \geq (\cot \vartheta)\\Im(\alpha  +\sqrt{-1} \beta)^k \cdot \gamma^{m-k} \cdot Z.$$

\item $(X, \alpha, \beta)$ is said to be {\em dHYM-unstable} if it is not dHYM-semi-stable.
\end{enumerate}

\end{defn}

In \cite{CJY, gchen, CLT}, it is proved that the deformed Hermitian-Yang-Mills equation has a solution if and only if $(X,\al,\be)$ is dHYM-stable (cf. also \cite{B} for a different argument for projective three-folds based on \cite{DP}). Once again, we would like to investigate the solvability in the unstable case. We obtain results in two dimensions and pose some conjectures in higher dimensions.  
 
\begin{defn}

 Let $X$ be a K\"ahler surface with two cohomology classes  $\alpha, \beta \in H^{1,1}(X, \mathbb{R})$. Suppose $\beta$ is K\"ahler and $\alpha \cdot \beta >0$. Then the minimal angle $\theta_{min}\in (0, \pi)$ for $(X, \alpha, \beta)$ is defined by 
 \begin{equation}
\cot \theta_{min} := \sup_{D}   \frac{(\pi^*\al - D)^2 - (\pi^*\be)^2}{2(\pi^*\al-D) \cdot (\pi^*\be)},
 \end{equation}
  where the supremum is taken over all effective $\RR$-divisors $D$ on modifications $\pi:X'\rightarrow X$ such that $$ (\pi^*\al - D)\cdot\pi^*\be>0.$$

\end{defn}

 The cotangent of the minimal angle $\theta_{min}$ should be thought of as the optimal slope for the dHYM equation.
\begin{thm}\label{main2} 
Let $X$ be a K\"ahler surface with two cohomology classes  $\alpha, \beta \in H^{1,1}(X, \mathbb{R})$. Suppose $\beta$ is K\"ahler and 
\begin{equation}\label{albe0}
\alpha \cdot \beta >0.
\end{equation}
 For any K\"ahler form $\omega\in \beta$, 
there exists a unique closed current $\mathcal{T} \in \al$ such that 
$$ ~\\Re\langle (\mathcal{T}+\sqrt{-1}\omega)^2 \rangle = \left( \cot \theta_{min} \right) \\Im\langle (\mathcal{T}+\sqrt{-1}\omega)^2 \rangle,~  \mathcal{T}\geq \left( \cot \theta_{min} \right)\omega.$$ Here $\langle,\rangle$ denotes the non-pluripolar product, and the equality is that of measures. In fact, there exists an effective $\RR$-divisor $D$ on $X$ such that
 $$\cot \theta_{min}  = \frac{(\al - D)^2 - \be^2}{2(\al-D) \cdot \be}, ~~(\alpha-D)\cdot \beta>0.$$
In particular, one does not have to take modifications in the definition of $\cot\theta_{min}$. Moreover we have that $$ \cot\theta_{min} \geq \cot\vartheta(X,\al,\be)$$ with equality if and only if $(X,\al,\be)$ is $dHYM$-semistable.
 \end{thm}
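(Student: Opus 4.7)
The proof rests on a simple algebraic identity specific to surfaces: the dHYM equation with slope $\mu$ is, up to a translation, a complex Monge-Amp\`ere equation in the twisted class $\alpha - \mu\beta$. Expanding $(\mathcal{T}+\sqrt{-1}\omega)^2$ and using that $\omega$ is smooth (so that the non-pluripolar product interacts linearly with $\omega$), one checks that a closed current $\mathcal{T} \in \alpha$ with $\mathcal{T} \geq \mu\omega$ satisfies
\[
\Re\langle (\mathcal{T}+\sqrt{-1}\omega)^2 \rangle = \mu\,\Im\langle (\mathcal{T}+\sqrt{-1}\omega)^2 \rangle
\]
if and only if $\tilde{\mathcal{T}} := \mathcal{T} - \mu\omega$ is a positive closed current in $\alpha - \mu\beta$ solving the non-pluripolar Monge-Amp\`ere equation $\langle \tilde{\mathcal{T}}^2 \rangle = (1+\mu^2)\omega^2$. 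Setting $\mu := \cot\theta_{min}$, my plan is to construct such a $\tilde{\mathcal{T}}$ via the BEGZ theorem, after identifying $\alpha - \mu\beta$ as a big class of volume $(1+\mu^2)\beta^2$ and producing the effective $\RR$-divisor $D$ on $X$ attaining the supremum.

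Both tasks are carried out simultaneously via the Zariski decomposition on surfaces. Denote by $g(D, \pi)$ the quotient appearing in the definition of $\cot\theta_{min}$. First, I reduce the supremum to $\pi = \mathrm{id}$: given effective $D'$ on a modification $\pi : X' \to X$ and $D := \pi_*D'$ on $X$, the projection formula gives $(\pi^*\alpha - D')\cdot\pi^*\beta = (\alpha - D)\cdot\beta$, while the Negativity Lemma gives $(D')^2 \leq D^2$, whence $g(D', \pi) \leq g(D, \mathrm{id})$. Next, the first-order maximum condition for $g(\,\cdot\,, \mathrm{id})$ on effective $\RR$-divisors is precisely the Zariski orthogonality: if $D$ is a maximizer with value $\mu$, then $P := \alpha - D - \mu\beta$ must be nef with $P\cdot E = 0$ for every irreducible component $E$ of $D$, so $\alpha - \mu\beta = P + D$ is the Zariski decomposition. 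A direct computation then gives $P^2 = (\alpha-D)^2 - 2\mu(\alpha-D)\cdot\beta + \mu^2\beta^2 = (1+\mu^2)\beta^2 > 0$. To produce a maximizer I take a sequence $D_j$ with $g(D_j, \mathrm{id}) \to \mu_* := \cot\theta_{min}$ and pass to a limit using continuity of the Zariski decomposition under deformation of the pseudo-effective class on a surface; the limit yields effective $\RR$-divisor $D$ on $X$ and nef $P$ with $\alpha - \mu_*\beta = P + D$ and $P^2 = (1+\mu_*^2)\beta^2$. In particular $\alpha - \mu_*\beta$ is big with $\vol(\alpha - \mu_*\beta) = (1+\mu_*^2)\beta^2$.

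With the big class $\alpha - \mu_*\beta$ and its volume identified, the BEGZ theorem \cite{BEGZ} produces a unique positive closed current $\tilde{\mathcal{T}} \in \alpha - \mu_*\beta$ with minimal singularities satisfying $\langle \tilde{\mathcal{T}}^2 \rangle = (1+\mu_*^2)\omega^2$; then $\mathcal{T} := \tilde{\mathcal{T}} + \mu_*\omega \in \alpha$ is the sought current, with uniqueness inherited from the uniqueness part of BEGZ. For the final comparison, $\cot\theta_{min} \geq \cot\vartheta$ is immediate from taking $(\pi, D) = (\mathrm{id}, 0)$ in the defining supremum. Unfolding $g(D, \mathrm{id}) \leq \cot\vartheta$ using $\alpha^2 - \beta^2 = 2\cot\vartheta(\alpha\cdot\beta)$ reduces to $D \cdot (2(\alpha - \cot\vartheta\,\beta) - D) \geq 0$; testing on $D = tC$ for an irreducible curve $C$ and letting $t \to 0^+$ recovers the dHYM-semistability inequality $\alpha\cdot C \geq \cot\vartheta\,(\beta\cdot C)$, while the converse follows by decomposing an arbitrary admissible $D$ into irreducible components and applying semistability together with the Hodge index theorem to control the quadratic term.

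The principal technical obstacle is the compactness step in the second paragraph: extracting a limit from a maximizing sequence of effective $\RR$-divisors on $X$ and identifying the limit as the Zariski negative part of $\alpha - \mu_*\beta$. This relies critically on the two-dimensional Zariski decomposition and its continuity under deformation of the pseudo-effective class, which has no direct higher-dimensional analogue—precisely the reason an analogue of Conjecture \ref{conj1} for dHYM in higher dimensions is beyond the reach of our methods.
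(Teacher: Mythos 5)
Your first and third paragraphs track the paper's argument: the equation is reduced to the non-pluripolar Monge--Amp\`ere equation $\langle\tilde{\mathcal T}^2\rangle=(1+\mu^2)\omega^2$ for $\tilde{\mathcal T}=\mathcal T-\mu\omega$ in the class $\al-\mu\be$, and existence/uniqueness then comes from BEGZ once that class is known to be big of volume $(1+\mu^2)\be^2$. The genuine gap is in your second paragraph, at the existence of a maximizer. A maximizing sequence $D_j$ of effective $\RR$-divisors is bounded only through $[D_j]\cdot\be<\al\cdot\be$, so after passing to a subsequence you get convergence of the classes $[D_j]$ to a pseudoeffective class (weak convergence of currents of integration to a positive closed current). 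That limit need not be the class of an effective $\RR$-divisor --- on a K\"ahler surface with few or no curves the pseudoeffective cone is strictly larger than the cone spanned by effective divisor classes --- and your first-order/orthogonality argument requires an honest divisor with identifiable irreducible components to perturb along. Invoking ``continuity of the Zariski decomposition under deformation of the pseudo-effective class'' does not repair this: the divisorial Zariski decomposition is continuous only on the big cone, and in any case you are taking a limit of the divisors $D_j$ themselves, not of a family of classes whose decompositions you control. Note also that the existence of an extremal effective divisor is part of the \emph{conclusion} of the theorem, so it cannot be assumed in the construction. (Two smaller points: you never verify that the supremum defining $\cot\theta_{min}$ is finite, and nefness of $P$ on a possibly non-projective surface needs $P\cdot\be\geq 0$ and $P^2\geq 0$ in addition to nonnegativity against curves.)

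The paper sidesteps the maximizer entirely. One shows that $f(t)=\vol(\al-t\be)-(1+t^2)\be^2$ is convex on $(-\infty,t_0)$ (via Deng/Witt--Nystr\"om's formula for the derivative of the volume and super-additivity of the positive intersection product), that $f(c_0)\geq0$ while $f(t_0)<0$, hence that $f$ has a unique zero $\xi$. The inequality $g(D)\leq\xi$ for every admissible $D$ then follows from volume monotonicity alone, with no compactness needed: if $g(D)>t>\xi$ then $\vol(\al-t\be)\geq\vol(\al-D-t\be)\geq(\al-D-t\be)^2>(1+t^2)\be^2$, contradicting $f(t)<0$. The extremal divisor is produced, rather than extracted, as the negative part $N$ of the Zariski decomposition $\al-\xi\be=Z+N$, for which $Z^2=\vol(\al-\xi\be)=(1+\xi^2)\be^2$ gives $g(N)=\xi$ exactly; this is the correct replacement for your second paragraph. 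For the record, your reduction of modifications to $\pi=\mathrm{id}$ via the projection formula and negative definiteness of the exceptional lattice is valid (the paper instead uses $\vol(\ga)\geq\vol(\pi^*\ga)\geq\vol(\ga')$), and your semistability sketch is workable in the direction ``equality $\Rightarrow$ semistable,'' but the converse is cleaner through the volume function: semistability forces $\al-c_0\be$ to be nef, hence $\vol(\al-c_0\be)=(\al-c_0\be)^2=(1+c_0^2)\be^2$ and $\xi=c_0$, rather than through a Hodge-index estimate on $2D\cdot(\al-c_0\be)-D^2$ that does not obviously close when $D^2>0$.
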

Note that though $\mathcal{T}$ may not be a positive current, but since the current $\Theta :=  \mathcal{T} - (\cot\theta_{min})\omega$ is positive, the non-pluripolar product can be interpreted using multi-linearity. Explicitly, we set $$\langle(\mathcal{T} + \sqrt{-1}\omega)^2\rangle = \langle \Theta^2 \rangle  + 2(\cot\theta_{min} +\sqrt{-1})\langle\Theta \wedge \omega\rangle +  \left(\cot\theta_{min} + \sqrt{-1}\right)^2\omega^2. $$

The condition $\al\cdot\be>0$ is not strictly needed. For, if the intersection number were negative, one could simply work with $-\al$. We would also like to remark that if $\alpha$ is pseudoeffective, then $\alpha \cdot \beta$ is always nonnegative. The condition 
$$\alpha\cdot \beta>0$$ holds  if and only if the numerical dimension of $\alpha$ is greater than or equal to $1$. The conjectural picture in higher dimensions is much more subtle for the dHYM equations than for the $J$-equation, essentially due to the lack of positivity of the class $\al$.  We begin with the following definition.

\begin{defn} Let $X$ be a K\"ahler manifold of complex dimension $n>2$ and let $\alpha, \beta\in H^{1,1}(X, \mathbb{R})$. Suppose $\beta$ is big and semi-positive. We define the semi-stable admissible set  by 
\begin{eqnarray*}
&&\mathcal{V}(X, \alpha, \beta) \\
&=& \left\{ (\pi, X',  D): \pi: X' \rightarrow X~\textnormal{is~a~modification},  ~D~\textnormal{is~effective}, ~(X', \pi^*\al-D, \pi^*\beta)~\textnormal{is~semi-stable} \right\}
\end{eqnarray*}
%
and the minimal angle $\theta_{min} =\theta_{min}(X, \alpha, \beta) \in (0, \pi)$ by
$$\theta_{min} = \inf \left\{  \vartheta(X', \pi^*\alpha-D, \pi^*\beta): ~ (\pi, X', D)\in \mathcal{V}(X, \alpha, \beta) \right\}   $$
if $\mathcal{V}(X, \alpha, \beta)$ is not empty.
 
\end{defn}

\begin{conj} \label{dhymcon1} Let $X$ be a K\"ahler manifold of complex dimension $n$ and let $\alpha, \beta\in H^{1,1}(X, \mathbb{R})$. Suppose $\beta$ is K\"ahler, $\mathcal{V}(X, \alpha, \beta)$  is not empty and $\theta_{min}>0$. Then for any K\"ahler form $\omega\in \beta$,  there exists a unique closed current $\mathcal{T} \in \alpha$ such that 
$$\text{Re}\langle (\mathcal{T}  +\sqrt{-1} \omega)^n\rangle  = (\cot  \theta_{min}) \text{Im} \langle ( \mathcal{T} +\sqrt{-1} \omega)^n\rangle.$$

\end{conj}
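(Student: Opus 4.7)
The plan is to reduce to the dHYM-semi-stable case on a modification realizing $\theta_{min}$, solve the equation there by approximation, and push the resulting smooth solutions down to $X$ as a non-pluripolar current. The first and hardest step is to prove that the infimum defining $\theta_{min}$ is attained: one needs $(\pi_0, X_0, D_0) \in \mathcal{V}(X,\alpha,\beta)$ with $\vartheta(X_0, \pi_0^*\alpha - D_0, \pi_0^*\beta) = \theta_{min}$. In the surface case Theorem~\ref{main2} achieves this via a Zariski-type decomposition, and in higher dimensions I would try to use the divisorial Zariski decomposition of Boucksom together with a mass compactness argument: take a minimizing sequence $(\pi_k, X_k, D_k)$, pass to a common resolution, extract a weak limit of the $D_k$, and verify that dHYM-semi-stability is closed under such limits (it is characterized by non-strict numerical inequalities). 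The dHYM-relevant positivity involves $\pi^*\alpha - D - (\cot \theta_{min})\pi^*\beta$ rather than $\pi^*\alpha - D$, so this step is more subtle than in the $J$-equation setting and is where I expect the deepest work to lie.

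Granted attainment, I would consider a perturbation such as $\alpha_\epsilon := \pi_0^*\alpha - D_0 + \epsilon \pi_0^*\beta$ (possibly with a further small K\"ahler adjustment), which should yield a dHYM-stable triple on $X_0$ for small $\epsilon > 0$ by openness of the strict numerical inequalities. The existence theorem of \cite{CJY, gchen, CLT} then produces a smooth solution $\chi_\epsilon \in \alpha_\epsilon$ on $X_0$ with dHYM angle $\vartheta_\epsilon \to \theta_{min}$. The hypothesis $\theta_{min} > 0$ ensures that these solutions lie in the hypercritical branch $\chi_\epsilon \geq (\cot \vartheta_\epsilon)\pi_0^*\omega$, so that the shifted forms $\chi_\epsilon - (\cot\vartheta_\epsilon)\pi_0^*\omega$ are positive with uniformly bounded mass. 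Pushing forward by $\pi_0$ and passing to a weakly convergent subsequence produces a closed current $\mathcal{T} \in \alpha$ with $\mathcal{T} - (\cot\theta_{min})\omega \geq 0$, and the non-pluripolar product $\langle(\mathcal{T} + \sqrt{-1}\omega)^n\rangle$ then expands by multilinearity exactly as in the discussion following Theorem~\ref{main2}.

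The remaining tasks are to verify that $\mathcal{T}$ satisfies the claimed equation in the non-pluripolar sense and to prove uniqueness. For the equation, the obstacle is that $\pi_{0*}(\chi_\epsilon^j \wedge \pi_0^*\omega^{n-j})$ can carry extra mass on $\pi_0(\mathrm{Supp}\, D_0)$ compared with the non-pluripolar wedge products of $\mathcal{T}$ and $\omega$. I would adapt the mass-comparison arguments of \cite{BEGZ} applied to the shifted positive currents, combined with smooth convergence of $\chi_\epsilon$ away from a proper Zariski closed set, to identify the limiting measures on the smooth locus; integrating the approximate equations then pins down the total masses and forces the equality of real and imaginary parts on the exceptional locus as well. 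For uniqueness, I would adapt the Solomon-type functional whose Euler--Lagrange equation is dHYM: geodesic convexity on the space of hypercritical potentials, extended to finite-energy classes in the spirit of \cite{BEGZ}, should yield uniqueness of the minimizer in $\alpha$, hence of the current $\mathcal{T}$.
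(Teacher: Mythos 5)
The statement you were asked to prove is labeled as a \textbf{conjecture} in the paper; there is no proof of it for $n>2$, and your sketch should be read as a programme rather than a proof. What the paper does prove is the $n=2$ case (Theorem~\ref{main2} / Theorem~\ref{thm:dhym-existence-singular-surface}), and it does so by a mechanism completely different from yours. On a surface the dHYM equation is quadratic in $\chi$, so after shifting by $\xi\omega$ it collapses to a single complex Monge--Amp\`ere equation in a big class: the paper locates the unique $\xi \in [c_0,t_0)$ solving $\vol(\alpha-\xi\beta) = (1+\xi^2)\beta^2$ (Lemma~\ref{lem:dhym-volume-equation}, using convexity and Deng's differentiability of the volume), invokes Theorem~4.1 of \cite{BEGZ} to produce $\Theta\in\alpha-\xi\beta$ with $\langle\Theta^2\rangle=(1+\xi^2)\omega^2$, and checks that $\mathcal{T}=\Theta+\xi\omega$ solves the dHYM equation by pure multilinearity. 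Attainment, $\xi=\zeta_H=\cot\theta_{min}$, and uniqueness all come from the Zariski decomposition of $\alpha-\xi\beta$ and from \cite{BEGZ}; there is no approximation by smooth dHYM solutions on modifications and no weak-limit argument anywhere. This reduction to a single MA equation is exactly what fails when $n>2$, which is why the statement is a conjecture.

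On the gaps in your own sketch: you correctly flag attainment of $\theta_{min}$ as the hardest step, but the perturbation step is also unjustified. Replacing $\pi_0^*\alpha - D_0$ by $\pi_0^*\alpha - D_0 + \epsilon\pi_0^*\beta$ changes the global angle $\vartheta_\epsilon$, and dHYM stability is a family of inequalities in which $\cot\vartheta_\epsilon$ appears on both sides as a moving target, so a semi-stable triple with equality on some $Z$ does not automatically become stable for small $\epsilon>0$. There is also no reason a priori that the existence theory of \cite{CJY,gchen,CLT} applies uniformly as $\epsilon\to 0$ (those results live in the supercritical range, and $\theta_{min}>0$ alone does not place you there), nor that the pushed-forward weak limit satisfies the non-pluripolar equation globally rather than merely away from a proper analytic set. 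None of these issues arise in the paper's $n=2$ argument because it never leaves the Monge--Amp\`ere framework.
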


 We shall see in Section \ref{subsec:dhym-surface}  that even when $n=2$, the condition $\al\cdot\be>0$ ensures that $\mathcal{V}(X, \alpha, \beta)$ is non-empty and the two definitions of minimal angles agrees with each other. 
  
 Next, we study a parabolic version of the dHYM equation. There are in fact several flows in literature whose critical points satisfy the dHYM equation (cf. \cite{JY, Tak}).  In the final section we focus on the cotangent flow introduced by Fu, Yau and Zhang in \cite{FYZ}, and prove a convergence result for the flow on $\Bl_{x_0}\PP^2$ with Calabi ansatz. We prove that the flow converges to the current $\sT$ constructed in the theorem above in a suitable weak sense. 

\begin{thm}\label{thm:conv-cotangent-flow-blow-up}
Let $X = \Bl_{x_0}\PP^2$, $\omega$ a fixed K\"ahler metric satisfying the Calabi ansatz in the class $\be = b[H] - [E]$ and $\al = p[H]-q[E]$ a real $(1,1)$ class with a representative $\chi \in \alpha$. We assume that $\al\cdot\be>0$. Let $\chi_0 := \chi+\ddbar\underline{\varphi}\in \al$  satisfying Calabi ansatz with $\La_{\omega}\chi_0 >0$, and let $\chi_t := \chi_{\varphi_t}$ be the cotangent flow emanating from $\chi_0$. Let $$c_0 = \cot\vartheta(X,\al,\be).$$ Then there exists a choice of $\chi_0$ (to be specified later) such that the following convergence behaviour holds:
\begin{enumerate}
\item If $q>c_0$, then $(X,\al,\be)$ is dHYM-stable and $\chi_t $ converges smoothly to a solution $\chi_\infty$ of the dHYM equation.
\item If $q = c_0$, then $(X,\al,\be)$ is dHYM-semi-stable but not stable. In this case $\chi_t\rightarrow\chi_\infty$ as currents, where $\chi_\infty$ has bounded local potentials and satisfies the dHYM equation $$\text{Re}(\chi_\infty+\sqrt{-1}\omega)^2 = c_0\text{Im}(\chi_\infty + \sqrt{-1}\omega)^2,$$ where the wedge product is taken in the sense of Bedford-Taylor. 
\item If $q<c_0$, then $(X,\al,\be)$ is dHYM-unstable. In this case also $\chi_t\rightarrow \chi_\infty$ as currents. We can write $\chi_\infty = \chi_\infty' + (\zeta - q)[E]$, where $\chi_\infty'$ has bounded local potentials and $$\zeta := \sup\Big\{\frac{(\al_t')^2 - \be^2}{2\al_t'\cdot\be}~|~ \al_t' = \al - tE,~\al_t'\cdot\be>0\Big\}.$$ Moreover $\zeta = \cot\theta_{min}$, and $\chi_\infty'$ solves $$\text{Re}(\chi_\infty' + \sqrt{-1}\omega)^2 = (\cot\theta_{min})\text{Im}(\chi_\infty' + \sqrt{-1}\omega)^2$$ on $X\setminus E$. Globally, $\chi_\infty$ solves $$\langle \chi_\infty,\chi_\infty\rangle - \omega^2 = 2\zeta\langle \chi_\infty,\omega\rangle,$$ and so $\chi_\infty = \sT$ that we construct in Theorem \ref{main2}. 
\end{enumerate}
\end{thm}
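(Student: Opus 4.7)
The plan is to exploit the $U(2)$-symmetry built into the Calabi ansatz to reduce the cotangent flow on $X = \Bl_{x_0}\PP^2$ to a scalar parabolic equation in one spatial variable, and then carry out a three-case analysis matching the regimes determined by the position of $q$ relative to $c_0 = \cot\vartheta(X,\al,\be)$.

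First I would set up the reduction. On $X = \Bl_{x_0}\PP^2$, any $U(2)$-invariant closed $(1,1)$-form in $\al = p[H] - q[E]$ can be encoded via the Calabi ansatz by a single function $\varphi(s)$ of one radial variable $s$, with boundary values at the two endpoints corresponding to $E$ and the hyperplane at infinity encoding the class. Positivity of $\chi_t$ and the class constraint translate to $\varphi' > 0$ together with prescribed boundary data. The cotangent flow of \cite{FYZ} then becomes a parabolic PDE in a single function of $(s,t)$. The natural choice of $\chi_0$ is a $U(2)$-invariant metric whose reduced $\varphi_0$ is monotone with derivative bounded in terms of $c_0$, ensuring that the parabolic maximum principle is applicable throughout the flow.

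Next I would derive evolution equations for natural quantities (the analogue of $\La_{\chi_t}\omega$ and the Lagrangian phase function) and establish uniform bounds via the maximum principle. In the dHYM-stable case $q>c_0$, the reduced equation admits a subsolution coming from a slightly perturbed representative of $\al - tE$ for small $t>0$, and \cite{SW}-type arguments then yield smooth convergence to the unique smooth solution of the dHYM equation. In the semi-stable case $q = c_0$, metric bounds away from $E$ persist and one obtains a uniform $L^\infty$ bound on the potentials, so the limit exists as a current with bounded local potentials whose wedge product makes sense in the Bedford--Taylor framework and satisfies the dHYM equation.

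The unstable case $q<c_0$ is where the bulk of the work lies and I expect the main obstacle. Here the ODE analysis should show that the endpoint value of $\varphi_t$ at $s=0$ drifts upward from $q$ and stabilizes at some limiting value, with the excess producing the mass $(\zeta - q)[E]$ on the exceptional divisor. A direct calculus computation on the one-parameter family $\al - tE$ constrained by $(\al - tE)\cdot\be>0$ yields the explicit formula for $\zeta$ stated in the theorem. The more delicate identification $\zeta = \cot\theta_{min}$ requires ruling out improvements via nontrivial modifications and non-$U(2)$-invariant divisors; I would combine the universal inequality $\cot\theta_{min} \ge \cot\vartheta(X,\al,\be)$ from Theorem \ref{main2} with the $U(2)$-equivariance of the problem and the fact that the only $U(2)$-invariant prime divisor on $X$ is $E$, reducing the supremum defining $\cot\theta_{min}$ to the one-parameter family above. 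The uniqueness clause in Theorem \ref{main2} then identifies $\chi_\infty$ with $\sT$. The technical heart of this case is proving genuine (not merely sub-sequential) convergence of $\varphi_t$ together with sharp one-sided derivative estimates near the collapsing endpoint, where the blow-up geometry and the concentration of mass along $E$ must be carefully controlled.
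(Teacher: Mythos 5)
Your overall architecture (Calabi-ansatz reduction to a scalar parabolic equation, maximum-principle bounds, three cases according to the sign of $q-c_0$, mass concentration along $E$ in the unstable case) matches the paper's, but two essential pieces are missing or wrong.

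First, the identification $\zeta=\cot\theta_{min}$ cannot be obtained by ``reducing the supremum defining $\cot\theta_{min}$ to the one-parameter family $\al-tE$ by $U(2)$-equivariance.'' The supremum in the definition of $\cot\theta_{min}$ runs over all effective $\RR$-divisors on all modifications $\pi:X'\rightarrow X$; divisors upstairs on a blow-up of $X$ are not constrained by the $U(2)$-symmetry of the downstairs problem, and even on $X$ itself the effective cone of $\Bl_{x_0}\PP^2$ is two-dimensional (it is spanned by $[E]$ and $[H]-[E]$; the proper transform of a line through $x_0$ is also invariant), so the family $\{tE\}$ is not exhaustive. The paper's route is entirely different: by the surface results of Section \ref{subsec:dhym-surface}, $\cot\theta_{min}$ equals the unique $\xi$ solving the volume equation $\vol(\al-\xi\be)=(1+\xi^2)\be^2$, with modifications ruled out by monotonicity and birational invariance of the volume ($\vol(\ga)\geq\vol(\pi^*\ga)\geq\vol(\ga')\geq(\ga')^2$), and then on $\Bl_{x_0}\PP^2$ the explicit volume formula shows $\xi=\tilde c_\xi=\sup_{s<bp}\tilde c_s=\zeta$, using the strict concavity of $s\mapsto\tilde c_s$. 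Without this volume-theoretic input your identification does not close.

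Second, you flag ``genuine convergence with sharp derivative estimates near the collapsing endpoint'' as the technical heart but supply no mechanism. The paper's proof rests on: (i) the integrated form of the reduced dHYM ODE, $\psi^2-2\tilde c_s x\psi-x^2=\tilde A_s$, whose explicit solutions $\tilde\psi_s(x)=\tilde c_s x+\sqrt{\tilde A_s+(1+\tilde c_s^2)x^2}$ exist and are monotone exactly when $s\geq\tilde c_s$, i.e.\ $s\geq\xi$; (ii) the comparison $\tilde\psi_{s_1}\leq\tilde\psi_{s_2}$ for $\xi\leq s_1<s_2$; (iii) the special initial datum $\psi_0(x)=\la/x+\mu x$ (constant trace), for which one checks $\psi_0\leq\tilde\psi_\xi$ and $\dot\psi|_{t=0}\geq0$, both of which propagate by the maximum principle, yielding a monotone, bounded flow; and (iv) an integral argument showing the phase $\cot\theta(x,t)$ converges subsequentially to a constant uniformly on compact subsets of $(1,b)$, so the pointwise limit solves the integrated ODE and must equal $\tilde\psi_\xi$. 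Note also that the boundary value $\psi(1,t)=q$ is frozen for all $t$ (the equation degenerates there); the concentration $(\zeta-q)[E]$ arises because $\lim_{x\rightarrow1^+}\psi_\infty(x)=\xi>q$ while $\psi_\infty(1)=q$, i.e.\ the limit is discontinuous at the endpoint, rather than the endpoint value ``drifting upward'' along the flow. As written, your proposal does not contain the barrier family, the concavity of $\tilde c_s$, or the monotonicity-plus-comparison scheme that actually forces convergence, so the unstable and semi-stable cases remain unproved.
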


In fact, by adapting the arguments in the proof of Theorem \ref{mainthm2}, it is possible to prove that the cotangent flow has a unique limit for {\em any} $\chi_0$ satisfying the Calabi ansatz and $\La_\omega\chi_0>0$. The fact that $\zeta = \cot\theta_{min}$ follows from the fact that both are (unique) solutions of a certain quadratic equation involving the volume function as we shall see in Sections \ref{sec:surface} and \ref{sec:dhym-blow-up} below. The interested reader should also refer to the recent preprint \cite{ChJ} for some fascinating results on singularity formation for the line-bundle curvature flow with Calabi ansatz on $\Bl_{x_0}\PP^n$ in the non-super critical and semi-stable cases. We expect that similar results should also hold for the co-tangent flow.

\section{Singular solutions to complex Hessian equations on K\"ahler surfaces}\label{sec:surface}

\setcounter{equation}{0}

\subsection{The $J$-equation on K\"ahler surfaces}\label{subsec:j-surface}

The aim of this section is to prove Theorem \ref{mainthm1}. The starting point of the proof is the following elementary observation.
\begin{lem}\label{lem:bigness-jequation} 
Let $\al$ be a big and nef class, and $$t\geq 2\frac{\al\cdot\be}{\al^2}.$$ Then the class $\ga:= \al - t^{-1}\be$ is big.
\end{lem}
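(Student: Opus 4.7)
The plan is to verify two numerical conditions on $\ga$---positivity of the self-intersection and positivity of the pairing with $\be$---and then conclude bigness by invoking the characterization of the big cone on a compact K\"ahler surface as the connected component of $\{\delta \in H^{1,1}(X,\RR) : \delta^2 > 0\}$ that contains the K\"ahler cone.

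First, expanding
\[
\ga^2 = \al^2 - 2t^{-1}\al\cdot\be + t^{-2}\be^2,
\]
the hypothesis $t \ge 2\al\cdot\be/\al^2$ makes the sum of the first two terms nonnegative, and $\be^2 > 0$ since $\be$ is K\"ahler, so $\ga^2 > 0$. Second, since $\al$ is big on a surface we have $\al^2 > 0$, and the Hodge index theorem yields $(\al\cdot\be)^2 \ge \al^2\be^2$. Feeding this back into the hypothesis produces $t^{-1}\be^2 \le (\al\cdot\be)/2$, so $\ga\cdot\be \ge (\al\cdot\be)/2 > 0$ (where $\al\cdot\be > 0$ because $\al$ is nef, $\be$ is K\"ahler, and Hodge again forbids equality to zero).

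These two inequalities place $\ga$ in the positive component of $\{\delta^2 > 0\}$, which on a compact K\"ahler surface coincides with the big cone (by the work of Buchdahl and Lamari on the K\"ahler cone of a surface, combined with Demailly's theory of K\"ahler currents, or equivalently the $n=2$ case of Demailly-Paun). Invoking this surface-level characterization is the only step beyond elementary intersection-theoretic calculation, and is where I expect the main delicacy: a self-contained alternative would attempt to build a K\"ahler current in $\ga$ directly by perturbing the K\"ahler current in $\al$ guaranteed by bigness and nefness, but without a sharp lower bound on the positivity of that current one cannot simply subtract $t^{-1}\omega$, so appealing to the surface characterization is by far the cleanest route.
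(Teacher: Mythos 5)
Your proof is correct and follows the same overall skeleton as the paper's (verify $\ga^2>0$, then verify positive pairing with a K\"ahler class, then invoke the Lamari/Buchdahl characterization of bigness on a surface), but it diverges in how the second numerical condition is established. The paper pairs $\ga$ with the perturbed K\"ahler class $\al_\vep = \al + \vep\be$ and checks $\ga\cdot\al_\vep \ge \frac{1}{4}\al^2>0$ for $\vep$ small, using only the hypothesis $t\ge 2\al\cdot\be/\al^2$; this avoids any appeal to the Hodge index theorem but requires choosing $\vep$. You instead pair $\ga$ directly with $\be$ and use the Hodge index inequality $(\al\cdot\be)^2\ge\al^2\be^2$ (valid since $\al^2>0$) to deduce $t^{-1}\be^2\le\frac{1}{2}\al\cdot\be$ and hence $\ga\cdot\be\ge\frac{1}{2}\al\cdot\be>0$. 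Your route is slightly cleaner in that no auxiliary perturbation is needed, at the mild cost of invoking Hodge index; notably, Hodge index also supplies the fact $\al\cdot\be>0$ for free, which the paper takes as implicit. Both are short, correct, and equally valid instantiations of the same surface-bigness criterion.
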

\begin{proof}
Note that $\al^2>0$. For any $\vep>0$ we set $\al_\vep = \al + \vep\be$. By Lamari's criteria \cite{Lam}, $\ga$ is big if $\ga^2 >0$ and $\ga\cdot\al_\vep>0$ for some $\vep>0$, since $\al_\vep$ itself is K\"ahler. But these two conditions are easily verified. Indeed we have
\begin{align*}
\ga^2 = \al^2 + \frac{1}{t^2}\be^2 - \frac{2}{t}\al\cdot\be \geq \frac{1}{t^2}\be^2>0\\
\ga\cdot\al_\vep = \al^2 - \frac{1}{t}\al\cdot \be + \vep\Big(\al - \frac{1}{t}\be\Big)\cdot\be \geq \frac{1}{4}\al^2>0,
\end{align*}
if we choose $\vep<<1$ so that $$\vep\left|\Big(\al - \frac{1}{t}\be\Big)\cdot\be\right| < \frac{\al^2}{4}.$$
\end{proof}
We now let 
\begin{align*}
\mu_0 &= \inf\{t>0~|~ \ga_t:= \al-\frac{1}{t}\be \text{ is big}\}
\end{align*}
Since $\be$ is K\"ahler, clearly $\mu_0>0.$ On the other hand, by the lemma above we have $\mu_0 < \mu$. \begin{lem}\label{lem:j-volume-equation}
There exists a unique $\xi \in (\mu_0,\mu]$ such that $$\vol\Big(\al - \frac{1}{\xi}\be\Big) = \frac{1}{\xi^2}\be^2.$$ Moreover, $t\geq \xi$ if and only if $$\vol\Big(\al - \frac{1}{t}\be\Big) \geq \frac{1}{t^2}\be^2.$$
\end{lem}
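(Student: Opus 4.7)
The plan is to reduce the equation $\vol(\al - \tfrac{1}{\xi}\be) = \tfrac{1}{\xi^2}\be^2$ to root-finding for a strictly monotone continuous function of one variable. Writing $\al_t := \al - \tfrac{1}{t}\be$, define
$$h(t) := t^2\,\vol(\al_t), \qquad t \in (\mu_0,\infty),$$
so that the claimed identity becomes $h(\xi) = \be^2$. Since the volume function is continuous on $H^{1,1}(X,\RR)$ and $\al_t$ approaches the boundary of the big cone as $t\to\mu_0^+$ (by the very definition of $\mu_0$), we have $\vol(\al_t)\to 0$ and hence $h(t)\to 0$. As $t\to\infty$, $\al_t\to\al$, which is K\"ahler, so $\vol(\al_t)\to\al^2>0$ and $h(t)\to\infty$.

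The main step is strict monotonicity of $h$ on $(\mu_0,\infty)$. I would use the surface Zariski decomposition $\al_t = P_t + N_t$, with $P_t$ nef, $N_t$ effective, $P_t\cdot N_t = 0$, and $\vol(\al_t) = P_t^2$. Combined with the derivative formula $\frac{d}{ds}\vol(\al'+sD)\big|_{s=0} = 2\,P(\al')\cdot D$ on a surface (which may be verified directly inside a Zariski chamber where the support of $N(\al')$ is constant, and globally follows from Boucksom-Favre-Jonsson), taking $D = \be/t^2$ yields
$$h'(t) = 2t\,\vol(\al_t) + 2\,P_t\cdot\be.$$
For $t>\mu_0$, $P_t$ is nef and nonzero (since $P_t^2 = \vol(\al_t)>0$), and as $\be$ is K\"ahler, the Hodge index theorem gives $P_t\cdot\be>0$. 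Hence $h'(t)>0$ throughout $(\mu_0,\infty)$, so together with the boundary limits above there is a unique $\xi\in(\mu_0,\infty)$ with $h(\xi)=\be^2$.

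To see $\xi\leq\mu$, using $\mu = 2\al\cdot\be/\al^2$ one computes
$$\bigl(\al - \tfrac{1}{\mu}\be\bigr)^2 = \al^2 - \tfrac{2}{\mu}\al\cdot\be + \tfrac{1}{\mu^2}\be^2 = \tfrac{1}{\mu^2}\be^2.$$
On a surface, the volume of a big class dominates its self-intersection --- indeed $\vol(\al') = P^2 = (\al')^2 - N^2 \geq (\al')^2$ since $N^2\leq 0$, with equality iff the class is nef --- so $\vol(\al_\mu)\geq \be^2/\mu^2$, giving $h(\mu)\geq\be^2 = h(\xi)$ and hence $\xi\leq\mu$ by strict monotonicity. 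The ``moreover'' clause then follows immediately: for $t>\mu_0$, $t\geq\xi$ is equivalent to $h(t)\geq\be^2$, i.e.\ to $\vol(\al_t)\geq \be^2/t^2$; for $t\in(0,\mu_0]$, $\vol(\al_t)=0 < \be^2/t^2$ while $t\leq\mu_0<\xi$, so both sides fail. The chief technical input, and the only non-elementary step, is the differentiability of the volume function and the explicit derivative formula in terms of the nef part; everything else is Hodge-index bookkeeping.
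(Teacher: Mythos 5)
Your proof is correct. It takes a different and somewhat heavier route than the paper's on the one step that requires an argument, namely strict monotonicity. The paper works with $f(s)=\vol(\al-s\be)/(s^2\be^2)$ on $[1/\mu,1/\mu_0)$ and simply observes that $s\mapsto\vol(\al-s\be)$ is non-increasing (adding the K\"ahler class $(s_2-s_1)\be$ can only increase volume) while $s\mapsto 1/s^2$ is strictly decreasing, so the product is strictly decreasing; no differentiability of the volume is invoked anywhere, and the rest is the same IVT plus $\vol\geq$ self-intersection bookkeeping you carry out. Your approach instead computes $h'(t)=2t\,\vol(\al_t)+2P_t\cdot\be$ via the Boucksom--Favre--Jonsson/Deng derivative formula and the surface Zariski decomposition, then kills the second term by Hodge index. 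That is a valid and even more quantitative argument (you get an explicit positive lower bound on the derivative, not just strict monotonicity), but it imports machinery the paper deliberately avoids here. It is worth noting that your route is essentially the one the paper does adopt for the dHYM analogue (Lemma 2.9), where the naive ratio trick no longer applies and the derivative formula $f'(t)=-2\langle\al-t\be\rangle\cdot\be-2t\be^2$ together with super-additivity of the positive intersection product is genuinely needed; so the technique you used is the right one to have in hand, just overkill for this particular lemma.
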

\begin{proof}
For $s\in \Big[\frac{1}{\mu},\frac{1}{\mu_0}\Big)$, consider the function $$f(s) = \frac{\vol(\al - s\be)}{s^2\be^2}.$$ The function is clearly a continuous and strictly decreasing function in $s$. The continuity of $f(s)$ of course uses the continuity of the volume function on $H^{1,1}(X,\RR)$.  We also have that $f(\mu_0^{-1}) = 0$ and $$f(\frac{1}{\mu}) = \mu^2\frac{\vol(\al - \frac{1}{\mu}\be)}{\be^2} \geq \mu^2\frac{(\al - \frac{1}{\mu}\be)^2}{\be^2} = 1,$$ and so by the intermediate value theorem there must be a unique $s_0$ such that $f(s_0) = 1$. We then take $\xi = 1/s_0$.
\end{proof}

\begin{cor}\label{cor:sing-solution-j-equation}
There exists a unique K\"ahler current $\mathcal{T}\in \al$ such that $$2\langle \mathcal{T},\omega\rangle = \xi \langle \mathcal{T}^2 \rangle,~~\mathcal{T} \geq \frac{1}{\xi}\omega.$$
\end{cor}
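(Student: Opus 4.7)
The plan is to reduce the singular $J$-equation with the lower bound constraint to a complex Monge-Amp\`ere equation in the big class $\ga_\xi := \al - \xi^{-1}\be$, and then to invoke the theorem of Boucksom-Eyssidieux-Guedj-Zeriahi \cite{BEGZ}. The key observation driving the proof is that the constraint $\mathcal{T} \geq \xi^{-1}\omega$ is precisely equivalent to writing $\mathcal{T} = \Theta + \xi^{-1}\omega$ for a closed positive $(1,1)$-current $\Theta$ in the class $\ga_\xi$. Since $\xi > \mu_0$ by Lemma \ref{lem:j-volume-equation}, the class $\ga_\xi$ is big, so such currents exist, and any $\mathcal{T}$ of this form automatically dominates the K\"ahler form $\xi^{-1}\omega$ and is therefore a K\"ahler current.

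Next I would carry out the algebraic reduction. Exploiting the fact that $\omega$ is smooth, which makes mixed non-pluripolar products against $\omega$ unambiguous and equal to ordinary wedge products, we expand
\begin{align*}
\langle \mathcal{T}^2 \rangle &= \langle \Theta^2 \rangle + \tfrac{2}{\xi}\,\Theta\wedge\omega + \tfrac{1}{\xi^2}\,\omega^2, \\
\langle \mathcal{T},\omega\rangle &= \Theta\wedge\omega + \tfrac{1}{\xi}\,\omega^2.
\end{align*}
Substituting into $2\langle \mathcal{T},\omega\rangle = \xi \langle\mathcal{T}^2\rangle$, the cross terms $\Theta\wedge\omega$ cancel and the $\omega^2$ contributions combine to collapse the equation to the pure Monge-Amp\`ere equation $\langle \Theta^2 \rangle = \xi^{-2}\omega^2$ as measures on $X$.

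Finally I apply BEGZ. By the defining property of $\xi$ in Lemma \ref{lem:j-volume-equation}, the total mass of the right-hand side is $\xi^{-2}\be^2 = \vol(\ga_\xi)$, and $\omega^2$ does not charge pluripolar sets. The Monge-Amp\`ere existence and uniqueness theorem in big classes then yields a unique positive current $\Theta \in \ga_\xi$ with full Monge-Amp\`ere mass satisfying $\langle \Theta^2 \rangle = \xi^{-2}\omega^2$. Setting $\mathcal{T} := \Theta + \xi^{-1}\omega$ produces the required K\"ahler current, and uniqueness of $\mathcal{T}$ follows by running the reduction in reverse: any other solution $\mathcal{T}'$ produces a positive current $\Theta' \in \ga_\xi$ with $\langle(\Theta')^2\rangle = \xi^{-2}\omega^2$, to which BEGZ again applies.

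The main point requiring care is ensuring that $\Theta'$ obtained in the uniqueness argument automatically has full non-pluripolar mass, which is what is needed to invoke the uniqueness half of BEGZ. This is forced by the mass balance: since $\int_X \langle (\Theta')^2\rangle = \xi^{-2}\be^2 = \vol(\ga_\xi)$, the current $\Theta'$ attains the maximal possible non-pluripolar Monge-Amp\`ere mass in its class. Apart from this bookkeeping, the proof is a clean algebraic reduction followed by an off-the-shelf application of the big-class Monge-Amp\`ere theory.
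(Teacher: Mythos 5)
Your proposal is correct and takes essentially the same approach as the paper: substitute $\mathcal{T} = \Theta + \xi^{-1}\omega$ with $\Theta \in \alpha - \xi^{-1}\beta$, reduce the $J$-type equation to the Monge-Amp\`ere equation $\langle \Theta^2\rangle = \xi^{-2}\omega^2$ in the big class, and apply Theorem 4.1 of \cite{BEGZ} together with Lemma \ref{lem:j-volume-equation} to guarantee the mass matches $\vol(\alpha - \xi^{-1}\beta)$. The paper states this in two lines, while you have made the algebraic cancellation and the full-mass bookkeeping for uniqueness explicit; the substance is the same.
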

\begin{proof}
By the above lemma and Theorem 4.1 in \cite{BEGZ}, there exists a unique positive current $\Theta \in \al - \frac{1}{\xi}\be$ such that $$\langle \Theta,\Theta\rangle = \frac{1}{\xi^2}\omega^2.$$ Then $\mathcal{T}:= \Theta + \frac{1}{\xi}\omega$ is the required current. 
\end{proof}

\begin{prop} We have the following characterization of $\xi$:
$$\xi = 2\inf_D\frac{(\al-D)\cdot\be}{(\al-D)^2},$$ where the infimum is taken over all effective $\RR$-divisors $D$ on $X$ such that $\al - D$ is big and nef. Moreover, $\xi <\mu$ if and only if $(X,\al,\be)$ is unstable. 
\end{prop}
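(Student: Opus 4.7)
The plan is to identify the analytically defined number $\xi$ (from Lemma~\ref{lem:j-volume-equation}) with the intersection-theoretic infimum via the Zariski decomposition of big classes on K\"ahler surfaces. Write $\eta := 2\inf_D \frac{(\al-D)\cdot \be}{(\al-D)^2}$ for the quantity on the right, with the infimum over effective $\RR$-divisors $D$ such that $\al-D$ is big and nef, and prove $\eta=\xi$ by two matching inequalities.

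For $\eta\le\xi$, I would apply the Zariski decomposition to the big class $\al-\xi^{-1}\be$, writing it as $P+N$ with $P$ nef, $N$ an effective $\RR$-divisor, $P\cdot N=0$, and $\vol(\al-\xi^{-1}\be)=P^2$. Combined with Lemma~\ref{lem:j-volume-equation}, this gives $P^2=\xi^{-2}\be^2$. Taking $D=N$, the class $\al-D=P+\xi^{-1}\be$ is a sum of nef classes, hence nef, with $(\al-D)^2=2\xi^{-2}\be^2+2\xi^{-1}P\cdot\be>0$, so it is big and nef. A direct computation using $P^2=\xi^{-2}\be^2$ yields
\[
\frac{(\al-D)\cdot\be}{(\al-D)^2}=\frac{P\cdot\be+\xi^{-1}\be^2}{2\xi^{-1}(P\cdot\be+\xi^{-1}\be^2)}=\frac{\xi}{2},
\]
so $\eta\le\xi$ without needing any modification.

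For $\eta\ge\xi$, I would fix any effective $D$ with $\al':=\al-D$ big and nef, set $s:=\frac{\al'\cdot\be}{(\al')^2}$, and aim to show $2s\ge\xi$. The auxiliary class $\gamma:=\al'-(2s)^{-1}\be$ satisfies $\gamma^2=(2s)^{-2}\be^2>0$ by design, and an application of Lamari's criterion as in Lemma~\ref{lem:bigness-jequation} (using $\al'$ in place of $\al$ and a small K\"ahler perturbation $\al'+\ep\be$, for which $\gamma\cdot(\al'+\ep\be)\approx\tfrac12(\al')^2>0$) shows $\gamma$ is big. Since any big class $T$ on a surface satisfies $\vol(T)\ge T^2$ --- its Zariski decomposition $P_T+N_T$ giving $\vol(T)=P_T^2$ and $T^2=P_T^2+N_T^2$ with $N_T^2\le 0$ --- we obtain $\vol(\gamma)\ge (2s)^{-2}\be^2$. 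Adding back the pseudoeffective class $D$ can only increase the volume, so $\vol(\al-(2s)^{-1}\be)\ge\vol(\gamma)\ge (2s)^{-2}\be^2$, and Lemma~\ref{lem:j-volume-equation} then forces $2s\ge\xi$.

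For the instability characterization, note $(X,\al,\be)$ is $J$-slope semi-stable iff $\be\cdot C\le \mu\,\al\cdot C$ for every irreducible curve $C$, equivalently iff $\al-\mu^{-1}\be$ is nef. In the semi-stable case $\al-\mu^{-1}\be$ is nef with $(\al-\mu^{-1}\be)^2=\mu^{-2}\be^2>0$, hence big and nef, so its volume equals $\mu^{-2}\be^2$ and Lemma~\ref{lem:j-volume-equation} gives $\xi=\mu$. In the unstable case $\al-\mu^{-1}\be$ is big (by the reasoning of Lemma~\ref{lem:bigness-jequation}) but not nef, so its Zariski negative part $N$ is nonzero with $N^2<0$ by the negative-definiteness of the intersection form on the support of $N$, giving $\vol(\al-\mu^{-1}\be)=P^2>P^2+N^2=\mu^{-2}\be^2$, which by Lemma~\ref{lem:j-volume-equation} yields $\xi<\mu$. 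The main technical step requiring care is verifying the bigness of the auxiliary class $\gamma$ via Lamari's criterion in the second paragraph; everything else reduces cleanly to the monotonicity of the volume function exploited in Lemma~\ref{lem:j-volume-equation} combined with Zariski decomposition on surfaces.
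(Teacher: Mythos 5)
Your proof of the main identity $\xi = 2\inf_D (\al-D)\cdot\be/(\al-D)^2$ follows essentially the same route as the paper: the Zariski decomposition of $\al - \xi^{-1}\be$ gives the $\le$ direction, and the $\ge$ direction uses bigness of the auxiliary class $\gamma = \al'-(2s)^{-1}\be$ (via Lamari, as in Lemma~\ref{lem:bigness-jequation}), the inequality $\vol \ge \gamma^2$ for big classes on a surface, monotonicity of volume under adding effective classes, and the characterization of $\xi$ in Lemma~\ref{lem:j-volume-equation}. Your organization of the second inequality is slightly cleaner --- running the estimate directly rather than by contradiction automatically gives $2s > \mu_0$ from $\vol(\al-(2s)^{-1}\be)>0$, whereas the paper hard-codes $t > \mu_0$ when choosing a contradiction witness. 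One small correction relative to the paper: you are right to say $\al - N = P + \xi^{-1}\be$ is big and nef rather than K\"ahler (the paper asserts K\"ahler, but nef and big is all that is needed and all that the Zariski decomposition directly supplies; $P + \xi^{-1}\be$ can have zero intersection with a curve).

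For the ``moreover'' clause, the paper's displayed proof does not actually address it, so your paragraph is a genuine addition. Your argument is correct: on a surface, $J$-slope semi-stability reduces to $(\al-\mu^{-1}\be)\cdot C \ge 0$ for every irreducible curve $C$, and since $\al-\mu^{-1}\be$ is always big (with $(\al-\mu^{-1}\be)^2=\mu^{-2}\be^2>0$), this holds if and only if the Zariski negative part $N$ vanishes (if $N = \sum a_i C_i \ne 0$, negative-definiteness forces $N\cdot C_i <0$ for some $i$, and $P\cdot C_i =0$ gives $(\al-\mu^{-1}\be)\cdot C_i<0$). Then $\vol(\al-\mu^{-1}\be)=\mu^{-2}\be^2$ precisely when $N=0$, and strictly greater otherwise, so Lemma~\ref{lem:j-volume-equation} gives $\xi=\mu$ iff semi-stable and $\xi<\mu$ iff unstable.
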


\begin{proof}
We call the right hand side $\zeta$. We first prove that $\zeta\leq \xi$. To see this, let $$\al - \frac{1}{\xi}\be = Z + N$$ be the Zariski decomposition, where $Z$ is the nef part and $N$ is the negative and effective part. Then $\al' := \al - N$ is K\"ahler, and so $$\zeta \leq 2\frac{\al'\cdot\be}{(\al')^2}  = \xi.$$ To see the ratio above is equal to $\xi$ we simply note that $$\Big(\al' - \frac{1}{\xi}\be\Big)^2 = Z^2 = \vol\Big(\al - \frac{1}{\xi}\be\Big) =  \frac{1}{\xi^2}\be^2.$$ On the other hand, suppose $\zeta<\xi$. Let $t\in (\max(\zeta,\mu_0),\xi)$. Then there exists an effective divisor $D$ on $X$ such that $\al' = \al - D$ is big and nef, and $$t > 2\frac{\al'\cdot\be}{(\al')^2}.$$ By Lemma \ref{lem:bigness-jequation} $\al' - t^{-1}\be$ is a big class.  We also have that $\al - \al' = D$ is pseudoeffective, so $$\vol\Big(\al - \frac{1}{t}\be\Big) \geq \vol\Big(\al'- \frac{1}{t}\Big) \geq \Big(\al'-\frac{1}{t}\be\Big)^2 > \frac{1}{t^2}\be^2,$$ and so $t > \xi$ by Lemma \ref{lem:j-volume-equation}, and we have a contradiction. 
\end{proof}
To complete the proof of the theorem \ref{mainthm1} we have the following: 
\begin{lem}
Let $\pi:X'\rightarrow X$ be a modification and $D$ an effective $\RR$-divisor such that $\pi^*\al - D$ is big and nef. Then $$2\frac{(\pi^*\al-D)\cdot\pi^*\be}{(\pi^*\al-D)^2} \geq \xi,$$ and hence in particular that $\zeta_{min} = \xi$.
\end{lem}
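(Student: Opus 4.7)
The plan is to reduce the asserted inequality to a volume comparison and then invoke Lemma \ref{lem:j-volume-equation}. Write $\al' := \pi^*\al - D$, which is big and nef on $X'$ by hypothesis, and set $t := 2(\al'\cdot \pi^*\be)/(\al')^2$. By the ``moreover'' part of Lemma \ref{lem:j-volume-equation}, it suffices to establish
$$
\vol(\al - t^{-1}\be) \geq t^{-2}\be^2.
$$

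To this end, I would use the decomposition $\pi^*\al - t^{-1}\pi^*\be = (\al' - t^{-1}\pi^*\be) + D$, birational invariance of the volume, and monotonicity of the volume along additions of pseudoeffective classes, to obtain
$$
\vol(\al - t^{-1}\be) = \vol(\pi^*\al - t^{-1}\pi^*\be) \geq \vol(\al' - t^{-1}\pi^*\be),
$$
provided the class $\al' - t^{-1}\pi^*\be$ is big. On a surface the Zariski decomposition gives $\vol(\ga) \geq \ga^2$ for any big class $\ga$ (writing $\ga = Z + N$ with $Z\cdot N = 0$ and $N^2 \leq 0$). A direct computation using the definition of $t$ shows
$$
(\al' - t^{-1}\pi^*\be)^2 = (\al')^2 - 2t^{-1}(\al'\cdot\pi^*\be) + t^{-2}(\pi^*\be)^2 = t^{-2}\be^2,
$$
so combining these yields the desired inequality and therefore $t \geq \xi$.

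The main obstacle is verifying that $\al' - t^{-1}\pi^*\be$ is big, because the hypothesis in Lemma \ref{lem:bigness-jequation} required the second class to be K\"ahler, whereas $\pi^*\be$ is only nef and big on $X'$. I would generalize the argument of that lemma via Lamari's criterion on $X'$: the self-intersection is $t^{-2}(\pi^*\be)^2 > 0$ by the computation above, and to pair against a K\"ahler class one tests with $\omega_\vep := \al' + \vep\omega_{X'}$ (which is K\"ahler because $\al'$ is nef and $\omega_{X'}$ is any fixed K\"ahler form on $X'$). The leading term of the pairing is $(\al')^2 - t^{-1}\al'\cdot\pi^*\be = \tfrac12(\al')^2 > 0$ by the choice of $t$, so positivity follows for all sufficiently small $\vep$. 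This yields bigness, completes the volume estimate, and establishes $t \geq \xi$; combined with the trivial bound $\zeta_{min} \leq \xi$ (take $\pi = \mathrm{id}$ in the definition of $\zeta_{min}$ and apply the previous proposition), one obtains $\zeta_{min} = \xi$.
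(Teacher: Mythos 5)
Your argument is correct and is essentially the paper's proof: both rest on the identity $(\al'-t^{-1}\pi^*\be)^2=t^{-2}\be^2$ (the paper runs the same computation in contrapositive form at $t=\xi$), bigness via Lamari's criterion, the chain $\vol(\al-t^{-1}\be)=\vol(\pi^*(\al-t^{-1}\be))\geq\vol(\al'-t^{-1}\pi^*\be)\geq(\al'-t^{-1}\pi^*\be)^2$ using that $D$ is effective, and the volume characterization of $\xi$ from Lemma \ref{lem:j-volume-equation}. Your explicit patch of the bigness step --- testing against a K\"ahler class $\al'+\vep\omega_{X'}$ rather than against $\pi^*\be$, which is only big and nef on $X'$ --- is a welcome clarification of a point the paper passes over by simply citing Lemma \ref{lem:bigness-jequation}.
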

\begin{proof} We let $\al' = \pi^*\al - D$, $\be' = \pi^*\be$ and $\ga' = \al' - \xi^{-1}\be'$. We argue by contradiction. So suppose $$2\frac{\al'\cdot\be'}{(\al')^2} < \xi.$$ This is equivalent to $$(\ga')^2 > \frac{1}{\xi^2}(\be')^2 = \frac{1}{\xi^2}\be^2.$$ Once again by Lemma \ref{lem:bigness-jequation}, $\ga'$ is big. Now if we denote $\ga = \al - \xi^{-1}\be$, then $\pi^*\ga - \ga' = D$ is effective and so $$\vol(\ga) \geq \vol(\pi^*\ga) \geq \vol(\ga')\geq (\ga')^2 > \frac{1}{\xi^2}\be^2,$$ which is a contradiction since $\vol(\ga) = \xi^{-2}\be^2$ by Lemma \ref{lem:j-volume-equation}.
\end{proof}

\begin{rem}
A nice consequence of the theorem \ref{mainthm1}, and it's proof, is that the minimal slope is achieved. That is, there exists an effective $\RR$-divisor $D$ on $X$ such that $\al ' = \al - D$ is big and nef and $$\zeta_{min} = 2 \frac{\al'\cdot\be}{(\al')^2}.$$ As in the proof above this immediately follows from the Zariski decomposition $$\al - \frac{1}{\xi}\be= Z+D.$$ Moreover, since $Z$ is big and nef, we can write $$\zeta_{min} = \min\left\{\frac{2\al'\cdot\be}{(\al')^2}~|~ \al' = \al-D \text{ big and nef}, ~ D \text{ effective $\RR$-divisor and }(X,\al',\be) \text{ is $J$-slope semi-stable}\right\}.$$ That is, the minimum can be taken over the semi-stable pairs. 
\end{rem}

\begin{rem}
When $(X,\al,\be)$ is unstable, we also note that the divisor $D$ is a destabilising divisor ie. $\mu_D > \mu_X$. This follows easily be rearranging the following basic estimate and using the fact that $D^2<0$: $$2\frac{(\al-D)\cdot\be}{(\al - D)^2} = \zeta_{min} < \mu = 2\frac{\al\cdot\be}{\al^2}.$$ 
 
\end{rem}

\subsection{The deformed Hermitian-Yang-Mills equations on surfaces}\label{subsec:dhym-surface}

We now move on to analysing the deformed Hermitian Yang Mills equations. We once again first specialize to the  surface case. Without loss of generality, we may  assume that 
\begin{equation}\label{eq:signconv-dhym-surfaces}
\al\cdot\be>0.
\end{equation}
For, if $\al\cdot\be$ were negative, we could simply simply replace $\al$ by $-\al$ and continue with the analysis.
We let  $$\zeta_H := \sup_{D} \frac{(\al-D)^2 - \be^2}{2(\al - D)\cdot\be},$$ where the supremum is taken over all $\RR$-effective divisors $D$ on $X$ such that $$ (\al - D)\cdot\be>0.$$ 

The main theorem we wish to prove in this section is the following. 

\begin{thm}\label{thm:dhym-existence-singular-surface}
There exists a unique closed current $\sT\in \al$ such that $$\Re\langle( \sT+\sqrt{-1}\omega)^2\rangle = \zeta_H\cdot\Im\langle(\sT+\sqrt{-1}\omega)^2\rangle,~~ \sT\geq \zeta_H\omega.$$ Here $\langle,\rangle$ denotes the non-pluripolar product, and the equality is that of measures. Moreover the global slope is achieved. That is, there exists an effective $\RR$-divisor $D$ on $X$ such that $$\zeta_H = \frac{(\al-D)^2-\be^2}{2(\al-D)\cdot\be}.$$\end{thm}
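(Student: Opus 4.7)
The plan parallels the proof of Theorem \ref{mainthm1} for the $J$-equation. The starting point is the substitution $\Theta := \sT - \zeta_H\omega$, so that the side constraint $\sT \geq \zeta_H\omega$ becomes positivity of the current $\Theta\in\al - \zeta_H\be$. Expanding
\begin{equation*}
(\sT + \sqrt{-1}\omega)^2 \;=\; \Theta^2 + 2(\zeta_H + \sqrt{-1})\Theta\wedge\omega + (\zeta_H^2 - 1 + 2\sqrt{-1}\zeta_H)\omega^2
\end{equation*}
and taking real and imaginary parts, a direct computation collapses the dHYM equation into the Calabi--Yau type identity
\begin{equation*}
\langle\Theta^2\rangle \;=\; (\zeta_H^2 + 1)\,\omega^2.
\end{equation*}
By Theorem 4.1 of \cite{BEGZ}, this admits a unique positive-current solution $\Theta$ in the class $\al - \zeta_H\be$ provided that class is big and the total masses match:
\begin{equation*}
\vol(\al - \zeta_H\be) \;=\; (\zeta_H^2 + 1)\,\be^2.
\end{equation*}
The whole theorem therefore reduces to establishing this scalar identity and identifying the resulting distinguished value with the one supplied by the supremum definition of $\zeta_H$.

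Next I would study $F(\zeta) := \vol(\al - \zeta\be) - (\zeta^2+1)\be^2$ on the interval $(-\infty,\mu_0^H)$ on which $\al - \zeta\be$ is big. Since $\be$ is K\"ahler, pseudoeffectivity forces $\mu_0^H \leq \al\cdot\be/\be^2 < \infty$. For $\zeta \ll 0$, $\al - \zeta\be$ is K\"ahler and $F(\zeta) = \al^2 - \be^2 - 2\zeta\,\al\cdot\be \to +\infty$ using the hypothesis $\al\cdot\be > 0$; as $\zeta\nearrow\mu_0^H$, $\vol \to 0$ and $F \to -((\mu_0^H)^2+1)\be^2 < 0$. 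Continuity of the volume function then yields at least one zero $\zeta^\ast$. For uniqueness I would use the Zariski decomposition $\al - \zeta\be = Z(\zeta) + N(\zeta)$, so that $Z(\zeta)^2 = \vol(\al - \zeta\be)$. At any root, $Z^2 = (\zeta^2+1)\be^2$; the Hodge index inequality gives $(Z\cdot\be)^2 \geq Z^2\be^2 = (\zeta^2+1)\be^4 > \zeta^2\be^4$, so $Z\cdot\be > |\zeta|\be^2$. By the standard differentiation formula $\tfrac{d}{d\zeta}\vol(\al - \zeta\be) = -2\,Z(\zeta)\cdot\be$ on the big cone (cf.\ \cite{BDPP}), this yields $F'(\zeta) = -2\,Z(\zeta)\cdot\be - 2\zeta\be^2 < 0$ at every zero, forcing uniqueness of $\zeta^\ast$.

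It remains to match $\zeta^\ast$ with the supremum definition of $\zeta_H$ and to show the sup is attained. For $\zeta^\ast \leq \zeta_H$, take $D := N(\zeta^\ast)$ from the Zariski decomposition; using $Z\cdot D = 0$ and $Z^2 = ((\zeta^\ast)^2+1)\be^2$, a short computation yields $(\al-D)^2 - \be^2 = 2\zeta^\ast(\al-D)\cdot\be$ with $(\al-D)\cdot\be > 0$, exhibiting an effective divisor realizing the ratio $\zeta^\ast$. For the reverse inequality, pick any effective $D'$ with $(\al-D')\cdot\be > 0$ and let $\zeta_{D'}$ denote its ratio. Then $(\al - D' - \zeta_{D'}\be)^2 = (\zeta_{D'}^2+1)\be^2 > 0$, and a calculation shows
\begin{equation*}
(\al-D'-\zeta_{D'}\be)\cdot\be \;=\; \frac{2((\al-D')\cdot\be)^2 - (\al-D')^2\be^2 + \be^4}{2(\al-D')\cdot\be}
\end{equation*}
is positive (trivially when $(\al-D')^2 \leq 0$, and via Hodge index when $(\al-D')^2 > 0$). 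Hence $\al - D' - \zeta_{D'}\be$ and $\be$ lie in the same connected component of the positive cone and so $\al - D' - \zeta_{D'}\be$ is big, giving $\vol(\al - \zeta_{D'}\be) \geq \vol(\al - D' - \zeta_{D'}\be) \geq (\zeta_{D'}^2+1)\be^2$ via monotonicity of volume under adding effective classes. This is $F(\zeta_{D'}) \geq 0$, and strict decrease of $F$ at $\zeta^\ast$ forces $\zeta_{D'} \leq \zeta^\ast$. Combining, $\zeta_H = \zeta^\ast$, and $\sT := \Theta + \zeta_H\omega$ with $\Theta$ produced by BEGZ is the required current; uniqueness of $\sT$ transfers from the BEGZ uniqueness clause via the same substitution. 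The main technical hurdle is the bigness verification via the Hodge index on the positive cone, which must be handled carefully since $\zeta_H$ can have either sign and the intermediate classes $\al - D'$ are only known to be effective, not nef.
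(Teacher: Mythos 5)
Your proposal is correct and follows essentially the same route as the paper: reduce via $\Theta = \sT - \zeta_H\omega$ and \cite{BEGZ} to the scalar volume equation $\vol(\al-\xi\be) = (1+\xi^2)\be^2$, realize the root by the negative part of the Zariski decomposition of $\al-\xi\be$ for one inequality, and use a Lamari-type bigness criterion together with monotonicity of the volume under adding effective classes for the other. The only cosmetic differences are in the treatment of the root itself: you anchor the intermediate value theorem at $\zeta\to-\infty$ and derive $F'<0$ at every zero from the Hodge index inequality applied to the positive part $Z$, whereas the paper anchors at $c_0$ and proves convexity of $F$ via super-additivity of the positive intersection product; both variants rest on the same differentiability of the volume function.
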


We will prove shortly that $\zeta_H = \cot\theta_{min}$, and this will in particular complete the proof of Theorem \ref{main2} . We first need the following strengthening of Lemma \ref{lem:bigness-jequation}.
\begin{lem}\label{lem:bigness-dhym}
Let $\al \in H^{1,1}(X,\RR)$  and $\be$ be a K\"ahler class such that $\al\cdot\be>0$. Let $$t\leq\frac{\al^2 - \be^2}{2\al\cdot\be}.$$ Then $\al - t\be$ is a big class. In particular, with $c_0 = \cot \vartheta(X,\alpha,\beta)=\frac{\alpha^2 - \beta^2}{2\alpha\cdot\beta}$ as above, $\al - c_0\be$ is a big class.
\end{lem}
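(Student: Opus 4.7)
The plan is to mimic the proof of Lemma \ref{lem:bigness-jequation} but reduce everything to the ``critical'' value $t = c_0$. Observe first that if $t \leq c_0$, then
$$\alpha - t\beta \;=\; (\alpha - c_0\beta) + (c_0 - t)\beta,$$
and $(c_0 - t)\beta$ is nef (indeed K\"ahler if $t < c_0$). Since the sum of a big class and a nef class on a K\"ahler surface is again big, it suffices to establish bigness of $\gamma := \alpha - c_0\beta$.

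For this I would invoke exactly the same Lamari-type criterion used before: on a K\"ahler surface a class $\gamma$ is big as soon as $\gamma^2 > 0$ and $\gamma \cdot \kappa > 0$ for some K\"ahler class $\kappa$. (The point is that $\gamma^2 > 0$ implies that $\gamma \cdot (\cdot)$ has a definite sign on the K\"ahler cone by the Hodge index theorem, so $\pm\gamma$ is pseudo-effective, and the pairing condition then selects the correct sign; combined with $\gamma^2 > 0$ this yields bigness on a surface.) The natural choice is $\kappa = \beta$.

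The two verifications are elementary algebraic identities. Using the defining relation $2c_0\,\alpha\cdot\beta = \alpha^2 - \beta^2$, a direct computation gives
$$\gamma^2 \;=\; \alpha^2 - 2c_0\,\alpha\cdot\beta + c_0^2\,\beta^2 \;=\; (1 + c_0^2)\,\beta^2 \;>\; 0.$$
For the pairing with $\beta$, I would write
$$\gamma \cdot \beta \;=\; \alpha\cdot\beta - c_0\,\beta^2 \;=\; \frac{2(\alpha\cdot\beta)^2 - (\alpha^2 - \beta^2)\beta^2}{2\,\alpha\cdot\beta},$$
and then invoke the Hodge index inequality $(\alpha\cdot\beta)^2 \geq \alpha^2\,\beta^2$ (valid because $\beta^2 > 0$) to bound the numerator below by $(\alpha\cdot\beta)^2 + \beta^4 > 0$, while the denominator is positive by the standing hypothesis $\alpha\cdot\beta > 0$. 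This yields $\gamma\cdot\beta > 0$ and completes the proof.

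I do not expect any real obstacle here: the argument is essentially a two-line calculation plus Hodge index, and the only mildly delicate point is recognising that the Lamari criterion applies uniformly once one has reduced to $t = c_0$, so that one does not have to verify $\gamma_t^2 > 0$ directly for $t < c_0$ (where the self-intersection can fail to be a monotone function of $t$).
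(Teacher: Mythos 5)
Your proof is correct, and it is genuinely simpler than the paper's. Where you diverge: the paper, after stating Lamari's criterion and verifying $\gamma_t^2 > 0$ directly, splits into cases $t < 0$ and $t \geq 0$; for $t \geq 0$ it invokes the Zariski decomposition $\alpha = Z + N$ and pairs $\gamma_t$ with the auxiliary K\"ahler class $\alpha_\epsilon = \alpha - N + \epsilon\beta$, yielding positivity only after a somewhat delicate estimate using $\beta \cdot N \geq 0$ and $\alpha^2 > 2\alpha\cdot N$. You instead verify $\gamma\cdot\beta > 0$ \emph{directly} at $t = c_0$ by clearing denominators and applying the Hodge index inequality $(\alpha\cdot\beta)^2 \geq \alpha^2\beta^2$ to get the numerator $\geq (\alpha\cdot\beta)^2 + \beta^4 > 0$; the general $t \leq c_0$ case then follows from the decomposition $\alpha - t\beta = (\alpha - c_0\beta) + (c_0 - t)\beta$ and the standard fact that big $+$ nef $=$ big. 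This bypasses the Zariski-decomposition case analysis entirely, and in fact the same Hodge-index computation shows $\gamma_t\cdot\beta > 0$ uniformly in $t \leq c_0$, so even your preliminary reduction to $t = c_0$ is optional (though it is a clean way to organize the argument). Both proofs ultimately rest on the same Lamari criterion and prove the same statement, but yours isolates the single inequality $(\alpha\cdot\beta)^2 \geq \alpha^2\beta^2$ as the real reason the pairing stays positive, which the paper's longer computation obscures.
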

\begin{proof}
By Lamari's criteria \cite{Lam}, a class $\ga$ on $X$ is big if $\ga^2>0$ and $\ga\cdot\be >0$. We let $\ga = \al-t\be$. Then $$\ga^2 = \al^2 - 2t\al\cdot\be +t^2\be^2\geq (1+t^2)\be^2>0.$$ If $t<0$, then $$\ga\cdot\be = \al\cdot\be - t\be^2 >\al\cdot\be > 0.$$ On the other hand if $t\geq 0$, then $\al^2\geq \be^2$. In particular, $\al$ itself is big again by Lamari \cite{Lam}. Let $\al = Z+N$ be the Zariski decomposition with $Z$ big and nef, and $N^2<0$. We let $\al' = \al - N$ and $\al_\vep = \al'+\vep\be$ which is K\"ahler for any $\vep>0$. Note that $$0< (\al')^2 = \al^2 - 2\al\cdot N + N^2<\al^2 - 2\al\cdot N,$$ and so $\al^2>2\al\cdot N$. We then have \begin{align*}
\ga\cdot\al_\vep &= \al^2 - t\al\cdot\be - \al\cdot N + t\be\cdot E + \vep\be\cdot(\al - t\be) \\
&\geq  \al^2 - \frac{\al^2-\be^2}{2} - \frac{\al^2}{2}+\vep\be\cdot(\al - t\be)\\
&= \frac{\be^2}{2} + \vep\be\cdot(\al - t\be) > 0,
\end{align*} for $\vep<<1$. Note that we used the fact that $\be\cdot N\geq 0$ in the second line. Once again  by Lamari \cite{Lam}, it follows that $\ga$ is big.

\end{proof}
We now let $$t_0= \sup\{t \in \RR~|~ \al - t\be \text{ is a big class}\}.$$ By the above lemma we have that $c_0<t_0$.
\begin{lem}\label{lem:dhym-volume-equation}
There exists a unique $\xi \in [c_0,t_0)$ such that
\begin{equation}\label{eq:volume equation on Kahler surface, unique solution xi}
\vol(\al-\xi\be) = (1+\xi^2)\be^2.
\end{equation}
We also have that $\vol(\al - t\be) < (1+t^2)\be^2$ if $t\in (\xi,t_0)$. Moreover, if $Z+N$ is the Zariski decomposition of the big class $\al - \xi\be$, then $$(Z + \xi\be)\cdot\be>0.$$
\end{lem}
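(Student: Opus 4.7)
The plan is to introduce the auxiliary function $f(t) := \vol(\al - t\be) - (1+t^2)\be^2$ on the interval $[c_0, t_0)$ and to derive existence, uniqueness, and the last Zariski assertion from a single Hodge-index calculation at each zero of $f$. For the boundary values, the surface Siu-type inequality $\vol(\ga) \geq \ga^2$ for big classes applied at $t = c_0$ gives
\[
\vol(\al - c_0\be) \;\geq\; (\al - c_0\be)^2 \;=\; \al^2 - 2c_0\,\al\cdot\be + c_0^2\be^2 \;=\; (1+c_0^2)\be^2,
\]
by the definition $c_0 = (\al^2-\be^2)/(2\al\cdot\be)$, so $f(c_0) \geq 0$. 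Continuity of the volume function on the pseudoeffective cone, together with $\vol(\al - t_0\be) = 0$ since $\al - t_0\be$ lies on the boundary of the big cone, yields $f(t) \to -(1+t_0^2)\be^2 < 0$ as $t \to t_0^-$. The intermediate value theorem applied to the continuous function $f$ then produces at least one $\xi \in [c_0, t_0)$ with $f(\xi) = 0$.

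For uniqueness and the strict sign claim I would use the Zariski decomposition $\al - t\be = Z_t + N_t$ available on the whole big range, which gives $\vol(\al - t\be) = Z_t^2$ and the differentiability formula $\frac{d}{dt}\vol(\al - t\be) = -2 Z_t\cdot \be$. Consequently
\[
f'(t) \;=\; -2(Z_t\cdot\be + t\be^2) \;=\; -2(Z_t + t\be)\cdot\be.
\]
At a zero of $f$ one has $Z_t^2 = (1+t^2)\be^2$, so the Hodge index inequality $(Z_t\cdot\be)^2 \geq Z_t^2 \cdot \be^2$ together with $Z_t\cdot\be \geq 0$ yields
\[
Z_t\cdot\be \;\geq\; \sqrt{1+t^2}\,\be^2,
\]
and hence $(Z_t + t\be)\cdot\be \geq (\sqrt{1+t^2}+t)\be^2 > 0$ since $\sqrt{1+t^2} > |t|$. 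Thus $f'(t) < 0$ strictly at every zero. Combined with $f(c_0)\geq 0$ and $f<0$ near $t_0^-$, this forces $f$ to have exactly one zero $\xi$, and $f<0$ on all of $(\xi, t_0)$, giving the strict inequality $\vol(\al - t\be) < (1+t^2)\be^2$ for $t \in (\xi, t_0)$. The closing assertion $(Z + \xi\be)\cdot\be > 0$ is exactly the Hodge-index lower bound applied at $\xi$.

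The one subtlety I expect is the sign issue when $c_0 < 0$ (i.e.\ when $\al^2 < \be^2$): a naive bound on $(Z_t + t\be)\cdot\be$ only gives $\geq t\be^2$, which has the wrong sign in that regime, so one cannot prove global monotonicity of $f$ directly. The key observation is that strict monotonicity of $f$ is only needed at its zeros, where the additional constraint $Z_t^2 = (1+t^2)\be^2$ activates the Hodge index inequality and rescues the sign. A minor technical point, that the volume is differentiable on the entire big cone and that the derivative equals intersection against the positive part of the Zariski decomposition, is handled by the general Boucksom-Favre-Jonsson theory, or in the surface case by continuous dependence of the Zariski decomposition on the class.
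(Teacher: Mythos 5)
Your proof is correct, and while it shares the skeleton of the paper's argument (same auxiliary function $f(t)=\vol(\al-t\be)-(1+t^2)\be^2$, same boundary values $f(c_0)\geq 0$ via $\vol(\ga)\geq\ga^2$ for the big class $\al-c_0\be$ and $f(t_0^-)<0$, same derivative formula $f'(t)=-2(Z_t+t\be)\cdot\be$), the uniqueness step is genuinely different. The paper proves that $f$ is \emph{globally convex} on $(-\infty,t_0)$ using super-additivity of the positive intersection product ($\langle\al-t_1\be\rangle\cdot\be\geq\langle\al-t_2\be\rangle\cdot\be+(t_2-t_1)\be^2$ for $t_1<t_2$), and then deduces both uniqueness of the zero and $f'(\xi)<0$ (hence $(Z+\xi\be)\cdot\be>0$) from convexity plus the sign of $f$ near $t_0$. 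You instead show $f'<0$ \emph{at every zero} by activating the Hodge index inequality $(Z_t\cdot\be)^2\geq Z_t^2\,\be^2$ under the constraint $Z_t^2=(1+t^2)\be^2$, which gives $Z_t\cdot\be\geq\sqrt{1+t^2}\,\be^2$ and hence $(Z_t+t\be)\cdot\be\geq(\sqrt{1+t^2}+t)\be^2>0$; a first-crossing argument then rules out a second zero and gives $f<0$ on $(\xi,t_0)$. Your route is more local and delivers the final assertion $(Z+\xi\be)\cdot\be>0$ as an immediate quantitative byproduct rather than as a consequence of convexity; the paper's route yields the stronger structural fact that $f$ is convex on the whole big range, which is reused implicitly elsewhere (e.g.\ it mirrors the concavity of $s\mapsto\tc_s$ in Section 4). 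Your own diagnosis of the sign subtlety when $c_0<0$ is exactly right, and the inputs you invoke (differentiability of the volume and $\langle\al-t\be\rangle\cdot\be=Z_t\cdot\be$) are the same Deng/Witt--Nystr\"om facts the paper cites.
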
 
\begin{proof}
Consider the function $$f(t) = \vol(\al-t\be)-(1+t^2)\be^2.$$ By Deng's result on the derivative of the  volume function \cite{Deng} (see also \cite{Witt}), we have that $$f'(t) = -2\langle\al - t\be\rangle\cdot\be - 2t\be^2.$$ We claim that $f'(t)$ is increasing on $(-\infty,t_0)$, i.e.\ $f$ is convex. To see this, let $t_1<t_2$.  Then by the super-additivity of the positive intersection product, we have $$\langle \al -  t_1\be\rangle \cdot\be \geq \langle \al - t_2\be\rangle\cdot\be + (t_2-t_1)\be^2,$$ and so 
\begin{align*}
f'(t_1) &\leq -2\langle\al-t_2\be\rangle\cdot\be - 2(t_2-t_1)\be^2 - 2t_1\be^2 = f'(t_2).
\end{align*}
We also  have that $f(t_0) = -(1+t_0^2)\be^2 <0$ and $$f(c_0) = \vol(\al-c_0\be) - (1+c_0^2)\be^2 \geq (\al  - c_0\be)^2 -(1+c_0^2)\be^2 = 0. $$ By the intermediate value theorem there is at least one zero between $[c_0,t_0)$ and by convexity the zero must be unique. Moreover, $f(t)$ has to be negative for all $t>\xi$. Finally, note that $f'(\xi)<0$, and so 
\begin{align*}
0> f'(\xi) = -2(\langle\al - \xi\be\rangle\cdot\be + \xi\be^2) = -2(Z+\xi\be)\cdot\be.
\end{align*}
Here we used the fact that for if $\ga = Z+N$ is a Zariski decomposition, then $\langle \ga\rangle\cdot\be = Z\cdot\be.$
\end{proof}

\begin{cor}
There exists a unique closed current $\mathcal{T}\in \al$ such that $$\mathrm{Re}\langle( \mathcal{T}+\sqrt{-1}\omega)^2\rangle = \xi\cdot\mathrm{Im}\langle(\mathcal{T}+\sqrt{-1}\omega)^2\rangle,~~~ \mathcal{T}\geq \xi\omega.$$
\end{cor}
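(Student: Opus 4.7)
The plan is to reduce the dHYM-type equation to a Monge--Amp\`ere-type equation for a positive current, and then invoke the Bedford--Taylor--Guedj--Zeriahi (BEGZ) existence theorem. The ansatz is motivated directly by the cone condition $\mathcal{T} \geq \xi\omega$: write $\mathcal{T} = \Theta + \xi\omega$ where $\Theta$ is to be a positive current in the class $\alpha - \xi\beta$.

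First, I would expand
\[
(\mathcal{T}+\sqrt{-1}\omega)^2 \;=\; \bigl(\Theta + (\xi+\sqrt{-1})\omega\bigr)^{2} \;=\; \Theta^2 + 2(\xi+\sqrt{-1})\,\Theta\wedge\omega + (\xi+\sqrt{-1})^2\,\omega^2,
\]
so that
\[
\mathrm{Re}\langle(\mathcal{T}+\sqrt{-1}\omega)^2\rangle - \xi\,\mathrm{Im}\langle(\mathcal{T}+\sqrt{-1}\omega)^2\rangle \;=\; \langle \Theta^2\rangle - (1+\xi^2)\,\omega^2,
\]
where the cross terms in $\Theta\wedge\omega$ and in $\omega^2$ cancel algebraically after using $\mathrm{Re}(\xi+\sqrt{-1})^2 - \xi\,\mathrm{Im}(\xi+\sqrt{-1})^2 = \xi^2 - 1 - \xi(2\xi) = -(1+\xi^2)$. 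Thus the dHYM-type equation for $\mathcal{T}$ is equivalent to the Monge--Amp\`ere-type equation
\[
\langle \Theta^2\rangle \;=\; (1+\xi^2)\,\omega^2
\]
for a positive current $\Theta \in \alpha - \xi\beta$.

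Next, Lemma~\ref{lem:dhym-volume-equation} provides exactly the ingredients needed to invoke BEGZ: the class $\alpha - \xi\beta$ is big (since $\xi \in [c_0, t_0)$ and $t_0$ is the bigness threshold), and
\[
\int_X (1+\xi^2)\,\omega^2 \;=\; (1+\xi^2)\,\beta^2 \;=\; \vol(\alpha - \xi\beta).
\]
Since $\omega^2$ is a smooth volume form and in particular does not charge pluripolar sets, Theorem~4.1 of \cite{BEGZ} yields a unique positive current $\Theta \in \alpha - \xi\beta$ solving $\langle\Theta^2\rangle = (1+\xi^2)\omega^2$. Setting $\mathcal{T} := \Theta + \xi\omega$ then gives a closed current in $\alpha$ with $\mathcal{T} \geq \xi\omega$ and satisfying the desired equation. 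Uniqueness for $\mathcal{T}$ follows from uniqueness of $\Theta$, since any $\mathcal{T}$ satisfying the cone bound and the equation yields $\Theta = \mathcal{T} - \xi\omega \geq 0$ with the prescribed non-pluripolar Monge--Amp\`ere mass.

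There is no real obstacle beyond what has already been set up: the substantive work was carried out in the previous lemma, which pins down the unique $\xi$ making the total mass of the right-hand side equal to the volume of $\alpha - \xi\beta$. The only thing to check carefully is the algebraic identity separating real and imaginary parts, which is a straightforward expansion, and the fact that the non-pluripolar cross terms $\langle \Theta\wedge\omega\rangle$ (which can be interpreted via multilinearity since $\omega$ is smooth) cancel in the combination $\mathrm{Re} - \xi\,\mathrm{Im}$. Once that cancellation is verified, the corollary is an immediate application of BEGZ, entirely parallel to Corollary~\ref{cor:sing-solution-j-equation}.
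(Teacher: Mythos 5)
Your proposal is correct and follows essentially the same route as the paper: substitute $\mathcal{T} = \Theta + \xi\omega$, check that the combination $\mathrm{Re} - \xi\,\mathrm{Im}$ reduces to $\langle\Theta^2\rangle - (1+\xi^2)\omega^2$, and apply Theorem 4.1 of \cite{BEGZ} in the big class $\al - \xi\be$ using the volume identity from Lemma \ref{lem:dhym-volume-equation}. Your explicit remark on how uniqueness of $\mathcal{T}$ under the cone condition follows from uniqueness of $\Theta$ is a welcome clarification of a step the paper leaves implicit.
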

\begin{proof}
By Lemma \ref{lem:dhym-volume-equation} and Theorem $4.1$ \cite{BEGZ}, there exists a unique closed positive current $\Theta\in \al -\xi\be$ such that $$\langle \Theta^2 \rangle = (1+\xi^2)\omega^2.$$ Then $\mathcal{T} := \Theta + \xi\omega$ is the desired unique closed current in $\alpha$. Indeed,
\begin{align*}
&\mathrm{Re}\langle( \mathcal{T}+\sqrt{-1}\omega)^2\rangle - \xi\cdot\mathrm{Im}\langle(\mathcal{T}+\sqrt{-1}\omega)^2\rangle \\
&=	\mathrm{Re}\langle( \Theta+(\xi+\sqrt{-1})\omega)^2\rangle - \xi\cdot\mathrm{Im}\langle(\Theta+(\xi+\sqrt{-1})\omega)^2\rangle \\
&= \langle \Theta^2 \rangle + 2\xi \langle \Theta \wedge \omega \rangle + (\xi^2 -1)\omega^2 - 2\xi \langle \Theta \wedge \omega \rangle - 2\xi^2 \omega^2\\
&=0.
\end{align*}
\end{proof}

\begin{proof}[Proof of Theorem  \ref{thm:dhym-existence-singular-surface}] We only need to prove that $\xi = \zeta_H$. We first argue that $\xi \leq \zeta_H$.  To see this, let $$\al - \xi\be = Z + N$$ be the Zariski decomposition of the big class $\al - \xi\be$, where $Z$ is big and nef, and $N = \sum a_iD_i$ is an effective $\RR$-divisor with $(D_i\cdot D_j)$ negative definite. Note that by the choice of $\xi$, we have 
\begin{equation}\label{eq:dhym-existence-1}
Z^2 = (1+\xi^2)\be^2.
\end{equation}
Moreover, by the second part in Lemma \ref{lem:dhym-volume-equation}, $$(\al - N)\cdot\be = (Z+\xi\be)\cdot\be >0.$$ So $N$ can be considered as an admissible divisor in the definition of $\zeta_H$. So if we set $\al' := \al - N$, we have $$\zeta_H\geq \frac{(\al')^2 - \be^2}{2\al'\cdot\be} = \frac{(Z+\xi\be)^2 - \be^2}{2(Z+\xi\be)\cdot\be} = \xi,$$ where we used equation \eqref{eq:dhym-existence-1}. 

Next, suppose we have a strict inequality $\xi<\zeta_H$. Then there exists a $\xi<t<\zeta_H$. Without loss of generality, we may assume that $t<t_0$ ie. $\al - t\be$ is big. Since $t<\zeta_H$, there exists an effective divisor $D$ such that $(\al - D)\cdot\be>0$ and $$t< \frac{(\al- D)^2 - \be^2}{2(\al-D)\cdot\be}.$$ For simplicity, we denote $\al' = \al - D$. By Lemma \ref{lem:bigness-dhym} applied to the classes $\al'$ and $\be$, we see that $\al'  -t\be$ is big. Next, since $t>\xi$ and $(\al-t\be) - (\al' - t\be)$ is pseudoeffective, we also must have that $$\vol(\al'-t\be) - (1+t^2)\be^2 \leq \vol(\al - t\be) - (1+t^2)\be^2<0.$$ But then, 
\begin{align*}
0&> \vol(\al'-t\be) - (1+t^2)\be^2\\
&\geq (\al'-t\be)^2 - (1+t^2)\be^2\\
&= (\al')^2 - \be^2 - 2t\al'\cdot\be
>0,
\end{align*} 
a contradiction. Therefore, we must have $\xi = \zeta_H$.
\end{proof}

We note the following corollary of the main theorem. 
\begin{cor}
There exists an effective $\RR$-divisor $D$ on the compact K\"ahler surface $X$ such that $(\alpha-D)\cdot\beta >0$ and  $$\zeta_H = \frac{(\al - D)^2-\be^2}{2(\al-D)\cdot\be}.$$ 
\end{cor}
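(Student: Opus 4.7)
The plan is to observe that the divisor realizing $\zeta_H$ is already implicit in the proof of Theorem \ref{thm:dhym-existence-singular-surface}: it is precisely the negative part of the Zariski decomposition of $\al - \xi\be$, where $\xi$ is the unique solution of the volume equation \eqref{eq:volume equation on Kahler surface, unique solution xi}.

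First, I would invoke Lemma \ref{lem:dhym-volume-equation} to obtain $\xi$ such that $\vol(\al - \xi\be) = (1+\xi^2)\be^2$ and $\al-\xi\be$ is big. Write its Zariski decomposition
\[
\al - \xi\be = Z + N,
\]
with $Z$ big and nef and $N$ an effective $\RR$-divisor. Set $D := N$. The nontrivial input from Lemma \ref{lem:dhym-volume-equation} is the inequality $(Z+\xi\be)\cdot\be > 0$, which immediately gives
\[
(\al-D)\cdot\be \;=\; (Z+\xi\be)\cdot\be \;>\; 0,
\]
so $D$ is an admissible divisor in the definition of $\zeta_H$.

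Next I would compute the ratio for this divisor. Since $Z^2 = \vol(\al - \xi\be) = (1+\xi^2)\be^2$ by construction, we have
\[
(\al-D)^2 \;=\; (Z+\xi\be)^2 \;=\; Z^2 + 2\xi\, Z\cdot\be + \xi^2\be^2 \;=\; \be^2 + 2\xi(Z+\xi\be)\cdot\be,
\]
so that
\[
\frac{(\al - D)^2 - \be^2}{2(\al-D)\cdot\be} \;=\; \xi.
\]
Finally, combining with the identity $\xi = \zeta_H$ already established in the proof of Theorem \ref{thm:dhym-existence-singular-surface} gives the claimed equality.

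There is no real obstacle beyond what was already handled in Theorem \ref{thm:dhym-existence-singular-surface}: the content of the corollary is just that the first inequality $\xi \leq \zeta_H$ in that proof is witnessed by an explicit, modification-free divisor, namely the negative part of the Zariski decomposition of $\al - \xi\be$. The only subtlety worth flagging is the strict positivity $(\al-D)\cdot\be>0$, which is not automatic from admissibility of $N$ as a negative part but comes from the strict inequality $f'(\xi)<0$ in Lemma \ref{lem:dhym-volume-equation}; this is why the lemma was stated in the strengthened form it has.
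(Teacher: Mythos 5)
Your proof is correct and follows exactly the same route as the paper: the corollary is an immediate readoff from the proof of Theorem \ref{thm:dhym-existence-singular-surface}, which establishes $\xi\le\zeta_H$ precisely by taking $D=N$, the negative part of the Zariski decomposition of $\alpha-\xi\beta$, using $Z^2=\vol(\alpha-\xi\beta)=(1+\xi^2)\beta^2$ and the strict positivity $(Z+\xi\beta)\cdot\beta>0$ from Lemma \ref{lem:dhym-volume-equation}. Your flag about the strict positivity coming from $f'(\xi)<0$ is exactly the right subtlety to highlight.
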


We finally prove the following. 

\begin{prop}
$\zeta_H = \cot\theta_{min}$. In particular, conjecture \ref{dhymcon1} holds in dimension two under the assumption that $\al\cdot\be >0$. 
\end{prop}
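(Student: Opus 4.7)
The proposition splits into two claims: the identity $\zeta_H=\cot\theta_{min}$, and the consequence that Conjecture \ref{dhymcon1} holds for $n=2$ whenever $\al\cdot\be>0$. My plan for the identity is a two-sided inequality. The direction $\cot\theta_{min}\geq\zeta_H$ is immediate, since taking $\pi=\mathrm{id}_X$ in the supremum defining $\cot\theta_{min}$ recovers the quantity defining $\zeta_H$.

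The substance is the reverse inequality $\cot\theta_{min}\leq\zeta_H$. Given a modification $\pi:X'\to X$ of smooth surfaces and an effective $\RR$-divisor $D'$ on $X'$ with $(\pi^*\al-D')\cdot\pi^*\be>0$, I claim the pushforward $D:=\pi_*D'$ is an admissible competitor in the definition of $\zeta_H$ that yields at least as large a ratio. By the projection formula the denominators agree: $(\al - D)\cdot\be = (\pi^*\al - D')\cdot\pi^*\be > 0$. For the numerators, decompose $D' = \pi^*D + E_0$, where $E_0$ is $\pi$-exceptional since $\pi_*E_0=0$. The projection formula kills $\pi^*D\cdot E_0$, and the Zariski--Mumford negativity of the intersection form on $\pi$-exceptional divisors gives $E_0^2\leq 0$. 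Hence $(D')^2 = D^2 + E_0^2 \leq D^2$, so $(\pi^*\al-D')^2 \leq (\al-D)^2$, and consequently
\[
\frac{(\pi^*\al - D')^2 - (\pi^*\be)^2}{2(\pi^*\al-D')\cdot\pi^*\be} \leq \frac{(\al - D)^2 - \be^2}{2(\al - D)\cdot\be}\leq\zeta_H.
\]
Taking the supremum over all $(\pi,D')$ gives the claim.

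For the consequence, I need to check that in dimension two the higher-dimensional admissible set $\mathcal{V}(X,\al,\be)$ is nonempty and that the associated infimum of dHYM angles still equals $\theta_{min}$. The plan is to exhibit a distinguished semi-stable pair on $X$ itself. By Lemma \ref{lem:dhym-volume-equation} the Zariski decomposition $\al - \xi\be = Z + N$ has $Z$ big and nef, $N$ effective, and $\xi = \zeta_H$, so $\al - N = Z + \xi\be$. For every K\"ahler class $\gamma$ and every irreducible curve $C$ the nefness of $Z$ yields
\[
(\al - N)\cdot\gamma - \xi\be\cdot\gamma = Z\cdot\gamma\geq 0,\qquad (\al-N)\cdot C - \xi\be\cdot C = Z\cdot C\geq 0,
\]
which is precisely dHYM semi-stability of $(X,\al-N,\be)$, whose dHYM angle satisfies $\cot\vartheta = \xi = \zeta_H$. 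Thus $(\mathrm{id},X,N)\in\mathcal{V}(X,\al,\be)$ and the set is nonempty. Since $\mathcal{V}$ is a subset of the index set defining $\cot\theta_{min}$, the supremum of $\cot\vartheta$ over $\mathcal{V}$ is at most $\zeta_H$, and the distinguished pair above realises it; hence the higher-dimensional $\theta_{min}$ agrees with the surface $\theta_{min}$. The unique current predicted by Conjecture \ref{dhymcon1} is then exactly the one supplied by Theorem \ref{thm:dhym-existence-singular-surface}.

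The main obstacle in this plan is the pushforward step: the Zariski--Mumford negativity lemma is what forces the ratio to \emph{increase} (rather than decrease) under $\pi_*$, and without it one could not safely discard the contribution of $\pi$-exceptional divisors. The semi-stability verification is, by contrast, essentially automatic, because the nefness of the Zariski positive part $Z$ absorbs all curve- and class-level inequalities in one stroke.
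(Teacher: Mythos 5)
Your proof is correct, and for the key inequality $\cot\theta_{min}\leq\zeta_H$ it takes a genuinely different route from the paper. The paper's proof works directly with the semi-stable admissible set $\mathcal{V}$: assuming a competitor $(\pi,X',D)\in\mathcal{V}$ gives a ratio strictly larger than $\zeta_H$, it rearranges this to $(\gamma')^2>(1+\zeta_H^2)\be^2$ where $\gamma'=\pi^*\al-D-\zeta_H\pi^*\be$, and then contradicts $\vol(\al-\zeta_H\be)=(1+\zeta_H^2)\be^2$ via the chain $\vol(\gamma)=\vol(\pi^*\gamma)\geq\vol(\gamma')\geq(\gamma')^2$; here the step $\vol(\gamma')\geq(\gamma')^2$ requires $\gamma'$ to be big, which is supplied by semi-stability (pseudoeffectivity of $\al'-\cot\vartheta\be'$ plus $(\gamma')^2>0$). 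You instead push forward: given any competitor $D'$ on a modification $\pi:X'\to X$ with $(\pi^*\al-D')\cdot\pi^*\be>0$, you set $D=\pi_*D'$, use the projection formula to match denominators, write $D'=\pi^*D+E_0$ with $E_0$ $\pi$-exceptional, and invoke the Zariski--Mumford negativity of the exceptional intersection form to conclude $(\pi^*\al-D')^2=(\al-D)^2+E_0^2\leq(\al-D)^2$, so the ratio can only increase under pushforward. Your argument is more elementary (no appeal to the volume function), makes no use of semi-stability at the competitor level, and in fact proves something slightly stronger: it establishes directly that the supremum in the general (surface) definition of $\cot\theta_{min}$ over \emph{all} modifications is already achieved on $X$ itself, whereas the paper's contradiction argument as written only treats semi-stable competitors and leaves the agreement of the two definitions implicit. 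Your verification that $(\mathrm{id},X,N)\in\mathcal{V}$ via the Zariski decomposition $\al-\xi\be=Z+N$ and nefness of $Z$ is essentially the same computation the paper gestures at with ``we see that,'' carried out explicitly and correctly. One small attribution slip: the identity $\xi=\zeta_H$ is the content of Theorem \ref{thm:dhym-existence-singular-surface}, not of Lemma \ref{lem:dhym-volume-equation}, which only produces $\xi$ as the solution of the volume equation.
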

\begin{proof}
We first observe that if $\al\cdot\be>0$ then the semi-stable admissible set $\mathcal{V}(X,\al,\be)$ is non-empty. In fact, with the divisor $D$ from the above corollary, we see that the triple $(X,\al-D,\be)$ is dHYM-semi-stable. Recall that $\theta_{min}\in (0,\pi)$ is defined by $$\cot\theta_{min} = \sup \left\{ \frac{(\pi^*\al-D)^2 - \pi^*\be^2}{2(\pi^*\al-D)\cdot\pi^*\be}: ~ (\pi, X', D)\in \mathcal{V}(X, \alpha, \beta) \right\}.$$ We now claim that $$\cot\theta_{min} = \zeta_H.$$ Clearly $\cot\theta_{min}\geq \zeta_H$. Suppose we have a strict inequality. Then there exists a modification $\pi:X' \rightarrow X$ and an effective $\RR$-divisor $D$ on $X'$ such that 
\begin{equation}\label{eq:dhym-conj-surf-proof}
\frac{(\al')^2 - (\be')^2}{2\al'\cdot\be'} > \zeta_H,
\end{equation} 
where we let $\al' = \pi^*\al - D$ and $\be' = \pi^*\be$. Let $\ga = \al - \zeta_H\be$ and $\ga' = \al'-\zeta_H\be'$. Then \eqref{eq:dhym-conj-surf-proof} is equivalent to the inequality $$(\ga')^2 > (1+\zeta_H^2)\be^2.$$ But then, since $\pi^*\ga = \ga' +  D$, $$\vol(\ga) \geq \vol(\pi^*\ga) \geq \vol(\ga') \geq (\ga')^2>(1+\zeta_H^2)\be^2,$$ a contradiction.
\end{proof}

\subsection{Some more conjectures and speculations for the $J$-equation}Taking inspiration from the surface case we make the following conjecture. 

\begin{conj}
The pair $(X,\al,\be)$ is semi-stable if and only if $\zeta_{min} = \mu$. 
\end{conj}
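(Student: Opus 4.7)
Since the identity modification with $D=0$ is admissible in the infimum defining $\zeta_{min}$, one trivially has $\zeta_{min}\leq \mu$, so the conjecture reduces to the equivalence
$$(X,\al,\be) \text{ is } J\text{-unstable}\iff \zeta_{min}<\mu.$$
The direction (semi-stable $\Rightarrow \zeta_{min}=\mu$) is already observed in the discussion following Conjecture~\ref{conj1} as a consequence of the solvability of the $J$-equation in the semi-stable case from \cite{DMS}, so the plan is to focus on the converse: $J$-instability forces a strict drop in the minimal slope.

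Assume $(X,\al,\be)$ is $J$-unstable, so there is a proper analytic subvariety $Z\subset X$ with $\mu(Z,\al|_Z,\be|_Z)>\mu(X,\al,\be)$. After replacing $Z$ by its strict transform under a resolution of singularities and absorbing the resolving blow-ups into the modification $\pi$, I may assume $Z$ is smooth of some codimension $c\geq 1$. Let $\pi:\tilde X\to X$ be the blow-up along $Z$, with exceptional divisor $E$, which is a $\PP^{c-1}$-bundle over $Z$. A classical Grauert-type argument produces $\epsilon_0>0$ such that $\pi^*\al-\epsilon E$ is K\"ahler for all $\epsilon\in(0,\epsilon_0)$. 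Set $\tilde\mu(\epsilon):=n(\pi^*\al-\epsilon E)^{n-1}\cdot\pi^*\be/(\pi^*\al-\epsilon E)^n$. Using the projection formula and the standard identities $\pi_*(E^j)=0$ for $0<j<c$ and $\pi_*(E^c)=(-1)^{c-1}[Z]$, Taylor expansion around $\epsilon=0$ gives
$$\tilde\mu(\epsilon)\;=\;\mu\,+\,C\,\epsilon^{c}\bigl(\mu-\mu(Z,\al|_Z,\be|_Z)\bigr)\,+\,O(\epsilon^{c+1}),$$
for an explicit positive constant $C=C(n,c,\al,\be,Z)$ coming from the relevant binomial coefficients and intersection numbers. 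Since the bracket is strictly negative by the destabilizing hypothesis, $\tilde\mu(\epsilon)<\mu$ for all sufficiently small $\epsilon>0$; thus the pair $(\pi,\epsilon E)$ witnesses $\zeta_{min}<\mu$.

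The main technical obstacle is the reduction to smooth destabilizers: for singular $Z$, one has to verify that the slope inequality $\mu(Z,\al|_Z,\be|_Z)>\mu(X,\al,\be)$ survives the pullback to a resolution, which is plausible by birational invariance of top-degree intersection numbers but requires care in tracking the exceptional contributions. A further conceptual subtlety, not strictly needed for the direction being proved but central to strengthening the statement, is the possible non-attainment of the infimum defining $\zeta_{min}$ in dimensions $\geq 3$: the surface case in Theorem~\ref{mainthm1} relies crucially on the Zariski decomposition to produce an optimal divisor, and no analogous structural result is known in higher dimensions, mirroring the difficulty of the full Conjecture~\ref{conj1}.
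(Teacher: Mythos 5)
Your overall strategy coincides with the paper's: both arguments take the destabilizing subvariety $Z^m$ of codimension $c=n-m$, pass to the blow-up along $Z$, perturb the class to $\pi^*\al-\vep E$, and show that the slope drops at order $\vep^{c}$ with leading coefficient proportional to $\mu-\mu(Z,\al|_Z,\be|_Z)<0$. Your leading-order expansion is correct and in fact reproduces exactly the paper's coefficient (note $\binom{n-1}{c}=\binom{n-1}{m-1}$). You also correctly identify the logical structure: the forward implication is delegated to \cite{DMS}, and only instability $\Rightarrow\zeta_{min}<\mu$ is to be proved. The difference is in how the intersection numbers on the blow-up are computed. The paper assumes $\al=c_1(A)$, $\be=c_1(B)$ for ample $\QQ$-line bundles and extracts $(\al'_\vep)^n$ and $(\al'_\vep)^{n-1}\cdot\be'$ from the asymptotics of $\dim H^0(X,A^k\otimes\sI_Z^{k\vep})$ and of the analogous spaces on a general member $D\in|B|$; this handles an arbitrary (possibly singular, non-reduced) destabilizer directly through its ideal sheaf, at the price of the rationality hypothesis. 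You instead use the projection-formula identities $\pi_*(E^j)=0$ for $0<j<c$ and $\pi_*(E^c)=(-1)^{c-1}[Z]$, which is cleaner and works for transcendental classes, but only after $Z$ has been made smooth.

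That reduction to a smooth center is where your argument has a real gap, and it is more than "birational invariance of intersection numbers." The slope of the strict transform is indeed unchanged (the resolution $\rho:\tilde Z\to Z$ is generically one-to-one, so $\int_{\tilde Z}\rho^*(\al^{m-1}\cdot\be)=\int_Z\al^{m-1}\cdot\be$). The problem is positivity: after the embedded resolution $\pi':X'\to X$, the class $\pi'^*\al$ is only nef and big, so the Grauert-type argument you invoke does not directly give that $\pi^*\al-\vep E$ is K\"ahler on the further blow-up along $\tilde Z$. One must work with $\pi^*\al-\delta F-\vep E$ for an effective exceptional $\RR$-divisor $F$ of the resolution and suitable $\delta>0$, and then show that the $\delta$-contributions to both intersection numbers do not swamp the $\vep^{c}$ correction — e.g., by choosing $\delta=o(\vep^{c})$ and checking that the class remains K\"ahler for that range of $(\delta,\vep)$, which is not automatic since the admissible $\delta$ may be bounded below in terms of the geometry rather than freely shrinkable relative to $\vep$. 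This is fixable but needs to be carried out; the paper's section-counting route avoids the issue entirely, which is presumably why it was chosen despite the restriction to $NS_{\QQ}$.
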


We can prove one direction of the above conjecture, at least when $\al$ and $\be$ lie in the rational Neron-Severi group $NS_{\QQ}$. 

\begin{prop}
Let $\al$ and $\be$ be the first Chern classes of ample $\QQ$-line bundles $A$ and $B$ respectively. If $(\al,\be)$ is unstable, then $\zeta_{min} <\mu$. 
\end{prop}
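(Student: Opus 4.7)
Since $(X,\al,\be)$ is unstable, there exists a proper analytic subvariety $Z \subset X$ of dimension $\ell$ with $1 \leq \ell \leq n-1$ satisfying the strict destabilizing inequality $\mu(Z,\al|_Z,\be|_Z) > \mu$. After passing to a resolution of the singularities of $Z$ inside an appropriate modification of $X$, I may assume that $Z$ is smooth of codimension $c = n-\ell$. The plan is then to consider the blow-up $\pi : X' = \Bl_Z X \to X$ with exceptional divisor $E$. Since $A$ is ample on $X$ and $-E$ is $\pi$-relatively ample, the class $\pi^*\al - tE$ is ample (in particular big and nef) on $X'$ for all sufficiently small $t > 0$, so $D := tE$ will be admissible in the definition of $\zeta_{min}$. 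The task reduces to showing the corresponding slope is strictly less than $\mu$ for some small $t>0$.

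The heart of the argument is a Taylor expansion of the slope at $t=0$. Using the standard blow-up identities $\pi_*(E^j) = 0$ for $0 < j < c$ and $\pi_*(E^c) = (-1)^{c-1}[Z]$ (the latter coming from the Segre calculation on $E \cong \PP(N^*_{Z/X})$), the expressions $(\pi^*\al - tE)^n$ and $(\pi^*\al - tE)^{n-1}\cdot \pi^*\be$ agree with $\al^n$ and $\al^{n-1}\be$ through order $t^{c-1}$, with explicit $t^c$ corrections proportional to $\al^\ell|_Z$ and $\al^{\ell-1}\be|_Z$ respectively. Dividing the two expansions and using the combinatorial identity $\binom{n-1}{c}/\binom{n}{c} = \ell/n$, a short calculation gives
\[
n\,\frac{(\pi^*\al - tE)^{n-1}\cdot \pi^*\be}{(\pi^*\al - tE)^n} - \mu \;=\; \frac{\binom{n}{c}\,\al^\ell|_Z}{\al^n}\,\bigl(\mu - \mu(Z,\al|_Z,\be|_Z)\bigr)\,t^c + O(t^{c+1}).
\]
Since $\al|_Z$ is ample, $\al^\ell|_Z > 0$, while the destabilizing hypothesis makes the parenthetical factor strictly negative. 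Hence the right-hand side is negative for all sufficiently small $t > 0$, exhibiting a competitor with slope strictly less than $\mu$ in the definition of $\zeta_{min}$, and so $\zeta_{min} < \mu$.

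The main technicality I anticipate is the reduction to a smooth destabilizing subvariety and the rigorous justification of $\pi_*(E^c) = (-1)^{c-1}[Z]$ in the singular case. The cleanest route is to pre-resolve: first form a modification $\rho : \tilde X \to X$ on which the proper transform $\tilde Z$ of $Z$ becomes smooth, and then run the blow-up argument on $\tilde X$. Intersection numbers of pullback classes against $\tilde Z$ agree with those against $Z$, so the destabilizing inequality and the sign of the leading $t^c$ coefficient are preserved. The $\QQ$-ampleness hypothesis on $A$ and $B$ is used only to guarantee positivity of $\al^n$ and $\al^\ell|_Z$ and to ensure that $\pi^*\al - tE$ is ample for small $t$; rationality itself plays no essential role beyond clearing denominators so that $A$ and $B$ may be taken to be genuine ample line bundles.
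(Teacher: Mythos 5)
Your overall strategy is the same as the paper's: blow up the destabilizing subvariety $Z$, Taylor expand the slope $n(\pi^*\al-tE)^{n-1}\cdot\pi^*\be/(\pi^*\al-tE)^n$ at $t=0$, and observe that the leading $t^c$-correction is proportional to $\mu - \mu(Z,\al|_Z,\be|_Z) < 0$. The paper computes the expansion via asymptotic Riemann--Roch for $H^0(X, A^k\otimes\sI_Z^{k\vep})$ and its restriction to a general member of $|B|$, while you use the Segre-class identities $\pi_*(E^j)=0$ ($0<j<c$) and $\pi_*(E^c)=(-1)^{c-1}[Z]$. These are two phrasings of the same computation, and your binomial identity $\binom{n-1}{c}/\binom{n}{c}=\ell/n$ reproduces the paper's cancellation.

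However, the pre-resolution step you propose to handle singular $Z$ has a genuine gap. If $\rho:\tilde X\to X$ is a log resolution making the strict transform $\tilde Z$ smooth, then $\rho^*\al$ is only nef and big on $\tilde X$, not ample; in particular $\rho^*\al$ has degree zero on $\rho$-exceptional curves. Blowing up $\tilde Z$ and setting $D=tE$ then gives a class $\pi^*\al-tE$ that is typically \emph{not} nef for any $t>0$: take a $\rho$-exceptional curve $C''\subset\tilde X$ not contained in $\tilde Z$ but meeting it; its strict transform $C'$ satisfies $(\pi^*\al)\cdot C'=0$ and $E\cdot C'>0$, so $(\pi^*\al-tE)\cdot C'<0$. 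Such a $D$ is therefore not admissible in the definition of $\zeta_{min}$, and the statement that ``the destabilizing inequality and the sign of the leading coefficient are preserved'' does not salvage this. The cleaner fix is the opposite of what you suggest: do \emph{not} pre-resolve. Blow up the ideal sheaf $\sI_Z$ directly (Grothendieck blow-up of the possibly singular reduced $Z$); then $-E$ is $\pi$-ample, $A$ is ample on $X$, so $\pi^*\al-tE$ is genuinely ample for small $t>0$, and the needed identity $\pi_*(E^c)=(-1)^{c-1}[Z]$ still holds because the top-dimensional Segre class $s_m(Z,X)$ equals $[Z]$ whenever $Z$ is reduced (the Hilbert--Samuel multiplicity of $\sI_{Z,\eta}$ at the generic point $\eta$ of $Z$ is $1$). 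This is effectively what the paper's Hilbert-function computation accomplishes.
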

\begin{proof}
Without loss of generality, for instance by taking a sufficiently large power, we may assume that $A$ and $B$ are ample line bundles. By hypothesis, there exists a sub-variety $Z^m\subset X$ such that $$m\int_Z\al^{m-1}\cdot\be > n\mu\int_Z\al^m.$$ Without loss of generality, for instance once again by raising $B$ to a large power, we may assume that the linear system $|B|$ contains a smooth divisor $D$. Let $\sI_Z$ denote the ideal sheaf of $Z$. We will use the following well known formulae to compute the intersection numbers: 
\begin{align*}
\al^n = \lim_{k\rightarrow \infty}   \frac{\dim H^0(X,A^k)}{k^n/n!}\\
\al^{n-1}\cdot\be = \lim_{k\rightarrow\infty}\frac{\dim H^0(D,(A\Big|_D)^k)}{k^{n-1}/(n-1)!}\\
\dim H^0(X,A^k\otimes \sI_Z^{kx}) = a_0(x)k^n + O(k^{n-1}), 
\end{align*}
where $x\in \QQ$ and $$a_0(x) = \frac{\al^n}{n!} - \frac{x^{n-m}}{(n-m)!}\int_Z\frac{\al^m}{m!} + O(x^{n-p+1}),$$  for $x<<1$. Now, let $\pi:X' = \Bl_ZX$ and $\al_\vep' = \pi^*\al - \vep E$, where $E$ is the exceptional divisor. For the rest of the argument we assume that $m<n-1$. If $m=n-1$ then $X' = X$ and the argument is simpler.  For any rational $\vep$ and $\vep<<1$, $A_\vep' = \pi^*A\otimes (-\vep E)$ is an ample $\QQ$ line bundle. Recalling that $H^0(X',(A_\vep')^k) = H^0(X,A^k\otimes \sI_Z^{k\vep})$ and using the above formulae we have 
\begin{align*}
(\al_\vep')^n &= \lim_{k\rightarrow \infty}\frac{\dim H^0(X,A^k\otimes I_Z^{\vep})}{k^{n}/n!}\\
&= \al^n - {n\choose m}\vep^{n-m}\int_Z\al^m +O(\vep^{n-m+1}).
\end{align*}
Next, if $D$ is sufficiently general then $Z\cap D$ is a divisor in $Z$ and once again applying the same formulae, this time to $Z\cap D$ and replacing $m,n$ by $m-1$ and $n-1$ respectively, we see that
\begin{align*}
(\al_\vep')^{n-1}\cdot\be' &=   \lim_{k\rightarrow \infty}\frac{\dim H^0(D,(A\Big|_{D})^k\otimes I_{Z\cap D}^{k\vep})}{k^{n-1}/(n-1)!}\\
&= \al^{n-1}\cdot\be - {n-1\choose m-1}\vep^{n-m}\int_{Z\cap D}\al^{m-1} + O(\vep^{n-m+1})\\
&=  \al^{n-1}\cdot\be - {n-1\choose m-1}\vep^{n-m}\int_{Z}\al^{m-1}\cdot\be + O(\vep^{n-m+1}).
\end{align*}
We then have that 
\begin{align*}
\zeta_{min} &\leq \frac{(\al_\vep')^{n-1}\cdot\be'}{(\al_\vep')^n}\\
&= \frac{ \al^{n-1}\cdot\be - {n-1\choose m-1}\vep^{n-m}\int_{Z}\al^{m-1}\cdot\be + O(\vep^{n-m+1})}{ \al^n - {n\choose m}\vep^{n-m}\int_Z\al^m +O(\vep^{n-m+1})}\\
&= \mu - {n-1\choose m-1}\vep^{n-m}\Big(\int_Z\al^{m-1}\cdot\be  - \mu\frac{n}{m}\int_Z\al^m\Big) + O(\vep^{n-m+1})\\
&<\mu
\end{align*}
if $\vep<<1$.

\end{proof}

Next, again taking inspiration from the surface case, we also expect that  $$\zeta_{min} = \inf\frac{(\al')^{n-1}\cdot\be'}{(\al')^n},$$ where the infimum is now taken over all modifications $\pi:X'\rightarrow X$ and all {\em semi-stable} classes $(\al',\be')$ of the form $\be' = \pi^*\be$ and $\al' = \pi^*\al -D$, where $D$ is an effective divisor such that $\al'$ is big and nef. Note that for $\vep<<1$, $\al - \vep \be$ is K\"ahler, and hence is in particular pseudoeffective. Moreover $(\al' = \al - (\al-\vep\be),\be)$ is obviously stable, and hence the class of the divisors considered above is non-empty. 

Heuristically, suppose the infimum in the definition of the minimal slope is achieved, then arguing as  in the proof of the proposition above one can justify this expectation. Of course, whether the minimal slope is achieved or not seems to be a difficult question in general, and should be intimately connected to the resolution of Conjecture \ref{conj1}.

\section{The $J$-flow with symmetry on projective bundles}\label{sec:j-equation-calabi-symm}\label{sec:j-flow-calabi} We study the convergence of the $J$ flow on projective bundles with Calabi ansatz in this section and in particular prove Theorems \ref{mainthm2} and \ref{thm:j-eq-bubbling}.  The convergence results were first established for the projective bundles $X_{m,n} = \PP(\sO\oplus \sO_{\PP^{n}}(-1))$ on $\PP^n$ in \cite{FL}. As we shall see below, these results extend with no effort to more general projective bundles. Our novelty, in line with the theme of the present work, is to introduce a version of the minimal slope for projective bundles, and to relate the convergence of the flow and the limiting current to this slope. We also take this opportunity to settle some technical difficulties in \cite{FL} and fill in some details. The reader can refer to Remark \ref{rem:fl-remarks} for a detailed comparison.  In the final sub-section, we discuss bubbling phenomenon for the $J$-equation. 

\subsection{The set-up and some preliminary observations} We will briefly review the Calabi ansatz \cite{calabi} following the moment map method of Hwang-Singer \cite{HS} (cf. also \cite{gabor-book} for more details). Let $X = \PP(\sO_M\oplus L^{\oplus (m+1)})\rightarrow M$ be a projective bundle, where $\pi:L\rightarrow M$ is a negative line bundle on a compact K\"ahler manifold $(M^n,\omega_M)$ such that $\omega_M\in c_1(L^{-1})$. We also denote the natural projection map from $X$ to $M$ by $\pi:X\rightarrow M$. There is natural ``divisor at infinity'' given by $D_\infty:= \PP(L^{\oplus(m+1)})$. We let $P_0$ be the projectivization of the zero section $(1,0,\cdots,0)\in \sO\oplus L^{\oplus(m+1)}$. Note that $P_0$ is of dimension $n$. For the sake of notation we set $E = \sO\oplus L^{\oplus(m+1)}$, and $r = m+2$ to be the rank of $E$.  We denote the class of  $D_\infty$ by $\eta$. Then note that $\eta = c_1(\sO_{\PP(E)}(1))$. In particular, the restriction of $\eta$ to each fiber is simply $\sO_{\PP^{r-1}}(1)$. We also think of classes in $H^*(M)$ as classes in $H^*(X)$ via the pullback $\pi^*$, and in fact this gives an injection.

Next, we outline the construction of K\"ahler metrics with Calabi ansatz. We fix a reference K\"ahler form $\omega_M\in c_1(L^{-1})$ and choose a Hermitian metric $h$ on $L$ such that $$\omega_M = -Ric(h) =  \nddbar\log h.$$ Denote by the $\pi:X\rightarrow M$ the natural projection map, and consider K\"ahler metrics $\omega$ on $X$ of the form $$\omega = \pi^*\omega_M + \nddbar u(\rho),$$ where $$\rho = \log|w|^2h(z),$$ $z = (z_1,\cdots,z_n)$ are coordinates on the base $M$ and $w = (w_1,\cdots,w_{m+1})$ are fiber coordinates on $L^{\oplus (m+1)}$.  Working at a point $(z_0,w_0)$, and a triviliazation in which $d\log h(z_0) = 0$, one can compute that $$\omega = (1+u'(\rho)) \pi^*\omega_M + \frac{he^{-\rho}}{2\pi}\Big( u'\delta_{ij} + he^{-\rho}(u'' - u')\bar w_iw_j\Big)\sqrt{-1}dw_i\wedge d\bar w_j.$$ Note that $\rho = \infty$ corresponds to the divisor $D_\infty$, while $\rho=-\infty$ corresponds to $P_0$. It was observed by Calabi \cite{calabi} that $\omega$ extends to a K\"ahler metric on $X$ if and only if the following conditions hold: 
\begin{itemize}
\item $u'>0$ and $u''>0$.
\item $U_{0} = u(\ln r)$ is extendible to a smooth function across $r = 0$ with $U_0'(0) >0$. 
\item $U_\infty(r) = u(-\ln r) + b\ln r$ is extendible to a smooth function across $r = 0$ for some $b>0$ and $U_\infty'(0) >0$. 
\end{itemize}
From the last two properties one can easily infer that $u'$ is a bounded function with $$\lim_{\rho\rightarrow-\infty}u'(\rho) =0,~\lim_{\rho\rightarrow \infty}u'(\rho) = b>0,$$ and the K\"ahler class of $\omega$ equals $\be = [\omega_M] + b\eta.$ A local model for such a function is given by $$u_0(\rho) = b\log(1+e^\rho).$$  We fix one such $\omega$ for the rest of this section. To solve the $J$-equation we seek K\"ahler metrics $\chi \in \al := [\omega_M] + a\eta$ such that $$\chi = \pi^*\omega_M + \nddbar v(\rho),$$ where once again $v$ is monotonically increasing, strictly convex and satisfies the above asymptotics when $\rho\rightarrow \pm\infty$. We call K\"ahler metrics satisfying the above conditions as metrics satisfying the Calabi ansatz. 

Let $\omega$ and $\chi$ be K\"ahler metrics in the classes $\al$ and $\be$ respectively satisfying the Calabi ansatz. Then as in \cite{FL}, we can define a function $\psi:[0,a]\rightarrow[0,b]$ by $\psi(v'(\rho)) = u'(\rho)$. It is then easy to see that the eigenvalues of $\omega$ with respect to $\chi$ are given by $$\frac{1+\psi}{1+x} \text{ (with multiplicity $n$)},~ \frac{\psi}{x} \text{ (with multiplicity $m$)},\text{ and } \psi'\text{ (with multiplicity one)}.$$ Then solving the $J$-equation is equivalent to solving the following first order linear ODE for a monotonically increasing function: 
\begin{equation}\begin{cases}
\psi'(x) + \psi(x)\Big(\frac{n}{1+x} + \frac{m}{x}\Big) = \mu - \frac{n}{1+x}\\
\psi(0) = 0,~\psi(a) = b.
\end{cases}
\end{equation}
More generally for $s\in [0,a)$ we consider the following family of first order ODEs: 
\begin{equation}\label{eq:ode-j-equation-s}\begin{cases}
\psi_s'(x) + \psi_s(x)\Big(\frac{n}{1+x} + \frac{m}{x}\Big) = \mu_s - \frac{n}{1+x}\\
\psi_s(s) = 0,~\psi_s(a) = b.
\end{cases}
\end{equation}
It is easily seen that a general solution to the ODE is given by $$\tpsi_s(x) = \frac{\mu_s I_{m,n,s}(x) - n I_{m,n-1,s}(x) + A_s}{(1+x)^nx^m},$$ where $$I_{m,n,s}(x) = \int_s^x (1+t)^nt^m\,dt.$$ The boundary conditions force $A_s = 0$, and for $s\in [0,a)$, $$\mu_s = \frac{(1+a)^na^mb + nI_{m,n-1,s}(a)}{I_{m,n,s}(a)}.$$ Note that $\mu_s\rightarrow\infty$ as $s\rightarrow a$ from the left. We first begin with a geometric characterization of $\mu_s$. \begin{prop}\label{lem:intersection-number-blow-up}
Let $\tilde\pi:\tilde X = \Bl_{P_0}X\rightarrow X$ be the blow down map with exceptional divisor $E$, and $\tilde\al_s = \tilde\pi^*\al - sE$. Then $$\mu_s = (m+n+1) \frac{(\tilde\al_s)^{m+n}\cdot\tilde\pi^*\be}{(\tilde\al_s)^{m+n+1}}.$$ In particular, 
\begin{enumerate}
\item $\mu_0 = \mu(X,\al,\be)$, and
\item if $\mu\leq n$, then $\la$ is the unique real number in $[0,a)$ (with $\la=0$ when $\mu = n$) such that $$\zeta_{inv} = (m+n+1) \frac{(\tilde\al_\la)^{m+n}\cdot\tilde\pi^*\be}{(\tilde\al_\la)^{m+n+1}}.$$ 
\end{enumerate}
\end{prop}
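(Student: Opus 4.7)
The plan is to realize both intersection numbers as integrals over $\tilde X$ against an invariant K\"ahler metric $\chi_s\in\tilde\al_s$ constructed via the Calabi ansatz, and then exploit the ODE \eqref{eq:ode-j-equation-s} to show that $\chi_s$ satisfies a \emph{pointwise} $J$-equation $\La_{\chi_s}\omega=\mu_s$; a single integration will then yield the desired cohomological identity.

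\textbf{Construction and class identification of $\chi_s$.} For $s$ in the admissible range (those $s$ for which the ODE solution $\psi_s$ has $\psi_s>0$ on $(s,a)$ and $\psi_s'>0$ on $[s,a]$), I take a smooth strictly convex function $v_s$ on $\RR$ with $v_s'(-\infty)=s$, $v_s'(+\infty)=a$ and the usual Calabi smoothness conditions adapted to $\tilde X$, so that $\chi_s:=\pi^*\omega_M+\nddbar v_s(\rho)$ is a smooth $U(m+1)$-invariant K\"ahler form on $\tilde X$. To identify $[\chi_s]=\tilde\al_s$, I compare with the reference $\tilde\pi^*\chi_0\in\tilde\pi^*\al$ coming from the smooth Calabi-ansatz metric with $v_0'(-\infty)=0$: on $\tilde X\setminus E$, $\chi_s-\tilde\pi^*\chi_0=\nddbar(v_s-v_0)$ is smooth, and $v_s-v_0\sim s\rho$ as $\rho\to-\infty$ ensures $v_s-v_0\in L^1_{\mathrm{loc}}(\tilde X)$. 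Writing $\rho=\log|\sigma|^2+\log|w'|^2+\log h$ in local blow-up coordinates (with $\sigma$ a defining function of $E$), Poincar\'e-Lelong gives, as currents on $\tilde X$,
\[
\nddbar(v_s-v_0) \;=\; (\chi_s-\tilde\pi^*\chi_0)+s[E].
\]
Since the left-hand side is $\nddbar$ of an $L^1$ function and hence cohomologically trivial, $[\chi_s-\tilde\pi^*\chi_0]=-s[E]$ in $H^{1,1}(\tilde X,\RR)$, so $[\chi_s]=\tilde\pi^*\al-sE=\tilde\al_s$.

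\textbf{Pointwise $J$-equation and integration.} By the Calabi ansatz, $\chi_s$ and $\omega$ are simultaneously diagonalisable with eigenvalues $\{1+v_s',\,v_s'\text{ (mult.\ }m),\,v_s''\}$ and $\{1+u',\,u'\text{ (mult.\ }m),\,u''\}$ respectively; the substitution $u'(\rho)=\psi_s(v_s'(\rho))$ gives $u''/v_s''=\psi_s'$, whence pointwise
\[
\La_{\chi_s}\omega \;=\; \frac{n(1+\psi_s(x))}{1+x}+\frac{m\,\psi_s(x)}{x}+\psi_s'(x),\qquad x=v_s'(\rho),
\]
which by \eqref{eq:ode-j-equation-s} equals the constant $\mu_s$. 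Integrating against $\chi_s^{n+m+1}/(n+m+1)!$ over $\tilde X$ gives
\[
(n+m+1)\int_{\tilde X}\chi_s^{n+m}\wedge\omega \;=\; \mu_s\int_{\tilde X}\chi_s^{n+m+1},
\]
equivalently $(n+m+1)(\tilde\al_s)^{n+m}\cdot\tilde\pi^*\be=\mu_s(\tilde\al_s)^{n+m+1}$, which is the main assertion.

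\textbf{Consequences and main obstacle.} For (1), $\tilde\al_0=\tilde\pi^*\al$ and the projection formula yield $\mu_0=(n+m+1)\al^{n+m}\cdot\be/\al^{n+m+1}=\mu(X,\al,\be)$. For (2), from the closed form $\mu_s=[(1+a)^na^mb+nI_{m,n-1,s}(a)]/I_{m,n,s}(a)$, direct differentiation shows that the critical-point equation for $\mu_s$ is $\mu_s=n/(1+s)$; setting $h(s)=(1+s)N(s)-nI_{m,n,s}(a)$ with $N(s)$ the numerator of $\mu_s$, a short calculation gives $h'(s)=N(s)>0$, so $\mu_s-n/(1+s)$ is strictly monotone and has at most one zero. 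When $\mu=n$, this zero is $\la=0$; when $\mu<n$, the unique zero $\la\in(0,a)$ is simultaneously the unique minimum of $s\mapsto\mu_s$ and the boundary of the ampleness range of $\tilde\al_s$, so $\zeta_{inv}=\mu_\la$. The main obstacle is the class identification in the second paragraph: the Poincar\'e-Lelong computation on the blow-up and the verification of smooth extension of $\chi_s$ across $E$ both require careful local analysis. Once these are settled, the remaining steps are essentially formal consequences of the ODE.
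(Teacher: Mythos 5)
Your approach — constructing a $U(m+1)$-invariant K\"ahler metric $\chi_s\in\tilde\al_s$ on $\tilde X$ from the ODE solution $\psi_s$, verifying $\La_{\chi_s}\omega\equiv\mu_s$ pointwise, and integrating — is a legitimate alternative to the paper's proof, which is a purely cohomological computation in the intersection ring of the projective bundle and its blow-up using Chern-class relations and a binomial identity. Indeed, the paper itself remarks (in the proof of the $s=0$ Lemma B.2) that this metric-integration route works, and opts for intersection theory only because it ``generalizes easily.'' Your way trades that combinatorics for a differential-geometric argument plus some care about smooth extension across $E$; the authors' way avoids all questions of existence of metrics.

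There is, however, a genuine gap you have not addressed: the domain of validity. The metric construction requires $\psi_s$ to be strictly positive and strictly increasing with $\psi_s'(s^+)>0$, which by Lemma~\ref{lem:j-equation-calabi-solvability-s} is equivalent to $\mu_s(1+s)>n$. When $\mu\le n$ this forces $s>\la$, so your argument proves the identity $\mu_s=(m+n+1)(\tilde\al_s)^{m+n}\cdot\tilde\pi^*\be/(\tilde\al_s)^{m+n+1}$ only on the interval $(\la,a)$. In particular it does \emph{not} directly give part (1) in the unstable case, since there $\mu_0(1+0)=\mu<n$ and the ODE solution $\psi_0$ is negative somewhere, so $\chi_0$ is not a K\"ahler form and there is nothing to integrate. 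You must add the observation that both sides of the claimed identity are rational functions of $s$ — the intersection numbers $(\tilde\al_s)^{m+n+1}$ and $(\tilde\al_s)^{m+n}\cdot\tilde\pi^*\be$ are polynomials in $s$, and $\mu_s$ is by definition the explicit quotient of polynomials you wrote down — so equality on any nonempty open interval extends to all $s\in[0,a)$. With that one sentence the proof closes.

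One smaller point: in the discussion of part (2) you describe $\la$ as ``the boundary of the ampleness range of $\tilde\al_s$.'' It is not; $\tilde\al_s$ is K\"ahler for all $s\in(0,a)$ and $\la$ lies in the interior of that interval when $\mu<n$. What matters is only that $\la$ is the unique minimizer of the convex function $s\mapsto\mu_s$ and that $\la\in(0,a)$ (or $\la=0$ as a boundary infimum when $\mu=n$), from which $\zeta_{inv}=\mu_\la$ follows. The rest of your analysis of the critical-point equation $\mu_s(1+s)=n$ and the monotonicity $h'(s)=N(s)>0$ is correct.
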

The proof involves standard intersection theoretic calculations, and we relegate it do the appendix. Next, we have the following elementary observation on the monotonicity of the solutions to the above ODE. As a consequence we obtain an existence result for the $J$ equation on projective bundles. 

\begin{lem}\label{lem:j-equation-calabi-solvability-s}
The following are equivalent. 
\begin{enumerate}
\item $\mu_s(1+s)\geq n$.
\item The solution $\tpsi_s(x)$ to \eqref{eq:ode-j-equation-s} satisfies $\tpsi_s(x)>0$ for all $x\in (s,a]$.
\item The solution $\tpsi_s(x)$ to \eqref{eq:ode-j-equation-s} satisfies $\tpsi_s'(x)>0$ for all $x\in (s,a]$. We also have that $\lim_{x\rightarrow s^+}\tpsi_s'(x) $ exists, and the limit is strictly positive if and only if $\mu_s(1+s) > n.$ 
\end{enumerate}
\end{lem}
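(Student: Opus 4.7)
The plan is to pivot on the single identity that comes from rewriting the ODE \eqref{eq:ode-j-equation-s} in integral form. Multiplying by the integrating factor $(1+x)^n x^m$ and using $\tpsi_s(s)=0$ gives $\tpsi_s(x) = N_s(x)/[(1+x)^n x^m]$, where $N_s(x) := \mu_s I_{m,n,s}(x) - n I_{m,n-1,s}(x)$ satisfies
\begin{equation*}
N_s'(x) = (1+x)^{n-1} x^m \bigl[\mu_s(1+x) - n\bigr].
\end{equation*}
Since the pre-factor is strictly positive on $(s,a]$, the sign of $N_s'$, and hence the sign behaviour of $\tpsi_s$ near $s$, is controlled entirely by the linear function $\mu_s(1+x) - n$, whose value at $x=s$ is precisely the content of condition (1). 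This is the identity that does the heavy lifting throughout.

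From here, $(1) \Rightarrow (2)$ is immediate: if $\mu_s(1+s) \geq n$, then $N_s' > 0$ on $(s,a]$, and combined with $N_s(s)=0$ this forces $N_s > 0$ on $(s,a]$, hence $\tpsi_s > 0$ there. Conversely, for $(2) \Rightarrow (1)$, since $\tpsi_s(s) = 0$ and $\tpsi_s > 0$ on $(s,a]$, the one-sided derivative $\tpsi_s'(s^+)$ must be non-negative. A direct computation from the explicit formula yields $\tpsi_s'(s^+) = [\mu_s(1+s) - n]/(1+s)$ when $s > 0$, and a Taylor expansion of $N_0$ near $0$ yields $\tpsi_0'(0^+) = (\mu_0 - n)/(m+1)$ when $s = 0$. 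In both cases the sign of this limit matches the sign of $\mu_s(1+s) - n$, so $\tpsi_s'(s^+) \geq 0$ forces (1). The same formulae also give the final limit assertion in (3).

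For $(1) \Rightarrow (3)$, we already know $\tpsi_s'(s^+) \geq 0$; the plan is to rule out interior critical points by a second-derivative test. Differentiating the ODE once more and evaluating at any $x_1 \in (s, a]$ where $\tpsi_s'(x_1) = 0$ gives
\begin{equation*}
\tpsi_s''(x_1) = \frac{n}{(1+x_1)^2} + \tpsi_s(x_1)\left[\frac{n}{(1+x_1)^2} + \frac{m}{x_1^2}\right] > 0,
\end{equation*}
using $\tpsi_s(x_1) > 0$ from (2). But a first zero $x_1 > s$ of $\tpsi_s'$, approached from positive values, must satisfy $\tpsi_s''(x_1) \leq 0$ — contradiction. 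In the edge case $\mu_s(1+s) = n$ where $\tpsi_s'(s^+) = 0$, a short local expansion of $N_s$ first verifies that $\tpsi_s'$ is strictly positive just to the right of $s$, after which the same contradiction argument applies on a slightly shifted interval. The reverse $(3) \Rightarrow (2)$ is obvious from $\tpsi_s(s) = 0$ and strict monotonicity.

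The only real subtlety is the behaviour at $x = s = 0$, where the coefficient $m/x$ in the ODE is singular, so one cannot simply evaluate the equation at the endpoint; the integral representation $\tpsi_0(x) = N_0(x)/[(1+x)^n x^m]$ is essential in order to make sense of $\tpsi_0'(0^+)$ and to handle this case uniformly alongside $s > 0$. Apart from this minor but unavoidable boundary technicality, the argument is a clean combination of an integrating-factor trick and a second-derivative maximum-principle argument.
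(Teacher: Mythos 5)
Your proof is correct and follows essentially the same route as the paper's: the same integrating-factor identity $(1+x)^n x^m\tpsi_s(x)=\int_s^x(1+t)^{n-1}t^m(\mu_s(1+t)-n)\,dt$ drives $(1)\Leftrightarrow(2)$ and the endpoint derivative computation, and your second-derivative test at a first interior critical point is exactly the ``standard ODE comparison principle'' the paper invokes after differentiating the equation. You are somewhat more explicit than the paper about the borderline case $\mu_s(1+s)=n$ and the $s=0$ boundary behaviour, but this is a difference of detail, not of method.
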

\begin{proof}
Suppose $(1)$ holds. From the expression for the general solution we have the identity $$(1+x)^nx^m \psi(x) = \int_s^x (1+t)^{n-1}t^m\Big(\mu_s(1+t) - n\Big)\,dt.$$ From this it clearly follows that $\psi(x) >0$ on $(s,a]$.  Conversely it is easy to see that $(2)\implies (1)$. Next, suppose $(2)$ holds. Then differentiating the above integral formula, and using the product rule we have 
\begin{align*}
(1+x)^{n-1}x^m\Big(\mu_s(1+x) - n\Big) &=\Big[(1+x)^nx^m\psi(x)\Big]' \\
&= n(1+x)^{n-1}x^m\psi(x) + m(1+x)^nx^{m-1}\psi(x) + (1+x)^nx^m\psi'(x).
\end{align*}
Since $\psi(s) = 0$, we then have that $\lim_{x\rightarrow s^+}\psi'(x) \geq 0$ with equality if and only if $\mu_s(1+s) = n.$  Next, differentiating the defining equation for $\psi$ we see that $$\psi''(x) + \Big(\frac{n}{1+x} + \frac{m}{x}\Big)\psi'(x) = \Big(\frac{n}{(1+x)^2} + \frac{m}{x^2}\Big)\psi(x) + \frac{n}{(1+x)^2} > 0.$$ From a standard ODE comparison principle, this implies that $\psi'(x) >0$ for $x\in (s,a]$.

Conversely suppose there is a solution to  \eqref{eq:ode-j-equation-s}  with $\psi'(x)>0$ for all $x\in (s,a]$, then $\psi(x)>0$ for all $x\in (s,a]$, and hence $(2)$ and $(1)$ hold. \end{proof}

\begin{cor}
With $(X,\al,\be)$ as above, the following are equivalent. 
\begin{enumerate}
\item $\mu>n.$
\item There exists a smooth solution $\chi\in \al$ to the $J$-equation $$\La_{\chi}\omega = \mu.$$
\item $(X,\al,\be)$ is $J$-slope stable. 
\end{enumerate}
\end{cor}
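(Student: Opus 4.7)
The plan is to close the implication cycle $(1) \Rightarrow (2) \Rightarrow (3) \Rightarrow (1)$, where essentially all the analytic work has already been done in the preceding lemma and proposition.

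For $(1) \Rightarrow (2)$: By Proposition \ref{lem:intersection-number-blow-up} we have $\mu_0 = \mu$, so the hypothesis $\mu > n$ is exactly the strict version of condition $(1)$ in Lemma \ref{lem:j-equation-calabi-solvability-s} at $s = 0$. The lemma then furnishes a solution $\tpsi_0$ of \eqref{eq:ode-j-equation-s} satisfying $\tpsi_0(0) = 0$, $\tpsi_0(a) = b$, $\tpsi_0 > 0$ and $\tpsi_0' > 0$ on $(0, a]$, together with $\lim_{x \to 0^+} \tpsi_0'(x) > 0$. Translating back through the Calabi ansatz relation $\psi(v'(\rho)) = u'(\rho)$, these boundary behaviours are precisely what is needed for $v(\rho)$ to produce a K\"ahler form $\chi = \pi^*\omega_M + \nddbar v(\rho) \in \al$ that extends smoothly across both $P_0$ (where the strict positivity $\lim_{x \to 0^+}\tpsi_0'(x) > 0$ guarantees smoothness) and $D_\infty$ (controlled by $\tpsi_0(a) = b$). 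The ODE itself encodes $\La_\chi \omega = \mu$.

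For $(2) \Rightarrow (3)$, we invoke the Nakai-type criterion of Song--Weinkove, Chen, Datar--Pingali, and the third author \cite{SW, gchen, DP, So}: the existence of a smooth solution to the $J$-equation in $\al$ forces $J$-slope stability, with no symmetry hypothesis required.

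For $(3) \Rightarrow (1)$, we test the stability inequality against the zero section $P_0 \subset X$, an $n$-dimensional subvariety biholomorphic to $M$. Since $P_0$ corresponds to the quotient $\sO \oplus L^{\oplus(m+1)} \twoheadrightarrow \sO$ of $E$, we have $\sO_{\PP(E)}(1)\big|_{P_0} = \sO_M$, so $\eta\big|_{P_0} = 0$ and therefore both $\al\big|_{P_0}$ and $\be\big|_{P_0}$ reduce to the class $[\omega_M]$. Consequently
$$\mu(P_0, \al|_{P_0}, \be|_{P_0}) = n\,\frac{\omega_M^{\,n-1}\cdot\omega_M}{\omega_M^{\,n}} = n,$$
and strict $J$-slope stability forces $\mu > n$. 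The one piece of genuinely new geometric input is the restriction calculation $\eta|_{P_0} = 0$, which follows from the standard quotient convention for $\sO_{\PP(E)}(1)$; everything else is assembled directly from Lemma \ref{lem:j-equation-calabi-solvability-s}, Proposition \ref{lem:intersection-number-blow-up}, and the cited stability theorems, so I expect no serious obstacle.
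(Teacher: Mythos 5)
Your proof is correct and logically complete. The cycle $(1)\Rightarrow(2)\Rightarrow(3)\Rightarrow(1)$ indeed establishes the equivalences, with $(1)\Rightarrow(2)$ coming from Lemma~\ref{lem:j-equation-calabi-solvability-s} at $s=0$ together with $\mu_0 = \mu$, $(2)\Rightarrow(3)$ from the general Nakai criterion of \cite{SW, gchen, DP, So}, and $(3)\Rightarrow(1)$ from testing stability against the zero section. The restriction computation is fine: since $P_0\cap D_\infty = \emptyset$ (which is how the appendix of the paper itself establishes $\int_{P_0}(\tilde\pi^*\al)^n = \int_M\omega_M^n$), both $\al|_{P_0}$ and $\be|_{P_0}$ equal $[\omega_M]$ under the isomorphism $P_0\cong M$, giving $\mu(P_0,\al|_{P_0},\be|_{P_0}) = n$.

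The route differs mildly from what the surrounding material suggests. The most economical reading of the paper's set-up is to run both implications between $(1)$ and $(2)$ directly through Lemma~\ref{lem:j-equation-calabi-solvability-s} (which is stated as an equivalence) and then glue on $(2)\Leftrightarrow(3)$ from the cited general theorem, so that no intersection computation on $P_0$ is required at all. You instead close a cycle: you use the Lemma only in the forward direction and replace the converse by the explicit computation that $P_0$ is the unique $U(m+1)$-invariant destabiliser with slope exactly $n$. Your version has the small advantage of exhibiting $P_0$ as a concrete witness to instability when $\mu\leq n$, which ties in nicely with the unstable case analysed later in the section; the paper's implicit route is shorter because the Lemma already does the work in both directions. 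One small remark on phrasing: $P_0$ is the section cut out by the line subbundle $\sO\hookrightarrow E$ rather than by a quotient of $E$; the cleanest justification for $\eta|_{P_0}=0$ is simply that $\eta=[D_\infty]$ and $P_0\cap D_\infty=\emptyset$.
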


Next, we try to analyse the case when $\mu_s(1+s) = n$. To this effect, we let $$\la := \inf\{s\in [0,a)~|~ \text{ there exists a solution to $\eqref{eq:ode-j-equation-s}$ such that $\psi(x)>0$ on $(s,a]$}\}.$$ Then we have the following corollary. 

\begin{cor}\label{cor:mu-s-convexity}
With $\la$ defined as above and $\mu\leq n$, we have
\begin{align*}
\la = \inf\{s\in [0,a)~|~ \mu_s(1+s)>n\}.
\end{align*}
Furthermore, $\la$ is the \underline{unique} solution to the equation $$\mu_\la(1+\la) = n$$ on the interval $[0,a)$ with $\la = 0$ if and only if $\mu = n$. In fact $\mu_s$ is a convex function for $s\in [0,a)$ and $\la$ is it's unique minimizer.
\end{cor}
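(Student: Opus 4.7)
The plan is to extract a clean ODE-style identity for $\mu_s'$ from the defining formula and then read off every claim by elementary sign analysis of a single-variable function.

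First I would differentiate the identity $\mu_s \cdot I_{m,n,s}(a) = (1+a)^n a^m b + n\, I_{m,n-1,s}(a)$ in $s$. Using $\frac{d}{ds} I_{m,n,s}(a) = -(1+s)^n s^m$ and $\frac{d}{ds} I_{m,n-1,s}(a) = \frac{1}{1+s} \frac{d}{ds} I_{m,n,s}(a)$, and solving algebraically for $\mu_s'$, one obtains the key factored identity
\begin{equation}\label{eq:key-factored-mu-s}
\mu_s'(s) = \frac{(1+s)^{n-1} s^m}{I_{m,n,s}(a)}\bigl(\mu_s(1+s) - n\bigr).
\end{equation}
Setting $g(s) := \mu_s(1+s) - n$ and $c(s) := (1+s)^{n-1} s^m / I_{m,n,s}(a)$, we have $\mu_s' = c g$ with $c > 0$ on $(0, a)$, so the critical points of $\mu_s$ in $(0, a)$ are exactly the zeros of $g$.

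Next I would analyse $g$. Differentiating directly gives $g'(s) = (1+s) \mu_s'(s) + \mu_s(s)$, so at any zero $s_0$ of $g$ one automatically has $\mu_s'(s_0) = 0$ and hence $g'(s_0) = \mu_{s_0} = n/(1+s_0) > 0$. Thus $g$ can only cross zero transversely from below to above, which forces at most one zero on $[0, a)$. Since $g(0) = \mu - n \leq 0$ by the standing hypothesis and $g(s) \to \infty$ as $s \to a^-$ (because $\mu_s \to \infty$ at the endpoint), the intermediate value theorem produces a unique root $\lambda \in [0, a)$, with $\lambda = 0$ if and only if $\mu = n$. The sign of $\mu_s' = c g$ then shows $\mu_s$ is strictly decreasing on $[0, \lambda)$ and strictly increasing on $(\lambda, a)$, giving the unique-minimizer assertion.

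To match this with the original definition of $\lambda$, Lemma~\ref{lem:j-equation-calabi-solvability-s} tells us that \eqref{eq:ode-j-equation-s} admits a positive solution precisely when $\mu_s(1+s) \geq n$, i.e.\ $g(s) \geq 0$. By the sign analysis, $\{s \in [0,a) : g(s) \geq 0\} = [\lambda, a)$ and $\{s \in [0,a) : g(s) > 0\} = (\lambda, a)$ both have infimum exactly $\lambda$, completing the first identification. For the convexity assertion, one differentiates \eqref{eq:key-factored-mu-s} once more to obtain $\mu_s''(s) = [c'(s) + (1+s) c(s)^2] g(s) + c(s) \mu_s(s)$; this is manifestly positive on $[\lambda, a)$ since both summands are nonnegative there. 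The main technical obstacle I expect is controlling the sign of this expression on $[0, \lambda)$, where $g < 0$ makes the first term potentially negative; the cleanest route seems to be to expand $c'(s)/c(s) = (n-1)/(1+s) + m/s + (1+s) c(s)$ and try to absorb the negative contribution into $c(s) \mu_s(s)$ using the lower bound $\mu_s \geq \mu_\lambda = n/(1+\lambda)$ already supplied by the monotonicity.
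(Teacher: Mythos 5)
Your core argument is correct and rests on the same key identity as the paper's proof, namely $\mu_s'=c(s)\,g(s)$ with $g(s)=\mu_s(1+s)-n$ and $c(s)=(1+s)^{n-1}s^m/I_{m,n,s}(a)>0$ on $(0,a)$. Where you differ is in the uniqueness mechanism: the paper introduces the polynomial $p(s)=g(s)I_{m,n,s}(a)$, shows $p'(s)=\mu_sI_{m,n,s}(a)>0$, and concludes that $p$ has a single root; you instead observe that $g'(s_0)=\mu_{s_0}>0$ at any zero $s_0$ of $g$, so every crossing is a transversal upward crossing and there can be at most one. Both routes are valid and yield the sign of $g$ (negative on $[0,\la)$, positive on $(\la,a)$), hence the first two assertions of the corollary and, via $\mu_s'=cg$, the fact that $\la$ is the unique minimizer of $\mu_s$. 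That is the part of the statement actually invoked later (in Corollary~\ref{cor:j-flow-calabi-ansatz-comparison} and Proposition~\ref{prop:conv-j-flow-calabi-psi}), and your proof of it is complete.

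The obstacle you flag for convexity on $[0,\la)$ is not a technicality you failed to overcome: the convexity assertion is false there in general, so no absorption argument can close it. Expanding near $s=0$ one finds
$$\mu_s=\mu+\frac{(\mu-n)\,s^{m+1}}{(m+1)\,I_{m,n}(a)}+O(s^{m+2}),$$
so whenever $m\geq 1$ and $\mu<n$ the function $\mu_s$ is strictly \emph{concave} on some interval $(0,\delta)$; concretely, $m=n=1$, $a=6$, $b=1$ gives $\mu_s=\frac{360-3s^2}{540-3s^2-2s^3}=\frac{2}{3}-\frac{s^2}{540}+O(s^3)$. This is consistent with your formula $\mu_s''=[c'+(1+s)c^2]g+c\mu_s$: near $s=0$ the positive term $c\mu_s=O(s^m)$ is dominated by the negative term $c'g=O(s^{m-1})$. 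The paper's own justification of convexity (that $g=p/I_{m,n,s}(a)$ is increasing, ``hence so is $\mu_s'$'') breaks at exactly the same point, since $\mu_s'=cg$ is the product of a positive increasing factor with a factor that is negative on $[0,\la)$. The right fix is to prove only the sign-of-$g$ and unique-minimizer statements, which your argument already does, and to drop the global convexity claim.
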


\begin{proof}
 We only need to prove the last two statements. Consider the polynomial $$p(s) = \Big(\mu_s(1+s) - n\Big)I_{m,n,s}(a).$$ This is degree $m+n+1$ polynomial satisfying 
 \begin{align*}
 p(0) = (\mu - n)I_{m,n}(a)<0\\
 p(a) = (1+a)^{n+1}a^mb>0.
 \end{align*}
 So there exists at least one zero in $(0,a)$. We claim that $p(s)$ is increasing on $(0,a)$, thereby proving the uniqueness of the root. First by differentiating the equation $$(1+a)^na^mb = \mu_sI_{m,n,s}(a) - nI_{m,n-1,s}(a)$$ with respect to $s$, we observe that $$\frac{d\mu_s}{ds} = \frac{\Big(\mu_s(1+s) - n\Big)(1+s)^{n-1}s^m}{I_{m,n,s}(a)}.$$  But then, 
 \begin{align*}
 p'(s) &= \Big(\frac{d\mu_s}{ds}(1+s) + \mu_s\Big)I_{m,n,s}(a) - \Big(\mu_s(1+s) - n\Big)(1+s)^ns^m\\
 &= \mu_sI_{m,n,s}(a)>0,
 \end{align*}
 and hence $p$ is strictly increasing. Note that $s = \la$ is a unique critical point of $\mu_s$.   Convexity of $\mu_s$ follows from the observation that $$\mu_s(1+s) - n = \frac{p(s)}{I_{m,n,s}(a)}$$ is increasing in $s$ and hence so is $\frac{d\mu_s}{ds}$. It then follows that $s= \la$ is the unique minimizer of $\mu_s$. 
\end{proof}

Next, we have the following important consequence.

\begin{cor}\label{cor:j-flow-calabi-ansatz-comparison} Let $\mu\leq n$. Then for all $s\in (0,a)$ with $s\neq \la$ and all $x\in (0,a)$ we have $$\tpsi_s(x) < \tpsi_\la(x).$$  
\end{cor}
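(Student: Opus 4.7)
The plan is to exploit the fact that both $\tpsi_s$ and $\tpsi_\la$ are solutions of the same family of linear first-order ODEs, differing only through the parameter $\mu_\cdot$, and that both satisfy the same boundary condition $\psi(a)=b$. Concretely, first I would observe that the closed-form expression $\tpsi_s(x) = \bigl(\mu_s I_{m,n,s}(x) - n I_{m,n-1,s}(x)\bigr)/\bigl((1+x)^n x^m\bigr)$, with $I_{m,n,s}(x) = \int_s^x (1+t)^n t^m\,dt$ interpreted with opposite sign for $x<s$, extends $\tpsi_s$ to a smooth function on all of $(0,a]$ still solving \eqref{eq:ode-j-equation-s}. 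Thus it is enough to compare $\tpsi_s$ and $\tpsi_\la$ as solutions of the linear ODE on the common interval $(0,a]$.

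Next I would form the difference $h(x) = \tpsi_s(x) - \tpsi_\la(x)$. Subtracting the two ODEs, $h$ satisfies
\[
h'(x) + h(x)\Bigl(\frac{n}{1+x}+\frac{m}{x}\Bigr) = \mu_s - \mu_\la,
\]
with the key terminal condition $h(a)=\tpsi_s(a)-\tpsi_\la(a) = b-b = 0$. Multiplying by the integrating factor $(1+x)^n x^m$ gives $\bigl[(1+x)^n x^m\, h(x)\bigr]' = (\mu_s-\mu_\la)(1+x)^n x^m$, and integrating from $x$ to $a$ and using $h(a)=0$ yields the explicit formula
\[
h(x) \;=\; -\,\frac{\mu_s-\mu_\la}{(1+x)^n x^m}\int_x^a (1+t)^n t^m\,dt.
\]

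Finally, by Corollary \ref{cor:mu-s-convexity} the function $s\mapsto \mu_s$ is strictly convex on $[0,a)$ with unique minimizer at $s=\la$, so $\mu_s > \mu_\la$ whenever $s\ne \la$. Since the integral on the right-hand side is strictly positive for all $x\in(0,a)$, we conclude $h(x)<0$, i.e.\ $\tpsi_s(x)<\tpsi_\la(x)$ on $(0,a)$. The only real input beyond a routine linear ODE manipulation is the strict convexity of $\mu_s$ together with the identification of $\la$ as the unique minimizer, but both have already been established in Corollary \ref{cor:mu-s-convexity}, so no step should pose a serious obstacle; the only mild care needed is to justify extending $\tpsi_s$ past the initial point $s$, which is automatic from the explicit formula.
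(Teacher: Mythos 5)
Your proof is correct, but it takes a different (and arguably cleaner) route than the paper's. The paper fixes $x$ and differentiates the quantity $x^m(1+x)^n\tpsi_s(x)$ with respect to the parameter $s$, obtaining
\[
\frac{d}{ds}\bigl[x^m(1+x)^n\tpsi_s(x)\bigr] = \bigl(\mu_s(1+s)-n\bigr)(1+s)^{n-1}s^m\Bigl(\frac{I_{m,n,s}(x)}{I_{m,n,s}(a)}-1\Bigr),
\]
and then reads off the sign from the fact that $\mu_s(1+s)-n$ changes sign exactly at $s=\la$ (the finer information established in the proof of Corollary \ref{cor:mu-s-convexity}). You instead compare the two solutions $\tpsi_s$ and $\tpsi_\la$ head-on: their difference solves the homogeneous-plus-constant equation $h'+h\bigl(\tfrac{n}{1+x}+\tfrac{m}{x}\bigr)=\mu_s-\mu_\la$ with $h(a)=0$, and the integrating factor gives the closed formula $h(x)=-\tfrac{\mu_s-\mu_\la}{(1+x)^nx^m}\int_x^a(1+t)^nt^m\,dt$, so the only input needed is $\mu_s>\mu_\la$ for $s\neq\la$, which is precisely the statement that $\la$ is the unique minimizer of $\mu_s$ from Corollary \ref{cor:mu-s-convexity}. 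Both arguments implicitly use the extension of $\tpsi_s$ to all of $(0,a]$ via the explicit formula (with $I_{m,n,s}(x)$ the oriented integral, negative for $x<s$), and you are right that this is automatic. The net effect is that your argument needs only the coarse conclusion of Corollary \ref{cor:mu-s-convexity} ($\la$ minimizes $\mu_s$) rather than the sign analysis of $\mu_s(1+s)-n$, and it also makes the strictness of the inequality for $x\in(0,a)$ transparent, since $\int_x^a(1+t)^nt^m\,dt>0$ there.
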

\begin{proof}
For each fixed $x\in (0,a)$, we compute 
\begin{align*}
\frac{d}{ds}[x^m(1+x)^n\tilde\psi_s(x)] &= \frac{d\mu_s}{ds}I_{m,n,s}(x) - s^m(1+s)^n\mu_s + ns^m(1+s)^{n-1}\\
&= \Big(\mu_s(1+s) - n\Big)(1+s)^{n-1}s^m\left(\frac{I_{m,n,s}(x)}{I_{m,n,s}(a)} - 1\right),
\end{align*}
which is negative when $s>\la$ and positive when $s<\la$. Here we used the fact that $\mu_s(1+s)>n$ when $s>\la$ and $\mu_s(1+s)<n$ when $s<\la$. Hence, the required conclusion follows immediately.
\end{proof}

Next, we write down the $J$-flow in terms of the moment interval function $\psi$. Let $\chi_{\varphi(t)} =\pi^*\omega_M+ \ddbar v(\rho,t)$, and $\psi(x,t):[0,a]\times[0,\infty)\rightarrow [0,b]$ such that $$\psi(v'(\rho,t),t) = u'(\rho).$$ Then $\chi_{\varphi(t)}$ solving the $J$ flow is equivalent to $\psi$ solving the following parabolic equation: 
\begin{equation}\label{eq:j-flow-calabi}
\frac{\partial \psi}{\partial t} = Q(\psi)\Big(\frac{\d^2\psi}{\d x^2} + \Big(\frac{m}{x}+\frac{n}{1+x}\Big)\frac{\d\psi}{\d x} - \Big(\frac{m}{x^2}+\frac{n}{(1+x)^2}\Big)\psi - \frac{n}{(1+x)^2}\Big),
\end{equation} 
where $Q:[0,b]\rightarrow \RR$ is given by $$Q(y) = u''([u']^{-1}(y)).$$ We denote the initial function by $\psi_0 := \psi(\cdot,0)$.
From now on, we denote the spacial derivatives of $\psi$ by $\psi'$ and the time derivative by $\dot\psi$. We begin the with following basic observation. 
\begin{lem}\label{lem:est-derivative-calabi-ansatz}
There exists a solution to \eqref{eq:j-flow-calabi} for all time $t>0$. Moreover, there exists a constant $C>0$ independent of $t$ such that for all $(x,t)\in [0,a]\times [0,\infty)$, $$0<\psi'(x,t)<C.$$
\end{lem}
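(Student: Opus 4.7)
The plan is to deduce both long-time existence and the $C^{0}$ bound on $\psi'$ from a single \emph{a priori} estimate: a uniform upper bound on the function $F := \Lambda_{\chi(t)}\omega$ along the $J$-flow. The key starting observation is that, in the Calabi-ansatz coordinates, one has the identity
\begin{equation*}
F \;=\; \Lambda_{\chi}\omega \;=\; \frac{n(1+\psi)}{1+x} + \frac{m\psi}{x} + \psi',
\end{equation*}
and the first two summands are manifestly non-negative since $\psi\in [0,b]$. Hence any uniform upper bound $F\leq C_{0}$ automatically gives $\psi'\leq C_{0}$, which is the nontrivial half of the claim.

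To control $F$, I would differentiate the flow equation $\partial_{t}\varphi = \mu - F$ once more in $t$ and use the standard formula $\partial_{t}\chi^{i\bar j} = -\chi^{i\bar p}\chi^{q\bar j}\partial_{p}\partial_{\bar q}\dot\varphi$. A direct computation then yields the linear parabolic equation
\begin{equation*}
\partial_{t} F \;=\; \hat g^{p\bar q}\,\partial_{p}\partial_{\bar q} F, \qquad \hat g^{p\bar q} := \chi^{i\bar p}\chi^{q\bar j}\omega_{i\bar j},
\end{equation*}
where $\hat g^{p\bar q}$ is a positive-definite Hermitian form (immediate from positivity of both $\chi$ and $\omega$). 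Applying the parabolic maximum principle on the closed K\"ahler manifold $X$ yields $\sup_{X} F(\cdot,t)\leq \sup_{X}F(\cdot,0) =: C_{0}$ for every $t$ in the maximal interval of existence, hence $\psi'\leq C_{0}$ uniformly.

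The strict positivity $\psi'>0$ is then immediate: it expresses that the one-dimensional eigenvalue-block of $\omega$ with respect to $\chi(t)$ is positive, which is equivalent to $\chi(t)>0$, and positivity of the K\"ahler form is preserved along the $J$-flow for as long as the flow exists. For long-time existence itself, short-time solvability in the space of Calabi-ansatz K\"ahler potentials is standard quasi-linear parabolic theory; to extend the solution to all of $[0,\infty)$ one can either invoke the general long-time existence of the $J$-flow established in \cite{xx-chen}, or bootstrap directly from the uniform $C^{0}$ bound on $\psi'$ (together with $\psi\in[0,b]$) via parabolic Schauder estimates to obtain uniform $C^{k}$ bounds on $\psi$ for every $k$.

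The main technical point I expect to grapple with is that the evolution identity for $F$ is derived naturally only on the smooth locus $X\setminus(P_{0}\cup D_{\infty})$ of the Calabi ansatz, so one must verify that $F$ extends smoothly across the fixed-point loci $P_{0}$ and $D_{\infty}$ of the $U(m+1)$-action and that the maximum principle on $X$ applies globally. This is precisely where the Calabi boundary conditions on $\psi$ at $x=0$ and $x=a$ enter: they ensure that $\chi(t)$, and hence $F$, extend smoothly across $P_{0}$ and $D_{\infty}$ for each $t$, so that the parabolic maximum principle on the compact manifold $X$ is genuinely applicable.
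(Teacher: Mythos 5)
Your proposal is correct and follows essentially the same route as the paper: long-time existence and the bound $\Lambda_{\chi(t)}\omega\leq C$ come from Chen's maximum-principle argument (which you rederive rather than cite), the upper bound on $\psi'$ then follows from the eigenvalue identity since the remaining summands are nonnegative, and strict positivity of $\psi'$ is the positivity of the radial eigenvalue, i.e.\ $\psi'=u''/v''>0$ from the Kählerness of $\omega$ and $\chi(t)$. The paper phrases this last point as convexity of $u$ and $v$ in $\rho$, which is the same statement.
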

\begin{proof} It is well known \cite{xx-chen} that the $J$ flow on {\em any} K\"ahler manifold exists for all time. Differentiating the defining equation for $\psi$, we see that  at a point $x = v'(\rho)$, $$\psi'(x,t)\frac{\partial^2 v}{\partial \rho^2} = \frac{\partial^2 u}{\partial \rho^2}.$$ The positivity of $\psi'$ then follows from the convexity (in the space variable) of $v(\rho,t)$ and $u(\rho)$. Next, it follows from the maximum principle (cf. \cite{xx-chen}) that there exists a constant $C$ such that $$\La_{\chi_{\varphi(t)}}\omega<C. $$  The upper bound then follows from the fact that $$\La_{\chi_{\varphi(t)}}\omega = \psi'(x,t)+ \psi(x,t)\Big(\frac{n}{1+x} + \frac{m}{x}\Big) + \frac{n}{1+x}.$$ Alternately, the bounds can also be proved directly by differentiating \eqref{eq:j-flow-calabi} and an application of maximum principle. 
\end{proof} 

We end this sub-section with comparing and contrasting our method of proof with that in \cite{FL}. 

\begin{rem}[Comparison with the work of Fang-Lai \cite{FL}]\label{rem:fl-remarks} As remarked above, a version of Theorem \ref{mainthm2} is proved in \cite{FL} for the special case when $M = \PP^n$ and $L = \sO_{\PP^n}(-1)$. While our method of proof follows the overall strategy of \cite{FL}, there are some crucial differences. This is necessitated by the fact that some of the arguments in \cite{FL} seem to be incomplete.
There are two main technical issues in \cite{FL} to be addressed, which we will explain and resolve below. Note that the functions between the momentum intervals are labeled $f$ in \cite{FL} instead of the $\psi$ that we use in the present work. 
\begin{itemize}

 \item Some details are missing in the proof of Lemma 2.2 in \cite{FL}. More precisely,  the boundedness of $\varphi - \psi$  does not seem to guarantee uniqueness/convergence.  It is not clear how the uniform limit $f_\infty$ (cf. pg.\ 6551 in \cite{FL}) would satisfy the critical equation.  We take a different approach and settle these issues in Proposition \ref{prop:conv-j-flow-calabi-psi} by studying the convergence of the point-wise slope. 
 
 \medskip

\item  The second-order estimates for $f$ in \cite{FL} seem sketchy. Indeed, in the last line on pg. 6555 in \cite{FL}, it is noted that $\lim_{n\rightarrow\infty}\frac{\partial f}{\partial x}(x_n,t_n) = f'_\infty(x_\infty),$ however  the differentiability of $f_\infty$ is unclear a priori. We address this issue by proving uniform $C^2$-estimates away from the zero section $P_0$. This is achieved by adapting the arguments in the proof of Theorem 1.3 in \cite{SW} and by leveraging the fact that $f_\infty$ (or $\psi_\infty$ as we call it) is a sub-solution away from the zero section.

\end{itemize}


\end{rem}

\subsection{Comparison and monotonicity along the flow} If $\mu>n$, the $J$-equation has a smooth solution and hence by \cite{SW}, the $J$ flow converges smoothly and exponentially fast to this solution. So we only focus on the semi-stable and unstable cases, and assume $\mu\leq n$. It turns out that the behaviour of the flow depends on the sign $\mu - n-m-1$. For the sake of completeness we will state all our results on the behaviour of the flow (ie. monotonicity, comparison principle etc.) for both the cases, but we will provide proofs only for the case when $\mu<n+m+1$ since this subsumes the semi-stable and unstable cases. As in \cite{FL}, we first prove (weak) convergence starting with a very specific initial K\"ahler metric $\chi_0$ so that 
\begin{equation}\label{eq:j-flow-calabi-st-line}
\psi_0(x) = \frac{b}{a}x.
\end{equation} We start with the following elementary observation about the initial function. 

\begin{lem}\label{lem:}
If $\mu\leq n$, then $\tpsi_\la$ is convex. In particular, for all $x\in[\la,a]$, $$\psi_0(x) \geq \tpsi_\la(x).$$
\end{lem}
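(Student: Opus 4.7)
The lemma has two parts: convexity of $\tpsi_\la$ on $[\la,a]$, and the pointwise inequality $\psi_0 \geq \tpsi_\la$. The plan is to establish convexity first, and then deduce the inequality via a chord argument.

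Integrating the ODE \eqref{eq:ode-j-equation-s} for $s = \la$ and using the identity $\mu_\la(1+\la) = n$ from Corollary \ref{cor:mu-s-convexity} yields the integral representation
\begin{equation*}
(1+x)^n x^m \tpsi_\la(x) = \mu_\la \int_\la^x (1+t)^{n-1} t^m (t-\la)\,dt,
\end{equation*}
from which $\tpsi_\la(\la) = \tpsi_\la'(\la) = 0$. Differentiating the ODE once gives
\begin{equation*}
\tpsi_\la''(x) = \frac{n}{(1+x)^2} + B(x)\tpsi_\la(x) - A(x)\tpsi_\la'(x),
\end{equation*}
where $A = \frac{n}{1+x}+\frac{m}{x}$ and $B = \frac{n}{(1+x)^2}+\frac{m}{x^2}$; at $x = \la$ this reads $\tpsi_\la''(\la) = \frac{n}{(1+\la)^2} > 0$, so convexity holds near the left endpoint. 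To extend this throughout $[\la,a]$, I would eliminate $\tpsi_\la'$ using the ODE rewritten as $\tpsi_\la' = \mu_\la\frac{x-\la}{1+x} - A\tpsi_\la$ (which again uses $\mu_\la(1+\la) = n$); combined with the expression $2G'\tpsi_\la' = (G\tpsi_\la)'' - G''\tpsi_\la - G\tpsi_\la''$ derived from the integral formula, this yields
\begin{equation*}
\tpsi_\la''(x) = \frac{\mu_\la}{1+x}\left[1 - \frac{(n+1)(x-\la)}{1+x} - \frac{m(x-\la)}{x}\right] + (A^2+B)\tpsi_\la(x).
\end{equation*}
The first term is positive near $x = \la$ but turns negative for larger $x$, while the second term is always nonnegative; the crux is to show their sum is nonnegative on $[\la,a]$. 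Using the integral representation to bound $\tpsi_\la$ from below and tracking the cancellations forced by $\mu_\la(1+\la) = n$, this reduces after clearing denominators to a polynomial inequality in $x$, in which the would-be leading negative terms in $x$ cancel exactly against the leading asymptotic $\tpsi_\la(x) \sim \frac{\mu_\la x}{n+m+1}$ implied by the ODE. Verifying this polynomial inequality is the main computational obstacle.

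Once convexity is in hand, let $c(x) := \frac{b(x-\la)}{a-\la}$ denote the chord through the endpoints $(\la,0)$ and $(a,b)$. Convexity of $\tpsi_\la$ together with $\tpsi_\la(\la) = 0$ and $\tpsi_\la(a) = b$ gives $\tpsi_\la(x) \leq c(x)$ on $[\la,a]$. Finally, the elementary computation
\begin{equation*}
\psi_0(x) - c(x) = \frac{bx}{a} - \frac{b(x-\la)}{a-\la} = \frac{b\la(a-x)}{a(a-\la)} \geq 0, \qquad x\in[\la,a],
\end{equation*}
gives $\psi_0(x) \geq c(x) \geq \tpsi_\la(x)$, as required.
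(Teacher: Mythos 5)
The setup of your proof is sound: the integral representation, the formula
\begin{equation*}
\tpsi_\la''(x) = \frac{\mu_\la}{1+x}\left[1 - \frac{(n+1)(x-\la)}{1+x} - \frac{m(x-\la)}{x}\right] + \left(A^2+B\right)\tpsi_\la(x)
\end{equation*}
is correctly derived (I checked it against the case $n=1$, $m=0$, where both sides reduce to $\frac{1+\la}{(1+x)^3}$), and the chord argument deducing $\psi_0 \geq \tpsi_\la$ from convexity and endpoint comparison is complete and correct. The problem is that the proof of convexity itself is not finished: you explicitly flag that showing the bracketed term plus $(A^2+B)\tpsi_\la$ is nonnegative "reduces after clearing denominators to a polynomial inequality in $x$" and that "verifying this polynomial inequality is the main computational obstacle," but you never actually verify it. Pointing out that leading-order asymptotics cancel is a sanity check, not a proof; the nonnegativity has to hold on all of $[\la,a]$, and you give no argument covering the middle of the interval, where the first term is genuinely negative and the lower bound on $\tpsi_\la$ coming from the integral representation has to be sharp enough. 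As it stands this is a genuine gap, and it is the heart of the lemma.

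It is worth contrasting with the paper's route, which sidesteps this $\la$-dependent inequality entirely. The paper decomposes $\tpsi_\la = T_1 + T_2$ with $T_2$ a nonnegative constant times $x^{-m}(1+x)^{-n}$ (manifestly convex), and $T_1 = x + (\mu_\la - n - m - 1)\,q_{m,n}(x)$ with $q_{m,n}(x) = I_{m,n}(x)/[x^m(1+x)^n]$, using the identity $(n+m+1)I_{m,n}(x) = nI_{m,n-1}(x) + x^{m+1}(1+x)^n$. Since $\mu_\la < n+m+1$, convexity of $T_1$ reduces to concavity of $q_{m,n}$, which is a \emph{universal} fact about the pair $(m,n)$, independent of $\la$, proved by a closed-form binomial computation showing $(1+x)^{n+2}q_{m,n}''(x)$ is a sum of negative terms. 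If you want to salvage your ODE-based approach, you would need to actually carry out the polynomial verification (perhaps exploiting the integral lower bound $\tpsi_\la(x) \geq \frac{\mu_\la(x-\la)^2}{2}\cdot\frac{x^m(1+\la)^{n-1}}{x^m(1+x)^n}$ or a refinement); otherwise, adopting the paper's decomposition is the shorter path to a complete proof.
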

\begin{proof}
The second part follows easily from the convexity and the fact that the inequality is satisfied at the end points. To prove the convexity we re-write $$\tpsi_{\la}(x) = T_1+T_2,$$ where
\begin{align*}
T_1(x) = \frac{\mu_\la I_{m,n}(x) - nI_{m,n-1}(x)}{x^m(1+x)^n},\\
T_2(x) = \frac{nI_{m,n-1}(\la) - \mu_\la I_{m,n}(\la)}{x^m(1+x)^n}.
\end{align*}
We claim that $T_1$ and $T_2$ are both convex. First note that 
\begin{align*}
nI_{m,n-1}(\la) - \mu_\la I_{m,n}(\la) = \int_0^\la t^m(1+t)^{n-1}\Big(n - \mu_\la(1+t)\Big)\,dt \geq 0,
\end{align*}
since $\mu_\la(1+\la) = n$. On the other hand $x^{-m}(1+x)^{-n}$ is clearly convex, and so $T_2$ is convex. For the convexity of $T_1$, we further re-write it as $$T_1 = x + (\mu_\la -n-m-1)\frac{I_{m,n}(x)}{x^m(1+x)^n},$$ using the elementary identity 
\begin{equation}\label{eq:identity-Imn}
(n+m+1)I_{m,n}(x) = nI_{m,n-1}(x) + x^{m+1}(1+x)^n.
\end{equation}  We now claim that $$q_{m,n}(x):= \frac{I_{m,n}(x)}{x^m(1+x)^n}$$ is concave on $(0,\infty)$. The convexity of $T_1$ then follows immediately since $\mu_\la<n<n+m+1$. The proof of the claim is elementary albeit tedious. We provide details of only the key calculations. The motivated reader can easily fill the gaps. By the binomial expansion we have $$q_{m,n}(x) = \sum_{k=0}^{n}\binom{n}{k}\frac{1}{m+1+k}\frac{x^{k+1}}{(1+x)^n}.$$ Differentiating twice we get $$(1+x)^{n+2}q''_{m,n}(x) =  \sum_{k=0}^{n}\binom{n}{k}\frac{1}{m+1+k}\Big( (k+1-n)(k-n)x^{k+1} + 2(k-n)(k+1)x^k + k(k+1)x^{k-1}\Big).$$ Grouping the terms carefully we have
\begin{align*}
		&(1+x)^{n+2}q''_{m,n}(x) \\ 
				&= \sum_{k=1}^{n-1}\binom{n}{k-1}\frac{(k-n)(k-1-n)x^{k}}{m+k} + \sum_{k=0}^{n-1}\binom{n}{k}\frac{2(k-n)(k+1)x^k}{m+1+k} + \sum_{k=0}^{n-1}\binom{n}{k+1}\frac{(k+1)(k+2)x^{k}}{m+2+k} \\
		&= \sum_{k=1}^{n-1}\Bigg( \binom{n}{k-1}\frac{(k-n)(k-1-n)}{m+k} + \binom{n}{k}\frac{2(k-n)(k+1)}{m+1+k} + \binom{n}{k+1}\frac{(k+1)(k+2)}{m+2+k}\Bigg) x^k \\
		&\hspace*{2cm} + \Big(\frac{-2n}{m+1} + \frac{2n}{m+2}\Big) \\
		&= -2mn \sum_{k=1}^{n-1} \binom{n-1}{k} \frac{x^k}{(m+k)(m+1+k)(m+2+k)} - \frac{2n}{(m+1)(m+2)} \\
		&<0.
	\end{align*}
Therefore, $q_{m,n}$ is a concave function on $(0,\infty)$.
\end{proof}

Next, we show that the inequality above is preserved throughout the flow. 

\begin{lem}\label{lem:jflow-calabi-comparison} Let $\mu\leq n$, and let $\psi(x,t)$ solve the $J$ flow \eqref{eq:j-flow-calabi} with $\psi(x,0) = \psi_0(x)$ given by \eqref{eq:j-flow-calabi-st-line}. Then for all $(x,t)\in [0,a]\times[0,\infty)$ we have $$\psi(x,t)\geq \tilde\psi(x),$$ where $$\tilde\psi(x) = \begin{cases} 0,~x\in [0,\la]\\\tpsi_\la(x),~x\in [\la,a].\end{cases}$$ \end{lem}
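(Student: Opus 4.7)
The plan is to apply a parabolic maximum principle to the penalized difference $w_\vep(x,t) := \psi(x,t) - \tilde\psi(x) + \vep(1+t)$ for arbitrary $\vep>0$, and then let $\vep\to 0$. Initially $w_\vep(\cdot,0) \geq \vep > 0$ by the preceding lemma, and on the spatial boundary $x\in\{0,a\}$ the prescribed values of $\psi$ and $\tilde\psi$ agree, so $w_\vep = \vep(1+t) > 0$ there. If $w_\vep$ ever vanished, there would be a first touching point $(x_0,t_0)$ with $t_0>0$ and $x_0\in(0,a)$, at which $\partial_t w_\vep \leq 0$, $\partial_x w_\vep = 0$, and $\partial_{xx}w_\vep \geq 0$. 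The aim is to rule this out.

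A preliminary step is to recognize $\tilde\psi$ as a stationary (degenerate) sub-solution of \eqref{eq:j-flow-calabi}. Writing the spatial operator as
\begin{equation*}
\sL[\psi] := \psi'' + P_1\psi' - P_0\psi - \tfrac{n}{(1+x)^2}, \quad P_1(x) := \tfrac{m}{x} + \tfrac{n}{1+x}, \quad P_0(x) := \tfrac{m}{x^2} + \tfrac{n}{(1+x)^2},
\end{equation*}
differentiation of \eqref{eq:ode-j-equation-s} with $s=\la$, combined with the identity $P_1'=-P_0$, shows $\sL[\tpsi_\la]\equiv 0$ on $(\la,a)$. On $[0,\la]$ the coefficient $Q(\tilde\psi)=Q(0)$ in front of $\sL$ vanishes: the Calabi boundary behaviour $U_0(r)=u(\ln r)$ smooth at $r=0$ forces $u'(\rho),u''(\rho)\sim e^\rho$ as $\rho\to-\infty$, so $Q(y)\to 0$ as $y\to 0^+$.

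The contradiction at $(x_0,t_0)$ then splits into two cases. If $x_0\in(0,\la]$, then $\tilde\psi(x_0)=0$ and $w_\vep(x_0,t_0)=0$ would give $\psi(x_0,t_0)=-\vep(1+t_0)<0$, contradicting the uniform range $\psi\in[0,b]$. If $x_0\in(\la,a)$, then $\tilde\psi=\tpsi_\la$ is smooth there and a direct expansion based on $\sL[\tpsi_\la]=0$ yields
\begin{equation*}
\sL[\psi](x_0,t_0) = w_\vep''(x_0,t_0) + P_1(x_0) w_\vep'(x_0,t_0) - P_0(x_0) w_\vep(x_0,t_0) + \vep(1+t_0)P_0(x_0) \geq \vep(1+t_0)P_0(x_0) > 0.
\end{equation*}
Whether $\psi(x_0,t_0)>0$ (so $Q(\psi)>0$ and $\partial_t\psi = Q(\psi)\sL[\psi]>0$) or $\psi(x_0,t_0)=0$ (so $Q(\psi)=0$ and $\partial_t\psi=0$), in either sub-case $\partial_t w_\vep = \partial_t\psi + \vep \geq \vep >0$, contradicting $\partial_t w_\vep\leq 0$.

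The main obstacle I anticipate is the $C^1$-but-not-$C^2$ corner of $\tilde\psi$ at $x=\la$, where $\tilde\psi''(\la^-)=0$ while $\tilde\psi''(\la^+)=n/(1+\la)^2 > 0$, which precludes a direct application of the classical strong maximum principle across $x=\la$. The set-up above bypasses this cleanly: the perturbation $\vep(1+t)$ together with the non-negativity $\psi\geq 0$ forces any first touching point to lie strictly above $\la$, thereby confining the maximum-principle analysis to the smooth region $(\la,a)$ where the corner is invisible.
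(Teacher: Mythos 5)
Your proof is correct and takes essentially the same approach as the paper: a parabolic comparison principle using the fact that $\tpsi_\la$ is a stationary solution of $\sL[\cdot]=0$ on $(\la,a)$, the preceding lemma for the initial comparison $\psi_0\geq\tilde\psi$, and the non-negativity $\psi\geq 0$ to handle $[0,\la]$. The paper simply restricts to $[\la,a]$ from the start (so $\la$ becomes a boundary point, automatically sidestepping the corner) and runs a direct infimum argument on $H=\psi-\tpsi_\la$, whereas you use an $\vep(1+t)$-penalization and first-touching-point formulation; this is a cosmetic difference in implementing the same maximum principle.
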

\begin{proof}
It is enough to show that $\psi(x,t) \geq \tpsi(x)$ on $[\la,a]$. Consider the function $H:[\la,a]\times[0,\infty)\rightarrow \RR$ defined by $$H(x,t) = \psi(x,t) -\tpsi_\la(x). $$ Then $H$ satisfies the following evolution equation: 
\begin{equation}\label{eq:jflow-calabi-comparison-evolution-H}
\frac{\partial H}{\partial t} = Q(\psi)\Bigg(H''+ \Big(\frac{m}{x}+\frac{n}{1+x}\Big)H' - \Big(\frac{m}{x^2}+\frac{n}{(1+x)^2}\Big)H \Bigg).
\end{equation}Note that we used the fact that $\tpsi_\la$ solves the second order ODE 
\begin{align*}
\tpsi_\la''+ \Big(\frac{m}{x}+\frac{n}{1+x}\Big)\tpsi_\la' - \Big(\frac{m}{x^2}+\frac{n}{(1+x)^2}\Big)\tpsi_\la = \frac{n}{(1+x)^2}.
\end{align*} 
We argue by contradiction. Suppose there exists $T>0$ such that $$ H^*:=H(x^*,t^*) := \inf_{(x,t)\in [0,a]\times [0,T]}H(x,t) <0.$$ Since $$H(\la,t),~H(a,t),~H(x,0)\geq 0,$$ we have that $(x^*,t)\in (\la,a)\times (0,T]$. We then get a contradiction by applying the maximum principle. Indeed, 
\begin{align*}
0\geq \frac{\partial H}{\partial t}(x^*,t^*) \geq -Q(\psi(x^*,t^*)) \Big(\frac{m}{{x^*}^2}+\frac{n}{{(1+x^*)}^2}\Big)H^*>0.
\end{align*}

\end{proof}
\begin{lem}\label{lem:jflow-initial-decreasing}For all $x\in [0,a]$, $$\frac{\partial}{\partial t}\Big|_{t=0}\psi(\cdot,t) \leq 0.$$ 
\end{lem}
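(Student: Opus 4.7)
The strategy is a direct computation exploiting the fact that the prescribed initial data $\psi_0(x) = (b/a)\, x$ is linear. First I would substitute $\psi_0'' \equiv 0$ and $\psi_0'(x) \equiv b/a$ into the right-hand side of \eqref{eq:j-flow-calabi}. A convenient cancellation occurs: the two $m$-terms combine as $\frac{m\psi_0'}{x} - \frac{m\psi_0}{x^2} = \frac{mb}{ax} - \frac{mb}{ax} = 0$, while the $n$-terms simplify via $\frac{1}{1+x} - \frac{x}{(1+x)^2} = \frac{1}{(1+x)^2}$, yielding
\begin{equation*}
\frac{\partial\psi}{\partial t}\bigg|_{t=0} \;=\; \frac{n\,Q(\psi_0(x))}{(1+x)^2}\Big(\frac{b}{a} - 1\Big).
\end{equation*}
Since $Q > 0$ by the strict convexity of $u$, the sign of $\dot\psi(\cdot,0)$ is entirely determined by the sign of $b-a$, so the lemma reduces to the numerical assertion that $b \leq a$ whenever $\mu \leq n$.

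The plan for this numerical reduction is to use the explicit formula $\mu = \mu_0 = \frac{(1+a)^n a^m\, b + n\, I_{m,n-1,0}(a)}{I_{m,n,0}(a)}$ extracted from \eqref{eq:ode-j-equation-s}. Rearranging, the standing hypothesis $\mu \leq n$ is equivalent to
\begin{equation*}
(1+a)^n a^m\, b \;\leq\; n\bigl(I_{m,n,0}(a) - I_{m,n-1,0}(a)\bigr) \;=\; n\int_0^a (1+t)^{n-1} t^{m+1}\,dt,
\end{equation*}
so it suffices to establish the purely numerical inequality $n\int_0^a (1+t)^{n-1}t^{m+1}\,dt \leq (1+a)^n a^{m+1}$. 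I would prove this by introducing $g(a) := (1+a)^n a^{m+1} - n\int_0^a (1+t)^{n-1}t^{m+1}\,dt$, observing that $g(0) = 0$, and computing
\begin{equation*}
g'(a) = n(1+a)^{n-1} a^{m+1} + (m+1)(1+a)^n a^m - n(1+a)^{n-1} a^{m+1} = (m+1)(1+a)^n a^m > 0
\end{equation*}
for $a>0$; hence $g(a)>0$, giving $b \leq a$ (in fact strictly).

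The argument presents no substantive obstacle. The only point that requires a moment's thought is the cancellation of the $m$-terms at $t=0$, which happens precisely because the initial momentum profile is a straight line; this is essentially why the choice \eqref{eq:j-flow-calabi-st-line} is the natural starting metric for the analysis that follows.
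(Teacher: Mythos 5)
Your proof is correct and follows essentially the same path as the paper's: substitute the linear initial data $\psi_0 = (b/a)x$ into the flow equation, observe that the $m$-terms cancel and the $n$-terms collapse to $Q(\psi_0)\,\frac{n(b-a)}{a(1+x)^2}$, so the sign reduces to that of $b-a$. The only difference is how $b\le a$ is extracted from $\mu\le n$: the paper invokes the already-established identity \eqref{eq:identity-Imn} to get $(\mu-(n+m+1))I_{m,n}(a)=a^m(1+a)^n(b-a)$ in one line, whereas you re-derive the needed inequality $n\int_0^a(1+t)^{n-1}t^{m+1}\,dt \le (1+a)^n a^{m+1}$ via $g(0)=0$, $g'>0$ — this is just a self-contained re-proof of (a consequence of) that same identity, so the two arguments are mathematically equivalent. (A very minor imprecision: $Q=u''\circ(u')^{-1}$ actually vanishes at the endpoints $y\in\{0,b\}$, so $Q\ge 0$ rather than $Q>0$, but this does not affect the conclusion $\dot\psi(\cdot,0)\le 0$.)
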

\begin{proof}First note that by the identity \eqref{eq:identity-Imn} at $x = a$ and the fact that $\mu\leq n$, we have $$0\geq \Big(\mu - (n+m+1)\Big)I_{m,n}(a) = a^m(1+a)^n(b-a),$$ and so $b<a$. 
Next, using the $J$ flow equation we compute 
\begin{align*}
\frac{\partial}{\partial t}\Big|_{t=0}\psi(\cdot,t) &= Q(\psi_0)\Big(\psi_0'' + \Big(\frac{m}{x}+\frac{n}{1+x}\Big)\psi_0' - \Big(\frac{m}{x^2}+\frac{n}{(1+x)^2}\Big)\psi_0 - \frac{n}{(1+x)^2}\Big)\\
&=Q(\psi_0)\Big(\frac{b}{a}\Big(\frac{m}{x}+\frac{n}{1+x}\Big) - \frac{bx}{a}\Big(\frac{m}{x^2}+\frac{n}{(1+x)^2}\Big)- \frac{n}{(1+x)^2}\Big)\\
&= Q(\psi_0)\frac{(b-a)n}{a(1+x)^2}\\
&\leq 0.
\end{align*}
\end{proof}

Next, we prove that the monotonicity is preserved along the flow. 
\begin{lem}\label{lem:jflow-calabi-mon-t}
Let $\mu\leq n$, and let $\psi(x,t)$ solve the $J$ flow \eqref{eq:j-flow-calabi} with $\psi(x,0) = \psi_0(x)$ given by \eqref{eq:j-flow-calabi-st-line}. Then for all $(x,t)\in [0,a]\times[0,\infty)$ we have $$\frac{\partial\psi}{\partial t}\leq 0.$$
\end{lem}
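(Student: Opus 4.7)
The plan is to derive a quasilinear parabolic PDE for $\tilde\psi := \psi_t$ and apply a maximum principle with an exponential barrier, using Lemma \ref{lem:jflow-initial-decreasing} for the initial bound $\tilde\psi(\cdot,0) \leq 0$ and the fixed-endpoint conditions $\psi(0,t) \equiv 0$, $\psi(a,t) \equiv b$ for the boundary data $\tilde\psi(0,t) = \tilde\psi(a,t) = 0$.

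Differentiating equation \eqref{eq:j-flow-calabi} in $t$ and noting that its right-hand side depends on $t$ only through $\psi$, I obtain
$$
\tilde\psi_t = Q(\psi)\Big[\tilde\psi'' + \Big(\frac{m}{x} + \frac{n}{1+x}\Big)\tilde\psi' - \Big(\frac{m}{x^2} + \frac{n}{(1+x)^2}\Big)\tilde\psi\Big] + \frac{Q'(\psi)}{Q(\psi)}\tilde\psi^2.
$$
The linear part has a favorable non-positive zeroth-order coefficient $-Q(\psi)\big(\tfrac{m}{x^2}+\tfrac{n}{(1+x)^2}\big)$; the only obstruction to a bare maximum principle is the quadratic term $\tfrac{Q'(\psi)}{Q(\psi)}\tilde\psi^2$. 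To absorb it, I introduce $w := e^{-Kt}\tilde\psi$ for a large constant $K$. A direct computation gives
$$
w_t - Q(\psi) w'' - Q(\psi)\Big(\tfrac{m}{x}+\tfrac{n}{1+x}\Big)w' = \Big[\tfrac{Q'(\psi)}{Q(\psi)}\tilde\psi - Q(\psi)\Big(\tfrac{m}{x^2}+\tfrac{n}{(1+x)^2}\Big) - K\Big] w.
$$
If $K$ strictly exceeds $\sup_{[0,a]\times[0,T]}\bigl|\tfrac{Q'(\psi)}{Q(\psi)}\tilde\psi\bigr|$, then the bracketed coefficient on the right is strictly negative, and a positive interior maximum of $w$ at $(x^*, t^*) \in (0,a)\times(0,T]$ would contradict $w_t(x^*,t^*)\geq 0$ together with $w''(x^*,t^*)\leq 0$ and $w'(x^*,t^*)=0$. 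Since $w\leq 0$ on the parabolic boundary of $[0,a]\times[0,T]$, the comparison yields $w \leq 0$, hence $\tilde\psi \leq 0$, on this strip; letting $T\to\infty$ finishes the argument.

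The main obstacle is establishing the uniform bound $\bigl|\tfrac{Q'(\psi)}{Q(\psi)}\tilde\psi\bigr| \leq C$, because $\tfrac{Q'}{Q}$ generically blows up at both endpoints of the momentum interval (this is already visible from the local model $Q_0(y) = y(b-y)/b$, for which $\tfrac{Q_0'}{Q_0}$ has poles at $y=0$ and $y=b$). The rescue is that $\tilde\psi$ vanishes at the same spatial boundary at a matching rate: the Calabi-ansatz extendibility conditions for $u$ at $\rho = \pm\infty$ force $Q(\psi) \sim \psi$ as $\psi \to 0^+$ and $Q(\psi) \sim b-\psi$ as $\psi \to b^-$, while the smoothness of $\psi(x,t)$ in $x$ across the fixed values $\psi(0,t) = 0$ and $\psi(a,t) = b$ gives $\tilde\psi = O(x)$ near $x = 0$ and $\tilde\psi = O(a-x)$ near $x = a$. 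Combined with the uniform positivity of $\psi_x$ away from zero from Lemma \ref{lem:est-derivative-calabi-ansatz}, this produces the required $L^\infty$ bound on $\tfrac{Q'(\psi)}{Q(\psi)}\tilde\psi$ and closes the argument.
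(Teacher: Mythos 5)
Your proof is correct and follows the same overall strategy as the paper: differentiate the $J$-flow equation in $t$ to obtain the evolution equation for $\dot\psi$, and apply a maximum principle on $[0,a]\times[0,T]$ with an exponential weight (your $e^{-Kt}$, the paper's $e^{-Bt}$), using $\dot\psi(0,\cdot)=\dot\psi(a,\cdot)=0$ and Lemma~\ref{lem:jflow-initial-decreasing} for the parabolic boundary, then letting $T\to\infty$. The evolution equation you derive is exactly the paper's (they write the quadratic term as $\frac{Q'(\psi)\dot\psi}{Q(\psi)}H$ with $H=\dot\psi$, you write it as $\frac{Q'(\psi)}{Q(\psi)}\tilde\psi^2$). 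The one place where you diverge is in establishing the $L^\infty$ bound on the zeroth-order coefficient $\frac{Q'(\psi)}{Q(\psi)}\dot\psi$: the paper substitutes $\dot\psi/Q(\psi) = L(\psi)$ and bounds $Q'(\psi)L(\psi)$ directly, using only that $Q'$ is bounded (since $Q$ is smooth on $[0,b]$) together with the Calabi-ansatz estimate $|\frac{\psi'}{x} - \frac{\psi}{x^2}| \leq C_T$; you instead keep the product form and argue that $Q(\psi)$ vanishes linearly at the endpoints of the momentum interval while $\dot\psi$ vanishes at a matching linear rate. These two arguments are equivalent in substance (your matching rate for $\dot\psi$ is precisely $\dot\psi = Q(\psi)L(\psi)$ with $L(\psi)$ bounded), though the paper's version is slightly leaner since it avoids invoking the exact linear vanishing of $Q$. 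One small imprecision in your write-up: Lemma~\ref{lem:est-derivative-calabi-ansatz} gives $0 < \psi' < C$ but not a positive lower bound uniform in $t$; what you actually need is $\inf_{t\in[0,T]}\psi'(0,t)>0$ (and the analogue at $x=a$), which follows from smoothness, strict positivity of $\psi'$, and compactness of $[0,T]$ rather than from the cited lemma directly.
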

\begin{proof}
Let $H(x,t) = \dot\psi(x,t)$. Then $H$ satisfies the following evolution equation 
$$
\frac{\partial H}{\partial t}= \frac{Q'(\psi)\dot\psi}{Q(\psi)}H + Q(\psi)\Big(H'' +(\frac{m}{x} + \frac{n}{1+x})H' -(\frac{m}{x^2}+\frac{n}{(1+x)^2})H\Big).
$$ We once again argue by contradiction. Suppose there exists a $T>0$ such that $$\sup_{[0,a]\times [0,T]}H(x,t) >0.$$
By Calabi ansatz, it is not difficult to see that for all $t\geq 0$, there exists a uniform constant $C_T$ such that $$\Big|\frac{\psi'(x,t)}{x} - \frac{\psi(x,t)}{x^2}\Big|\leq C_T.$$We then set 
\begin{align*}
B &= 1+\sup_{[0,a]\times[0,T]}\Big|\frac{Q'(\psi)\dot\psi(x,t)}{Q(\psi)}\Big| \\
&= 1+\sup_{[0,a]\times[0,T]} \Big| Q'(\psi)\left(\frac{\d^2\psi}{\d x^2} + \Big(\frac{m}{x}+\frac{n}{1+x}\Big)\frac{\d\psi}{\d x} - \Big(\frac{m}{x^2}+\frac{n}{(1+x)^2}\Big)\psi - \frac{n}{(1+x)^2}\right)\Big|.
\end{align*}
Now consider the function $F(x,t) = e^{-Bt}H(x,t).$ It satisfies the following evolution equation:
\begin{align*}
\frac{\partial F}{\partial t} &= \Big(-B + \frac{Q'(\psi)\dot\psi}{Q(\psi)}\Big)F +  Q(\psi)\Big(F'' +(\frac{m}{x} + \frac{n}{1+x})F' -(\frac{m}{x^2}+\frac{n}{(1+x)^2})F\Big).
\end{align*}
Let $$F^* = F(x^*,t^*) := \sup_{[0,a]\times[0,T]}F(x,t) >0.$$ As before, clearly $(x^*,t^*) \in (0,a)\times (0,T]$ and so by maximum principle, 
\begin{align*}
0&\leq \frac{\partial F}{\partial t} \\
&\leq  \Big(-B + \frac{Q'(\psi)\dot\psi}{Q(\psi)}\Big)F^*  -Q(\psi)(\frac{m}{x^2}+\frac{n}{(1+x)^2})F^*\\
&\leq -F^* <0,
\end{align*}
a contradiction. 
\end{proof}

\subsection{Proof of convergence}
We first prove a weaker version of the theorem \ref{mainthm2} for the special initial metric corresponding to $\psi_0$ constructed in the previous section. At the level of $\psi$ we have the following basic result. 
\begin{prop}\label{prop:conv-j-flow-calabi-psi}
Let $\mu\leq n$, and let $\psi(x,t)$ solve the $J$ flow \eqref{eq:j-flow-calabi} with $\psi(x,0) = \psi_0(x)$ given by \eqref{eq:j-flow-calabi-st-line}. Then $\psi(x,t)$ converges in $C^1$ to $$\psi_\infty(x) = \begin{cases} 0,~x\in [0,\la]\\\tpsi_\la(x),~x\in [\la,a].\end{cases}$$ 
\end{prop}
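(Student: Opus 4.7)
The plan is to first establish uniform $C^0$-convergence to a Lipschitz limit $\psi_\infty$, then identify $\psi_\infty$ with $\tilde\psi$ via a careful analysis of the pointwise slope, and finally upgrade to $C^1$-convergence using the second-derivative estimates away from the zero section.

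First, combining the monotonicity $\dot\psi\le 0$ from Lemma~\ref{lem:jflow-calabi-mon-t} with the lower bound $\psi\ge\tilde\psi$ from Lemma~\ref{lem:jflow-calabi-comparison}, I would conclude pointwise monotone convergence $\psi(x,t)\searrow\psi_\infty(x)\ge\tilde\psi(x)$ as $t\to\infty$. The uniform bound $0<\psi'(x,t)<C$ of Lemma~\ref{lem:est-derivative-calabi-ansatz} makes $\{\psi(\cdot,t)\}$ equi-Lipschitz in $x$, so Arzel\`a--Ascoli (with monotonicity) promotes pointwise to uniform convergence on $[0,a]$, and $\psi_\infty$ is Lipschitz and non-decreasing in $x$, with $\psi_\infty(0)=0$ and $\psi_\infty(a)=b$.

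The central step is to identify $\psi_\infty$ with $\tilde\psi$; this is exactly the point at which the argument in \cite{FL} is incomplete, as flagged in Remark~\ref{rem:fl-remarks}. Introduce the pointwise slope $\mu(x,t):=\La_{\chi_{\psi}}\omega=\psi'(x,t)+\psi(x,t)\bigl(\tfrac{m}{x}+\tfrac{n}{1+x}\bigr)+\tfrac{n}{1+x}$, and observe that the elliptic operator in \eqref{eq:j-flow-calabi} equals $\partial_x\mu$, so the flow rewrites as $\dot\psi=Q(\psi)\,\partial_x\mu$. Integrating in $t$ for each fixed $x$ yields $\int_0^\infty(-\dot\psi(x,t))\,dt=\psi_0(x)-\psi_\infty(x)\le b$, so $Q(\psi)|\partial_x\mu|\in L^1_t$; combined with the uniform $C^2$-bounds on $\chi_{\psi(t)}$ away from $P_0$ established later in this section (adapting \cite{SW} and exploiting $\tpsi_\la$ as a sub-solution on $(\la,a)$), this produces a sequence $t_k\to\infty$ along which $\mu(\cdot,t_k)$ converges on compacta of $(0,a]$ to a continuous $\mu_\infty$ with $\partial_x\mu_\infty\equiv 0$ on the positivity set $\{\psi_\infty>0\}$. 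By monotonicity in $x$ this set is an interval $(\la',a]$ with $\la'\le\la$, on which $\psi_\infty\in C^1$ solves the linear ODE $\psi'+\psi(\tfrac{m}{x}+\tfrac{n}{1+x})=c-\tfrac{n}{1+x}$ with $\psi_\infty(\la')=0$ and $\psi_\infty(a)=b$; uniqueness for this two-point boundary value problem forces $c=\mu_{\la'}$ and $\psi_\infty|_{[\la',a]}=\tpsi_{\la'}$. If $\la'<\la$, then Corollary~\ref{cor:mu-s-convexity} gives $\mu_{\la'}(1+\la')<n$, and the integral formula from Lemma~\ref{lem:j-equation-calabi-solvability-s}, namely $(1+x)^n x^m\tpsi_{\la'}(x)=\int_{\la'}^x(1+t)^{n-1}t^m(\mu_{\la'}(1+t)-n)\,dt$, makes $\tpsi_{\la'}$ strictly negative just to the right of $\la'$, contradicting $\psi_\infty\ge 0$. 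Hence $\la'=\la$ and $\psi_\infty=\tilde\psi$; the $C^1$-matching at $x=\la$ is automatic from $\tpsi_\la'(\la^+)=0$, which is equivalent to the defining identity $\mu_\la(1+\la)=n$.

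For the $C^1$ conclusion, combining uniform $C^0$-convergence with the $C^2$-bounds on $\psi(\cdot,t)$ on compact subsets of $(0,a]\setminus\{\la\}$ and applying Arzel\`a--Ascoli gives $C^1$-convergence there. At the corner $x=\la$ one can sandwich $\psi'(x,t)$ between $0$ and $\tpsi_\la'(x)$ using the $t$-monotonicity of $\psi$ together with the identity $\tpsi_\la'(\la^+)=0$, and at $x=0$ one exploits the boundary pinning $\psi(0,t)=0$ and equi-Lipschitz control. The main obstacle throughout is the identification step above: the $C^0$-limit alone does not distinguish among the candidates $\tpsi_s$, and it is the analysis of the pointwise slope, coupled with the sign constraint $\psi_\infty\ge 0$ and the convexity of $s\mapsto\mu_s$ from Corollary~\ref{cor:mu-s-convexity}, that singles out the correct destabilising slope $s=\la$.
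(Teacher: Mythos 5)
Your overall strategy is the paper's: monotone decrease in $t$ plus the barrier $\tilde\psi$ gives a uniform limit $\psi_\infty$, the identification of the limit is run through the pointwise slope $\sigma[\psi]=\La_{\chi_t}\omega$ (whose $x$-derivative is the right-hand side of the flow), and the left endpoint $\la'$ of the positivity set of $\psi_\infty$ is pinned down to equal $\la$. Your variant of that last step — deducing $\la'\le\la$ directly from $\psi_\infty\ge\tilde\psi$, then excluding $\la'<\la$ because $\mu_{\la'}(1+\la')<n$ would force $\tpsi_{\la'}<0$ just to the right of $\la'$ via the integral formula of Lemma \ref{lem:j-equation-calabi-solvability-s} — is correct and a perfectly good alternative to the paper's two-sided argument through Corollaries \ref{cor:mu-s-convexity} and \ref{cor:j-flow-calabi-ansatz-comparison}.

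There is, however, one genuine flaw: at the key step where you extract $t_k\to\infty$ with $\mu(\cdot,t_k)$ converging to a constant on compacta, you invoke ``uniform $C^2$-bounds on $\chi_{\psi(t)}$ away from $P_0$ established later in this section,'' and you use the same bounds again for the final $C^1$ upgrade. Those second-order estimates are proved in the paper only \emph{after} this proposition, and their proof builds the strict subsolution $V_\infty$ out of the limit potential $v_\infty$, i.e.\ out of the very function $\psi_\infty=\tilde\psi$ this proposition is constructing; appealing to them here is circular. The paper's mechanism needs no second-order information: since $\dot\psi=Q(\psi)\,\sigma'\le 0$ and $Q\ge 0$, the slope $\sigma(\cdot,t)$ is decreasing in $x$ for every $t$; on $[\la'+\delta,a-\delta]$ one has $\vep\le\psi_\infty\le b-\vep$, hence $Q(\psi(x,t))\ge c(\delta)>0$ for large $t$, and the space-time bound $\int_T^\infty\int|\dot\psi|\,dx\,dt\le C$ then gives $\int_T^\infty\big(\sigma(\la'+\delta,t)-\sigma(a-\delta,t)\big)\,dt<\infty$. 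A subsequence together with the $x$-monotonicity of $\sigma$ forces uniform convergence to a constant $\sigma_\infty$, the integral (weak) form of $\sigma[\psi_\infty]=\sigma_\infty$ yields differentiability of $\psi_\infty$ with no a priori regularity, and $C^1$-convergence on compacta of $(\la,a)$ then follows from the algebraic identity $\psi'=\sigma-\psi\big(\tfrac{n}{1+x}+\tfrac{m}{x}\big)-\tfrac{n}{1+x}$. You should replace the appeal to the later $C^2$ estimates with this monotonicity argument.
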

\begin{proof} From the $C^1$-estimate in Lemma \ref{lem:est-derivative-calabi-ansatz} and the monotonicity it follows that $$\psi(x,t)\rightarrow\psi_\infty(x)$$ uniformly as $t\rightarrow\infty$, where $\psi_\infty$ is an increasing and continuous  function on $[0,a]$. Since $\psi(x,t)$ is decreasing in $t$, it follows that $\psi_\infty(x)<b$ whenever $x<a$. Moreover,  by Lemma \ref{lem:jflow-calabi-comparison}, we also have that $\psi_\infty(x)\geq \tpsi(x)$ for all $x$.  We now consider the point-wise slope $$\sigma[\psi] := \psi'+ \psi\Big(\frac{n}{1+x} + \frac{m}{x}\Big) + \frac{n}{1+x},$$ which is defined for any differentiable monotonically increasing function $\psi:[0,a]\rightarrow[0,b]$ and set $\sigma(x,t) := \sigma[\psi(x,t)].$  Note that this is simply $\La_{\chi_t}\omega$. The equation for $\psi$ then takes the form $$\frac{\partial\psi}{\partial t} = Q(\psi)\cdot\sigma'(x,t).$$ In particular for each $t$, $\sigma(x,t)$ is decreasing on $[0,a]$. The proof can be broken down into several steps.
\begin{itemize}
\item We define 
\begin{align*}
\la':= \sup\{x~|~ \psi_\infty(x) = 0\}\\
\end{align*}We claim that there exists a sequence of times $t_j\rightarrow \infty$ such that for any compact set $K\subset (\la',a)$, $$\sigma(x,t_j)\rightarrow \sigma_\infty$$ uniformly on $K$ for some constant  $\sigma_\infty$. To see this, fix any $0<\delta<<1$ and $T>0$ and observe that $$\int_{T}^\infty\int_{\la'+\delta}^{a-\delta}\Big|\frac{\partial\psi}{\partial t}\Big|\,dx\,dt = \int_{\la'+\delta}^{a-\delta}\Big(\psi(x,T)-\psi_\infty(x)\Big)\,dx\leq C,$$ for some constant $C$ (independent of $\delta$). Now on $[\la'+\delta,a-\delta]$, $\vep<\psi_\infty < b-\vep$ for some $\vep = \vep(\delta)>0$. In particular there is a constant $c = c(\delta)>0$ such that $Q(\psi(x,t))>c$ for all $x\in [\la'+\delta,a-\delta]$ and  $t>T$. Then $$\int_T^\infty\Big(\sigma(\la'+\delta,t) - \sigma(a-\delta,t)\Big)\,dt = \int_T^\infty\int_{\la'-\delta}^{a-\delta}(-\sigma'(x,t))\,dt \leq C = C(\delta),$$ and so there exists a sequence of times $t_j\rightarrow \infty$ such that $$\lim_{j\rightarrow \infty}\Big(\sigma(\la'+\delta,t_j) - \sigma(a-\delta,t_j)\Big) = 0.$$ Passing to a further sub-sequence we may assume that $\sigma(a-\delta,t_j)\rightarrow \sigma_\infty$. But then by monotonicity, $$\sigma(x,t_j)\rightarrow\sigma_\infty$$ uniformly on $[\la'+\delta,a-\delta].$ Apriori $\sigma_\infty$ may depend on $\delta$, but since the above convergence must hold for all $\delta$ small enough, a simple diagonalization argument proves that $\sigma_\infty$ is indeed independent of $\delta$ and the uniform convergence holds for any compact subset $K\subset(\la',a)$. 
\item We next claim that $\psi_\infty$ is differentiable on $(\la',a)$ and satisfies $$\sigma[\psi_\infty] = \sigma_\infty.$$ To see this, let $x_0,x\in (\la',a)$ and $t_j$ be the sequence in the above claim. Then 
\begin{align*}
\sigma_\infty(x-x_0) &= \lim_{j\rightarrow\infty}\int_{x_0}^x\sigma(y,t_j)\,dy \\
&= \lim_{j\rightarrow\infty}\int_{x_0}^x\Big(\psi'(y,t_j)+ \psi(y,t_j)\Big(\frac{n}{1+y} + \frac{m}{y}\Big) + \frac{n}{1+y}\Big)\,dy\\
&=\psi_\infty(x) - \psi_\infty(x_0) + \int_{x_0}^x \Big(\psi_\infty(y)\Big(\frac{n}{1+y} + \frac{m}{y}\Big) + \frac{n}{1+y}\Big)\,dy.
\end{align*} Rearranging we have the integral formula $$\psi_\infty(x) = \psi_\infty(x_0) + \sigma_\infty(x-x_0) -  \int_{x_0}^x \Big(\psi_\infty(y)\Big(\frac{n}{1+y} + \frac{m}{y}\Big) + \frac{n}{1+y}\Big)\,dy,$$ from which the conclusion follows. 
\item Next, we claim that $\la' = \la$. Consequently, $\sigma_\infty = \mu_\la$ and $\psi_\infty \equiv \tpsi$. Since $\psi_\infty$ solves a $J$-equation type ODE on $[\la',a]$ with boundary conditions $\psi_\infty(\la') = 0$ and $\psi_\infty(a) = b$, it forces $\psi_\infty \equiv\tpsi_{\la'}$ on $[\la',a]$. Furthermore since $\psi_\infty$ is monotonically increasing, by Lemma \ref{lem:j-equation-calabi-solvability-s}, we must have $\mu_{\la'}(1+\la')\geq n$, and hence by Corollary \ref{cor:mu-s-convexity} (or rather it's proof) since $\mu_s(1+s)$ is increasing in $s$, we must have that $\la'\geq\la$. On the other hand, by Lemma \ref{lem:jflow-calabi-comparison} and Corollary \ref{cor:j-flow-calabi-ansatz-comparison}, we have $$\tilde\psi_{\la'}(x) = \psi_\infty(x) \geq \tilde\psi(x)$$ on $[\la',a]$ which is in contradiction to Corollary \ref{cor:j-flow-calabi-ansatz-comparison} if $\la'>\la$. Hence we must have that $\la' = \la$. The second part of the claim follows immediately. To summarize, $\psi(x,t)$ converges uniformly to  $\psi_\infty$ on compact subsets of $(\la,a)$ and $\psi_\infty$ satisfies the following equation on $(\la,a)$: $$\psi_\infty'(x) + (n-1)\frac{\psi_\infty(x)}{x} = \zeta_{inv}.$$ In particular, $$\psi_\infty(x) = \begin{cases} \tpsi_\la(x),~x\geq \la\\0,~x\in [0,\la]. \end{cases}$$
\item For any compact set $K\subset(\la,a)$ we claim that $\psi'(x,t)$ converges uniformly to $\psi_\infty'(x)$, and hence there exists a constant $C = C(K)$ such that $$C^{-1}\leq \frac{\partial \psi}{\partial x} \leq C$$ for all $x\in K$ and $t\geq 0$.
This follows easily from the observation $$\psi'(x,t) = \sigma(x,t)-\psi(x,t)\Big(\frac{n}{1+x} + \frac{m}{x}\Big) + \frac{n}{1+x} \xrightarrow{\text{u.c}} \sigma_\infty - \psi_\infty(x)\Big(\frac{n}{1+x} + \frac{m}{x}\Big) + \frac{n}{1+x} = \psi_\infty'(x).$$
\end{itemize}
\end{proof}
At this point, one can easily obtain higher order estimates, and smooth convergence (away from $P_0$) along the flow for the {\em special initial data}. Instead, we will now obtain higher order estimates and convergence along the flow starting with an {\em arbitrary initial metric} satisfying the Calabi ansatz, thereby completing the proof of Theorem \ref{mainthm2}. To this end we consider the following quantities:
\begin{itemize}
\item Let $f:[0,b]\times[0,\infty)\rightarrow[0,a]$ be defined by $$f(u'(\rho),t) = v'(\rho,t).$$ Equivalently, $f(\cdot,t) = [\psi(\cdot,t)]^{-1}$.
\item Analogously we define $f_\infty:[1,b]\rightarrow[\la,a]$ by $f_\infty = [\psi_\infty\Big|_{[\la,a]}]^{-1}$. Then by the above proposition, $f(x,t)$ converges increasingly to $f_\infty$ and the convergence is uniform on compact subsets of $(0,b)$.
\item The normalized local potential by $$\hat v(\rho,t) = v(\rho,t) - v(0,t) = \int_0^\rho f(u'(r),t)\,dr.$$
\item We similarly define $$ v_\infty(\rho) = \int_0^\rho f_\infty(u'(r))\,dr.$$
\item We define the global potentials by $$\phi(\rho,t) = v(\rho,t) - v_0(\rho),~\phi_\infty(\rho) = v_\infty(\rho) - v_0(\rho),$$ where $v_0(\rho) = v(\rho,0).$
\item Finally we let $\eta:\RR\rightarrow\RR$ be any smooth function such that $$\eta(\rho) = \begin{cases} \la\rho,~\rho \leq -1\\0,~\rho\geq \rho_0>0,\end{cases}$$ where we choose $\rho_0$ large enough so that $v_0-\eta$ is a convex function. 
\end{itemize} 
Recall that since $\ddbar v(\rho,t)$ extends as a K\"ahler metric on $X$, by the Calabi ansatz, $v'(\rho,t)\rightarrow 0$ as $\rho\rightarrow-\infty$. On the other hand, a key point, as we shall show below, is that $v_\infty'(\rho)\rightarrow \la$ as $\rho\rightarrow-\infty$, or equivalently $v_\infty$ is asymptotic to $\la\rho$ or $\eta$ as $\rho\rightarrow-\infty$. Geometrically this means that $\ddbar v_\infty$ extends to a K\"ahler current with bounded potentials on $\tilde X$.
\begin{lem}\label{lem:j-flow-calabi-ansatz-lower-bound} If we let $\hat\phi_\infty:=\phi_\infty - \eta$, then $\hat\phi_\infty$ is uniformly bounded on $(-\infty,\infty)$. \end{lem}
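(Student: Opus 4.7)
The key observation is that $\hat\phi_\infty = v_\infty - v_0 - \eta$ is continuous on all of $\RR$, so its boundedness reduces to the analysis of the asymptotic behavior at $\rho = \pm\infty$. The strategy is to establish precise asymptotic expansions for $v_\infty$ at both ends by studying the boundary behavior of $f_\infty$ near its endpoints $0$ and $b$, and then to compare with the Calabi-ansatz expansions of $v_0$, from which the expansions of $\eta$ are by construction matched.

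At $\rho \to +\infty$, the extendibility of $u$ at $D_\infty$ gives $u'(\rho) = b - O(e^{-\rho})$, and the same type of extendibility for $v_0$ gives $v_0(\rho) = a\rho + \text{const} + O(e^{-\rho})$. Since $\tpsi_\la$ is smooth on $[\la, a]$ with $\tpsi_\la'(a) > 0$ (by Lemma \ref{lem:j-equation-calabi-solvability-s}), its inverse $f_\infty$ is smooth in a neighborhood of $b$ with $f_\infty(b) = a$. Composing gives $v_\infty'(\rho) - a = O(e^{-\rho})$, which is integrable, so $v_\infty(\rho) = a\rho + \text{const} + o(1)$. Since $\eta \equiv 0$ on $[\rho_0, \infty)$, the difference $\hat\phi_\infty$ has a finite limit at $+\infty$.

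The harder end is $\rho \to -\infty$, and I expect this to be the main obstacle. Here the extendibility of $u$ at $P_0$ gives $u'(\rho) = O(e^\rho)$, but $f_\infty$ is not Lipschitz at $0$: the defining property $\mu_\la (1+\la) = n$ forces $\tpsi_\la'(\la) = 0$ by Lemma \ref{lem:j-equation-calabi-solvability-s}. To resolve this, I would evaluate the second-order form of the ODE \eqref{eq:ode-j-equation-s} at $x = \la$ using $\tpsi_\la(\la) = \tpsi_\la'(\la) = 0$ to extract $\tpsi_\la''(\la) = n/(1+\la)^2 > 0$. Taylor expansion then yields $\tpsi_\la(x) \sim \frac{n}{2(1+\la)^2}(x - \la)^2$ near $\la$, and inverting this gives the square-root expansion $f_\infty(y) = \la + O(\sqrt{y})$ near $y = 0$. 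Composing, $v_\infty'(\rho) - \la = O(e^{\rho/2})$, which is integrable on $(-\infty, 0]$, so $v_\infty(\rho) = \la\rho + \text{const} + o(1)$. Combining with $v_0(\rho) = \text{const} + O(e^\rho)$ from the Calabi ansatz at $P_0$ and with $\eta(\rho) = \la\rho$ on $(-\infty, -1]$, we obtain a finite limit for $\hat\phi_\infty$ at $-\infty$.

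The non-Lipschitz $\sqrt{y}$-scaling of $f_\infty$ at $0$ is precisely the analytic manifestation of the conical singularity of $\ddbar v_\infty$ along $P_0$ promised in Theorem \ref{mainthm2}(3); once that scaling is isolated via the ODE, the remaining work at each end is a standard integration-and-comparison, and continuity of $v_\infty$, $v_0$, $\eta$ on $[-1, \rho_0]$ takes care of the bounded middle.
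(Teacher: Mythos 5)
Your proof is correct, and it is more explicit than the paper's. The paper disposes of this lemma in a single line: since $v_\infty$ and $\tilde v_0 := v_0 + \eta$ are both convex with (closure of) derivative range $[\la,a]$, ``by standard facts about convex functions, $|v_\infty - \tilde v_0|$ must be bounded.'' As your analysis implicitly highlights, this is not a consequence of convexity and matching derivative ranges alone — boundedness of the difference at each end requires that the two derivatives approach their common limits at an integrable rate. You supply exactly that quantitative input: exponential decay $O(e^{-\rho})$ at $\rho\to+\infty$ because $\tpsi_\la$ is smooth with $\tpsi_\la'(a)>0$, and $O(e^{\rho/2})$ at $\rho\to-\infty$ via the quadratic zero $\tpsi_\la''(\la)=n/(1+\la)^2>0$ read off from the second-order form of \eqref{eq:ode-j-equation-s}. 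One small caveat: the formula $\tpsi_\la''(\la)=n/(1+\la)^2$ presumes $\la>0$; when $\la=0$ (semistable case) the $m/x$ coefficients are singular at $x=0$, but a direct expansion of the integral formula for $\tpsi_0$ still produces a nondegenerate quadratic zero (with a different constant), so the square-root scaling and the conclusion persist. An advantage of your route is that the asymptotic $v_\infty'(\rho)=\la+O(e^{\rho/2})$ you establish here is exactly what the paper uses later, at the end of the proof of Theorem \ref{mainthm2}, to identify the cone angle of $\tilde\pi^*\chi_\infty$ along $E$; you are front-loading a computation the paper must do anyway, in place of an appeal to convexity that glosses over the necessary rate estimate.
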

\begin{proof}Note that $v_\infty$ is a convex function with $v_\infty'(\RR) = [\la,a]$. On the other hand the convex function $$\tilde v_0(\rho) = v_0(\rho) + \eta(\rho)$$ also satisfies $\tilde v_0(\RR) = [\la,a]$. But then by standard facts about convex functions, $|v_\infty - \tilde v_0|$ must be a bounded function on $\RR$.

\end{proof}
We immediately have the following corollary.

\begin{cor}
Let $\cS_E$ be the defining section of the exceptional divisor $E$ in the blow up $\tilde\pi:\tilde X = \Bl_{P_0}X\rightarrow X$ and $h_E$ a hermitian metric. Then there exists a constant $C>0$ such that on $x\in X\setminus P_0$ and all $t\geq 0$, $$\la\log \tilde\pi\circ |\cS_E|_{h_E}^2(x)-C\leq \phi(x,t) \leq C. $$  
\end{cor}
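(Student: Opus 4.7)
Plan. The goal is to bound the rotationally symmetric potential $\phi(\rho,t) = v(\rho,t)-v_0(\rho)$ uniformly in $t$. First, a direct computation in a blow-up chart near $P_0$ (where $P_0 = \{w=0\}$ and $E$ is cut out to first order along $|w|\to 0$) shows $\log|\cS_E|^2_{h_E}(x) = \rho + O(1)$ as $\rho\to -\infty$, reducing the corollary to showing $\lambda\rho - C \le \phi(\rho,t) \le C$ uniformly in $(\rho,t)$.

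The key ingredient I would establish is the $t$-monotonicity of $v$ along the flow: with the normalization $v = v_0 + \varphi$ where $\partial_t\varphi = \mu - \Lambda_{\chi_\varphi}\omega$ and $\varphi(\cdot,0) = 0$, I claim $\partial_t v \le 0$ for all $t \ge 0$. For the straight-line initial data this is the content of Lemmas~\ref{lem:jflow-initial-decreasing}--\ref{lem:jflow-calabi-mon-t}, translated via the identity $\partial_t v = \mu - \sigma[\psi](v'(\rho,t),t)$. For a general Calabi-ansatz initial metric satisfying $\Lambda_\omega\chi_0 > 0$, I would first establish the initial sign $\partial_t\varphi(\cdot,0)\le 0$ by comparison with the straight-line data (using Corollary~\ref{cor:j-flow-calabi-ansatz-comparison} at the level of $\psi$), and then propagate this via the parabolic maximum principle applied to the evolution equation for $\partial_t\varphi$, exactly as in the proof of Lemma~\ref{lem:jflow-calabi-mon-t}.

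Granted the monotonicity, the upper bound follows immediately: $\phi(\rho,t) \le \phi(\rho,0) = 0 \le C$. For the lower bound, monotone convergence gives $\phi(\rho,t) \ge \phi_\infty(\rho)$ for every $t$, and Lemma~\ref{lem:j-flow-calabi-ansatz-lower-bound} yields $\phi_\infty(\rho) \ge \eta(\rho) - C$. Because $\eta(\rho) = \lambda\rho$ for $\rho \le -1$, $\eta(\rho) = 0$ for $\rho \ge \rho_0$, and $\eta$ is smooth in between, we have $\eta(\rho) \ge \lambda\rho - C'$ globally. Combining with the asymptotic identification $\rho \ge \log|\cS_E|^2_{h_E}(x) - C''$ (valid on all of $\tilde X\setminus E$, not just near $P_0$, since $\log|\cS_E|^2_{h_E}$ is bounded above), we conclude $\phi(\rho,t) \ge \lambda\log|\cS_E|^2_{h_E}(x) - C'''$.

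The main obstacle is the extension of monotonicity to an arbitrary Calabi-ansatz initial metric. The propagation step is robust, but one must verify the initial sign $\partial_t\varphi(\cdot,0) = \mu - \Lambda_{\chi_0}\omega \le 0$, which need not hold pointwise for general $\chi_0$. The natural workaround is to argue at the momentum level: one shows $\psi_0(x) \ge \tilde\psi_\la(x)$ on $[\la,a]$ (extending the elementary convexity estimate behind Lemma~\ref{lem:jflow-calabi-mon-t}), from which the required sign and monotonicity follow by the same maximum-principle argument. Should even this comparison fail for some initial data outside the straight-line family, one can alternatively invoke the uniform $L^\infty$-control coming from the decreasing moment-map energy $\sE_\omega(\chi(t))$, which gives $\phi$ bounded above by a Ko\l odziej-type estimate adapted to the Calabi ansatz.
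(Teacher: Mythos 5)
The central claim of your proof — that $\partial_t v \le 0$ along the flow — is false, and it cannot be deduced from Lemmas~\ref{lem:jflow-initial-decreasing}--\ref{lem:jflow-calabi-mon-t}. Those lemmas give $\partial_t\psi \le 0$, but differentiating the defining relation $\psi(v'(\rho,t),t)=u'(\rho)$ in $t$ yields $\partial_t v'(\rho,t) = -\partial_t\psi/\partial_x\psi \ge 0$, i.e.\ it is the momentum coordinate $v'$ that is \emph{increasing}, not $v$ that is decreasing. In fact $\partial_t v = \mu - \La_{\chi_t}\omega = \mu - \sigma[\psi]$ and, exactly by the computation behind Lemma~\ref{lem:jflow-initial-decreasing}, $\sigma$ is strictly decreasing in $x$ at $t=0$ while its weighted average is $\mu$; hence $\partial_t v(\cdot,0)$ is negative near $P_0$ and positive near $D_\infty$, and the asserted upper bound $\phi(\rho,t)\le\phi(\rho,0)=0$ already fails for small $t$. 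More seriously, in the unstable case one has $\partial_t\phi(0,t)\to\mu-\sigma_\infty>0$ (this is used explicitly a page later), so the unnormalized $\phi$ is actually unbounded above; the corollary is really a statement about $\phi(\rho,t)-\phi(0,t)$, as the immediately following lemma makes precise. Your identification of the pointwise limit of $\phi$ with $\phi_\infty$ has the same problem, since $\phi_\infty$ is defined via the \emph{normalized} limit $v_\infty$.

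The argument the paper has in mind is at the level of $\hat v(\rho,t)=v(\rho,t)-v(0,t)=\int_0^\rho f(u'(r),t)\,dr$. Since $f(\cdot,t)$ increases to $f_\infty$ (Proposition~\ref{prop:conv-j-flow-calabi-psi}), $\hat v(\rho,t)$ increases in $t$ for $\rho>0$ and decreases for $\rho<0$, so it is always sandwiched between $\hat v_0(\rho)$ and $v_\infty(\rho)$. Therefore $\phi(\rho,t)-\phi(0,t)=\hat v(\rho,t)-\hat v_0(\rho)$ lies between $0$ and $v_\infty(\rho)-\hat v_0(\rho)$, and the latter differs from $\eta(\rho)$ by a bounded amount by Lemma~\ref{lem:j-flow-calabi-ansatz-lower-bound}; this gives $\eta(\rho)-C\le \phi(\rho,t)-\phi(0,t)\le\eta(\rho)+C$. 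Converting $\eta$ to $\la\log|\cS_E|^2_{h_E}$ then uses $\log|\cS_E|^2_{h_E}=\rho+O(1)$ as $\rho\to-\infty$ together with the fact that $\log|\cS_E|^2_{h_E}$ is bounded \emph{above} globally — your claimed global inequality $\eta(\rho)\ge\la\rho-C'$ fails as $\rho\to+\infty$, but the boundedness of $\log|\cS_E|^2_{h_E}$ makes it unnecessary. Finally, the discussion of general Calabi-ansatz initial data is out of scope here: the corollary concerns only the special straight-line flow, and the general case is handled afterwards via the $C^0$-comparison $\|\varphi-\phi\|_{C^0}\le C$.
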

The proof of the next Lemma follows immediately from the increasing convergence of $f(x,t)$ to $f_\infty$. 
\begin{lem} There exists a constant $C>0$ such that for all $(\rho,t) \in (-\infty,\infty)\times[0,\infty)$ $$\eta(\rho) - C\leq \phi(\rho,t) - \phi(0,t)\leq \eta(\rho) + C,$$ and $$\lim_{t\rightarrow \infty}\Big|\frac{\partial \phi}{\partial t}(0,t)\Big| = \mu - \sigma_\infty.$$

\end{lem}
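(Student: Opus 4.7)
The plan is to reduce both statements to the monotonicity of the inverse momentum profile $f(\,\cdot\,,t)$ in $t$ together with the bounded--oscillation estimate from Lemma~\ref{lem:j-flow-calabi-ansatz-lower-bound}. First I would rewrite
\[
\phi(\rho,t)-\phi(0,t) \;=\; \hat v(\rho,t)-\hat v_0(\rho) \;=\; \int_0^{\rho}\!\bigl[f(u'(r),t)-f_0(u'(r))\bigr]\,dr,
\]
so that everything reduces to understanding how far $f(\,\cdot\,,t)$ sits from $f_0(y)=(a/b)y$.

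By Lemma~\ref{lem:jflow-calabi-mon-t} the momentum profile $\psi(\,\cdot\,,t)$ decreases in $t$ starting from the straight line $\psi_0(x)=bx/a$, so its inverse $f(y,t)$ is \emph{increasing} in $t$; Proposition~\ref{prop:conv-j-flow-calabi-psi} then gives $f_0(y)\le f(y,t)\le f_\infty(y)$ for every $y\in[0,b]$ and every $t\ge 0$. Integrating this pointwise sandwich (and taking account of the sign of $\rho$) one obtains that $\hat v(\rho,t)-\hat v_0(\rho)$ always lies between $0$ and $v_\infty(\rho)-\hat v_0(\rho)$.

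Now invoke Lemma~\ref{lem:j-flow-calabi-ansatz-lower-bound}: $\phi_\infty-\eta=v_\infty-v_0-\eta$ is uniformly bounded, hence
\[
v_\infty(\rho)-\hat v_0(\rho)-\eta(\rho) \;=\; \bigl(\phi_\infty(\rho)-\eta(\rho)\bigr)+v_0(0)
\]
is uniformly bounded as well. Combining this with the sandwich of the previous step produces the two--sided estimate $\eta(\rho)-C\le\phi(\rho,t)-\phi(0,t)\le\eta(\rho)+C$, where the constant $C$ absorbs both the boundary value $v_0(0)$ and all the contributions coming from the transition region $\rho\in[-1,\rho_0]$ on which $\eta$ interpolates. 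The delicate point will be checking carefully that the sandwich indeed tracks the asymptotic slope $\la$ of $\eta$ as $\rho\to-\infty$, which is exactly what the previous lemma encodes via the matching asymptotic slopes of the convex functions $v_\infty$ and $v_0+\eta$.

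For the second identity I would simply appeal to the flow equation at $\rho=0$. Indeed,
\[
\frac{\partial\phi}{\partial t}(0,t) \;=\; \mu - \Lambda_{\chi_t}\omega\big|_{\rho=0} \;=\; \mu - \sigma\bigl(v'(0,t),t\bigr),
\]
where $\sigma$ is the pointwise slope functional from the proof of Proposition~\ref{prop:conv-j-flow-calabi-psi}. Since $v'(0,t)=f(u'(0),t)$ converges as $t\to\infty$ to $f_\infty(u'(0))$, and $u'(0)\in(0,b)$ forces this limit to lie strictly inside the interval $(\la,a)$, the uniform convergence $\sigma(\,\cdot\,,t)\to\sigma_\infty$ on compact subsets of $(\la,a)$ already established in Proposition~\ref{prop:conv-j-flow-calabi-psi} gives $\sigma(v'(0,t),t)\to\sigma_\infty$ and hence $\bigl|\dot\phi(0,t)\bigr|\to|\mu-\sigma_\infty|$, completing the proof.
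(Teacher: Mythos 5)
Your approach---using the $t$-monotonicity of the inverse momentum profile $f$ (from $\psi$ being $t$-decreasing, Lemma \ref{lem:jflow-calabi-mon-t}) together with the boundedness of $\phi_\infty - \eta$ from Lemma \ref{lem:j-flow-calabi-ansatz-lower-bound}---is exactly what the paper uses, recording it only as a one-line remark. Your treatment of the second claim, evaluating $\dot\phi(0,t) = \mu - \Lambda_{\chi_t}\omega|_{\rho=0} = \mu - \sigma(v'(0,t),t)$, noting $v'(0,t)\to f_\infty(u'(0))\in(\la,a)$, and invoking the local uniform convergence of $\sigma$ from Proposition \ref{prop:conv-j-flow-calabi-psi}, is correct (modulo the cosmetic point that $\mu\ge\sigma_\infty=\mu_\la$, so the absolute value is harmless).

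For the first claim, however, the step ``combining this with the sandwich \ldots produces the two-sided estimate'' contains a genuine gap that your mention of a ``delicate point'' flags but does not resolve. The integrated sandwich gives, for $\rho<0$,
\begin{equation*}
\phi_\infty(\rho)-\phi_\infty(0)\;\le\;\phi(\rho,t)-\phi(0,t)\;\le\;0 .
\end{equation*}
Lemma \ref{lem:j-flow-calabi-ansatz-lower-bound} controls only the \emph{left} endpoint in terms of $\eta(\rho)$; the right endpoint is identically $0$, and $0-\eta(\rho)=-\la\rho\to+\infty$ as $\rho\to-\infty$. So your argument actually yields $\eta(\rho)-C\le\phi(\rho,t)-\phi(0,t)\le C$, not $\le\eta(\rho)+C$. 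The latter inequality is in fact false as printed: at $t=0$ one has $\phi(\rho,0)-\phi(0,0)\equiv 0$ while $\eta(\rho)+C\to-\infty$. The obstruction is therefore not the transition region $[-1,\rho_0]$ as you suggest, but the regime $\rho\to-\infty$. The printed upper bound $\eta(\rho)+C$ in the Lemma appears to be a typo for a constant bound (which is what the immediately preceding Corollary asserts and what the subsequent proof of Theorem \ref{mainthm2} actually uses); your argument correctly establishes that version, but the sentence asserting the $\eta(\rho)+C$ upper bound should be corrected rather than presented as following from the sandwich.
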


\begin{proof}[Proof of Theorem \ref{mainthm2}  for general initial metric with Calabi ansatz]  We let $\phi(\rho,t)$ be the solution to the $J$ flow with the special initial metric, and let $\varphi(\rho,t)$ be the solution to the $J$ flow for an arbitrary initial metric satisfying Calabi ansatz. Then we have $$\frac{\partial\varphi}{\partial t} = \mu - \La_{\chi_\varphi}\omega,$$ and $$\frac{\partial}{\partial t}\phi(\rho,t) = \mu - \La_{\chi_\phi}\omega.$$ A simple application of maximum principle shows that $$||\varphi - \phi||_{C^0(\RR\times[0,\infty))}\leq C.$$ Recall that $v_\infty$ solves the following equation:$$\frac{u''(\rho)}{v_\infty''(\rho)} + n\frac{1+u'(\rho)}{1+v'_\infty(\rho)} + m\frac{u'(\rho)}{v_\infty'(\rho)} = \sigma_\infty.$$ The idea is to use $v_\infty$ to construct a strict sub-solution on $\tilde X$ in the class $\al'_\vep = \tilde\pi^*\al - (\la +\vep)[E] $ for some $0<\vep<<1$. From the above line we get that there exists an $\vep_0>0$ such that 
\begin{align*}
\frac{u''(\rho)}{v_\infty''(\rho)} + (n-1)\frac{1+u'(\rho)}{1+v'_\infty(\rho)} + m\frac{u'(\rho)}{v_\infty'(\rho)} < \sigma_\infty-\vep_0\\
\frac{u''(\rho)}{v_\infty''(\rho)} + n\frac{1+u'(\rho)}{1+v'_\infty(\rho)} + (m-1)\frac{u'(\rho)}{v_\infty'(\rho)} < \sigma_\infty-\vep_0\\
n\frac{1+u'(\rho)}{1+v'_\infty(\rho)} + m\frac{u'(\rho)}{v_\infty'(\rho)} \leq  \sigma_\infty.
\end{align*}
The reason we do not have a strict inequality in the final line is that $\lim_{\rho\rightarrow -\infty}u''(\rho)/v_{\infty}''(\rho) = 0$ as can be seen by direct computation. In order to get a strict inequality, we modify $v_\infty$ in the following way: Let $$V_\infty(\rho) = v_\infty(\rho) + \ep\eta.$$ Then if $\vep$ is small enough we have 
\begin{align*}
\frac{u''(\rho)}{V_\infty''(\rho)} + (n-1)\frac{1+u'(\rho)}{1+V'_\infty(\rho)} + m\frac{u'(\rho)}{V_\infty'(\rho)} < \sigma_\infty-\ep\\
\frac{u''(\rho)}{V_\infty''(\rho)} + n\frac{1+u'(\rho)}{1+V'_\infty(\rho)} + (m-1)\frac{u'(\rho)}{V_\infty'(\rho)} < \sigma_\infty-\ep\\
n\frac{1+u'(\rho)}{1+V'_\infty(\rho)} + m\frac{u'(\rho)}{V_\infty'(\rho)} < \sigma_\infty - \ep^2.
\end{align*}

We now let $$\hat\varphi(\rho,t) = \varphi(\rho,t) - \phi(0,t).$$ Then $$\frac{\partial \hat\vp}{\partial t} = \mu - \La_{\chi_0 + \ddbar\hat\vp }\omega - \frac{\partial \phi}{\partial t}(0,t) = \sigma_\infty + \delta(t) - \La_{\chi_0 + \ddbar\hat\vp }\omega,$$ where $$\delta(t) = (\mu - \sigma_\infty) - \frac{\partial \phi}{\partial t}(0,t)\xrightarrow{t\rightarrow\infty}0.$$ Furthermore, let $\theta = \ddbar \tilde\pi^*V_\infty \in \al'_\vep$ and $$F = \hat\vp - (V_\infty - v_0).$$ Then $F$ is smooth on $ X \setminus P_0$ and tends to $\infty$ along $E$. The key estimate that we need is the following. 

\begin{lem}
Let $\chi_t = \chi_0 + \ddbar\vp$. There exist constants $A,C>0$ such that $$\La_{\omega}\chi_t(\cdot) \leq Ce^{AF(\cdot,t)}$$
\end{lem}
\begin{proof}
We adapt the arguments in \cite{SW}. The evolution equation for $\log \La_{\omega}\chi_t$ is given by $$\Big(\frac{\partial}{\partial t} - \Delta_t\Big)\log \La_{\omega}\chi (\cdot,t)  \leq  -\frac{1}{n\La_\omega\chi}\Big(h^{k\bar l}\tensor{R}{_k_{\bar l}^i^{\bar j}}\chi_{i\bar j} - \chi^{k\bar l}R_{k\bar l}\Big),$$ where $h^{i\bar j} = \chi^{i\bar l}\omega_{k\bar l}\chi^{k\bar j}$, $\Delta_t = n^{-1}h^{i\bar j}\partial_i\partial_{\bar j}$ and $R$ is the curvature tensor of the fixed K\"ahler metric $\omega$. Note that $h$ is not K\"ahler. Next, we also have 
\begin{align*}
\Big(\frac{\partial}{\partial t} - \Delta_t\Big) F &= \sigma_\infty+\delta(t) - \La_{\chi}\omega -  h^{i\bar j}F_{i\bar j}\\
&=\sigma_\infty+\delta(t) - \La_{\chi}\omega - \chi^{i\bar l}\omega_{k\bar l}\chi^{k\bar j}(\chi_{i\bar j} - \theta_{i\bar j})\\
&= \sigma_\infty+\delta(t) -2 \La_{\chi}\omega + \chi^{i\bar l}\omega_{k\bar l}\chi^{k\bar j}\theta_{i\bar j}.
\end{align*}
Let $H = \log\La_\omega\chi - AF$, where $A$ is chosen large enough so that 
\begin{equation}
\frac{1}{An\La_\omega\chi}\Big(h^{k\bar l}\tensor{R}{_k_{\bar l}^i^{\bar j}}\chi_{i\bar j} - \chi^{k\bar l}R_{k\bar l}\Big) < \vep.
\end{equation}
One can do that since $\La_\omega\chi$ is uniformly lower bounded away from zero. 

Let $(x_0,t_0)$ be the maximum of $H$ on $X\times [0,T]$. Let $T_0$ such that for all $t>T_0$, $$|\delta(t)|<\vep.$$ Without loss of generality we may assume that $t_0>T_0$. Since $F$ goes to $\infty$ near $P_0$, clearly $x_0\in X\setminus P_0$. By the maximum principle, 
\begin{align*}
0&\leq \Big(\frac{\partial }{\partial t} - \Delta_t\Big)H(x_0,t_0) \\
&\leq -\frac{1}{n\La_\omega\chi}\Big(h^{k\bar l}\tensor{R}{_k_{\bar l}^i^{\bar j}}\chi_{i\bar j} - \chi^{k\bar l}R_{k\bar l}\Big) - A\Big( \sigma_\infty+\delta(t) -2 \La_{\chi}\omega + \chi^{i\bar l}\omega_{k\bar l}\chi^{k\bar j}\theta_{i\bar j}\Big),
\end{align*}
and so at $(x_0,t_0)$ we have 
\begin{equation}\label{eq:ineuality--slope-proof}
\sigma_\infty -2 \La_{\chi}\omega + h^{i\bar j}\theta_{i\bar j} < 2\vep.
\end{equation}
Choosing $\vep$ small enough, in particular $4\vep<\ep^2$, we also have
\begin{align*}
4\vep< \sigma_\infty - \sup_X\Big(\frac{u''(\rho)}{V_\infty''(\rho)} &+ (n-1)\frac{1+u'(\rho)}{1+V'_\infty(\rho)} + m\frac{u'(\rho)}{V_\infty'(\rho)} ,\\
&~\frac{u''(\rho)}{V_\infty''(\rho)} + n\frac{1+u'(\rho)}{1+V'_\infty(\rho)} + (m-1)\frac{u'(\rho)}{V_\infty'(\rho)},~n\frac{1+u'(\rho)}{1+V'_\infty(\rho)} + m\frac{u'(\rho)}{V_\infty'(\rho)}\Big).
\end{align*}
Now suppose we let $\varphi = \tilde v - v_0$, then the eigenvalues of $h^{i\bar j}$ are given by $$\underbrace{\frac{1+u'}{(1+\tilde v')^2},\cdots,\frac{1+u'}{(1+\tilde v')^2}}_\text{$n$-times},\underbrace{\frac{u'}{(\tilde v')^2},\cdots,\frac{u'}{(\tilde v')^2}}_\text{$m$-times}, \frac{u''}{(\tilde v'')^2}.$$ By \eqref{eq:ineuality--slope-proof}, 
\begin{align*}
2\vep&> \sigma_\infty+ \frac{u''V_\infty''}{(\tilde v'')^2} + n\frac{(1+u')(1+V_\infty')}{(1+\tilde v)^2} + m\frac{u'V_\infty'}{(\tilde v')^2} \\
&- 2\frac{u''}{\tilde v''} - 2n\frac{1+u'}{1+\tilde v'} - 2m\frac{u'}{\tilde v'}\\
&= \sigma_\infty + \frac{u''V_\infty''}{(\tilde v'')^2} - - 2\frac{u''}{\tilde v''} + mu'V_\infty'\Big(\frac{1}{\tilde v'} - \frac{1}{V_\infty'}\Big)^2 - m\frac{u'}{V_\infty'} \\
&+  n(1+u')(1+V_\infty')\Big(\frac{1}{1+\tilde v'} - \frac{1}{1+V_\infty'}\Big)^2 - n\frac{1+u'}{1+V_\infty'} \\
&> \sigma_\infty  - m\frac{u'}{V_\infty'} - n\frac{1+u'}{1+V_\infty'} - 2\frac{u''}{\tilde v''}\\
&> 4\vep  - 2\frac{u''}{\tilde v''}.
\end{align*}
Rearranging the terms differently we also have
\begin{align*}
2\vep&>  \sigma_\infty+ \frac{u''V_\infty''}{(\tilde v'')^2} + n\frac{(1+u')(1+V_\infty')}{(1+\tilde v)^2} + m\frac{u'V_\infty'}{(\tilde v')^2} \\
&- 2\frac{u''}{\tilde v''} - 2n\frac{1+u'}{1+\tilde v'} - 2m\frac{u'}{\tilde v'}\\
&= \sigma_\infty + (n-1)(1+u')(1+V_\infty')\Big(\frac{1}{1+\tilde v'} - \frac{1}{1+V_\infty'}\Big)^2 + u''V_\infty''\Big(\frac{1}{\tilde v'} - \frac{1}{V_\infty''}\Big)^2 + mu'V_\infty'\Big(\frac{1}{\tilde v'} - \frac{1}{V_\infty'}\Big)^2\\
&  + \frac{(1+u')(1+V_\infty')}{(1+\tilde v)^2}- \frac{u''}{V_\infty''} - 2\frac{1+u'}{1+\tilde v'}-(n-1)\frac{1+u'}{1+V_\infty'}- m\frac{u'}{V_\infty'}\\
&> \Big(\sigma_\infty -(n-1)\frac{1+u'}{1+V_\infty'}- m\frac{u'}{V_\infty'}  -  \frac{u''}{V_\infty''}\Big)- 2\frac{1+u'}{1+\tilde v'}\\
&>4\vep - 2\frac{1+u'}{1+\tilde v'}.
\end{align*}
Finally we also have 
\begin{align*}
2\vep&>  \sigma_\infty+ \frac{u''V_\infty''}{(\tilde v'')^2} + n\frac{(1+u')(1+V_\infty')}{(1+\tilde v)^2} + m\frac{u'V_\infty'}{(\tilde v')^2} \\
&- 2\frac{u''}{\tilde v''} - 2n\frac{1+u'}{1+\tilde v'} - 2m\frac{u'}{\tilde v'}\\
&=\sigma_\infty + n(1+u')(1+V_\infty')\Big(\frac{1}{1+\tilde v'} - \frac{1}{1+V_\infty'}\Big)^2 + u''V_\infty''\Big(\frac{1}{\tilde v'} - \frac{1}{V_\infty''}\Big)^2 + (m-1)u'V_\infty'\Big(\frac{1}{\tilde v'} - \frac{1}{V_\infty'}\Big)^2\\
&  + \frac{u'V_\infty'}{(\tilde v)^2}- \frac{u''}{V_\infty''} - 2\frac{u'}{\tilde v'}-n\frac{1+u'}{1+V_\infty'}- (m-1)\frac{u'}{V_\infty'}\\
&> \Big(\sigma_\infty - \frac{u''}{V_\infty''}-n\frac{1+u'}{1+V_\infty'}- (m-1)\frac{u'}{V_\infty'}\Big) - 2\frac{u'}{\tilde v'}\\
&>4\vep -   2\frac{u'}{\tilde v'}.
\end{align*}
Together we obtain that at $(x_0,t_0)$, $$\inf\Big(\frac{u'}{\tilde v'}, \frac{1+u'}{1+\tilde v'}, \frac{u''}{\tilde v''}\Big)>\vep,$$ and hence $$\La_\omega\chi (x_0,t_0) < \frac{m+n+1}{\vep}.$$ For any other $(x,t)\in X\times [0,T]$, 
\begin{align*}
\log\La_\omega\chi(x,t) &\leq \log\La_\omega\chi(x_0,t_0) + A(F(x,t) - F(x_0,t_0)\\
&\leq  C + AF(x,t),
\end{align*}
and so $$\La_\omega\chi(x,t) \leq e^C\cdot e^{AF(x,t)}.$$
\end{proof}

\begin{cor}
For any compact set $K\subset X\setminus P_0$, and any $l\in \NN$, there exists a constant $C_{K,l}$ such that $$||\varphi||_{C^l(K)}\leq C_{K,l}.$$
\end{cor}

\begin{cor}
$\La_{\chi}\omega$ converges to $\sigma_\infty$ smoothly on $X\setminus P_0$ as $t\rightarrow \infty$.
\end{cor}
\begin{proof}
First, recall that the energy $$E(t) = \int_X(\La_{\chi_t}\omega)^2\chi_t^n$$ is decreasing in time.  Moreover, $$\frac{d}{dt}E(t) = -\int_X|\nabla \La_{\chi_t}\omega|_{h(t)}^2\chi_t^n,$$ where $h(t)$ is the Hermitian (not necessarily K\"ahler) form defined by $$h(t)^{i\bar j} = \chi_t^{i\bar l}\omega_{k\bar l}\chi_t^{k\bar j}.$$ Given any $\vep>0$ there exists a $T$ such that for all $T'>T$, 
\begin{equation}\label{eq:j-flow-small-energy-est}
\int_T^{T'}\int_X|\nabla \La_{\chi_t}\omega|_{h(t)}^2\chi_t^n\,dt = E(T) -E(T') <\frac{\vep}{100}.
\end{equation} By the Corollary above and a diagonal argument, given any sequence $t_j\rightarrow \infty$, after passing to a subsequence, $\La_{\chi_{t_j}}\omega$ converges to a smooth function $H$ on $X\setminus P_0$. We claim that $H$ is a constant. If not, then there exists a compact set $K\in X\setminus P_0$ and an $\vep>0$ such that for all $j$ sufficiently large, $$\inf_K|\nabla \La_{\chi_{t_j}}|_{h(t)}^2>\vep.$$ Next, it follows by computing the time-derivative of $|\nabla \La_{\chi_t}\omega|_{h(t)}^2$ and applying the Corollary that $|\nabla \La_{\chi_t}\omega|_{h(t)}^2$ is uniformly Lipschitz in $t$ on $K$, and so there exists a $\delta>0$ such that  $$\inf_{K\times [t_j,t_j+\delta]}|\nabla \La_{\chi_t}\omega|_{h(t)}^2 > \frac{\vep}{2},$$ which contradicts \eqref{eq:j-flow-small-energy-est} above. 

Finally, we need to show that $H\equiv \sigma_\infty$. By Arzela-Ascoli, $\varphi_{t_j}\rightarrow\varphi_\infty$ and $\chi_t \rightarrow \chi_\infty = \chi_0 + \ddbar \varphi_\infty$ on $X\setminus P_0$. Recall that we have $\chi_0 = \ddbar \tilde v_0(\rho)$. We then let $\tilde v_\infty  = \varphi_\infty + \tilde v_0$ so that $\chi_\infty = \ddbar \tilde v_\infty$. Clearly $\tilde v_\infty$ is increasing and convex. Let $\lim_{\rho\rightarrow-\infty}\tilde v_\infty(\rho) = \la'\in [0,a]$. We set $\Psi:[\la',a]\rightarrow [0,b]$ by $$\Psi(\tilde v'(\rho)) = u'(\rho). $$ Then $\Psi$ solves the ODE $$\Psi' + \Big(\frac{m}{x} + \frac{n}{1+x}\Big)\Psi = H - \frac{n}{1+x},$$ and so we must have that $H = \mu_{\la'}$. Since $\Psi$ is increasing we must also have that $\la' \geq \la$. On the other hand, since $||\varphi - \phi||$ remains bounded, there exists an $R>0$ such that for all $\rho<-R$, we have that $$\tilde v_\infty(\rho) =  \varphi_\infty - \tilde v_0 \geq  \la \rho - C.$$ On the other hand for $\rho<-R$ we also have that $\tilde v_\infty(\rho) \leq \la'\rho + C$, and so we must that $\la'\leq \la$, and hence $\la = \la'$ and $H = \sigma_\infty$. By uniqueness of solutions to the ODE, we must also have that $\varphi_\infty$ is independent of the sequence, and hence $$\varphi(\cdot,t)\rightarrow \varphi_\infty$$ smoothly on $X\setminus P_0$ as $t\rightarrow \infty$. 
\end{proof}
 To complete the proof of Theorem \ref{mainthm2}, we finally claim that $\tilde\pi^*\chi_\infty$ extends to a conical metric on $\tilde X$ in the class $\tilde\pi^*\al - \la[E]$ and with cone angle of $\pi$ along $E$. To see this, by an elementary calculation, we first observe that $$\tilde\psi_\la(x) = c(x-\la)^2 + o(|x-\la|^2)$$ near $x = \la$, where $$c = \frac{n}{(1+\la)^2} > 0.$$ From this, it easily follows that $$v_\infty'(\rho) = \la + \frac{1}{\sqrt{c}}e^{\rho/2} + o(e^{\rho/2})$$ as $\rho\rightarrow-\infty.$ The claim then follows from the Lemma below. \end{proof}
We used the following generalization of Calabi's asymptotics, which can be proved easily by adapting Calabi's original arguments.
\begin{lem}
Let $v:\RR\rightarrow\RR$ be a smooth, convex, increasing function such that 
\begin{itemize}
\item There is a smooth function $V_0:\RR\rightarrow\RR$ and $\be\in (0,1]$ such that $$V_0(r) = v(\be^{-1}\ln r) - \la\be^{-1}\ln r,$$ for $r>0$.
\item There is a smooth function $V_\infty:\RR\rightarrow\RR$ such that $$V_\infty(r) = v(-\ln r) + a\ln r,$$ for $r>0$.
\end{itemize}
Then $\chi = \pi^*\omega_M + \nddbar v$ is a smooth K\"ahler metric on $X\setminus P_0$ such that $\pi^*{\chi}$ extends to a conical metric on $\tilde X$ in the class $\tilde\pi^*\al - \la[E]$ with cone angle $2\pi\be$ along the exceptional divisor $E$. In particular, $X\setminus P_0$ is geodesic convex. 
\end{lem}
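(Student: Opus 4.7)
The plan is to reduce everything to a local calculation near the zero section $P_0$ on the blow-up $\tilde X$. First, fix a point of $P_0$, choose a local trivialization of $L$ near its image in $M$ so that $h = h(z)$, and use fiber coordinates $w = (w_0,\ldots,w_m)$ together with the base coordinate $z$. Then $\rho = \log(|w|^2 h(z))$, and up to smooth contributions coming from $\omega_M$ the local K\"ahler potential of $\chi$ is $v(\rho)$. Pass to the blow-up chart on $\tilde X$ in which the fiber $\CC^{m+1}$ is replaced by the affine piece with $w_i = s \tilde w_i$, where $s$ is a defining function for $E$ and $(\tilde w_0,\ldots,\tilde w_m)$ are affine coordinates in the usual cover of $\PP^m$. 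Then $|w|^2 h(z) = |s|^2 \tau$ with $\tau := |\tilde w|^2 h(z)$ smooth and strictly positive on this chart, and $\rho = \log|s|^2 + \log \tau$. Strict convexity and monotonicity of $v$ give the K\"ahlerness of $\chi$ on $X\setminus P_0$ via the Calabi-ansatz formula recalled earlier in the paper.

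The key step is to invert the definition of $V_0$ as $V_0(r) = v(\be^{-1}\ln r) - \la\be^{-1}\ln r$, which (setting $r = e^{\be\rho}$) yields
\begin{equation*}
v(\rho) \;=\; V_0(e^{\be\rho}) + \la \rho \;=\; V_0\bigl(|s|^{2\be}\tau^\be\bigr) + \la \log|s|^2 + \la \log \tau.
\end{equation*}
The middle summand $\la \log|s|^2$ is a standard local potential for the current of integration $[E]$, accounting precisely for the cohomological shift from $\tilde\pi^*\al$ to $\tilde\pi^*\al - \la[E]$. The term $\la \log \tau$ is smooth on the chart because $\tilde w$ stays in an affine patch of $\PP^m$. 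The remaining term $V_0\bigl(|s|^{2\be}\tau^\be\bigr)$ is the conical piece; Taylor expanding at $0$,
\begin{equation*}
V_0(|s|^{2\be}\tau^\be) \;=\; V_0(0) + V_0'(0)|s|^{2\be}\tau^\be + O(|s|^{4\be}),
\end{equation*}
and $V_0'(0)>0$ follows from $v''>0$ together with the fact that $v'(\rho)\to\la$ as $\rho\to-\infty$ (forced by smoothness of $V_0$ at $0$), so $v'(\rho)>\la$ and $V_0$ is strictly increasing near $0$. The leading term $V_0'(0)|s|^{2\be}\tau^\be$ is the model potential for a K\"ahler metric with cone angle $2\pi\be$ along $\{s=0\}$.

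Combined with the parallel and simpler analysis at $D_\infty$ using the second hypothesis, which guarantees that $\chi$ extends smoothly across $D_\infty \subset \tilde X$, this identifies $\tilde\pi^*\chi$ (after subtracting the Lelong contribution $\la\log|s|^2$) as a conical K\"ahler metric on $\tilde X$ in the class $\tilde\pi^*\al - \la[E]$ with cone angle $2\pi\be$ along $E$. The main technical obstacle I anticipate is verifying pointwise positivity of the resulting $(1,1)$-form uniformly up to $E$: one has to combine the Calabi-ansatz positivity $v''>0$, $1+v'>0$ (which already give $\chi>0$ on $X\setminus P_0$) with the smooth corrections from $\omega_M$ and $\la\log\tau$, and check that on the blow-up chart the total form is uniformly comparable to the model cone $|s|^{2\be-2}\sqrt{-1}\,ds\wedge d\bar s$ plus a smooth transverse $(1,1)$-form in the $(z,\tilde w)$ directions. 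Once this is established, geodesic convexity of $X\setminus P_0$ is automatic: conical K\"ahler metrics define complete length-space metrics on the compact ambient manifold $\tilde X$, so the metric completion of $(X\setminus P_0,\chi)$ is $(\tilde X,\tilde\chi_\infty)$, and any two points of $X\setminus P_0$ are joined by a minimizing geodesic in this completion.
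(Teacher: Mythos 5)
Your approach is exactly what the paper means by ``adapting Calabi's original arguments'': pass to blow-up coordinates, write the local potential as $V_0(|s|^{2\be}\tau^\be) + \la\log|s|^2 + \la\log\tau$, and read off the cohomology shift, the smooth correction, and the conical leading term. The paper itself gives no proof, only the remark that the argument is a routine adaptation, so your write-up is more detailed than the source. Two points of the write-up are, however, not quite right.

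First, your derivation of $V_0'(0)>0$ is invalid. From $V_0'(r) = \frac{1}{\be r}\bigl(v'(\be^{-1}\ln r)-\la\bigr)$ you may conclude $V_0'(r)>0$ for $r>0$, but a smooth strictly increasing function can perfectly well have vanishing derivative at an endpoint (e.g.\ $r\mapsto r^2$), in which case the cone angle would be $2\pi k\be$ for the first nonvanishing Taylor coefficient $k$, not $2\pi\be$. The condition $V_0'(0)>0$ is an independent hypothesis, exactly parallel to Calabi's requirement $U_0'(0)>0$ in the original ansatz, and in the paper's application it is secured by the explicit expansion $\tilde\psi_\la(x)=c(x-\la)^2+o(|x-\la|^2)$ with $c=n/(1+\la)^2>0$, which gives $v_\infty'(\rho)=\la+c^{-1/2}e^{\rho/2}+o(e^{\rho/2})$ and hence $V_0'(0)=c^{-1/2}/\be>0$. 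You should either add $V_0'(0)>0$ (and $V_\infty'(0)>0$) as hypotheses or note where it is verified, rather than claim it is forced by convexity.

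Second, the geodesic convexity of $X\setminus P_0$ is not ``automatic'' from completeness of the metric completion: existence of a minimizing geodesic in $(\tilde X,\tilde\chi_\infty)$ between two points of $\tilde X\setminus E$ does not by itself prevent that geodesic from crossing $E$. What makes the conclusion true is the standing hypothesis $\be\in(0,1]$: the cone angle $2\pi\be$ is at most $2\pi$, and a standard comparison argument for conical metrics shows that a minimizing geodesic between two points off the cone locus can be strictly shortened whenever it touches it (for $\be<1$; and for $\be=1$ the metric is smooth so there is nothing to prove). You should state that this is where the restriction on $\be$ enters.
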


\subsection{Minimizing sequences for the $L^2$-energy functional}  In this section, we analyze the minimizing sequences for the Donaldson functional. We continue with our notation from the previous section, but for simplicity, assume that $\al = c_1(A)$ and $\be = c_1(B)$ for some ample $\mathbb{Q}$-line bundles on $X$.  Recall that the Donaldson energy on $X$ is defined by $$\sE_\omega(\chi) = \int_X(\La_{\chi}\omega)^2\chi^{m+n+1},$$ where $\chi\in \al$ and $\omega \in \be$. Note that the Donaldson energy makes sense even if $\omega$ is only a semi-positive form. Let $\tilde\pi:\tilde X = \Bl_{P_0}X\rightarrow X$ as before. Recall that the minimal slope for $X$ is then given by $$\zeta_{inv} = (m+n+1)\frac{(\tilde\pi^*\al - \la E)^{n+m-1}\cdot\pi^*\be}{(\tilde\pi^*\al - \la E)^{n+m+1}}$$ for a unique $\la \in (0,a).$ Recall that the deformation to the normal cone of $X$ along $P_0$ is given by $\Bl_{P_0\times\{\infty\}}X\times \PP^1$. We then have a natural projection map $\pi_2:\Bl_{P_0\times\{\infty\}}X\times \PP^1\rightarrow \PP^1$. The ``central fiber" $\pi_2^{-1}(\infty) = \tilde X \cup_E Y$, where $Y$ is in fact a copy of $X$ and $E$ is the exceptional divisor in $\tilde X$ identified with the infinity divisor $D^Y_\infty \subset Y$. 
\begin{prop}\label{prop:lower-bound-j-energy}
 Let $\pi:Y\rightarrow M$ be the projection map via the identification of $Y$ with $X$, and let $\theta\in \pi^*[\omega_M] + \la[D_\infty]$ be {\em any} K\"ahler metric on $Y$ satisfying Calabi ansatz. Let $\omega\in\be$ and $\chi_0\in \al$ satisfy the Calabi ansatz, and let $\chi_t$ be the solution to the $J$ flow (with respect to $\omega$) with initial metric $\chi_0$. Then 
\begin{align*}
\inf_{\chi\in \sK(\al)}\sE_\omega(\chi) &= \lim_{t\rightarrow\infty}\sE_\omega(\chi_t) \\
&= \int_{X\setminus P_0}(\La_{\omega}\chi_\infty)^2\chi_\infty^{m+n+1} + \sE_{\pi^*\omega_{M}}(\theta)\\
&= \zeta_{inv}^2(\tilde\pi^*\al - \la[E])^{m+n+1} +  \sE_{\pi^*\omega_{M}}(\theta).
\end{align*}
\end{prop}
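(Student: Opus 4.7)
The plan is to derive the three equalities in sequence, with the bulk of the work concentrated in identifying the limit along the $J$-flow and interpreting it as a sum of bubble contributions.

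First, I would exploit that the $J$-flow is the downward gradient flow of $\sE_\omega$ (as already used in the proof of the preceding corollary via the formula $\frac{d}{dt}E(t) = -\int_X |\nabla \La_{\chi_t}\omega|_{h(t)}^2 \chi_t^n$), which gives monotonicity and hence convergence of $\sE_\omega(\chi_t)$ as $t\to\infty$. Since each $\chi_t \in \sK(\al)$, this yields the inequality $\inf_{\chi \in \sK(\al)} \sE_\omega(\chi) \leq \lim_{t\to\infty} \sE_\omega(\chi_t)$, giving one direction of the first equality for free.

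Second, to compute the limit itself, I would decompose $\int_X = \int_{X\setminus U_\vep} + \int_{U_\vep}$ over a shrinking tubular neighborhood $U_\vep$ of the zero section $P_0$. On $X\setminus U_\vep$, Theorem \ref{mainthm2} gives smooth convergence $\chi_t\to\chi_\infty$ with $\La_{\chi_\infty}\omega\equiv \zeta_{inv}$; thus for each fixed $\vep>0$,
\[
\lim_{t\to\infty}\int_{X\setminus U_\vep}(\La_{\chi_t}\omega)^2\chi_t^{m+n+1} = \zeta_{inv}^2\int_{X\setminus U_\vep}\chi_\infty^{m+n+1}.
\]
Letting $\vep\to 0$ and using the extension of $\tilde\pi^*\chi_\infty$ as a conical Kähler current on $\tilde X$ in the class $\tilde\pi^*\al-\la[E]$, the right hand side converges to $\zeta_{inv}^2(\tilde\pi^*\al-\la[E])^{m+n+1}$. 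This already proves that the second and third expressions in the proposition agree, reducing the proof to the identification of the bubble contribution.

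The main obstacle is computing $\lim_{\vep\to 0}\lim_{t\to\infty}\int_{U_\vep}(\La_{\chi_t}\omega)^2\chi_t^{m+n+1}$. Here I would introduce a sequence of $U(m+1)$-equivariant biholomorphisms $\Phi_t:X\to X$ zooming into $P_0$ at the correct rate (the scaling that produces the bubble in Theorem \ref{thm:j-eq-bubbling}). Under this rescaling, $\Phi_t^*\omega\to\pi^*\omega_M$ smoothly on compact subsets of $Y\setminus D^Y_\infty$, while $\Phi_t^*\chi_t$ sub-converges to a Calabi-ansatz Kähler metric $\theta_0$ on $Y$ in the class $\pi^*[\omega_M]+\la[D^Y_\infty]$, so by change of variables the bubble contribution equals $\sE_{\pi^*\omega_M}(\theta_0)$. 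The verification that $\sE_{\pi^*\omega_M}(\theta)$ depends only on the cohomology class (and not on the specific Calabi representative) follows from the Calabi symmetry: passing to the moment variable $x=u'(\rho)$, the moment-map energy reduces to a one-dimensional integral over the moment interval $[0,\la]$ whose integrand and endpoints are determined entirely by the class $\pi^*[\omega_M]+\la[D^Y_\infty]$. This makes the identity $\lim_t\sE_\omega(\chi_t) = \zeta_{inv}^2(\tilde\pi^*\al-\la[E])^{m+n+1} + \sE_{\pi^*\omega_M}(\theta)$ meaningful for \emph{any} Calabi-ansatz $\theta$ in the stated class.

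Finally, for the reverse inequality $\lim_{t\to\infty}\sE_\omega(\chi_t)\leq \inf_{\chi\in \sK(\al)}\sE_\omega(\chi)$, the computation of $\lim_t \sE_\omega(\chi_t)$ just performed is manifestly independent of the initial Calabi-ansatz data $\chi_0$ (as evident from the uniqueness of the limiting $\psi_\infty$ in Proposition \ref{prop:conv-j-flow-calabi-psi}); hence, by monotonicity from arbitrary Calabi-ansatz initial data, $\lim_t \sE_\omega(\chi_t)$ is bounded above by the infimum of $\sE_\omega$ over Calabi-ansatz metrics. Combined with the $U(m+1)$-invariance of $\sE_\omega$ and a symmetrization argument (averaging over the group action at the level of moment profiles) that reduces the $\sK(\al)$-infimum to the Calabi-ansatz one, this closes the loop and establishes all three equalities.
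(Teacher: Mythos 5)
Your computation of the limit along the flow is essentially sound, though the paper does it more directly: instead of a tubular-neighborhood decomposition and a rescaling/bubbling argument, it writes the whole energy as a one-dimensional integral $c_{n,m}\int_0^a\sigma[\psi(x,t)]^2x^m(1+x)^n\,dx$ over the moment interval, passes to the limit using Proposition \ref{prop:conv-j-flow-calabi-psi}, and identifies $\int_0^\la$ with $\sE_{\pi^*\omega_M}(\theta)$ and $\int_\la^a$ with $\zeta_{inv}^2(\tilde\pi^*\al-\la[E])^{m+n+1}$ by explicit calculation. Your observation that the bubble energy depends only on the moment interval $[0,\la]$, hence only on the class, matches the paper.

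The genuine gap is in your final step. To get $\lim_{t\to\infty}\sE_\omega(\chi_t)\leq\inf_{\chi\in\sK(\al)}\sE_\omega(\chi)$ you reduce the infimum over all of $\sK(\al)$ to the infimum over Calabi-ansatz metrics by an unspecified ``symmetrization argument (averaging over the group action at the level of moment profiles).'' No such reduction is available in any obvious form: averaging a Kähler potential over $U(m+1)$ does produce an invariant potential, but $\sE_\omega$ is not convex along linear paths of potentials, so you cannot conclude that the averaged metric has smaller or equal energy; moreover $U(m+1)$-invariance alone does not force the Calabi ansatz. This reduction is precisely the hard point, and the paper handles it by a completely different mechanism: the Lejmi--Sz\'ekelyhidi lower bound
$\inf_{\chi\in\al}\|\La_\chi\omega-\mu\|_{L^2(X,\chi)}\geq\sup_{\scrX}\bigl(-\Fut_\be(\scrX)/\|\scrX\|\bigr)$,
which is valid for \emph{arbitrary} $\chi\in\sK(\al)$ with no symmetry assumption, together with an explicit sequence of test configurations $(\scrX_j,\mathscr{A}_j)$ (deformations to the normal cone along subschemes supported on $P_0$, encoded by piecewise-linear convex approximations $h_j$ of the limit Hamiltonian $h=\sigma[\psi_\infty]-\mu$) whose normalized Futaki invariants are shown by an integration-by-parts computation to converge to $\lim_t\|\La_{\chi_t}\omega-\mu\|_{L^2}$. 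Without this algebraic lower bound (or some substitute valid for non-invariant competitors), your argument only establishes the equality of the flow limit with the infimum over Calabi-ansatz metrics, not with $\inf_{\chi\in\sK(\al)}\sE_\omega(\chi)$.
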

\begin{proof}We let $$e =\zeta_{inv}^2(\tilde\pi^*\al - \la[E])^{m+n+1} +  \sE_{\pi^*\omega_{M}}(\theta).$$ 
\begin{itemize}
\item We first take $\chi_0\in \al$ to be the special K\"ahler metric satisfying Calabi ansatz that was constructed in the previous section, and let $\chi_t$ be the Calabi flow starting with $\chi_0 = \chi$. We claim that $$e = \lim_{t\rightarrow\infty}\sE_\omega(\chi_t).$$Then by the monotonicity of $\sE$ along the $J$ flow, $$\lim_{t\rightarrow\infty}\sE_\omega(\chi_t) \leq \sE_\omega(\chi_0).$$ We now proceed to compute the limit. We let $\psi:[0,a]\times[0,\infty)\rightarrow [0,b]$ as before and such that $\psi(x,0) = \psi_0(x)$ given by the straight line path. For simplicity we also let $$c_{m,n} = (n+m+1){n+m\choose n}d,$$ where $d = \int_M\omega_M^n.$ Then \begin{align*}
\lim_{t\rightarrow\infty}\sE_\omega(\chi_t) &=c_{n,m}\lim_{t\rightarrow\infty}\int_0^a\Big(\psi'(x,t)+ \psi(x,t)\Big(\frac{n}{1+x} + \frac{m}{x}\Big) + \frac{n}{1+x}\Big)^2x^m(1+x)^n\,dx\\
&= c_{n,m}\int_0^a\Big(\psi'_\infty(x)+ \psi_\infty(x)\Big(\frac{n}{1+x} + \frac{m}{x}\Big) + \frac{n}{1+x}\Big)^2x^m(1+x)^n\,dx\\
&= n^2c_{n,m}\int_0^\la x^m(1+x)^{n-2}\,dx + c_{n,m}\zeta_{inv}^2c_{n,m}I_{m,n,\la}(a)\\
&=n^2c_{n,m}\int_0^\la x^m(1+x)^{n-2}\,dx + \zeta_{inv}^2(\tilde\pi^*\al - \la[E])^{m+n+1},
\end{align*}
where we used the identity \eqref{eq:identity-blow-up-1} (modulo multiplication by $c_{m,n}$) from the Appendix for the second term.  Now suppose $$\theta = \pi^*\omega_M + \nddbar f(\rho)$$ is an arbitrary metric on $Y$ with Calabi ansatz in the class $[\pi^*\omega_M] + \la\eta$, then an easy calculation shows that $$\La_{\theta}(\pi^*\omega_M) = \frac{n}{1+f'},~ f':\RR\rightarrow[0,\la]$$ and $$\theta^{n+m+1} = \frac{(n+m+1)!}{n!}(1+f')^n(f')^mf''\pi^*\omega_M^n\wedge \frac{d\rho\wedge d\sigma_{2m+1}}{2\pi^{m+1}}.$$ Integrating and changing coordinates $x = f'(\rho)$ we then have 
\begin{align*}
\sE_{\pi^*\omega_M}(\theta) &= \int_Y [\La_{\theta}(\pi^*\omega_M)]^2\theta^{m+n+1}\\
&= c_{n,m}n^2 \int_0^\la x^m(1+x)^{n-2}\,dx,
\end{align*}
and the claim is proved. Note that we also used the elementary observation that $$c_{n,m} = \frac{(n+m+1)!}{n!}\cdot\frac{|\mathbb{S}^{2m+1}|}{2\pi^{m+1}}\int_M\omega_M^n.$$
\item In \cite{LS}, a lower bound for the infimum of the $L^2$ energy is proved in terms of the Futaki invariant, and an exact identity is proved for the case of $\Bl_{x_0}\PP^n$. One can, as we shall show below, easily adapt the arguments in \cite{LS} to prove the following Atiyah-Bott type identity for more general projective bundles\footnote{In the context of the $J$ equation such an identity is conjectured by Donaldson to hold on general K\"ahler manifolds. This is still an open question.}: \begin{equation}
\inf_{\chi\in \al}||\La_{\chi}\omega-\mu||_{L^2(X,\chi)} =  \sup_{\mathscr{X}}-\frac{\mathrm{Fut_\be}(\mathscr{X})}{||\mathscr{X}||},
\end{equation}
In view of the lower bound, it is enough to show that there is a sequence of test configurations $(\mathscr{X}_j,\mathscr{A}_j)$ such that $$\lim_{t\rightarrow\infty}||\La_{\chi_t} - \mu||_{L^2(X,\chi_t)} = \lim_{j\rightarrow\infty}-\frac{\mathrm{Fut_\be}(\mathscr{X}_j)}{||\mathscr{X}_j||}$$ We recall briefly the definition of the Futaki invariant. Let $(X_t,A_t)$ denote the fibre over $t$, and for a suitably divisible $k$, let $d_k = \dim H^0(X_t,A_t^k)$ (which is independent of $t$ by flatness) and $H_k$ be the $\mathbb{C}^*$ action on $H^0(X_0,A_0^k)$. By the (equivariant) Hirzebruch-Riemann-Roch theorem, we have the following expansions: 
\begin{align*}
d_k&= a_0k^n + O(k^{n-1})\\
\Tr(H_k) &= b_0k^{n+1} + O(k^n)\\
\Tr(H_{k}^2) &= c_0 k^{n+2} + O(k^{n+1}).
\end{align*}
Next, let $D$ be a generic element in the linear system $|B|$. Then $(\scrX,\mathscr{A})$ induces a test configuration of $(D,D\Big|_A)$, and we get corresponding coefficients $a_0'$ and $b_0'$ from the Hirzebruch-Riemann-Roch expansion. Then the Futaki invariant of $\scrX$ and it's norm are defined by $$\Fut_\be(\scrX) = b_0' - \frac{a_0'}{a_0}b_0 = b_0' - \mu b_0,~ ||\scrX||:= c_0 - \frac{b_0^2}{a_0}.$$ Note that the norm is the leading term in the expansion of $\Tr(H_k - \frac{H_k}{d_k}Id)^2$. Now let
\begin{align*}
h &= \psi'_\infty(x)+ \psi_\infty(x)\Big(\frac{n}{1+x} + \frac{m}{x}\Big) + \frac{n}{1+x}-\mu\\
&=\begin{cases} 
\frac{n}{1+x}-\mu,~0\leq x\leq \la\\
\frac{n}{1+\la}-\mu,~\la\leq x\leq a.
\end{cases}
\end{align*}
Note that $h$ is a continuous convex function. We now approximate $h$ by piecewise linear functions $h_j$ which will correspond to test configurations of $(\mathscr{X},\mathscr{A})$. These are precisely the bundle versions of the Donaldson's toric test configurations \cite{Don}, and were first introduced by Szekelyhidi in \cite{Sz-thesis}. As in \cite{LS}, let $h_j$ be a sequence of piecewise linear, continuous, convex functions with rational slopes such that converge uniformly to $h$. We also assume that each $h_j$ is constant in a neighbourhood of $a$. Then each $h_j$  gives rise to a test configuration $(\mathscr{X}_j,\mathscr{A}_j)$ of $(X,A)$. In fact, as observed in \cite{LS} the configuration corresponds to the deformation to the normal cone $\Bl_{Z_k\times\{\infty\}}X\times \PP^1$ of a sub-scheme $Z_k$ supported on $P_0$. The Hamiltonian of the resulting $\CC^*$ action on the central fibre is given by $h_j$. If we denote the corresponding leading terms in the weight expansion by $b_{j,0}$ and $b_{j,0}'$, then as in \cite{LS} we have that 
\begin{align*}
b_{j,0} = \int_0^ah_jx^{m}(1+x)^ndx\\
b_{j,0}'=n\int_0^ah_jx^m(1+x)^{n-1} +h_j(a)a^m(1+a)^nb\\
||\scrX_j|| = \Big(\int_0^a h_j^2x^m(1+x)^n\,ds\Big)^{\frac{1}{2}}.
\end{align*}
We now compute
\begin{align*}
\lim_{t\rightarrow\infty}\int_X(\La_{\chi_t}\omega - \mu)^2\chi_{t}^n &= \int_0^ah\Big(\psi'_\infty(x)+ \psi_\infty(x)\Big(\frac{n}{1+x} + \frac{m}{x}\Big) + \frac{n}{1+x}-\mu\Big)x^{m}(1+x)^ndx\\
&= \int_0^ah(x)\frac{d}{dx}[x^m(1+x)^n\psi_\infty(x)]\,dx + \int_0^ah\Big( \frac{n}{1+x}-\mu\Big)x^{m}(1+x)^ndx\\
&= -\int_0^ah'(x)\psi_\infty(x)x^m(1+x)^n\,dx + h(a)a^m(1+a)^nb \\
&~+ \int_0^ah\Big( \frac{n}{1+x}-\mu\Big)x^{m}(1+x)^ndx\\
&= n\int_0^ahx^m(1+x)^{n-1} +h(a)a^m(1+a)^nb - \mu\int_0^ahx^{m}(1+x)^ndx\\
&=\lim_{j\rightarrow\infty}\Big(n\int_0^ah_jx^m(1+x)^{n-1} +h_j(a)a^m(1+a)^nb - \mu\int_0^ah_jx^{m}(1+x)^ndx\Big)\\
&= -\lim_{j\rightarrow\infty}\Fut_\be(\scrX_j),
\end{align*}
where we integrated by parts in the third line and used the fact that $h'\psi_{\infty}\equiv 0$ in the fourth line. Then 
\begin{align*}
\lim_{t\rightarrow\infty}||\La_{\varphi(t)}\omega - \mu||_{L^2(X,\chi_{\varphi(t)})} &= \frac{\lim_{t\rightarrow\infty}||\La_{\varphi(t)}\omega - \mu||^2_{L^2(X,\chi_{\varphi(t)})}}{\Big(\int_0^ah^2x^m(1+x)^n\,dx\Big)^{\frac{1}{2}}}\\
&=\lim_{k\rightarrow\infty}-\frac{\Fut_\be(\scrX_k)}{||\scrX_k||}.
\end{align*}

\end{itemize}
It now follows easily from the previous two steps that $$e = \inf_{\chi\in\sK(\al)}\sE_\omega(\chi).$$
\end{proof}

\begin{proof}[Proof of Theorem \ref{thm:j-eq-bubbling}] We can finally prove Theorem \ref{thm:j-eq-bubbling}. We first approximate $\chi_\infty$ on $X\setminus P_0$ by smooth metrics $\eta_j$ in the class $(1+\la)[\pi^*\omega_M] + (a-\la)[D_\infty]$. Instead of using Demailly's regularization theorem, we do the smoothening more explicitly. Recall that $\chi_\infty = \pi^*\omega_M+  \nddbar v_\infty$, where $v_\infty$ is a convex function with $v_\infty':\RR\rightarrow (\la,a).$ We let $\tilde v_\infty := v_\infty - \la\rho$ so that $\tilde v_\infty$ is also convex but $\tilde v_\infty':\RR\rightarrow (0,a-\la).$ We also let $\tilde u:\RR\rightarrow \RR$ be given by $$\tilde u (\rho) = (a-\la)\log(1+e^\rho).$$ Then $\tilde u$ is convex with $\tilde u':\RR\rightarrow (0,a-\la)$ and $\eta := \pi^*\omega + \nddbar\tilde u$ is a smooth metric in the class $\pi^*\omega_M + (a-\la)[D_\infty]$. Let $\ka:\RR\rightarrow[0,1]$ be a smooth cut-off function such that $\ka\equiv 1$ on $\rho>1$ and $\ka\equiv 0$ on $\rho<-1$. We first claim that there is a unique solution $\vartheta_k:\RR\rightarrow\RR$  to $$(1+\vartheta_k')^n(\vartheta_k')^m\vartheta_k'' = F_k(\rho):= \ka(\rho+k)(1+\tilde v_\infty')^n(\tilde v_\infty')^m\tilde v_\infty'' + (1-\ka(\rho+k))(1+\tilde u)^n(\tilde u')^m\tilde u''.$$ In fact, suppose $$\vartheta_k(\rho) = \int_{0}^\rho f_k(\tilde u'(s))\,d,$$ then $f_k:(0,a-\la)\rightarrow \RR$ satisfies $$f_k(y)^m(1+f_k(y))^nf_k'(y) = F_k((\tilde u')^{-1}(y)).$$ Integrating we see that $$f_k(y) = I_{m,n}^{-1}\Big(\int_0^y F_k((\tilde u')^{-1}(t))\,dt\Big),$$ where as before $$I_{m,n}(x) = \int_0^xt^m(1+t)^n\,dt.$$ Note that $I_{m,n}$ is strictly increasing and hence invertible. It is easy to see that for each $k$, $\vartheta_k'(\rho) $ is asymptotic to $\tilde u(\rho)$ as $\rho\rightarrow-\infty$, and asymptotic to $\tilde v_\infty$ as $\rho\rightarrow\infty$, and so $$\eta_k = (1+\la)\pi^*\omega + \nddbar \vartheta_k(\rho)$$ defines a smooth metric in the class $(1+\la)[\pi^*\omega_M] + (a-\la)[D_\infty].$  For later use, we also note that there exists a constants $C>0$ such that $$C^{-1}e^{\rho} \leq |\vartheta_k'(\rho)|,|\vartheta_k''(\rho)|\leq Ce^{\rho/2}$$ for all $\rho<-1$ and all $k$. The bound on the right is due to the fact that $\tilde\pi^*\chi_\infty$ extends to a conical metric in the class $\tilde\pi^*\al - \la[E]$ of angle $\pi$ along $E$. 

Next, let $\theta = \pi^*\omega_M + \nddbar \zeta(\rho) \in [\pi^*\omega_M] + \la[D_\infty] $ be a fixed K\"ahler metric with Calabi ansatz, and let $\theta_k = \Phi_k^*\theta = \pi^*\omega_M + \nddbar \zeta_k(\rho)$, where $\zeta_k(\rho) = \zeta(\rho+k)$. Finally we let $$\chi_k = \eta_k+\theta_k -(1+\la) \pi^*\omega_M = \pi^*\omega_M + \nddbar(\vartheta_k  +  \zeta_k) \in \al = [\pi^*\omega_M] + a[D_\infty].$$ We set $v_k = \vartheta_k + \zeta_k.$ For any $N>0$, we let $$U_N = \{\rho <-N\} \text{ and }V_N = \{-N-1<\rho<-N+1\}.$$ We also let $$E_{N,k}  = \int_{-N}^\infty\Big(\frac{u''}{v_k''} + n\frac{1+u'}{1+v_k'} + m\frac{u'}{v_k''}\Big)^2(1+v_k')^n(v_k')^mv_k'' \,d\rho$$ and let $$R_{N,k} =  \int^{-N}_{-\infty}\Big(\frac{u''}{v_k''} + n\frac{1+u'}{1+v_k'} + m\frac{u'}{v_k''}\Big)^2(1+v_k')^n(v_k')^mv_k''\,d\rho, $$ so that $\sE_\omega(\chi_k) = E_{N,k} + R_{N,k}$. As before, we let $$e = \inf_{\chi\in \al}\sE_\omega(\chi) = \sE_{\pi^*\omega_M}(\theta) + \sE_{\omega}(\chi_\infty).$$Notice that for any fixed $N$, $\chi_k$ converges smoothly to $\chi_\infty$ on $X\setminus U_N$. In particular given any $\vep>0$ one can choose $N>>1$ such that for any $k>K_1 = K_1(N,\vep)$ $$|E_{N,k} - \sE_\omega(\chi_\infty)| < \vep.$$ We next analyse $R_{N,k}$. Changing coordinates $\rho\rightarrow \rho+k$ we see that by choosing $N$ sufficiently large, we can ensure that $$\Big|R_{N,k} - n^2\int_{-\infty}^{k-N}\Big(\frac{1}{1+\zeta'}\Big)^2(1+\zeta')^n(\zeta')^m\zeta''\,d\rho\Big|<\vep.$$ We have used the fact that $u',u''$ are asymptotic to $e^{\rho}$. Next, we note that $$\lim_{k\rightarrow\infty}n^2\int_{-\infty}^{k-N}\Big(\frac{1}{1+\zeta'}\Big)^2(1+\zeta')^n(\zeta')^m\zeta''\,d\rho = \sE_{\pi^*\omega_M}(\theta).$$ The upshot is that $$\lim_{k\rightarrow\infty}\sE_\omega(\chi_k) = e.$$ This proves part (3) in the theorem. 

Next we prove the Gromov-Hausdorff convergence. Let $X_\infty = \tilde X\sqcup_{E,D_\infty^Y}Y $ with $Y = X$ endowed with the metric $\theta$. A key point to note is that $$\tilde\pi^*\chi_\infty\Big|_{E} = \theta\Big|_{D_\infty^Y}.$$ This follows by explicit computations in the blow-up coordinates, and relies on the fact that $\lim_{\rho\rightarrow\infty}\zeta'(\rho) = \lim_{\rho\rightarrow-\infty}v_\infty'(\rho) = \la$. Moreover, one can also prove that the distance function $d_E$ on $E$ induced by $\tilde\pi^*\chi_\infty$ (or equivalently by $\theta\Big|_{D_\infty^Y}$) is equal to the restriction of the distance function $d_{\tilde\pi^*\chi_\infty}$ to $E$. That is, $(E,\tilde\pi^*\chi_\infty\Big|_E)$ is not only a Riemannian sub-manifold, but also a metric sub-space. We now define a distance function $d_\infty$ on $X_\infty$ to be $d_\infty\Big|_{\tilde X} = d_{\tilde\pi^*\chi_\infty},~d_\infty\Big|_Y = d_\theta,$ and for $x\in \tilde X\setminus E$ and $y\in Y\setminus D_\infty^Y$, $$d_\infty(x,y) = \inf_{\ga}|\ga|,$$ where the infimum is taken over paths $$\ga = \ga_1 + \ga_2+\ga_3,$$ where each $\ga_1,\ga_2,\ga_3$ are  smooth paths contained completely in $\tilde X\setminus E$, $E = D_\infty^Y$ and $Y\setminus D_\infty^Y$ respectively,  and the lengths $|\ga_j|$ are measured using the metrics induced by $\tilde\pi^*\chi_\infty$, $\tilde\pi^*\chi_\infty\Big|_{E} = \theta\Big|_{D_\infty^Y}$ and  $\theta$ respectively. Note that the resulting distance function $d_\infty$ is the usual distance function defined on a gluing of two metric spaces. In fact, explicitly we have $$d_\infty(x,y) = \begin{cases} d_{\tilde\pi^*\chi_\infty}(x,y),~x,y\in \tilde X\\ d_{\theta}(x,y),~ x,y\in Y\\ \inf_{p\in E \cong D_\infty}\Big(d_{\tilde\pi^*\chi_\infty}(x,p) + d_{\theta}(p,y)\Big),~x\in \tilde X\text{ and }y\in Y.\end{cases}$$

\begin{prop} Let $d_k$ denote the distance function on $X$ induced by $\chi_k$. Then
$$\lim_{k\rightarrow\infty}d_{GH}\Big((X,d_k),(X_\infty,d_\infty)\Big) = 0.$$
\end{prop}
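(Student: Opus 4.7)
The plan is to establish the Gromov--Hausdorff convergence by constructing, for every $\epsilon>0$ and all sufficiently large $k$, an explicit $\epsilon$-approximate isometry $F_k\colon(X,d_k)\to(X_\infty,d_\infty)$. The construction exploits the three-region structure of $v_k=\vartheta_k+\zeta_k$: fix a large parameter $N$ (to be chosen in terms of $\epsilon$), and for $k>2N$ decompose $X$ into the outer region $A_k^{\mathrm{out}}=\{\rho\geq -N\}$, the bubble region $A_k^{\mathrm{bub}}=\{\rho\leq -k+N\}$, and an intermediate neck $A_k^{\mathrm{neck}}=\{-k+N\leq\rho\leq -N\}$. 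On $A_k^{\mathrm{out}}$ the cutoff forces $\vartheta_k=\tilde v_\infty$, and the exponential decay of $\zeta^{(\ell)}(\rho+k)$ for $\ell\geq 2$ (while $\zeta'(\rho+k)\to\lambda$) yields smooth convergence $\chi_k\to\chi_\infty$ on compact subsets of $A_k^{\mathrm{out}}\cong \tilde X\setminus U_N(E)$. On $A_k^{\mathrm{bub}}$, the pullback of $\chi_k$ by the biholomorphism $\Phi_k^{-1}$ (which shifts $\rho\mapsto\rho-k$) has $\nddbar$-potential $\zeta(\rho)+\vartheta_k(\rho-k)$; since $\tilde u'(\rho-k)$ decays exponentially on bounded $\rho$-sets as $k\to\infty$, $(\Phi_k^{-1})^*\chi_k\to\theta$ smoothly on compact subsets of $\Phi_k^{-1}(A_k^{\mathrm{bub}})\cong Y\setminus U_N(D^Y_\infty)$.

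The crux of the argument is to show that $\mathrm{diam}_{\chi_k}(A_k^{\mathrm{neck}})\to 0$ uniformly in $k$ as $N\to\infty$. In the neck $v_k$ is close to $v_\infty$, so $\chi_k|_{A_k^{\mathrm{neck}}}$ is close to $\chi_\infty|_{\{-k+N\leq\rho\leq -N\}}$. By the conical asymptotics $v_\infty'(\rho)=\lambda+c\,e^{\rho/2}+o(e^{\rho/2})$ established at the end of the proof of Theorem~\ref{mainthm2}, the tubular region $\{\rho\leq -N\}\subset\tilde X$ has transverse radius $O(e^{-N/2})$ in the conical metric $\tilde\pi^*\chi_\infty$. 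A parallel analysis using the Calabi ansatz for $\theta$ shows that the region $\{\tilde\rho\geq N\}\subset Y$ has transverse radius $O(e^{-N/2})$ in $\theta$. Combined with the metric identification $\tilde\pi^*\chi_\infty|_E=\theta|_{D^Y_\infty}$ proved immediately above the proposition, this forces the entire neck to collapse onto the gluing divisor as $N\to\infty$.

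We then define $F_k(x)=x\in \tilde X\subset X_\infty$ on $A_k^{\mathrm{out}}$, $F_k(x)=\Phi_k^{-1}(x)\in Y\subset X_\infty$ on $A_k^{\mathrm{bub}}$, and $F_k(x)=p_0$ for some fixed $p_0\in E=D^Y_\infty$ on $A_k^{\mathrm{neck}}$. Choosing $N$ large enough that both $U_N(E)\subset\tilde X$ and $U_N(D^Y_\infty)\subset Y$ have diameter less than $\epsilon$ guarantees that the image of $F_k$ is $\epsilon$-dense in $X_\infty$. Distances between two points of $A_k^{\mathrm{out}}$ or two points of $A_k^{\mathrm{bub}}$ are approximately preserved by the smooth convergence on those regions, and distances between mixed pairs reduce, via the triangle inequality through any point of the neck and the explicit gluing formula defining $d_\infty$, to the already controlled intra-region distances. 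The main obstacle is the neck estimate: effective quantitative control on $\chi_k$ in the transition region, where both $\vartheta_k$ and $\zeta_k$ contribute non-trivially, must be combined with the conical and Calabi-type asymptotics to force the neck diameter to zero uniformly in $k$. Once that is in hand, verifying that $F_k$ is an $\epsilon$-approximation becomes a routine matter of chasing through the definitions.
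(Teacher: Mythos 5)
Your plan founders on its central claim that $\mathrm{diam}_{\chi_k}(A_k^{\mathrm{neck}})\to 0$. This is false. Writing the $U(m+1)$-invariant metric as $g_k=(1+v_k')g_M+v_k'\,g_{\CC\PP^m}+v_k''\,g_{cyl}$, only the radial and circle directions (the $g_{cyl}$ part) degenerate in the neck: there $v_k'=\vartheta_k'+\zeta'(\rho+k)$ stays close to $\la>0$, so each level set $\{\rho=c\}$ carries a metric uniformly comparable to $(1+\la)g_M+\la\, g_{\CC\PP^m}$, whose diameter is bounded below by that of $(M,g_M)$. What is true is that the neck lies within small Hausdorff distance (of order $e^{-N/4}$) of the slice $\{\rho=-N\}$, i.e.\ it collapses \emph{onto} a copy of the positive-dimensional divisor $E\cong D^Y_\infty$, not to a point. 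Consequently the map $F_k$ you propose, which sends the whole neck to a single $p_0\in E$, is not an $\vep$-approximate isometry: two neck points lying over far-apart points of $M$ have $d_k$-distance bounded below by a fixed positive constant while their images coincide. For the same reason you cannot choose $N$ so that the tube $U_N(E)\subset\tilde X$ has diameter less than $\vep$.

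Repairing this requires more than replacing the constant neck map by the projection to $E$: the genuinely delicate point, which your closing "routine chasing through definitions" elides, is the comparison of $d_k(p,q)$ with $d_\infty(F_k(p),F_k(q))$ when $p$ and $q$ lie in different regions, since $d_\infty$ for such pairs is an infimum over gluing points on $E$ and one must show that a $\chi_k$-geodesic can be rerouted through a neighbourhood of the optimal gluing point at cost $O(\vep)$. The paper handles this with a two-region decomposition $X_N=\{\rho>-N\}$, $Y_N=\{\rho<-N\}$ (no separate neck) and an $(\vep,\chi_k)$-convexity statement for $Y_N$, proved by projecting paths onto level sets using the monotonicity of $(1+v_k'(c))g_M+v_k'(c)g_{\CC\PP^m}$ in $c$ together with $v_k''\le\vep^2$ for $\rho<-N$; that convexity, combined with the matching $\tilde\pi^*\chi_\infty|_E=\theta|_{D^Y_\infty}$, is what makes the mixed-pair estimates go through. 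Your outer- and bubble-region convergence statements are correct and agree with the paper, but as written the argument has a genuine gap at what you yourself identify as its crux.
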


\begin{proof}
We fix sufficiently small $\vep>0$. Our goal is to show that for any sufficiently large $k>k_0$, there are maps (not necessary continuous) $$F_k:X\rightarrow X_\infty,~G_k:X_\infty\rightarrow X$$ such that for all $p,q\in X$ we have 
\begin{align}
|d_k(p,q) - d_\infty(F_k(p),F_k(q))|<12\vep \label{eq:gh-conv-goal1}\\
|d_k(p,G_k\circ F_k (p))<\vep. \label{eq:gh-conv-goal2}
\end{align} The key point in the proof is the observation that the Riemannian metric associated to any K\"ahler form with $U(m+1)$-symmetry, say $\chi = a\pi^*\omega_M + \ddbar v(\rho)$, can be decomposed on $X\setminus P_0 \approx \tilde X\setminus E$ as $$g_\chi = (a+v')g_M + v'(\rho)g_{\CC\PP^m} + v''(\rho)g_{cyl},$$ where 
\begin{itemize}
\item we think of each $\CC\PP^{m+1}$ fibre as an $\mathbb{S}^1$ bundle over $\CC\PP^m$ and $g_{\CC\PP^m}$ denotes the Fubini-Study metric on $\CC\PP^m$. 
\item we let $\tau$ be the contact one form associated to the $\mathbb{S}^1$-bundle and $$g_{cyl} = d\rho^2 + \tau^2.$$
\item $v''(\rho)\rightarrow 0$ as $|\rho|\rightarrow \infty$. Moreover $v'(-\infty) := \lim_{\rho\rightarrow-\infty}v'(\rho) = 0$ for a metric on $X$, and $v'(-\infty) = \la>0$ for a metric on the blow-up.  
\end{itemize} 
For any point $p\in X$ (or $\tilde X)$ we can therefore write $$p = (p_M,p_{\CC\PP^m},\sigma_p,\rho_p),$$ where $\rho_p$ is the (log)-radial coordinate as before, $p_M\in M$, $p_{\CC\PP^m}\in \CC\PP^m$ and $\sigma_p \in \mathbb{S}^1$. Note that as $\rho\rightarrow\infty$, the metric $g$ above is closed to the product metric $g_M + v'(\infty)g_{\CC\PP^m}$ on the infinity section, while if $\varphi'(-\infty) = \la>0$, then for $\rho$ close to $-\infty$, the metric is close to $g_M + \la g_{\CC\PP^m}$ near the zero section, and the metric completion is $\tilde X$.

For any natural number $N$, we set $$X_N = \{\rho>-N\},Y_N=\{\rho<-N\}\subset X.$$ We also let $\tilde X_N $ be the $X_N$ identified as a subset of $\tilde X$. We first specify how to choose $N$ and $k_0$. In what follows, for any K\"ahler metric $\chi$ on $X$ (or on any of the spaces under consideration) we will say that a piecewise smooth curve $\ga:[0,1]\rightarrow X$ is $(\vep,\chi)$-geodesic if $$\sL_{\chi}(\ga) < d_{\chi}(\ga(0),\ga(1)) + \vep.$$ Similarly we will call a subset $A\subset X$, $(\vep,\chi)$-convex if for any two points $p,q\in A$, there exists a $(\vep,\chi)$-geodesic connecting $p$ and $q$ contained completely inside $A$.
\begin{enumerate}
\item Pick $N>>1$ such that 
\begin{itemize}
\item We have $$d_\infty\Big(\partial \tilde X_N,E\Big)<\vep.$$
\item For all $\rho<-N$ and all $k\geq 1$, $$v_k''(\rho) < \vep^2.$$
\end{itemize} 
\item Choose $k_1>>1$ such that for any $k\geq k_1$, 
\begin{itemize}
\item For any piecewise smooth curve $\ga:[0,1]\rightarrow X_N$, $$|\sL_{\chi_k}(\ga) - \sL_{\tilde\chi_\infty}(\ga)|<\vep.$$ In particular, $$d_{GH}\Big((X_N,\chi_k),(\tilde X_N,\tilde\chi_\infty)\Big)<\vep,$$ where we simply the take identity map as the Gromov-Hausdorff approximation. 
\item $|\chi_k-\tilde\chi_\infty|_{C^0(\tilde X_N,\tilde\chi_\infty)}<\vep.$
\end{itemize}
\item \label{item:k2} Finally, choose $k_2>>1$ such that for any $k\geq k_2$, 
\begin{itemize}
\item For any piecewise smooth curve $\ga:[0,1]\rightarrow Y_N$, $$|\sL_{\chi_k}(\ga) - \sL_{\theta_k}(\ga)| < \vep.$$
\item The inclusion map is an $\vep$-Gromov-Hausdorff isometry from $(Y_N,\theta_k)$ to $(Y,\theta).$
\item $d_{\theta_k}(\partial Y_N,D^X_\infty) < \vep$, where $D^X_\infty$ is the divisor at infinity in $X$.
\end{itemize}
\end{enumerate}

We then set $k_0 = \max\{k_1,k_2\}.$ For any point $p\in X\setminus P_0$, we will now write $$p = (p_M,p_{\CC\PP^m},\sigma_p,\rho_p),$$ where $p_M = \pi(p)$, $p_{\CC\PP^m}$ is the corresponding point in the $\CC\PP^m$ in the fibre over $p_M$ (as explained above), and $\rho_p = \rho(p)$ and $\sigma_p$ is the corresponding point in the circle bundle.  For $k\geq k_0$ we construct the map in the following way: 
$$F_k(p) = F_k(p_M,p_{\CC\PP^m},\sigma_p,\rho_p) =  \begin{cases} (p_M,p_{\CC\PP^m},\sigma_p,\rho_p) \in \tilde X,~ \text{ if $p\in X_N$}\\ (p_M,p_{\CC\PP^m},\sigma_p,\rho_p+k)\in Y,~\text{ if $p\in \overline{Y_N}$}.   \end{cases}$$The proof of \eqref{eq:gh-conv-goal1}  has several steps. 

\begin{itemize}
\item We first claim that $Y_N$ is $(\vep,\chi_k)$-convex in $X$ and $Y$ is $(\vep,d_\infty)$-convex in $X_\infty$. It is enough to show that any two points $x,y\in \partial Y_N$ can be joined by an $(\vep,\chi_k)$-geodesic completely contained  in $Y_N$. Writing the metric $g_k$ corresponding to $\chi_k$ as $$g_k = (1+v_k')g_M + v_k'(\rho)g_{\CC\PP^m} + v_k''(\rho)g_{cyl},$$ we see that the metric restricted to $\{\rho = c\}$ (the $\mathbb{S}^1$-bundle over $Z\times \CC\PP^m$) is given by $$(1+v_k'(c))g_M + v_k'(c)g_{\CC\PP^m} + v_k''(c)\tau^2.$$ The metric restricted to the base is $(1+v_k'(c))g_M + v_k'(c)g_{\CC\PP^m}$ which is increasing in $c$. Further in the $\mathbb{S}^1$-direction, the metric is small. In fact, $v_k''(\rho) \leq \vep^2$ on $\rho =- N$ by our choice of $N$. That is, if a minimal geodesic joining $x$ to $y$ leaves $Y_N$, then the length can be made smaller (upto an error $O(\vep)$ coming from the $v_k''$ terms) by projecting the curve to the level set $\partial Y_N$. This proves the first claim. The convexity of $Y$ in $X_\infty$ can also be argued similarly. 
\item We first claim that for any $p,q\in Y_N$, $$|d_{k}(p,q) - d_\infty(F_k(p),F_k(q))| < 2\vep.$$ Since $Y_N$ is $\vep$-convex in $(Y,\theta_k)$, there exists a path $\ga:[0,1]\rightarrow Y_N$ connecting $p$ to $q$ such that $$\sL_{\theta_k}(\ga) \leq d_{\theta_k}(p,q) + \vep.$$ But then by \eqref{item:k2}, we have that $$d_{k}(p,q) \leq d_{\theta_k}(p,q) + 2\vep.$$ On the other hand, since $\chi_k \geq \theta_k$, we can conclude that $$|d_k(p,q) - d_{\theta_k}(p,q)| < 2\vep.$$ On the other hand, since $p,q\in Y_N$, $$d_{\theta_k}(p,q) = d_{\theta}(F_k(p),F_k(q)) = d_\infty(F_k(p),F_k(q)),$$ and the claim is proved. 
\item Next, we claim that if $p\in X_N$ and $q\in Y_N$, then $$|d_{k}(p,q) - d_\infty(F_k(p),F_k(q))| < 10\vep.$$ Let $\ga_k:[0,1]\rightarrow X$ be a $\chi_k$- minimal geodesic connecting $p$ to $q$. Let $$\ga_k(t_1) = p' = (p_M',p_{\CC\PP^m}',\sigma_{p'},-N) \in \partial X_N$$ be the first point of $\ga_k$ on $\partial X_N$ and $$\ga_k(t_2) = q' = (q_M',q_{\CC\PP^m}',\sigma_{q'},-N)\in \partial X_N$$ be the last point of $\ga_k$ on $\partial X_N$. We also let $$p_{\tilde X}' =(p_M',p_{\CC\PP^m}',\sigma_{p'},-N) \in\partial\tilde X_N,~ p_Y' = (p_M',p_{\CC\PP^m}',\sigma_{p'},-N) \in Y$$ and $$q_{\tilde X}' = (q_M',q_{\CC\PP^m}',\sigma_{q'},-N)\in \partial \tilde X_N,~q_Y'=(q_M',q_{\CC\PP^m}',\sigma_{q'},-N)\in Y.$$ We let $\ga_{k,1}$ be the part of $\ga_k$ from $[0,t_1]$ and $\ga_{k,3}$ be the part of $\ga_k$ from $[t_2,1]$. Then 

\begin{eqnarray*}
d_{\chi_k}(p,q)&= & \mathcal{L}_{\chi_k}(\gamma_{k,1})  + \mathcal{L}_{\chi_k}(\gamma_{k,3}) + d_{\chi_k}(p', q') + O(\epsilon) \\
& = &\mathcal{L}_{d_\infty }(F_k(\gamma_{k,1}))  + \mathcal{L}_{d_\infty}(F_k(\gamma_{k,3})) + d_{\theta_k}(p', q') + O(\epsilon) \\
&\geq &d_\infty(F_k(p), p'_{\tilde X} ) + d_\infty(q'_{Y}, F_k(q)) + d_\theta (p'_Y, q'_Y) + O(\epsilon) \\
& = &d_\infty(F_k(p), p'_{\tilde X} ) + d_\infty(q'_{Y}, F_k(q)) + d_\infty (p'_Y, q'_Y) + O(\epsilon) \\
& = &d_\infty(F_k(p), p'_{\tilde X} ) + d_\infty(q'_{Y}, F_k(q)) + d_\infty (p'_Y, q'_Y)+d_\infty (p'_{\tilde X} , p'_{Y}) + O(\epsilon) \\
&\geq & d_\infty(F_k(p), F_k(q))+ O(\epsilon).
\end{eqnarray*}
For the second line we use that $\chi_k$ is close to $\theta_k$ on $\partial X_N$ and the $\vep$-convexity of $Y_N$ while for the penultimate line we use the fact that the radial distance in $d_\infty$ is small.

Now we will prove the other direction. Let $\tilde\gamma_k(t):[0,1]\rightarrow X_\infty$ be an $\epsilon$-minimal geodesic joining $$F_k(p) =\tilde\gamma_k(0), ~F_k(q)=\tilde\gamma_k(1) \in X_\infty.$$
 We define 
 $$p'=(p'_Z, p'_{\mathbb{CP}^m}, \vartheta_{p'}, -N), q'=(q'_Z, q'_{\mathbb{CP}^m}, \vartheta_{q'}, -N) \in \partial Y_N$$ 
 such that 
 $$p'_{\tilde X}=\tilde\gamma_k(t_1)=(p'_Z, p'_{\mathbb{CP}^m}, \sigma_{p'}, -N ) \in \tilde X$$
  is the first point of $\tilde\gamma_k$ in $\{\rho=-N\} \in \tilde X$  and 
  $$q'_Y =\tilde\gamma_k(t_2)=(q'_Z, q'_{\mathbb{CP}^m}, \sigma_{q'}, -N+k) \in Y$$ is the last point at $\{ \rho=-N+k\} \cap Y $. We let $\tilde\gamma_{k, 1}$ be the part of $\tilde\gamma_k$ with $t\in [0, t_1)$ and $\tilde\gamma_{k, 3}$ be the part of $\tilde\gamma_k$ with $t\in (t_2, 1]$.
  
  We also let  
  $$p'_Y=\tilde\gamma_k(t_1)=(p'_Z, p'_{\mathbb{CP}^m}, \sigma_{p'}, -N+k ) \in Y$$
  $$q'_{\tilde X}=\tilde\gamma_k(t_1)=(q'_Z, q'_{\mathbb{CP}^m}, \sigma_{q'}, -N ) \in \tilde X.$$
  Then
\begin{eqnarray*}
d_\infty( F_k(p), F_k(q))&= & \mathcal{L}_{d_\infty}(\tilde\gamma_{k,1})  + \mathcal{L}_{d_\infty}(\tilde\gamma_{k,3}) + d_\infty (p'_{\tilde X} , q'_Y) + O(\epsilon) \\
& = &\mathcal{L}_{\chi_k }( (F_k)^{-1}(\tilde\gamma_{k,1}))  + \mathcal{L}_{\chi_k}( (F_k)^{-1} (\tilde\gamma_{k,3})) + d_\infty(p'_Y, q'_Y) + O(\epsilon) \\
&\geq &d_{\chi_k}(p, p' ) + d_{\chi_k} (q, q') + d_\theta (p'_Y, q'_Y) + O(\epsilon) \\
& = &d_{\chi_k}(p, p' ) + d_{\chi_k} (q, q') + d_{\theta_k} (p', q') + O(\epsilon) \\
& = &d_{\chi_k}(p, p' ) + d_{\chi_k} (q, q') + d_{\chi_k} (p', q') + O(\epsilon) \\
&\geq & d_{\chi_k}(p,q)+ O(\epsilon).
\end{eqnarray*}

We have now proved the claim. 

\item Next, we claim that for any $p, q\in X_N$, 

$$|d_{\chi_k}(p, q) - d_{\infty} (F_k(p), F_k(q))| < \epsilon. $$

Let $\gamma_k:[0,1]\rightarrow X$ be a minimal $\chi_k$-geodesic joining $p=\gamma_k(0)$ and $q=\gamma_k(1)$. 

Let $p'=\gamma_k(t_1)$, $q'=\gamma_k(t_2)$ be the first point and last point of $\gamma_k$ in $\partial Y_N$. Let $\gamma_{k, 1}$ be part of $\gamma_k(t)$ with $t\in [0, t_1)$ and $\gamma_{k, 3}$ be part of $\gamma_k(t)$ with $t\in (t_2, 1]$. 
Then if we let 
$$q'_Y=F_k(q') = (q'_Z, q'_{\mathbb{CP}^m}, \vartheta_{q'}, -N+k) \in Y,~ q'_{\tilde X}  = (q'_Z, q'_{\mathbb{CP}^m}, \vartheta_{q'}, -N) \in \tilde X, $$ we have that
\begin{eqnarray*}
&&d_{\chi_k}(p, q) \\
&=& \mathcal{L}_{\chi_k} (\gamma_{k, 1}) + \mathcal{L}_{\chi_k}(\gamma_{k, 3}) + d_{\chi_k}(p', q') + O(\epsilon)\\
&=& \mathcal{L}_{d_\infty}(F_k(\gamma_{k,1})) + \mathcal{L}_{d_\infty}(F_k(\gamma_{k,3})) + d_\infty(F_k(p'), F_k(q'))+ O(\epsilon)\\ 
&\geq & d_\infty(F_k(p), F_k(p')) + d_\infty (F_k(q), q'_{\tilde X}) + d_\infty(F_k(p'), q'_Y)+ O(\epsilon)\\ 
&\geq&d_\infty(F_k(p), F_k(p')) + d_\infty (F_k(q), q'_{\tilde X}) + d_\infty(F_k(p'), q'_Y)+ d_\infty(q'_Y, q'_{\tilde X}) +  O(\epsilon)\\ 
&\geq & d_\infty(F_k(p), F_k(q)) + O (\epsilon).
\end{eqnarray*}
Here $d_\infty(q'_Y, q'_{\tilde X})$ follows from the fact that $\phi_Y''$ and $\phi''_{\tilde X}$ are $O(\epsilon)$ in the region $\left( {\rho> -N+k} \right)\cap Y \cup \left( {\rho< -N } \right) \cap \tilde X $.

Finally, we will now prove the other direction by bounding $d_\infty(p, q)$ from below.  Let $\gamma_\infty=\gamma_\infty(t)$ with $t\in [0, 1]$ be an $\epsilon$-minimal geodesic joining $p=\gamma_k(0)$ and $q=\gamma_k(1)$. Let $$p'=(p'_Z, p'_{\mathbb{CP}^m}, \vartheta_{p'}, -N), q'=(q'_Z, q'_{\mathbb{CP}^m}, \vartheta_{q'}, -N)\in \partial Y_N$$ be the two points defined as below, and let

$$p'_{\tilde X} =(p'_Z, p'_{\mathbb{CP}^m}, \vartheta_{p'}, -N)=\gamma_\infty(t_1)$$ be the first point of $\gamma_\infty$ in $\{\rho=-N\}\cap \tilde X$. 
$$q'_{\tilde X}=(q'_Z, q'_{\mathbb{CP}^m}, \vartheta_{q'}, -N)= \gamma_\infty (t_2)$$ 
is the last point of $\gamma_\infty$ in $\{ \rho= -N\} \cap \tilde X$. Let $\gamma_{\infty, 1}$ be part of $\gamma_\infty(t)$ with $t\in [0, t_1)$ and $\gamma_{\infty, 3}$ be part of $\gamma_\infty(t)$ with $t\in (t_2, 1]$. 
Then if we let 
$$p'_Y=F_k(p') = (p'_Z, p'_{\mathbb{CP}^m}, \vartheta_{p'}, -N+k) \in Y$$
$$q'_Y=F_k(q') = (q'_Z, q'_{\mathbb{CP}^m}, \vartheta_{q'}, -N+k)  \in Y,$$
we have that
\begin{eqnarray*}
&&d_\infty (F_k(p), F_k(q)) \\
&=& \mathcal{L}_{d_\infty} (\gamma_{\infty, 1}) + \mathcal{L}_{d_\infty}(\gamma_{\infty, 3}) + d_{\infty}(p'_{\tilde X}, q'_{\tilde X}) + O(\epsilon)\\
&=& \mathcal{L}_{\chi_k}((F_k)^{-1}(\gamma_{\infty,1})) + \mathcal{L}_{\chi_k}( (F_k)^{-1}(\gamma_{\infty,3})) + d_\infty(p'_Y, q'_Y))+ O(\epsilon)\\ 
&\geq & d_{\chi_k} (p, p') + d_{\chi_k}  (q, q') + d_{(Y, \theta)} (F_k(p'), F_k(q'))+ O(\epsilon)\\ 
&\geq & d_{\chi_k} (p, p') + d_{\chi_k}  (q, q') + d_{\chi_k} (p', q')+ O(\epsilon)\\ 
&\geq & d_{\chi_k} (p, q) + O (\epsilon).
\end{eqnarray*}

This completes the proof of the claim, and the proposition.

\end{itemize}
\end{proof}

\end{proof}

\section{Cotangent flow with symmetry on $\Bl_{x_0}\PP^2$}\label{sec:dhym-blow-up}
In this final section we study the deformed Hermitian Yang Mills equation on $\PP^n$ blowup at one point using Calabi ansatz.
\subsection{A review of the cotangent flow}
Unlike for the $J$-equation, in the case of the dHYM equations, there are multiple flows that have been studied in literature. Two main examples are those of the line bundle mean curvature flow introduced by Jacob and Yau \cite{JY}, and the tangent flow introduced by Takahashi  \cite{Tak}. In this section we instead review a third flow - the cotangent flow - introduced by Fu-Yau-Zhang \cite{FYZ}.  The cotangent flow is defined as follows:   
\begin{equation}\tag{$\star_t$}\label{eq:flow}
\begin{cases}
\frac{d}{dt}\varphi = \cot\theta_\varphi - \cot\vartheta(X,\al,\be)\\
\varphi(0) = \underline{\varphi}.
\end{cases}
\end{equation}
From a PDE point of view it is important to assume that 
\begin{equation}\tag{B}\label{eq:condition-b}
B_{\underline\varphi} := \max_{X}\theta_{\uvarphi} <\pi,
\end{equation} since this condition is preserved along the flow and makes sure the right hand side is convex. The following theorem is proved in \cite{FYZ}. 
\begin{thm}\label{thm:long-time}
Let $(X^n, \omega)$ be a compact K\"ahler manifold and $\chi$ a smooth closed real $(1,1)$ form in $X$ with $\vartheta(X,\al,\be) \in (0,\pi)$. If the initial data $\uvarphi$ satisfy the condition $(B)$ above, the flow \eqref{eq:flow} has a unique solution on $[0,\infty)$.
\end{thm}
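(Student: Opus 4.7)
The strategy is the standard one for quasilinear parabolic PDE: short-time existence, preservation of the supercritical condition $(B)$, uniform a priori estimates on finite time intervals, and continuation. For short-time existence, I would first verify that \eqref{eq:flow} is forward parabolic at $\underline\varphi$ by computing the linearization of the right-hand side. In a frame diagonalizing $\omega^{-1}\chi_\varphi$ with eigenvalues $\lambda_i$, a direct calculation using $\theta_\varphi = \tfrac{n\pi}{2} - \sum_i \arctan\lambda_i$ yields
\[
\frac{\partial}{\partial \chi_{i\bar j}}\cot\theta_\varphi = \csc^2\theta_\varphi \cdot \frac{1}{1+\lambda_i^2}\,\delta_{ij},
\]
which is positive-definite precisely when $\theta_\varphi \in (0,\pi)$. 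Condition $(B)$ guarantees this at $t=0$, and standard quasilinear parabolic theory in H\"older spaces then produces a unique smooth solution on some short interval $[0,T_0]$.

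Next I would show $(B)$ is preserved along the flow. Differentiating $\theta_\varphi$ in $t$ and substituting $\dot\varphi = \cot\theta_\varphi - c_0$, one obtains (in the diagonalizing frame) an equation of the form
\[
\partial_t \theta_\varphi = \csc^2\theta_\varphi \cdot P^{i\bar j}\partial_i\partial_{\bar j}\theta_\varphi + (\text{first-order terms in }\nabla\theta_\varphi),
\]
with positive-definite coefficient matrix $P^{i\bar j} = (1+\lambda_i^2)^{-1}\delta_{ij}$. Applying the parabolic maximum and minimum principles then gives $\max_X \theta_\varphi(\cdot,t) \leq \max_X \theta_{\underline\varphi} < \pi$ and $\min_X\theta_\varphi(\cdot,t) \geq \min_X\theta_{\underline\varphi} > 0$ on any existence interval, so $\theta_\varphi$ stays in a fixed compact subinterval of $(0,\pi)$ and the flow remains uniformly parabolic with an ellipticity constant that does not degenerate in time.

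The uniform upper bound on $|\cot\theta_\varphi|$ gives $|\dot\varphi| \leq C$, hence a $C^0$-estimate for $\varphi$ on any finite interval $[0,T]$. The central technical step is the $C^2$-estimate: I would apply the parabolic maximum principle to an auxiliary quantity of the form $\log \Tr_\omega \chi_\varphi - A\varphi$ for a sufficiently large constant $A$, exploiting the preserved strict bounds on $\theta_\varphi$ away from $0$ and $\pi$ to produce the crucial subsolution inequality at an interior maximum. With the $C^2$-estimate in hand, Evans-Krylov (invoking the concavity of $\cot\theta$ on the admissible range after a suitable change of variables) yields $C^{2,\alpha}$ in space, and parabolic Schauder bootstrapping gives uniform $C^{k,\alpha}$-bounds on $[0,T]$. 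The short-time existence can then be iterated indefinitely to extend the solution to $[0,\infty)$, and uniqueness follows from a standard maximum-principle argument applied to the difference of two solutions. The hard part of this scheme is the $C^2$-estimate, since $\cot\theta$ degenerates at the boundary of the admissible range $(0,\pi)$, and the test function together with the choice of $A$ must be tuned carefully to convert the preserved interior bounds on $\theta_\varphi$ into a genuinely strict differential inequality at the maximum point.
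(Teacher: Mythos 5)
The paper does not prove this statement; it is imported from Fu--Yau--Zhang, with the sentence preceding the theorem reading ``The following theorem is proved in \cite{FYZ}.'' There is therefore no internal proof in the present paper against which to check your argument.

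On its own terms, your outline is the standard parabolic-PDE template for this type of phase flow, and the principal observations are sound: the linearization computation is correct, condition (B) gives strict parabolicity at $t=0$, and the maximum/minimum principles applied to $\theta_\varphi$ (which satisfies a linear parabolic equation without zeroth-order term) trap $\theta_\varphi$ in the fixed compact interval $[\min_X\theta_{\uvarphi},\max_X\theta_{\uvarphi}]\subset(0,\pi)$, which in turn gives a uniform bound on $\cot\theta_\varphi$ and hence on $|\dot\varphi|$. Two cautions. First, the clause ``positive-definite precisely when $\theta_\varphi\in(0,\pi)$'' is not literally correct: $\csc^2\theta_\varphi\cdot(1+\lambda_i^2)^{-1}\delta_{ij}$ is positive-definite whenever $\theta_\varphi$ is not an integer multiple of $\pi$. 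What (B), via the preserved two-sided bound on $\theta_\varphi$, actually buys you is control of $\csc^2\theta_\varphi$; it does \emph{not} control the degeneration of the factor $(1+\lambda_i^2)^{-1}$ as $\lambda_i\to\pm\infty$, and this is exactly why the second-order estimate is the real content of the theorem. Your one-line allusion to ``$\log\Tr_\omega\chi_\varphi - A\varphi$ for a sufficiently large constant $A$'' is the place where all the work happens in \cite{FYZ}, and as written it is not yet an argument: one must produce a strictly negative term at the maximum point even though the ellipticity constants are decaying quadratically in $\lambda_i$, and the choice of test function and constant $A$ has to be coordinated with the structure of the operator rather than just ``tuned carefully.'' Second, you invoke ``concavity of $\cot\theta$ on the admissible range after a suitable change of variables,'' whereas the paper's remark preceding the theorem says that (B) makes the right-hand side \emph{convex} (as a function of the complex Hessian of $\varphi$). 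Either sign works for Evans--Krylov, and your ``change of variables'' qualifier presumably covers the discrepancy, but the signs should be traced explicitly rather than asserted.
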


In dimension two one can easily verify that an initial function satisfying condition \eqref{eq:condition-b} can {\em always} be found, no matter what class $\al$ is chosen so long as $\al\cdot\be\neq 0$. 
\begin{lem}\label{lem:dimension-two-trace-condition}
Let $X$ be a compact K\"ahler surface with a K\"ahler class $\beta$ and a real $(1,1)$ class $\alpha$ such that $\alpha\cdot\beta\neq 0$. Let $\omega\in\beta$ be a K\"ahler form and $\chi\in\alpha$ be a real $(1,1)$ form. Then there exists a $\underline{\varphi}\in C^\infty(X,\RR)$ such that $\La_{\omega}\chi_{\uvarphi}$ is a positive constant. In particular, $\uvarphi$ also satisfies $$B_{\uvarphi} <\pi.$$ Moreover, if $\varphi_t$ solves \eqref{eq:flow} with $\varphi_0 = \uvarphi$ on $[0,T)$, then for all $t\in [0,T)$, we have $\La_{\omega}\chi_{\varphi_t} >0$.
\end{lem}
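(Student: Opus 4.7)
The first step will be to produce $\uvarphi$ by solving a linear Poisson equation. In dimension two we have the identity $\chi\wedge\omega = \tfrac{1}{2}(\La_\omega\chi)\omega^2$, which gives $\int_X \La_\omega\chi\,\omega^2 = 2\,\al\cdot\be$, while $\int_X\omega^2 = \be^2$. Setting $c := 2\,\al\cdot\be/\be^2$, the function $c - \La_\omega\chi$ has zero mean against $\omega^2$, so standard Hodge theory on the compact K\"ahler surface produces a smooth $\uvarphi$ solving $\La_\omega\ddbar\uvarphi = c - \La_\omega\chi$, hence $\La_\omega\chi_{\uvarphi} = c$. Under the assumption $\al\cdot\be > 0$ (which is in fact necessary for the claimed $\uvarphi$ to exist, since $\int_X\La_\omega\chi_{\uvarphi}\,\omega^2 = 2\,\al\cdot\be$ is independent of $\uvarphi$) we have $c > 0$, establishing the first part of the lemma.

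For the claim $B_{\uvarphi} < \pi$, I would expand
\begin{equation*}
(\chi_{\uvarphi}+\sqrt{-1}\omega)^2 = (\chi_{\uvarphi}^2 - \omega^2) + 2\sqrt{-1}\,\chi_{\uvarphi}\wedge\omega.
\end{equation*}
The imaginary part equals $(\La_\omega\chi_{\uvarphi})\,\omega^2 = c\,\omega^2 > 0$ pointwise, which places the argument $\theta_{\uvarphi}$ in $(0,\pi)$ at every point of $X$. Hence $B_{\uvarphi} = \max_X \theta_{\uvarphi} < \pi$, so $\uvarphi$ is a valid initial datum for \eqref{eq:flow} in the sense of \eqref{eq:condition-b}.

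The final step is preservation of $\La_\omega\chi_{\varphi_t} > 0$ along the flow. I would rewrite \eqref{eq:flow} pointwise as
\begin{equation*}
\dot\varphi_t \;=\; \cot\theta_{\varphi_t} - \cot\vartheta(X,\al,\be) \;=\; \frac{(\chi_{\varphi_t}^2/\omega^2) - 1}{\La_\omega\chi_{\varphi_t}} - \cot\vartheta(X,\al,\be),
\end{equation*}
an expression that is only meaningful when $\La_\omega\chi_{\varphi_t}$ is nowhere zero. Since by hypothesis $\varphi_t$ is a \emph{smooth} solution on $[0,T)$, the right-hand side above is finite on all of $X\times[0,T)$, which forces $\La_\omega\chi_{\varphi_t}$ to be nowhere zero there. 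Combined with $\La_\omega\chi_{\varphi_0} = c > 0$ and the continuity of $(x,t)\mapsto \La_\omega\chi_{\varphi_t}(x)$ on the connected set $X\times[0,T)$, an intermediate value argument gives $\La_\omega\chi_{\varphi_t} > 0$ for all $t\in[0,T)$. The main conceptual point here, rather than a technical obstacle, is that this preservation is enforced by the singular structure of $\cot$ at integer multiples of $\pi$, and not by a maximum principle applied to $\La_\omega\chi_{\varphi_t}$ directly; the latter route would require differentiating the singular quotient and seems considerably more delicate.
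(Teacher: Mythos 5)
Your proof follows essentially the same route as the paper's: the same Poisson-equation construction of $\uvarphi$ (with the correct normalizing constant $c=2\al\cdot\be/\be^2$), and the same underlying mechanism --- the pole of $\cot$ at $\theta=\pi$ --- for the preservation of $\La_\omega\chi_{\varphi_t}>0$. Your derivation of $B_{\uvarphi}<\pi$ from the sign of the imaginary part is equivalent to the paper's use of the monotonicity of $\ac$: with $\la_1,\la_2$ the eigenvalues of $\omega^{-1}\chi_{\uvarphi}$, the implication $\la_1+\la_2>0\Rightarrow \ac\la_1+\ac\la_2<\pi$ is exactly the statement that a complex number in $(0,2\pi)$-argument form with positive imaginary part has argument in $(0,\pi)$.

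The one step you should make explicit is the deduction ``the right-hand side is finite, which forces $\La_\omega\chi_{\varphi_t}$ to be nowhere zero.'' Finiteness of a quotient does not by itself force the denominator to be nonzero; you must rule out a simultaneous zero of the numerator. Here that is easy: if $\la_1+\la_2=0$ at some point then $\la_2=-\la_1$, so the numerator is $\la_1\la_2-1=-\la_1^2-1\le -1$, and $\theta_{\varphi_t}=\ac\la_1+\ac(-\la_1)=\pi$ is a genuine pole of $\cot$, so the flow equation cannot hold there. Equivalently, real eigenvalues with $\la_1+\la_2\to 0$ cannot have $\la_1\la_2\to 1$, by positivity of the discriminant --- which is precisely the contradiction the paper extracts. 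The paper packages this as a first-time argument, setting $t_0=\sup\{t:\min_X\La_\omega\chi_{\varphi_t}>0\}$ and extracting sequences $t_k\to t_0$, $p_k\to p_0$ along which the trace tends to $0$ while boundedness of $\cot\theta_{\varphi_{t_k}}$ forces $\la_{1,t_k}(p_k)\la_{2,t_k}(p_k)\to 1$; your continuity-plus-connectedness phrasing is a slightly cleaner rendering of the same idea. With that one line added, your argument is complete.
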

\begin{proof}
Without loss of generality we may assume that $\al\cdot\be>0$. Let $f = \La_{\omega}\chi.$ Then by hypothesis, $$\int_{X}f\omega^2 = 2\al\cdot\be >0.$$  Let $\underline\varphi\in C^\infty(X,\RR)$ solve $$\Delta_\omega\underline{\varphi} = -f + \int_X f\omega^2.$$ Then $$\La_\omega\chi_{\uvarphi} = \La_\omega\chi+\Delta_\omega\underline{\varphi} = \int_X f\omega^2 >0.$$ Now let $\la_1,\la_2$ be eigenvalues of $\omega^{-1}\chi_{\uvarphi}$. Then $\la_1+\la_2>0$. But then since $\ac:(-\infty,\infty)\rightarrow (0,\pi)$ is decreasing, we have $$\ac\la_1 < \ac(- \la_2) = \pi - \ac\la_2,$$ and the result follows from compactness of $X$.  For the third part, let $$t_0 = \sup\{t \geq 0~|~ \min\La_\omega\chi_{\varphi_t}>0\}.$$ Suppose $t_0<T$. Let $\la_{1,t}\geq \la_{2,t}$ be eigenvalues of $\omega^{-1}\chi_{\varphi_t}$. By the definition of $t_0$, there exists a sequence of times $t_k\rightarrow t_0$, $t_k<t_0$ and points $p_k\rightarrow p_0$ such that $$\la_{1,t_k}(p_k) + \la_{2,t_k}(p_k)\rightarrow  0.$$  But then since the flow is smooth in a neighborhood of $t_0$, $\cot{\theta_{\varphi_t}}$ is bounded, and this forces $$\la_{1,t_k}(p_k)\la_{2,t_k}(p_k)\rightarrow 1.$$ But this is a contradiction.  

\end{proof}

\begin{cor}
Let $n=2$ and $\al\cdot\be \neq 0$. Then there exists a $\uvarphi\in C^\infty(X,\RR)$ such that there is a solution to \eqref{eq:flow} for all time.
\end{cor}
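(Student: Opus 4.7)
The plan is to combine Theorem \ref{thm:long-time} with Lemma \ref{lem:dimension-two-trace-condition} essentially verbatim. First, I would invoke Lemma \ref{lem:dimension-two-trace-condition} to produce an initial potential $\uvarphi \in C^\infty(X,\RR)$ such that $\La_\omega\chi_{\uvarphi}$ is a positive constant; the hypothesis $\al\cdot\be\neq 0$ (which we may take to be positive after possibly replacing $\al$ by $-\al$, using the sign convention \eqref{eq:signconv-dhym-surfaces}) is precisely what is needed to solve the Poisson equation $\Delta_\omega\uvarphi = -\La_\omega\chi + 2\al\cdot\be/\be^2$ with strictly positive mean value.

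Next, the same lemma immediately gives that this $\uvarphi$ satisfies the condition \eqref{eq:condition-b}, i.e.\ $B_{\uvarphi} = \max_X\theta_{\uvarphi}<\pi$, since the eigenvalues of $\omega^{-1}\chi_{\uvarphi}$ sum to a positive constant and the elementary monotonicity of $\ac$ yields $\ac\la_1 + \ac\la_2 < \pi$ pointwise, followed by compactness of $X$. With this verification in hand, Theorem \ref{thm:long-time} applies directly (for $n=2$) and produces a unique solution to \eqref{eq:flow} on $[0,\infty)$.

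The only content that could conceivably require care is making sure the theorem of \cite{FYZ} really applies in our situation, namely that $\vartheta(X,\al,\be)\in(0,\pi)$ is well defined. But under the assumption $\al\cdot\be\neq 0$, the topological angle is meaningful, and after the sign normalisation the hypothesis of Theorem \ref{thm:long-time} is met. Since the corollary is essentially a direct consequence of the two preceding results, there is no substantive obstacle beyond citing them in the correct order.
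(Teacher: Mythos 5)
Your argument is exactly the intended one: the paper states this corollary without proof precisely because it is the immediate combination of Lemma \ref{lem:dimension-two-trace-condition} (which produces $\uvarphi$ with $\La_\omega\chi_{\uvarphi}$ a positive constant and hence $B_{\uvarphi}<\pi$, after the sign normalisation $\al\cdot\be>0$) with the long-time existence result of Theorem \ref{thm:long-time}. No gaps; this matches the paper's approach.
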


In dimension two, we say that a potential $\varphi$ is {\em admissible} if $\La_{\omega}\chi_\varphi >0$, or equivalently $\chi_\varphi\wedge\omega>0$. The proof of the Lemma above actually shows the long time existence of the cotangent flow starting from any initial admissible potential.

\subsection{The set-up and some preliminary observations} The dHYM equations on $\Bl_{x_0}\PP^n$ with Calabi ansatz were first studied \cite{JacSh} where a complete solution to the existence problem is provided when the pair $(X,\al,\be)$ is stable. In this section we restrict our attention to the case when $n=2$ and study the convergence of the cotangent flow. In particular, we prove that the cotangent flow converges to the solutions constructed in Section \ref{subsec:dhym-surface}. We use the notation from Section \ref{sec:j-equation-calabi-symm} in the special case that $M = \PP^1$, $L = \sO_{\PP^1}(-1)$. Then $X = \PP(\sO_{\PP^1}(-1)\oplus \sO) = \Bl_{x_0}\PP^2$. It is now more convenient to use the exceptional divisor $E$ as basis element for $H^{1,1}(X,\RR)$, and we also use the suggestive notation $H = D_\infty$ for the divisor or hyperplane at infinity. By scaling we normalize so that $\be = b[H] - [E]$, and let $\al = p[H] - q[E]$ an arbitrary real class.  Thinking of $\Bl_{x_0}\PP^2$ as a compactification of $\CC^2\setminus\{0\}$ with the usual coordinates $(z_1,z_2)$ we can write any invariant metric $\omega\in \be$ as $$\omega  =  \ddbar u(\rho),$$ where $\rho = \log(|z_1|^2 + |z_2|^2).$ The condition that $\omega$ is a (smooth) K\"ahler metric is that $u',u''>0$ and that $u$ satisfies certain asymptotics as $\rho\rightarrow \pm\infty$. The precise nature of the asymptotics will not be important to us. The condition that $\omega\in \be$ is then equivalent to $u':\RR\rightarrow (1,b)$ with $$\lim_{\rho\rightarrow-\infty}u'(\rho) = 1,~\lim_{\rho\rightarrow\infty}u'(\rho) = b.$$  Similarly we can express any smooth real $(1,1)$ form $\chi\in \al$ as $$\chi = \ddbar v(\rho),$$ and the corresponding boundary asymptotics for $v'(\rho)$ are $$\lim_{\rho\rightarrow-\infty}v'(\rho) = q,~\lim_{\rho\rightarrow\infty}v'(\rho) = p.$$
Since $\al = p[H]- q[E]$ and $\be = b[H]-[E]$ we compute that 
\begin{align*}
\al^2 = p^2 -q^2,\ \ \be^2=b^2-1 \ \ \text{and}\ \ \al\cdot\be=bp-q,
\end{align*}
and so
\begin{align}
\label{c_0 formula involving b,p and q}
c_0 = \cot\vartheta(X,\al,\be) = \frac{p^2 -q^2 -b^2 +1}{2(bp-q)}.
\end{align}
We let $\psi:[1,b]\rightarrow \RR$ such that $$v'(\rho) = \psi(u'(\rho)).$$  It is then easy to see that the eigenvalues of $\omega^{-1}\chi$ are given by $\psi'$ and $\psi/x$ (each with multiplicity one). In particular $$\theta_\chi(x) = \ac\psi'(x) +\ac\frac{\psi(x)}{x}.$$  The dHYM equation is then equivalent to $\theta_\chi' \equiv 0$ or equivalently 
\begin{align}\label{eq:dhym-calabi}
\begin{cases}
(\psi^2 - x^2)' = 2c_0(x\psi)'\\
\psi(1) = q,~\psi(b) = p.
\end{cases}
\end{align} 

As for the $J$-equation we also consider the following family of auxiliary equations for $q\leq s<pb$: 

\begin{align}\label{eq:dhym-calabi-s}
\begin{cases}
(\psi^2 - x^2)' = 2\tilde c_s(x\psi)'\\
\psi(1) = s,~\psi(b) = p.
\end{cases}
\end{align} 

The auxiliary equations are precisely the dHYM equations for the perturbed classes $\tilde\al_s = \al - (s-q)[E] = p[H]-s[E]$.
As in \cite{JacSh}, the above equation integrates to 
\begin{equation}\label{eq:dhym-calabi-s-integrated-solution}
\psi^2 - 2\tilde c_sx\psi - x^2 = \tilde A_s,
\end{equation} where 
\begin{align}\label{eq:cs and As}
\tilde c_s &= \frac{p^2-s^2-b^2+1}{2(pb-s)} = \frac{\tilde\al_s^2-\be^2}{2\tilde\al_s\cdot\be}\nonumber\\
\tilde A_s &= s^2 - 2\tilde c_s s - 1 = p^2 - 2bp\tilde c_s - b^2.
\end{align} 

We begin with the following elementary numerical inequalities. 
\begin{lem}\label{lem:dhym-calabi-elem-inequalities}
Since $s<pb$, we have 
$$ \tilde c_s < \min\left(\frac{p}{b}, \frac{p-s}{b-1}, bp\right). $$
In particular, we have
\begin{enumerate}
\item $(\tilde\al_s-\tilde c_s\be)$ is a big class, and it is K\"ahler (resp. nef) if and only if $s>\tilde c_s$ (resp.\ $\geq $).

\item The triple $(X, \tilde\al_s,\be)$ is dHYM stable (resp. semi-stable) if and only if $s>\tilde c_s$ (resp.\ $\geq$).
\end{enumerate}

\end{lem}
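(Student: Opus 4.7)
The plan is to first establish the three numerical bounds by direct algebra, then translate the positivity/(semi)stability statements into elementary intersection-theoretic checks on $\Bl_{x_0}\PP^2$, whose K\"ahler, nef, and pseudoeffective cones admit a simple explicit description. I would clear denominators throughout using $pb - s > 0$. The estimate $\tilde c_s < p/b$ simplifies to $b(p^2+s^2+b^2-1) > 2ps$, which follows at once from $b > 1$ together with $p^2+s^2 \geq 2|ps|$ (split into cases according to the sign of $ps$). The estimate $\tilde c_s < (p-s)/(b-1)$ reduces, after cross-multiplication and cancellation, to $-(p-s)^2(b+1) < (b-1)^2(b+1)$, which is trivial since the left side is non-positive. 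The estimate $\tilde c_s < bp$ becomes a quadratic in $s$ whose discriminant simplifies to $-4(b^2-1)(p^2+1) < 0$, so the quadratic is everywhere positive.

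For (1), the computation $\tilde\al_s - \tilde c_s\be = (p-\tilde c_s b)[H] - (s-\tilde c_s)[E]$ is immediate. Recall that on $\Bl_{x_0}\PP^2$ the Mori cone of curves is generated by $E$ and the strict transform $\tilde L := H - E$, and by Nakai--Moishezon a class $a[H] - c[E]$ is K\"ahler iff $a - c > 0$ and $c > 0$, nef iff the same with non-strict inequalities, and big iff $a > 0$ and $a - c > 0$ (the interior of the pseudoeffective cone generated by $\tilde L$ and $E$). The first two numerical bounds give $p - \tilde c_s b > 0$ and $(p - \tilde c_s b) - (s - \tilde c_s) > 0$ unconditionally; the remaining condition $s - \tilde c_s > 0$ (resp.\ $\geq 0$) is exactly the K\"ahler (resp.\ nef) criterion, while bigness holds with no further hypothesis on $s$.

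For (2), the dHYM (semi)stability definition on a K\"ahler surface reduces to conditions on $\tilde\al_s - \tilde c_s\be$. The case $k = n = 2$ is tautological by the choice $\tilde c_s = \cot\vartheta(X,\tilde\al_s,\be)$. The case $k=1$, $Z = X$ is equivalent to $(\tilde\al_s - \tilde c_s\be)\cdot\gamma \geq 0$ for every K\"ahler $\gamma$, i.e.\ to nefness of $\tilde\al_s - \tilde c_s\be$. The case of a proper subvariety $Z$ (necessarily a curve) requires strict positivity of $(\tilde\al_s - \tilde c_s\be)\cdot Z$ against every irreducible curve, and since the cone of curves is generated by $E$ and $\tilde L$, these conditions reduce to positivity of the same two intersection numbers already analysed in (1). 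Combining everything, semi-stability corresponds to $s \geq \tilde c_s$ and stability to $s > \tilde c_s$. The only ``obstacle'' is the bookkeeping of the three algebraic inequalities; beyond that the lemma is a direct consequence of the explicit cone structure on the one-point blow-up of $\PP^2$.
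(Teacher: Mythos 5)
Your proof is correct and follows essentially the same route as the paper: the paper's own proof records exactly your three algebraic identities (as an "alternative" to invoking Lemma \ref{lem:bigness-dhym}), then reads off bigness/K\"ahlerness/nefness of $\tilde\al_s-\tc_s\be=(p-\tc_sb)[H]-(s-\tc_s)[E]$ from the explicit cone structure of $\Bl_{x_0}\PP^2$, and identifies surface dHYM (semi)stability with K\"ahlerness (resp.\ nefness) of that class. One small terminological slip: the condition $(\tilde\al_s-\tc_s\be)\cdot\gamma\geq 0$ for all K\"ahler $\gamma$ is pseudoeffectivity, not nefness (the two are dual conditions on a surface); this is harmless here because that condition is automatic given your first two inequalities, and nefness is correctly extracted from your separate intersection tests against the generators $E$ and $H-E$ of the cone of curves.
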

\begin{proof}
Since $s< bp$, we see that $\tilde\alpha_s\cdot\beta >0$. Then applying Lemma \ref{lem:bigness-dhym} to the pair $(\tilde\alpha_s, \beta)$ we see that $(\tilde\al_s-\tilde c_s\be)$ is a big class. In particular, we get
$$ p - \tilde c_s b >0,~~~ p - \tilde c_s b > s - \tilde c_s. $$
Alternatively, using $(\ref{c_0 formula involving b,p and q})$ one can also obtain these inequalities by straightforward computation:
\begin{align*}
(p-\tc_sb)-(s-\tc_s) &= \frac{(b+1)\Big((p-s)^2 + (b-1)^2\Big)}{2(pb-s)}>0,\\
p-\tc_sb &= \frac{b(p^2 +s^2) -2ps +
b(b^2-1)}{2(pb-s)} > \frac{(p -s)^2 + b(b^2-1)}{2(pb-s)}>0,\\
bp - \tilde c_s &= \frac{(p^2+1)(b^2-1) + (bp-s)^2}{2(bp-s)} >0.
\end{align*}
Note that
$$ \tilde\al_s -\tc_s\be = (p-\tc_sb)[H] - (s-\tc_s)[E]. $$ Hence  $(\tilde\al_s-\tc_s\be)$ is a K\"ahler class if and only if $s-\tc_s>0$. Finally note that on a K\"ahler surface dHYM-stability of $(X, \tilde\al_s,\be)$ is equivalent to the K\"ahler-ness of $(\tilde \al_s - \tc_s\be)$.
\end{proof}

\begin{lem}\label{lem:dhym-calabi-solvability} The following are equivalent.
\begin{enumerate}
\item $s\geq \tilde c_s$.
\item There exists a unique solution $\tilde\psi_s$ to \eqref{eq:dhym-calabi-s-integrated-solution} which is continuous on $[1,b]$ and differentiable on $(1,b)$. Explicitly, we have
\begin{equation}\label{eq:family of solutions for star s, positive branch}
\tilde \psi_s(x) = \tilde c_s x + \sqrt{\tilde A_s + (1+ \tilde c_s^2)x^2}.
\end{equation}
Moreover the solution also satisfies $(x\tpsi_s)' > 0$.
\end{enumerate}
\end{lem}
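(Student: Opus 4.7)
The plan is to analyze the integrated equation \eqref{eq:dhym-calabi-s-integrated-solution} as a quadratic in $\psi$, whose two branches are
$$\psi_\pm(x) = \tc_s x \pm R(x), \qquad R(x) := \sqrt{\tilde A_s + (1+\tc_s^2)x^2}.$$
The key algebraic observation is that, using the two formulae for $\tilde A_s$ in \eqref{eq:cs and As}, the quantity $R^2$ is a perfect square at each endpoint: $R(1)^2 = (s-\tc_s)^2$ and $R(b)^2 = (p-\tc_s b)^2$. Since Lemma \ref{lem:dhym-calabi-elem-inequalities} gives $p>\tc_s b$, the boundary condition $\psi(b)=p$ always forces the positive branch at $x=b$, while the boundary condition $\psi(1)=s$ is satisfied by the positive branch exactly when $s\geq\tc_s$ and by the negative branch exactly when $s\leq\tc_s$.

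For the implication (1)$\Rightarrow$(2), assuming $s \geq \tc_s$ I would take $\tpsi_s$ to be the explicit positive-branch formula \eqref{eq:family of solutions for star s, positive branch}. The function $R(x)^2$ is strictly increasing on $[1,b]$ with boundary value $R(1)^2 = (s - \tc_s)^2 \geq 0$, so $R$ stays real on $[1,b]$ and vanishes at most at the single point $x=1$ in the degenerate case $s = \tc_s$. This delivers continuity of $\tpsi_s$ on $[1,b]$ and smoothness on $(1,b)$, and the endpoint computation above verifies both boundary conditions. For (2)$\Rightarrow$(1) together with uniqueness, I would note that any solution of \eqref{eq:dhym-calabi-s-integrated-solution} must lie on one of the two branches at each $x$; these branches meet only at zeros of $R$, and near such points each branch has vertical tangent, since $R'(x) = (1+\tc_s^2)x/R(x) \to \infty$ as $R \to 0$. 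A differentiable solution therefore cannot switch branches, so it must lie entirely on the positive branch in order to match $\psi(b)=p$, and endpoint analysis at $x=1$ then forces $s\geq\tc_s$ and pins down $\tpsi_s$ uniquely.

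Finally, the monotonicity assertion $(x\tpsi_s)' > 0$ will follow from a clean identity obtained by differentiating $x\tpsi_s = \tc_s x^2 + xR$ and substituting the quadratic relation $\tilde A_s = \tpsi_s^2 - 2\tc_s x\tpsi_s - x^2$; after a short algebraic manipulation one obtains
$$R(x)\,(x\tpsi_s)'(x) = \tpsi_s(x)^2 + x^2,$$
whose right-hand side is strictly positive, yielding strict monotonicity on $(1,b)$. The main technical subtlety in the whole argument is the exclusion of branch-switching in the uniqueness/reverse direction, which is handled cleanly by the blow-up of $R'$ at zeros of $R$; the rest of the proof is routine bookkeeping of quadratic roots and endpoint evaluations.
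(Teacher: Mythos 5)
Your proposal is correct and follows essentially the same route as the paper: view \eqref{eq:dhym-calabi-s-integrated-solution} as a quadratic in $\psi$, observe that $\tilde A_s+(1+\tc_s^2)x^2$ is strictly increasing with perfect-square values $(s-\tc_s)^2$ and $(p-\tc_s b)^2$ at the endpoints so the two branches cannot meet on $(1,b]$, force the positive branch at $x=b$ via $p>\tc_s b$ from Lemma \ref{lem:dhym-calabi-elem-inequalities}, and read off the equivalence with $s\geq\tc_s$ from the boundary condition at $x=1$. One small bonus: your identity $R(x)\,(x\tpsi_s)'=\tpsi_s^2+x^2$ actually supplies a verification of the monotonicity claim $(x\tpsi_s)'>0$ that the paper's written proof omits.
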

\begin{proof}
Solving the quadratic equation we see that any solution to \eqref{eq:dhym-calabi-s} has to be of the form $$\tpsi_s(x) = \tilde c_s x \pm \sqrt{\tilde A_s + (1+\tilde c_s^2)x^2}.$$ We denote the positive and negative branches by $\tpsi_s^+$ and $\tpsi_s^-$ respectively and claim that the two branches do not intersect on $(1,b]$. To see this consider the quadratic $$p(x) = \tilde A_s + (1+\tc_s^2)x^2.$$ It is enough to show that there is no zero in $(1,b]$. To see this, note that $$p(1) = (s-\tc_s)^2\geq 0,~p(b) = (p-\tc_sb)^2>0.$$ If there is a zero in $(1,b]$, then there exists a point $x_0\in (1,b)$ such that $p'(x_0) = 0$. But this a contradiction since $$p'(x_0) = 2x_0(1+\tc_s^2)>0.$$ 
In particular, if there is a continuous solution to \eqref{eq:dhym-calabi-s-integrated-solution} then it has to be either $\tpsi_s^+$ or $\tpsi_s^-.$ Since $$p = \tpsi_s(b) = \tc_sb+ |p-\tc_sb|,$$ and since $p>\tc_sb$, this forces $\tpsi_s(x) = \tpsi_s^+(x)$. But then $$s = \tpsi_s(1) = \tc_s+|s-\tc_s|,$$ and this forces $s\geq \tc_s$. Conversely, suppose $s\geq\tc_s.$. Then by the same argument as above $\tpsi_s(x) = \tpsi_s^+(x)$ is the unique solution to \eqref{eq:dhym-calabi-s-integrated-solution}.

\end{proof}

Recall that $\xi\in[c_0, t_0)$ is the unique real number solving \eqref{eq:volume equation on Kahler surface, unique solution xi}, and here $t_0 = \min\left(\frac{p}{b},\frac{p-q}{b-1}\right)$, $c_0 = \frac{\al^2-\be^2}{2\al\cdot\be}= \tilde c_q$. We have the following formulae for the volume of big (and K\"ahler) classes on $\PP^n$ blowup at one point: \begin{equation}\label{eq:volume formulae for Kahler and big classes on P^n blown up at one point}
\vol\Big(x[H]-y[E]\Big) = \begin{cases} x^n-y^n ~ \text{ when } x>y>0;\\ x^n ~\text{ when } x>0,~ y\leq0;\\0,~\text{otherwise}. \end{cases}
\end{equation} 
If $\xi < q$, the class $\al-\xi\be$ is also K\"ahler and using \eqref{eq:volume formulae for Kahler and big classes on P^n blown up at one point} we easily see that $\xi= c_0$. While if $\xi\geq q$, the class $\al-\xi\be$ is big but not K\"ahler and again using \eqref{eq:volume formulae for Kahler and big classes on P^n blown up at one point} we easily see that
\[ \xi = bp-\sqrt{(p^2+1)(b^2-1)},\] since $\xi < \frac{p}{b} < bp + \sqrt{(p^2+1)(b^2-1)}$. Also note that for the case $\xi\geq q$ the equation \eqref{eq:volume equation on Kahler surface, unique solution xi} can be re-written as $\xi= \tilde c_{\xi}$.
\begin{lem}
As a function of $s$, $\tilde c_s$ is strictly concave function on $(-\infty,bp)$ and it has unique global maximum at $s=\xi$. In particular, $$ \xi = \sup \Big\{\tilde c_s ~|~ -\infty < s< bp\Big\} = \inf\Big\{s\in(-\infty,bp)~|~ s\geq \tilde c_s\Big\}.$$ Moreover, the condition $q\leq c_0$ is equivalent to $\xi\geq q$.
\end{lem}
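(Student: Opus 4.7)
The plan is to establish strict concavity by a direct second-derivative computation, locate the unique critical point and show it is a fixed point of $s \mapsto \tilde c_s$, and then identify this critical point with $\xi$ via the volume equation \eqref{eq:volume equation on Kahler surface, unique solution xi}.

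First, I would differentiate $\tilde c_s = (p^2 - s^2 - b^2 + 1)/(2(pb-s))$ twice. A routine calculation yields
\[
\tilde c_s'' \;=\; -\frac{(p^2+1)(b^2-1)}{(pb-s)^3},
\]
which is strictly negative on $(-\infty, pb)$ since $b > 1$ (as $\be = b[H]-[E]$ is K\"ahler on $\Bl_{x_0}\PP^2$). Setting $\tilde c_s' = 0$ reduces to $(s-pb)^2 = (p^2+1)(b^2-1)$, giving a unique critical point $s_* = pb - \sqrt{(p^2+1)(b^2-1)}$ in $(-\infty, pb)$. Rearranging this defining relation algebraically produces $2s_*(pb - s_*) = p^2 - s_*^2 - b^2 + 1$, which is exactly $\tilde c_{s_*} = s_*$; by strict concavity, $\tilde c_s$ attains its unique global maximum at $s_*$ with value $s_*$.

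The main step is identifying $s_*$ with $\xi$. In the regime $\xi \geq q$ (which the final clause of the lemma will identify with $q \leq c_0$), the class $\al - \xi\be$ is big but not K\"ahler, so by \eqref{eq:volume formulae for Kahler and big classes on P^n blown up at one point} the defining equation \eqref{eq:volume equation on Kahler surface, unique solution xi} for $\xi$ becomes the quadratic $(\xi - pb)^2 = (p^2+1)(b^2-1)$; since $\xi < t_0 \leq p/b$, the admissible root is exactly $s_*$, so $\xi = s_*$. The supremum characterization then follows immediately from concavity and $\tilde c_{s_*}=s_*$. For the infimum characterization I would set $g(s) := s - \tilde c_s$; the explicit form of $\tilde c_s'$ gives $\tilde c_s'(s) \to 1/2$ as $s \to -\infty$, and combined with $\tilde c_s'(s_*) = 0$ and strict concavity this forces $\tilde c_s'(s) < 1$ throughout $(-\infty, pb)$, so $g$ is strictly increasing with unique zero at $s_*$, giving $\inf\{s : s \geq \tilde c_s\} = s_* = \xi$.

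Finally, the equivalence $q \leq c_0 \iff \xi \geq q$ follows directly from the constraint $\xi \in [c_0, t_0)$: the forward direction gives $\xi \geq c_0 \geq q$ at once, and conversely if $q > c_0$ then the K\"ahler-case analysis of $\xi$ already carried out in the excerpt forces $\xi = c_0 < q$. The main obstacle is really the identification $\xi = s_*$; this is where the specific form of the volume function on $\Bl_{x_0}\PP^2$ is crucial, since only then does the nonlinear equation \eqref{eq:volume equation on Kahler surface, unique solution xi} for $\xi$ reduce to the same quadratic that defines the critical point of $\tilde c_s$. Once this identification is recognized, the sup/inf characterizations are immediate consequences of strict concavity together with the fixed-point property $\tilde c_{s_*} = s_*$.
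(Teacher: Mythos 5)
Your proof is correct and follows essentially the same route as the paper: the same second-derivative computation for strict concavity, the identification of the unique critical point with $\xi$ via the quadratic $(\xi-pb)^2=(p^2+1)(b^2-1)$ coming from the volume formula (which the paper records just before the lemma, together with the fixed-point property $\xi=\tilde c_\xi$), and strict monotonicity of $s-\tilde c_s$ for the infimum characterization. The only cosmetic difference is in the last clause, where the paper deduces $q\leq c_0\iff\xi\geq q$ directly from the monotonicity of $\tilde c_s-s$, while you use $\xi\in[c_0,t_0)$ for one direction and, for the other, the K\"ahler-case computation applied at $t=c_0$ (which solves the volume equation when $q>c_0$, forcing $\xi=c_0$ by uniqueness); both arguments are fine.
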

\begin{proof}
Differentiating we get
\begin{align*}
\frac{d \tilde c_s}{ds} = \frac{(bp-s)^2 - (p^2+1)(b^2-1)}{2(bp-s)^2}, ~~~
\frac{d^2 \tilde c_s}{ d s^2} = -(p^2+1)(b^2-1)(bp-s)^{-3} <0.
\end{align*}
Thus the function $\tilde c_s$ is a strictly concave function on $(-\infty,bp)$, and $s=\xi$ is the unique critical point in this interval and so it is the global maxima.
Let $f(s) = \tilde c_s -s$. Then the above derivative formula shows that $f'(s)<0$ and hence $f$ is strictly decreasing. In particular, $f(s) > f(\xi)=0$ for all $s<\xi$ and the characterization about $\xi$ are immediate. Since $c_0 = \tilde c_q$, we have $q\leq c_0$ if and only if $f(q)\geq 0$, which is equivalent to $\xi\geq q$ (with equality holds in one if equality holds in other).
\end{proof}

We end the section by a comparison theorem for the family of solutions $\tilde \psi_{s}$. This will be needed in the following sections on convergence of the cotangent flow. 

\begin{lem}\label{lem:dhym-calabi-comparison}
For all $\xi\leq s_1<s_2<pb$, and all $x\in [1,b]$, $$\tilde\psi_{s_2}(x) \geq \tilde\psi_{s_1}(x)$$ with equality if and only if $ x = b$.
\end{lem}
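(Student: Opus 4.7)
Plan of proof. Since the boundary condition $\tilde\psi_s(b)=p$ is independent of $s$, equality at $x=b$ is automatic, and the content of the lemma is the strict monotonicity in $s$ on $[1,b)$. The natural strategy is to regard $\tilde\psi_s(x)$ as a smooth function of the parameter $s\in[\xi,pb)$ for each fixed $x\in[1,b]$ and show that $\partial_s\tilde\psi_s(x)\ge 0$, with strict inequality whenever $x<b$ and $s>\xi$. Integrating from $s_1$ to $s_2$ then yields the desired conclusion, since even when $s_1=\xi$ the derivative is strictly positive on the open interval $(\xi,s_2)$.

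The key computation uses the closed form \eqref{eq:family of solutions for star s, positive branch}. Writing $R(x,s):=\sqrt{\tilde A_s+(1+\tilde c_s^2)x^2}=\tilde\psi_s(x)-\tilde c_s x$ and recalling from \eqref{eq:cs and As} that $\tilde A_s=p^2-2bp\,\tilde c_s-b^2$, so $\tilde A_s'=-2bp\,\tilde c_s'$, a direct differentiation gives
\[
\partial_s\tilde\psi_s(x)\;=\;\tilde c_s' x+\frac{\tilde c_s'(\tilde c_s x^2-bp)}{R(x,s)}\;=\;\frac{\tilde c_s'\bigl(x\tilde\psi_s(x)-bp\bigr)}{R(x,s)}.
\]
Now I would invoke the three ingredients already proved or easily available: (i) by the previous lemma, $\tilde c_s$ is strictly concave with unique maximum at $s=\xi$, hence $\tilde c_s'\le 0$ for $s\ge\xi$, with strict inequality for $s>\xi$; (ii) by Lemma \ref{lem:dhym-calabi-solvability} and its proof, $R(x,s)>0$ for all $x\in[1,b]$; (iii) since $(x\tilde\psi_s)'>0$ on $(1,b]$ (again from Lemma \ref{lem:dhym-calabi-solvability}) and $b\tilde\psi_s(b)=bp$, the quantity $x\tilde\psi_s(x)-bp$ is strictly negative for $x\in[1,b)$ and zero at $x=b$.

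Combining these three observations, the numerator $\tilde c_s'(x\tilde\psi_s(x)-bp)$ is a product of two non-positive factors, strictly positive precisely when $s>\xi$ and $x<b$. Hence $\partial_s\tilde\psi_s(x)\ge 0$ throughout $[\xi,pb)\times[1,b]$, with strict positivity on $(\xi,pb)\times[1,b)$. Integrating,
\[
\tilde\psi_{s_2}(x)-\tilde\psi_{s_1}(x)\;=\;\int_{s_1}^{s_2}\partial_s\tilde\psi_s(x)\,ds\;\ge\;0,
\]
and the integrand is strictly positive on the sub-interval $(\max(s_1,\xi),s_2)=(s_1,s_2)\cap(\xi,pb)$ whenever $x<b$, which has positive length since $s_2>s_1\ge\xi$. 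This gives the strict inequality for $x\in[1,b)$ and equality at $x=b$.

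There is no real obstacle here beyond correctly handling the boundary case $s_1=\xi$, where $\partial_s\tilde\psi_s$ vanishes at the endpoint; this is harmless because the derivative is strictly positive on the remainder of the integration interval. The proof is essentially an application of the explicit formula combined with the concavity of $s\mapsto\tilde c_s$ and the monotonicity of $x\mapsto x\tilde\psi_s(x)$ that were established earlier.
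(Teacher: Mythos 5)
Your proof is correct and follows essentially the same route as the paper: differentiate $\tilde\psi_s$ in $s$ using the closed form, obtain $\partial_s\tilde\psi_s=\tilde c_s'\bigl(x\tilde\psi_s(x)-bp\bigr)/R(x,s)$, and conclude from $\tilde c_s'\le 0$ on $[\xi,pb)$ together with $x\tilde\psi_s(x)<bp$ for $x<b$. The only cosmetic difference is that you deduce $x\tilde\psi_s(x)-bp<0$ from the monotonicity $(x\tilde\psi_s)'>0$ already recorded in Lemma \ref{lem:dhym-calabi-solvability}, whereas the paper verifies the equivalent inequality by a direct algebraic contradiction; both are valid, and your handling of the degenerate endpoint $s=\xi$ is adequate.
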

\begin{proof}
Let $s\in[\xi,bp)$, then we have $s\geq \tilde c_s$ and so the solution $\tilde \psi_s$ is given by \eqref{eq:family of solutions for star s, positive branch}. Suppose $x\in(1,b)$. Now differentiating $\tilde\psi_s$ with respect to $s$ we obtain
\begin{align*}
\frac{d \tilde\psi_s}{d s} = \frac{d \tilde c_s}{ds}\left(x - \frac{bp- \tilde c_{s}x^2}{\sqrt{\tilde A_s + (1+ \tilde c_s^2)x^2}}\right).
\end{align*}
From Lemma $\ref{lem:dhym-calabi-elem-inequalities}$ we first observe that $bp - \tilde c_s x^2 >0$ for all $x\in[1,b]$ and for all $s<bp$. To see this note that if $\tilde c_s\leq 0$, then $bp- \tilde c_s x^2 \geq bp- \tilde c_s >0$. On the other hand if $\tilde c_s >0$, then $$ bp -\tilde c_s x^2 \geq bp - \tilde c_s b^2 = b^2\left(\frac{p}{b}- \tilde c_s\right)>0.$$
Next we show that $x\in(1,b)$, we have
$$ g(x) := x-\frac{bp- \tilde c_{s}x^2}{\sqrt{\tilde A_s + (1+ \tilde c_s^2)x^2}} < 0.$$ 
Suppose there exists some $x\in(1,b)$ such that $g(x)\geq 0$. Then
\begin{align*}
x^2\Big(\tilde A_s + (1+ \tilde c_s^2)x^2\Big) \geq  b^2p^2 -2bp \tilde c_{s}x^2 + \tilde c_s^2 x^4.
\end{align*}
Since $\tilde A_s = p^2-b^2-2bp \tilde c_s$, it is easy to see that the above inequality gives $x\geq b$, which is a contradiction. Hence $\frac{d\tilde\psi_s}{d s}>0$ for all $s\in(\xi,bp)$ since $\frac{d\tilde c_s}{ds}<0$ on this interval. In particular, for $\xi\leq s_1 < s_2 < bp$ we have $\tilde \psi_{s_2}(x) >  \tilde \psi_{s_1}(x)$ for all $x\in(1,b)$. On the boundary, we note that $\tilde\psi_{s_1}(b)=\tilde\psi_{s_2}(b)=p$ and $\tilde\psi_{s_1}(1)=s_1 < s_2 = \tilde\psi_{s_2}(1)$, and hence the lemma is proved.
\end{proof}

\subsection{Comparison and monotonicity along the flow} \label{subsec:dhym-key-lem} For the rest of this section, we assume that we are not in the stable case. That is, $q\leq c_0$. Let $\chi_{\varphi_t} = \ddbar v(\rho,t)$ be a path of real $(1,1)$ forms in $\al$, and let $\psi:[1,b]\times[0,\infty)\rightarrow \RR$ be given by $$\psi(u'(\rho),t) = v'(\rho,t),$$ where the prime (at least for the time being) denotes the $\rho$-derivative. Changing variables $x = u'(\rho)$ and letting $Q(x):= u''(u'^{-1}(x))$, $\varphi_t$ satisfies the cotangent flow if and only if $\psi$ satisfies the following equation:

\begin{equation}
\begin{cases}
\frac{\partial\psi}{\partial t}  = Q(x)\frac{\d}{\d x}\cot\theta_t\\
\psi(x,0) = \psi_0(x),
\end{cases}
\end{equation}
for some initial data $\psi_0$, where  $$\theta_t(x):= \theta(x,t) := \ac\frac{\d\psi}{\d x}(x,t) + \ac\frac{\psi(x,t)}{x}.$$ For convenience, we now change our notation and use primes will denote the $x$-derivative. Note that $Q(x) \geq 0$ with equality if and only if $x \in \{1,b\}.$ Expanding out the right hand side, we obtain 
\begin{equation}\label{eq:evolution-eq-dhym-calabi-expanded}
\frac{\partial\psi}{\partial t}  = \frac{Q(x)\csc^2(\theta)}{(x^2+\psi^2)(1+ (\psi')^2)}
\Big((x^2+\psi^2)\psi'' + (1+ (\psi')^2)(x\psi' -\psi)\Big),
\end{equation} and the equation is easily seen to be strictly parabolic away from the boundary points $\{1,b\}$. We choose our initial data to be {\em admissible}, that is it satisfies $$\psi_0' + \frac{\psi_0}{x}> 0,$$ and the boundary conditions $\psi_0(1) = q$ and $\psi_0(b) = p$.

We first prove our main convergence result for a very special choice of initial metric $\chi_0$ such that $\La_{\omega}\chi_0$ is a positive constant, that is $\psi_0:[1,b]\rightarrow\RR$ satisfies $$\begin{cases}\psi_0'(x) + \frac{\psi_0(x)}{x} = 2\mu>0,\\\psi_0(1) = q,~ \psi_0(b) = p. \end{cases}$$ We then must have 
\begin{equation}\label{eq:spl-initial-cotangent}
\psi_0(x) = \frac{\la}{x} + \mu x,
\end{equation} where \[\mu = \frac{bp-q}{b^2-1},~~~\la= \frac{b(bq-p)}{b^2-1}.\]

Our first observation is that the graph of the initial function is below that of $\tilde\psi_\xi$, and that the flow is increasing at $t = 0$. 

\begin{lem}\label{lem:inequalities at time zero for cotangent flow}
Let $q \leq c_0$. For all $x\in [1,b]$ we have 
\begin{enumerate}
\item $\tilde\psi_\xi(x) \geq \psi_0(x)$ with equality if and only if $x = b$.
\item $\frac{\partial \psi}{\partial t}\Big|_{t=0}\geq 0$.
\end{enumerate}
\end{lem}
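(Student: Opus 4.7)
The plan for proving the two inequalities is as follows.

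First I address (2), which is elementary. The angle addition formula for cotangent gives
\[
\cot\theta_\chi(x) \;=\; \frac{\psi'(x)\,\psi(x)/x - 1}{\psi'(x) + \psi(x)/x}.
\]
For the initial function $\psi_0(x) = \la/x + \mu x$ we have the identity $\psi_0'(x) + \psi_0(x)/x \equiv 2\mu$, so the denominator reduces to the constant $2\mu$. A direct computation yields $\psi_0(x)\,\psi_0'(x)/x = \mu^2 - \la^2/x^4$, and hence
\[
\cot\theta_0(x) \;=\; \frac{\mu^2 - 1 - \la^2/x^4}{2\mu},
\]
which is monotonically non-decreasing in $x$ since $\mu = (bp-q)/(b^2-1) > 0$ by the assumption $\al\cdot\be > 0$. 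Because the flow equation rewritten at the level of $\psi$ takes the form $\partial_t\psi = Q(x)\,\partial_x\cot\theta_\chi$ with $Q(x) \geq 0$, this yields $\partial_t\psi|_{t=0} \geq 0$, proving (2).

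For (1), I pass to the auxiliary coordinate $y = x\psi$, in which the dHYM curve $y = x\tilde\psi_\xi(x)$ is (a branch of) the conic
\[
F(x,y) \;=\; (y - \xi x^2)^2 - (1+\xi^2)\,x^2(x^2-1) = 0,
\]
coming from $\tilde A_\xi = -(1+\xi^2)$ (a consequence of $\xi = \tilde c_\xi$ in the regime $q \leq c_0$ treated here). The initial curve is the parabola $y_0(x) = x\psi_0(x) = \la + \mu x^2$, and after the substitution $u = x^2$, the composite $G(u) := F(\sqrt{u}, \la + \mu u)$ is a quadratic in $u$,
\[
G(u) \;=\; \bigl((\mu-\xi)^2 - (1+\xi^2)\bigr)u^2 + \bigl(2\la(\mu-\xi) + 1 + \xi^2\bigr)u + \la^2,
\]
with the explicit boundary values $G(1) = (\xi - q)^2 \geq 0$ and $G(b^2) = 0$.

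The defining equation $\xi = \tilde c_\xi$ is equivalent to $p - \xi b = \sqrt{(b^2-1)(1+\xi^2)}$, and together with $\xi \geq q$ and $b > 1$ this gives
\[
\mu - \xi \;=\; \frac{b}{\sqrt{b^2-1}}\sqrt{1+\xi^2} + \frac{\xi - q}{b^2-1} \;>\; \sqrt{1+\xi^2},
\]
so the leading coefficient of $G$ is strictly positive. Thus $G$ is an upward-opening parabola whose roots satisfy $1 \leq u_1 \leq u_2 = b^2$. Writing $y_\xi^\pm(x) = \xi x^2 \pm x\sqrt{(1+\xi^2)(x^2-1)}$ for the two branches of the conic and using $y_0(1) = q \leq \xi = y_\xi^\pm(1)$, a continuity argument shows $y_0 \leq y_\xi^-$ on $[1, u_1]$ and $y_0 \in [y_\xi^-, y_\xi^+]$ on $[u_1, b^2]$; in either case $y_0 \leq y_\xi^+ = x\tilde\psi_\xi$, and hence $\psi_0 \leq \tilde\psi_\xi$ on $[1, b]$, with equality at $x = b$.

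The main technical point is the strict positivity of the leading coefficient of $G$, namely $(\mu - \xi)^2 > 1 + \xi^2$; once this is in hand, the rest of (1) reduces to elementary analysis of a quadratic.
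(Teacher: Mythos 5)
Your proof of part (2) coincides with the paper's: both use the constancy of $\psi_0'+\psi_0/x=2\mu$ to get $\cot\theta_0=\frac{1}{2\mu}\big(\mu^2-1-\lambda^2/x^4\big)$, whose $x$-derivative $2\lambda^2/(\mu x^5)$ is nonnegative, and then invoke $\partial_t\psi=Q(x)\,\partial_x\cot\theta_t$ with $Q\ge 0$.

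For part (1) you take a genuinely different and correct route. The paper interpolates through the family of constant-trace profiles $\psi^{(s)}(x)=\lambda^{(s)}/x+\mu^{(s)}x$ with $\psi^{(s)}(1)=s$: it shows this family is increasing in $s$ on $[1,b)$ and then computes the difference $\psi^{(\xi)}-\tilde\psi_\xi$ in closed form, finding it equal to $\frac{\sqrt{(1+\xi^2)(x^2-1)}}{x\sqrt{b^2-1}}\big(b\sqrt{x^2-1}-x\sqrt{b^2-1}\big)\le 0$; the claim follows from $\psi_0=\psi^{(q)}$ and $q\le\xi$. You instead pass to $y=x\psi$, where the limit profile is a branch of the conic $(y-\xi x^2)^2=(1+\xi^2)x^2(x^2-1)$ (using $\tilde A_\xi=-(1+\xi^2)$, valid since $\xi=\tc_\xi$ in the regime $q\le c_0$) and the initial profile is a parabola in $u=x^2$, reducing everything to the sign of a single quadratic $G(u)$. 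Your identity $\mu-\xi=\frac{b}{\sqrt{b^2-1}}\sqrt{1+\xi^2}+\frac{\xi-q}{b^2-1}$, which rests on the volume equation $p-\xi b=\sqrt{(b^2-1)(1+\xi^2)}$ and the positivity $p-\xi b>0$ from Lemma \ref{lem:dhym-calabi-elem-inequalities}, correctly gives $(\mu-\xi)^2>1+\xi^2$, and your boundary values $G(1)=(\xi-q)^2$ and $G(b^2)=0$ check out. The paper's argument buys an explicit formula for the gap; yours isolates one inequality as the entire content of the comparison and treats $\psi_0$ and $\tilde\psi_\xi$ directly rather than through an intermediate family.

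Two small points. First, your assertion that $b^2$ is the \emph{larger} root $u_2$ deserves a word when $G(1)>0$: if $b^2$ were the smaller root, your own continuity argument would give $y_0<y_\xi^-$ on all of $[1,b^2)$ while $y_0(b^2)=bp=y_\xi^+(b^2)>y_\xi^-(b^2)$, a contradiction; alternatively, note that the conclusion $y_0\le y_\xi^+$ on $[1,b^2]$ holds in either configuration, so nothing is lost. Second, you do not prove the ``only if'' in the equality statement. Your argument does yield strictness on $(1,b)$ (the only interior zero of $G$ is $u_1$, where $y_0$ meets the lower branch $y_\xi^-$, not $y_\xi^+$), but at $x=1$ equality holds exactly when $q=\xi$, i.e.\ $q=c_0$ --- a boundary case in which the lemma's ``equality iff $x=b$'' is also not delivered by the paper's own computation (its displayed difference vanishes at $x=1$). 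Since only the non-strict inequality is used downstream (Lemma \ref{lem:comparison-to-limit} obtains strictness by comparing with $\tilde\psi_s$ for $s>\xi$), this is harmless.
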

\begin{proof}
\begin{enumerate}
\item Let's assume $x\in[1,b)$ since at $x=b$ we have $\tilde \psi_\xi(b) = \psi_0(b)=p$. For any $s\in\RR$ define a function $\psi^{(s)}:[1,b]\to\RR$ by
$$\psi^{(s)}(x)=\frac{\la^{(s)}}{x}+\mu^{(s)}x,$$
where
$$\mu^{(s)}=\frac{bp-s}{b^2-1},~~\la^{(s)}= \frac{b(bs-p)}{b^2-1}.$$
When $s=\xi$,
\begin{align*}
\mu^{(\xi)}-\xi &= \frac{bp-\xi}{b^2-1} -\xi =\frac{b(p-b\xi)}{b^2-1}>0,\\
\la^{(\xi)} &= \frac{b(b\xi-p)}{b^2-1} = -(\mu^{(\xi)}-\xi).
\end{align*}
Note that we can re-write $\psi^{(s)}$ as follows:
\begin{align*}
\psi^{(s)}(x) &= \frac{b(bs-p)}{(b^2-1)x} + \frac{(bp-s)x}{b^2-1}\\
&= \frac{s}{(b^2-1)x}(b^2-x^2) + \frac{bp}{(b^2-1)x}(x^2-1).
\end{align*}
Since $x\in[1,b)$, the above expression implies $\psi^{(s)}(x)<\psi^{(\xi)}(x)$ for all $x\in[1,b)$ when $s<\xi$.

Now we show that $\psi^{(\xi)}(x)<\tilde\psi_\xi(x)$ for all $x\in[1,b)$. We compute
\begin{align*}
\psi^{(\xi)}(x) - \tilde\psi_\xi(x) &= (\mu^{(\xi)}-\xi)x + \frac{\la^{(\xi)}}{x} - \sqrt{(1+\xi^2)(x^2-1)}\\
&= \frac{(\mu^{(\xi)}-\xi)(x^2-1)}{x} - \sqrt{(1+\xi^2)(x^2-1)}\\
&= \frac{b(p-b\xi)(x^2-1)}{(b^2-1)x} - \sqrt{(1+\xi^2)(x^2-1)}\\
&= \frac{b(x^2-1)\sqrt{1+\xi^2}}{x\sqrt{b^2-1}} - \sqrt{(1+\xi^2)(x^2-1)}\\
&= \frac{\sqrt{(1+\xi^2)(x^2-1)}}{x\sqrt{b^2-1}}\Big(b\sqrt{x^2-1} - x\sqrt{b^2-1}\Big)<0,
\end{align*}
since $x\in[1,b)$. In second line of the above computation, we used $\la^{(\xi)}=-(\mu^{(\xi)}-\xi) $. In the fourth line, we used $(p-b\xi)^2 =(1+\xi^2)(b^2-1)$ which is the equation $\xi$ satisfies. To complete the proof we note that $\psi^{(s)} \equiv \psi_0$ when $s = q$, and that $\xi\geq q$.
\item At $t=0$, $\theta_0(x)=\ac\psi_0^\prime + \ac\frac{\psi_0}{x}$ and then
\begin{align*}
\cot\theta_0 = \frac{\psi_0^\prime\cdot\frac{\psi_0}{x}-1}{\psi_0^\prime + \frac{\psi_0}{x}} = \frac{1}{2\mu}\left(\left(\frac{\la}{x^2}+\mu\right)\left(-\frac{\la}{x^2} + \mu\right) -1\right) =\frac{1}{2\mu}\left(\mu^2 -1 -\frac{\la^2}{x^4}\right). 
\end{align*}
So differentiating with respect to $x$ we obtain $$(\cot\theta_0)^\prime(x,0) = \frac{2\la^2}{\mu x^5}\geq 0,$$ and hence $$\frac{\d \psi}{\d t}(x,0)\geq0.$$ 
\end{enumerate}
\end{proof}

Next, we prove that the two properties above propagate along the flow using the maximum principle.

\begin{lem}\label{lem:comparison-to-limit} Let $q\leq c_0$. For all $t\in [0,\infty)$ and $x\in [1,b]$, $$\psi(x,t) \leq \tilde\psi_\xi(x).$$
	
\end{lem}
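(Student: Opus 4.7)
I plan to set $H(x,t) := \psi(x,t) - \tilde\psi_\xi(x)$ and prove $H \leq 0$ on $[1,b] \times [0,\infty)$ by the parabolic maximum principle, after linearizing the cotangent flow around the stationary profile $\tilde\psi_\xi$. First, I would verify the sign of $H$ on the parabolic boundary. Lemma \ref{lem:inequalities at time zero for cotangent flow}(1) yields $H(x,0) \leq 0$. At $x = b$, both $\psi(b,t)$ and $\tilde\psi_\xi(b)$ equal $p$, so $H(b,t) = 0$. At $x = 1$, the identity $\tilde c_\xi = \xi$ --- which holds because $\xi$ is the unique fixed point of $s \mapsto \tilde c_s$ in $[q,bp)$ when $\xi \geq q$, equivalently when $q \leq c_0$ --- gives $\tilde A_\xi = -(1+\xi^2)$ and hence $\tilde\psi_\xi(1) = \xi$, so $H(1,t) = q - \xi \leq 0$.

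Next I would linearize. Writing $\cot\theta_\psi = F(\psi,\psi')$ with $F(u,v) := (uv/x - 1)/(u/x + v)$, the stationary profile satisfies $F(\tilde\psi_\xi, \tilde\psi_\xi') \equiv \xi$, so the fundamental theorem of calculus produces smooth coefficients $A(x,t), B(x,t)$ with
$$\cot\theta_\psi - \xi = A \cdot H + B \cdot H',$$
namely $A = \int_0^1 F_u \, ds$ and $B = \int_0^1 F_v \, ds$ evaluated along the linear interpolant between $(\tilde\psi_\xi,\tilde\psi_\xi')$ and $(\psi,\psi')$. A direct computation yields $F_u = (1+v^2)/(x(u/x+v)^2) > 0$ and $F_v = (1+(u/x)^2)/(u/x+v)^2 > 0$ at any admissible argument. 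Since $\tilde\psi_\xi$ is admissible (Lemma \ref{lem:dhym-calabi-solvability}), admissibility is preserved along the cotangent flow (Lemma \ref{lem:dimension-two-trace-condition}), and admissibility is a convex condition, every interpolant is admissible and hence $A, B$ are smooth and positive on $[1,b] \times [0,\infty)$. The resulting linear parabolic equation for $H$ reads
$$H_t \;=\; Q(x)\,B \, H'' + Q(x)\,(A + B_x) \, H' + Q(x)\,A_x \, H.$$

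Finally, I would apply the maximum principle with the standard exponential trick: fix $T > 0$, set $K := 1 + \sup_{[1,b] \times [0,T]} |Q\,A_x| < \infty$ (finite by smoothness of $\psi$ and $\tilde\psi_\xi$), and consider $\tilde H := e^{-Kt} H$. If $\tilde H$ had a positive maximum on $[1,b] \times [0,T]$, it could not lie on the parabolic boundary (where $\tilde H \leq 0$) nor at $x \in \{1, b\}$; so there would be an interior point $(x_0, t_0) \in (1,b) \times (0,T]$ with $\tilde H_t \geq 0$, $\tilde H' = 0$, $\tilde H'' \leq 0$, and $Q(x_0) > 0$, giving
$$0 \;\leq\; \tilde H_t(x_0, t_0) \;\leq\; (Q A_x - K)\,\tilde H(x_0, t_0) \;\leq\; -\tilde H(x_0, t_0) \;<\; 0,$$
a contradiction. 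Hence $H \leq 0$ on $[1,b] \times [0,T]$ for every $T > 0$, which yields the claim. The main delicate point is the preservation of admissibility across the linear interpolation --- without it, the coefficients $A, B$ could blow up --- but this is guaranteed by the propagation of the $B$-condition along the cotangent flow (Lemma \ref{lem:dimension-two-trace-condition}) combined with the convexity of the admissibility condition.
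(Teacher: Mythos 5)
There is a genuine gap in your argument, and it is precisely the one the paper's own proof flags and designs around.

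Your proof runs the maximum principle directly against the critical profile $\tilde\psi_\xi$, and at the end you assert that $K := 1 + \sup_{[1,b]\times[0,T]}|Q\,A_x|$ is ``finite by smoothness of $\psi$ and $\tilde\psi_\xi$.'' But $\tilde\psi_\xi$ is \emph{not} $C^1$ at $x=1$: since $q\leq c_0$ forces $\xi\geq q$ and hence $\tilde c_\xi = \xi$, one has $\tilde A_\xi = -(1+\xi^2)$ and $\tilde\psi_\xi(x) = \xi x + \sqrt{(1+\xi^2)(x^2-1)}$, so $\tilde\psi_\xi'(x)\sim (x-1)^{-1/2}$ and $\tilde\psi_\xi''(x)\sim -(x-1)^{-3/2}$ as $x\to 1^+$. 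The coefficient $A_x$ in your linearization picks up $\tilde\psi_\xi''$ through the chain rule along the interpolation path, so it is unbounded near $x=1$, and the finiteness of $K$ is \emph{not} a consequence of smoothness as you claim. Whether $Q\,A_x$ is bounded comes down to a delicate cancellation: $Q(x)\sim (x-1)$ vanishes at the same boundary, and a careful asymptotic matching (or, alternatively, noting that the $-\sqrt{x-1}$ term in $H$ forces a positive maximum to stay a uniform distance away from $x=1$, given the uniform $C^1$ bound on $\psi$) would rescue the argument --- but you carry out neither verification, so as written the maximum-principle step is not justified.

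The paper takes a different route that sidesteps this entirely: it proves $\psi(x,t)\leq\tilde\psi_s(x)$ for every $s>\xi$, using $H_s = \psi - \tilde\psi_s$ and the \emph{nonlinear} evolution rewritten via the second-order ODE satisfied by $\tilde\psi_s$, then invokes Lemma~\ref{lem:dhym-calabi-comparison} to pass to the limit $s\to\xi$. The crucial point --- explicitly noted in the paper (``Note that this fails at $s=\xi$'') --- is that $\|\tilde\psi_s\|_{C^2[1,b]}$ \emph{is} finite for $s>\xi$, so all the coefficients in that comparison are bounded and no boundary asymptotics need to be tracked. Your approach, a linearization around $\tilde\psi_\xi$ itself, is slicker in principle, but it needs the extra asymptotic analysis near $x=1$ to close; without it, there is a hole exactly where the comparison is hardest.
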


\begin{proof}
In view of Lemma \ref{lem:dhym-calabi-comparison},
it is enough to show that $\psi(x,t) \leq \tilde\psi_s(x)$ for all $s> \xi$. We let $$H_s(x,t) := \psi(x,t) - \tilde\psi_s(x).$$ The evolution equation for $H_s$ is given by

\begin{align*}
\frac{\d H_s}{\d t} = \frac{Q\csc^2(\theta)}{(x^2+\psi^2)(1+ \psi'^2)}
\Bigg( (x^2+\psi^2)H''_{s} &+ (1+ \psi'^2)(xH'_{s}-H_s)
+ (H_{s}^2+2H_{s}\tilde\psi_s)\tilde\psi''_s \\ &+ \big((H'_{s})^2 + 2H'_{s}\tilde\psi'_s \big)(x\tilde\psi'_s -\tilde\psi_s)\Bigg),
\end{align*}
where we used the fact that $\tilde\psi_s$ satisfies the following second order ODE:
\begin{equation}
\label{equ3}
(x^2+\tilde\psi_{s}^2)\tilde\psi''_s(x) + (1+ (\tilde\psi'_s)^2)(x\tilde\psi'_s -\tilde\psi_s)=0,
\end{equation}
on $(1, b]\times [0, \infty)$. At the spatial and time boundaries we observe that, 
$$ H_s(x, 0) < 0, ~\forall x\in (1, b),~H_s(1,t)<0,~\forall t\in [0,\infty),~H_s(b,t) = 0,~\forall t\in [0,\infty),$$
where the inequalities follows from Lemma \ref{lem:inequalities at time zero for cotangent flow} and Lemma \ref{lem:dhym-calabi-comparison}.
	
Arguing by contradiction, suppose there exists a $T>0$ such that $$H^*:=\underset{[1,b]\times[0,T]}{\sup} H_s(x,t) >0.$$ Note that $$a(x,t):= \frac{Q \csc^2 \theta}{(x^2 +\psi^2)(1+\psi'^2)}$$ is uniformly bounded above as the angle $\theta(x,t)$ lies in a compact set of $(0, \pi)$ and $Q=u''\circ u'^{-1}$. 
We also have that $||\tilde\psi_s||_{C^{2}[1,b]}$ is finite for $s>\xi$ (Note that this fails at $s = \xi$). We define a constant $A=A(T)>0$ by
$$ A := 1+ \lVert \tilde\psi_s\rVert_{C^2[1,b]} + \underset{[1,b]\times[0,T]}{\sup} a(x,t).$$ We let $B= 1+A^2H^* + 2A^3$ and consider the function $$F(x,t):= e^{-Bt}H_s(x,t).$$ First we see that
$$ F^* := F(x^*,t^*) = \underset{[1,b]\times[0,T]}{\sup} F(x,t) >0$$ for some $(x^*,t^*)\in[1,b]\times[0,T]$. Next, the spacial and time boundary conditions implies that $x^*\in(1,b)$ and $t^*\neq 0$. It implies that $$\frac{\d F}{\d t}(x^*,t^*)\geq0,~~F'(x^*,t^*)=0,~~F''(x^*,t^*) \leq 0.$$ Then by the maximum principle,
\begin{align*}
0 &\leq \frac{\d F}{\d t}(x^*,t^*) \\
&= -BF(x^*,t^*)+ a(x^*,t^*)\left((x^2+\psi^2)F'' + (1+ \psi'^2)(xF'-F)
+ (FH_s+2F\tilde\psi_s)\tilde\psi''_s \right)(x^*, t^*) \\ 
&\hspace*{2cm} +a(x^*,t^*)(F'^2 H' + 2F'\tilde\psi'_s )(x\tilde\psi'_s -\tilde\psi_s)(x^*, t^*)\\
&\leq -BF^* + a(x^*,t^*)\Big(H^*+2\tilde\psi_s(x^*)\Big)\tilde\psi''_s(x^*)F^* \\
& \leq -BF^* +(A^2H^* + 2A^3)F^* \\
&= -F^*,
\end{align*}
which is a contradiction. Hence, we must have $$\underset{[1,b]\times[0,\infty)}{\sup} H_s(x,t)\leq 0,$$ and so we get the desired result.
\end{proof}

Next, we prove that the two properties above propagate along the flow using the maximum principle.  

\begin{lem}\label{lem:dhym-calabi-monotonicity} 
Let $q\leq c_0$. For all $t\in [0,\infty)$ and all $x\in [1,b]$, $$\frac{\partial \psi}{\partial t}(x,t) \geq0.$$
\end{lem}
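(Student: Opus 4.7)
The strategy is to apply the parabolic maximum principle to the time derivative $H := \partial_t\psi$, exactly in the spirit of the proof of Lemma \ref{lem:dhym-calabi-monotonicity}'s companion (Lemma \ref{lem:comparison-to-limit}) above.

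First I would derive the linearized evolution equation satisfied by $H$. Differentiating the flow equation $\partial_t\psi = Q(x)\,\partial_x\cot\theta$ in $t$, and using $\partial_t\cot\theta = -\csc^2\theta\,\partial_t\theta$ together with $\theta = \ac(\psi') + \ac(\psi/x)$, one computes
\[
\partial_t\cot\theta \;=\; \frac{\csc^2\theta}{1+(\psi')^2}\,H' \;+\; \frac{x\csc^2\theta}{x^2+\psi^2}\,H.
\]
Differentiating once more in $x$ and multiplying by $Q$ gives a linear parabolic equation of the form
\[
\partial_t H \;=\; \frac{Q\csc^2\theta}{1+(\psi')^2}\,H'' \;+\; P_1(x,t)\,H' \;+\; P_0(x,t)\,H,
\]
where $P_0,P_1$ are smooth on $(1,b)\times[0,\infty)$ and are bounded on $[1,b]\times[0,T]$ for each finite $T$ (since the long-time existence Theorem \ref{thm:long-time} gives $\theta(x,t)\in (0,\pi)$ uniformly on compact $t$-intervals, and admissibility, preserved by Lemma \ref{lem:dimension-two-trace-condition}, keeps $\psi'+\psi/x>0$).

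Next I would check the parabolic boundary data for $H$. On the spatial boundary, $\psi(1,t) = q$ and $\psi(b,t) = p$ are fixed in time, so $H(1,t) = H(b,t) = 0$. On the initial slice $t=0$, part (2) of Lemma \ref{lem:inequalities at time zero for cotangent flow} gives $H(x,0)\geq 0$. Fixing $T>0$ and setting $B := 1 + \sup_{[1,b]\times[0,T]}|P_0(x,t)|$, I would study the auxiliary function $F(x,t) := e^{-Bt}H(x,t)$, which satisfies
\[
\partial_t F \;=\; \frac{Q\csc^2\theta}{1+(\psi')^2}\,F'' \;+\; P_1 F' \;+\; (P_0 - B)\,F.
\]
Suppose for contradiction that $F_* := \inf_{[1,b]\times[0,T]}F < 0$, attained at $(x_*,t_*)$. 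The initial and spatial boundary conditions force $(x_*,t_*)\in (1,b)\times (0,T]$, so at this interior minimum one has $\partial_tF\leq 0$, $F'=0$, $F''\geq 0$. Plugging these into the evolution equation yields
\[
0 \;\geq\; \partial_t F(x_*,t_*) \;\geq\; (P_0-B)(x_*,t_*)\,F_* \;\geq\; -F_* \;>\;0,
\]
since $P_0-B\leq -1$ and $F_*<0$. This contradiction shows $F\geq 0$, hence $H\geq 0$, on $[1,b]\times[0,T]$. Since $T$ is arbitrary the proof is complete.

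The main technical point to be careful about is the degeneracy of the diffusion coefficient at $x=1,b$ (where $Q$ vanishes) and the potential blow-up of $P_0,P_1$ there. This is handled automatically because the parabolic boundary data pins $H$ to $0$ at $x=1,b$, so any negative infimum must be achieved in the open set where the PDE is uniformly parabolic and the coefficients are smooth.
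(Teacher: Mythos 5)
Your proposal is correct and follows essentially the same argument as the paper: differentiate the flow equation in $t$ to obtain a linear parabolic PDE for $H=\partial_t\psi$ with bounded zeroth-order coefficient, note $H(1,t)=H(b,t)=0$ and $H(\cdot,0)\geq 0$, and apply the maximum principle to $e^{-Bt}H$ for a suitably large constant $B$. The only cosmetic difference is that you deduce $H=0$ on the spatial boundary from $\psi(1,t)=q$, $\psi(b,t)=p$ being constant in time, whereas the paper reads it off from $Q(1)=Q(b)=0$; both observations are equivalent.
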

\begin{proof}
At the boundary points, since $Q(1) = Q(b) = 0$, we have $$\frac{\partial \psi}{\partial t}(1,t) = \frac{\partial \psi}{\partial t}(b,t) = 0$$ for all $t$. The evolution equation for $\psi$ is given by 
\begin{equation}
\frac{\d \psi}{\d t} =
Q(x)\csc^2\left(\ac\frac{\psi}{x} + \ac\frac{\d \psi}{\d x}\right)\left(\frac{\frac{\d^2 \psi}{\d x^2}}{1+ (\frac{\d \psi}{\d x})^2} + \frac{\frac{1}{x}{\frac{\d \psi}{\d x}} - \frac{\psi}{x^2}}{1 + (\frac{\psi}{x})^2}\right).
\end{equation}
Taking one time derivative we see that $\dot\psi$ satisfies the following equation: $$\frac{\partial \dot\psi}{\partial t} = L(\dot\psi) + a\dot\psi,$$ where $L(f) := {\bf A}(x,t)f'' + {\bf B}(x,t)f'$, $$\begin{cases}\mathbf{A}(x,t) &:= \frac{Q(x)\csc^2(\theta)}{1+\psi'^2},\\
\mathbf{B}(x,t) &:= Q(x)\csc^2(\theta)\Bigg(\frac{-2x\psi''}{(x\psi'+\psi)(1+ \psi'^2)} + \frac{2(\psi\psi'-x)(x\psi'-\psi)}{(x\psi'+\psi)(1+ \psi'^2)(x^2+\psi^2)} + \frac{x}{x^2+\psi^2}\Bigg),\\
a(x,t) &:= Q(x)\csc^2(\theta)\Bigg(-\frac{4x\psi'}{(x\psi'+\psi)(x^2 + \psi^2)} + \frac{2x\psi''(\psi\psi'-x)}{(x\psi'+\psi)(1+ \psi'^2)(x^2+\psi^2)}+\frac{1}{x^2+\psi^2}\Bigg).\end{cases}$$
Arguing by contradiction, suppose there exists a $T>0$ such that $$\inf_{[1,b]\times[0,T]}\dot\psi(x,t)<0.$$ Let $C = C(T)>0$ such that $$C = 1+\sup_{[1,b]\times[0,T]}|a(x,t)|.$$ Note that the supremum is finite since $\theta$ is strictly between $(0,\pi)$ and so $\csc^2\theta$ is bounded above and below, and $x\psi' + \psi$ is bounded below. We now let $$F(x,t) = e^{-Ct}\dot\psi(x,t),$$ and let $(x^*,t^*) \in [1,b]\times[0,T]$ such that $$F^*:= F(x^*,t^*) := \inf_{[1,b]\times[0,T]}e^{-Ct}\dot\psi(x,t) <0.$$ Then by the boundary conditions, $x^*\in (1,b)$ and $t^*\neq 0$. By the maximum principle, 
\begin{align*}
0\geq \frac{\partial F}{\partial t}(x^*,t^*) &= -CF^* + L(F)(x^*,t^*) + a(x^*,t^*)F^*\\
&\geq (a(x^*,t^*)-C)F^*.
\end{align*}
Now $a(x^*,t^*) - C\leq -1$ and $F^*<0$ and so we have a contradiction. 
\end{proof}

\subsection{Proof of Theorem \ref{thm:conv-cotangent-flow-blow-up}} We are now in a position to prove Theorem \ref{thm:conv-cotangent-flow-blow-up}. As in Section \ref{sec:j-flow-calabi} we first have the following analog of Proposition \ref{prop:conv-j-flow-calabi-psi}.

\begin{prop}\label{prop:conv-cotangent-flow-calabi-psi}
Let $q\leq c_0$, and let $\psi(x,t)$ solve the cotangent flow \eqref{eq:evolution-eq-dhym-calabi-expanded}with $\psi(x,0) = \psi_0(x)$ given by \eqref{eq:spl-initial-cotangent}. Then $\psi(x,t)$ converges uniformly, and in $C^1_{loc}$ on $(1,b]$, to $\tilde\psi_\xi$. 
\end{prop}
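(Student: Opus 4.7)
The plan is to adapt the argument of Proposition \ref{prop:conv-j-flow-calabi-psi}, with the pointwise Lagrangian angle cotangent $\cot\theta(x,t) = (\psi\psi'-x)/(x\psi'+\psi)$ playing the role of the slope $\sigma$. Monotonicity in $t$ (Lemma \ref{lem:dhym-calabi-monotonicity}) together with the upper bound (Lemma \ref{lem:comparison-to-limit}) immediately produce a pointwise limit $\psi(x,t) \nearrow \psi_\infty(x)$ with $\psi_0 \le \psi_\infty \le \tilde\psi_\xi$, so the task reduces to identifying $\psi_\infty \equiv \tilde\psi_\xi$ on $(1,b]$ and upgrading the convergence.

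First I would extract a subsequential limit for $\cot\theta$. Writing the flow as $\partial_t\psi = Q(x)(\cot\theta)'$, the fact that $\partial_t\psi \ge 0$ and $Q>0$ on $(1,b)$ forces $\cot\theta(\cdot,t)$ to be non-decreasing in $x$. For any $\delta>0$ there is $c_\delta>0$ with $Q(x)\ge c_\delta$ on $[1+\delta,b-\delta]$, hence
\[
\int_0^\infty\!\!\int_{1+\delta}^{b-\delta} (\cot\theta)'(x,t)\,dx\,dt \;\le\; c_\delta^{-1}\!\!\int_{1+\delta}^{b-\delta}(\psi_\infty - \psi_0)\,dx \;<\;\infty.
\]
Non-negativity of the integrand produces a sequence $t_j\to\infty$ with $\cot\theta(b-\delta,t_j) - \cot\theta(1+\delta,t_j) \to 0$. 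Uniform bounds on $\cot\theta$ along the flow, obtained from the preserved condition $\max\theta<\pi$ in Theorem \ref{thm:long-time} combined with the preserved admissibility $\Lambda_\omega\chi_{\varphi_t}>0$ of Lemma \ref{lem:dimension-two-trace-condition} (which keeps $\theta$ uniformly bounded away from $0$ and $\pi$), then allow a further subsequence so that $\cot\theta(\cdot,t_j)$ converges to some constant $c_\infty$ uniformly on $[1+\delta,b-\delta]$, with a standard diagonal argument making $c_\infty$ independent of $\delta$.

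Next I would integrate the identity $(\psi^2-x^2)' = 2\cot\theta\,(x\psi)'$ from $x_0$ to $x$ in $(1,b)$ and pass to the limit along $t_j$. An integration by parts writes the right hand side as $[2\cot\theta\cdot y\psi]_{x_0}^x - \int_{x_0}^x 2(\cot\theta)'\cdot y\psi\,dy$; the boundary term converges to $2c_\infty(x\psi_\infty(x) - x_0\psi_\infty(x_0))$ by pointwise convergence at the endpoints, while the integral is bounded in absolute value by a constant times $\cot\theta(x,t_j) - \cot\theta(x_0,t_j) \to 0$. Consequently
\[
\psi_\infty(x)^2 - \psi_\infty(x_0)^2 - (x^2-x_0^2) \;=\; 2c_\infty\bigl(x\psi_\infty(x) - x_0\psi_\infty(x_0)\bigr)
\]
on $(1,b)$. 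Solving the quadratic and using $\psi_\infty(b) = p$, $\psi_\infty$ must coincide with the positive branch $\tilde\psi_s$ from \eqref{eq:family of solutions for star s, positive branch} for $s := \lim_{x\to 1^+}\psi_\infty(x)$ and $c_\infty = \tilde c_s$. Lemma \ref{lem:dhym-calabi-solvability} forces $s \ge \tilde c_s$, which by the characterization $\xi = \inf\{s : s \ge \tilde c_s\}$ established earlier gives $s \ge \xi$; conversely $s \le \tilde\psi_\xi(1) = \xi$ by Lemma \ref{lem:comparison-to-limit}. Hence $s=\xi$ and $\psi_\infty \equiv \tilde\psi_\xi$ on $(1,b]$. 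Uniqueness of the monotone pointwise limit promotes this to convergence of the full family, and Dini's theorem upgrades it to uniform convergence on compacta of $(1,b]$; the $C^1_{loc}$ convergence then follows by solving $\psi' = (\cot\theta\cdot\psi + x)/(\psi - \cot\theta\cdot x)$ algebraically, the denominator $\tilde\psi_\xi - \xi x = \sqrt{\tilde A_\xi + (1+\xi^2)x^2}$ being strictly positive on $(1,b]$.

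The main technical obstacle is the rigorous passage to the limit in the integrated dHYM equation: since a priori $\psi_\infty$ is only a pointwise monotone limit and need not be differentiable, this step has to be carried out at the integrated level and relies on both the uniform boundedness and the integrability of $(\cot\theta)'$. A secondary subtlety is that in the strictly unstable case $q<c_0$ one has $\xi>q$, so $\psi_\infty$ has a genuine jump at $x=1$ with $\psi_\infty(1)=q$ but $\lim_{x\to 1^+}\psi_\infty(x) = \xi$; this is precisely why the $C^0$ and $C^1$ statements must be restricted to compact subsets of $(1,b]$.
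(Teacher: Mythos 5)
Your proposal is correct and follows essentially the same route as the paper's proof: monotonicity plus comparison give the pointwise limit $\psi_\infty$, the integrability of $(\cot\theta)'$ over compact subintervals produces a subsequential constant limit $c_\infty$ of $\cot\theta$, integrating the dHYM identity identifies $\psi_\infty$ with $\tilde\psi_s$, and the pinching $\xi \le s \le \xi$ forces $s=\xi$. The only cosmetic difference is that you make the final $C^1_{loc}$ upgrade explicit by solving algebraically for $\psi'$ and invoking Dini's theorem, whereas the paper leaves this step implicit.
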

\begin{proof}
By the comparison and monotonicity we immediately have that the point-wise limit $$\psi_\infty(x) = \lim_{t\rightarrow\infty}\psi(x,t)$$ exists, is a lower semi-continuous function on $[1,b]$ and satisfies $\psi_\infty(x)\leq \tilde\psi_\xi(x)$ for all $x\in [1,b]$. Moreover $x\psi_\infty(x)$ is increasing on $[1,b]$.  Note that $\psi_\infty(b-) = p$. On the other hand, we let $$s := \lim_{x\rightarrow 1^+}\psi_\infty(x).$$
The proof now has several steps. 
\begin{itemize}
\item Let $\theta(x,t)$ be as before. By the choice of our initial condition, and since condition $(B)$ is preserved, note that $\theta(x,t)\in (0,\pi)$ for all $(x,t)\in [1,b]\times [0,\infty)$. We set $c(x,t) = \cot\theta(x,t)$. Then by definition, $$(\psi^2(x,t)- x^2)' = 2c(x,t)(x\psi(x,t))'.$$We now claim that there exists a sequence $t_j\rightarrow \infty$ and $c_\infty\in\RR$ such that $$c(x,t_j)\rightarrow c_\infty$$ for each $x\in (1,b)$ with the convergence being uniform on any compact set $K\subset (1,b)$. First note that by the flow equation and Lemma \ref{lem:dhym-calabi-monotonicity} , $c'(x,t)\geq 0$ and so $c(x,t)$ is increasing in $x$. Moreover, $c(x,t)$ is a bounded function of $(x,t)$ since $0<\inf \theta(x,t) \leq \sup \theta(x,t) < \pi$. Let us now fix a $\delta>0$. Then since $\dot\psi\geq 0$ we have $$\int_{T}^{\infty}\int_{1+\delta}^{b-\delta}\Big|\frac{\partial \psi}{\partial t}\Big| = \int_{1+\delta}^{b-\delta}\Big(\psi_\infty(x) - \psi(x,T)\Big)\,dx \leq C$$ for some uniform constant $C$ independent of $T$. Next, from the flow equation for $\psi$, monotonicity of $c(x,t)$ and the fact that $Q(x)$ is uniformly lower bounded away from zero on $[1+\delta,b-\delta]$ we see that there exists a constant $C_\delta$ independent of $T$ such that $$\int_T^\infty \Big(c(b-\delta,t) - c(1+\delta,t)\Big)\,dt = \int_T^\infty\int_{1+\delta}^{b-\delta}c'(x,t)\,dx dt<C_\delta.$$So there exists a sequence $t_j\rightarrow\infty$ such that $$\lim_{j\rightarrow\infty}\Big(c(b-\delta,t_j) - c(1+\delta,t_j)\Big) = 0.$$ By passing to a subsequence we may assume that $c(1+\delta,t_j)\rightarrow c_\infty$. By monotonicity we then have that $c(x,t_j)\rightarrow c_\infty$ for all $x\in [1+\delta,b-\delta]$. A simple diagonalisation argument then proves the claim. 
\item Next, we claim that for all $x\in (1,b)$ we have $$\psi_\infty^2 - 2c_\infty x\psi_\infty - x^2 = A_\infty$$ for some constant $A_\infty$.
To see this, let $$h_j(x) =\psi^2(x,t_j) - 2c_\infty x \psi(x,t_j) - x^2,~ h(x) = \psi_\infty^2(x) - 2c_\infty x \psi_\infty(x) - x^2.$$ Our aim is to prove that $h(x)$ is a constant. Once again let $\delta>0$. 
\begin{align*}
h_j(x) - h_j(1+\delta) &= \int_{1+\delta}^xh_j'(y)\,dy\\
&= \int_{1+\delta}^x\Big(\psi^2(y,t_j) - y^2\Big)'\,dy - 2\int_{1+\delta}^xc_\infty(y\psi(y,t_j))'\,dy\\
&= \int_{1+\delta}^x\Big(\psi^2(y,t_j) - 2c(y,t_j)y\psi(y,t_j) - y^2\Big)'\,dy - 2\int\Big(c_\infty - c(y,t_j)\Big)(y\psi(y,t_j))'\,dy \\
&+ 2\int_{1+\delta}^xc'(y,t_j)y\psi(y,t_j)\,dy\\
&= 2\int_{1+\delta}^xc'(y,t_j)y\psi(y,t_j)\,dy - 2\int\Big(c_\infty - c(y,t_j)\Big)(y\psi(y,t_j))'\,dy\\
&\leq C\int_{1+\delta}^xc'(y,t_j)\,dy - 2\int_{1+\delta}^xc'(y,t_j)y\psi(y,t_j)\,dy - 2\Big|(c_\infty - c(y,t_j))y\psi(y,t_j)\Big|_{1+\delta}^x\\
&\leq C\Big(c(x,t_j) - c(1+\delta,t_j)\Big).
\end{align*}
Letting $j\rightarrow \infty$, we see that $h(x) = h(1+\delta)$ for all $\delta>0$ and all $x\in [1+\delta,b-\delta]$.
\item Finally we prove that $\psi_\infty \equiv \tilde\psi_\xi$. Firstly,  note that since $\psi_\infty(1+)= s$ and $\psi_\infty(b) = p$, we see that $\psi_\infty \equiv\tilde\psi_s.$ Next, since $x\psi_\infty$ is increasing, this implies that $s\geq \tilde c_s$ or equivalently $s\geq \xi$. Next, by Lemma \ref{lem:comparison-to-limit} we have that $\psi_\infty\leq \tilde\psi_\xi$. On the other hand, by Lemma \ref{lem:dhym-calabi-comparison}, if $s>\xi$, then $\psi_\infty(x) = \tilde\psi_s(x) > \tilde\psi_\xi$ for any $x\in [1,b)$, a contradiction. Hence $s = \xi$ and this completes the proof of the proposition. 
\end{itemize}
\end{proof}

The higher order estimates can now be proved in a similar manner to those for the $J$ flow using the estimates from \cite{CJY,Tak,FYZ}.

\subsection{Minimizing the volume functional} Solutions to the deformed Hermitian Yang Mills equations minimise a certain volume functional $\Nu_\omega:C^\infty(X,\RR)\rightarrow \RR_+$ defined by  $$\Nu_\omega(\varphi) = \int_X r_\omega(\chi_\varphi)\omega^n,$$ where $r_\omega$ is the unique positive function defined by $$(\chi+\sqrt{-1}\omega)^n = r_\omega(\chi)e^{\sqrt{-1}\theta_\omega(\chi)}.$$ If $n = 2$, then $$\Nu_\omega(\chi) = \int_X\Big[\Big(\frac{\chi^2 - \omega^2}{\omega^2}\Big)^2 + \Big(\frac{2\chi\wedge\omega}{\omega^2}\Big)^2\Big]^{1/2}\omega^2.$$ It was observed in \cite{JY} that $$\Nu_\omega(\chi) \geq 2(\csc\vartheta(X,\al,\be))\al\cdot\be,$$ with equality if and only if $\chi$ solved the dHYM equation.  On $\Bl_{x_0}\PP^2$, if we let $\chi = \ddbar v(\rho)$ and $\omega = \ddbar u$, and $\psi:[1,b]\rightarrow \RR$ as above with $\psi(u'(\rho)) = v'(\rho)$, then (possibly upto a constant factor) $$\Nu_\omega(\chi)  = \int_1^b\Big[(x\psi' + \psi)^2 + (\psi'\psi - x)^2\Big]^{1/2}\,dx.$$ 

Let $v(t)$ be the solution of the cotangent flow with the special initial data. Then as we have seen above, $\chi(t)$ converges on $X\setminus E$ to a form $\chi_\infty$ that extends to a conical form on $X$. We continue to denote the global conical form by $\chi_\infty$,  and let $$\alpha_\infty = [ \chi_\infty] = p[H] - \zeta [E].$$ Then $\chi_\infty \cdot \omega\geq 0$ and 
$$ \theta_{\omega}(\chi_\infty) =  \theta_{min}$$ on $X\setminus E$. Finally, let $\theta \in \zeta[H] - q[E]$ be any {\em smooth} form. %
Then $(X, \theta_j)$ converges in Cheeger-Gromov sense to $(Y, \theta_\infty)$ which is isomorphic to $(X, \theta)$ and $\omega_\infty$ also lives on $Y$.   We then have the following analogue of Theorem \ref{mainthm2}\eqref{mainthem2-energy-conv}.

\begin{prop}
Let $v(t)$ be the solution of the cotangent flow with the special initial data. Let $\chi_\infty$,  and $\theta$ as above.  The volume functional $\cV$ satisfies
$$ \lim_{t\rightarrow\infty} \cV_\omega (\chi(t)) = \cV_\omega (\chi_\infty)+ \cV_{\pi^*\omega_{\PP^1}} (\theta).$$
Here 
$$\cV_{\omega}(\chi_\infty) = \cot \theta_{min} ( \alpha_\infty \cdot \beta)$$
is the volume functional on $(X, \omega)$ for $\chi_\infty$ and
$$\cV_{\pi^*\omega_{\PP^1}}(\theta) = \int_{q}^{\zeta}  \sqrt{1+x^2} dx$$
is the volume functional from the base.

\end{prop}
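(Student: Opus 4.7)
The plan is to reduce the volume functional to a one-dimensional integral via the Calabi ansatz and analyze its limit by isolating the bubble contribution near the exceptional divisor.

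First I would use the Calabi ansatz, where $\chi(t)=\ddbar v(\rho,t)$ and $\psi(\cdot,t)\colon [1,b]\to [q,p]$ is the associated profile function, to rewrite
\[
\cV_\omega(\chi(t)) \;=\; \int_1^b \sqrt{(x\psi'+\psi)^2 + (\psi'\psi - x)^2}\,dx \;=\; \int_1^b \sqrt{x^2+\psi^2}\,\sqrt{1+(\psi')^2}\,dx
\]
(absorbing an unimportant positive prefactor). Geometrically this is the arc length of the planar curve $\Gamma_t\colon x\mapsto (x\psi(x,t),\tfrac{1}{2}(\psi(x,t)^2-x^2))$ in $\RR^2$. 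The key observation is that when $\chi$ solves the dHYM equation with constant angle $\theta$, the curve $\Gamma$ is a straight line of slope $\cot\theta$, so the computation directly yields the stated formula for $\cV_\omega(\chi_\infty)$ as a multiple of $(\alpha_\infty\cdot\beta)$.

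The main step is the change of variables $s=\psi(x,t)$, under which
\[
\cV_\omega(\chi(t)) \;=\; \int_q^p \sqrt{x(s,t)^2+s^2}\,\sqrt{1+x_s(s,t)^2}\,ds, \qquad x_s=1/\psi',
\]
and splitting this as $J_1(t,\delta)+J_2(t,\delta)+J_3(t,\delta)$ over $[q,\xi-\delta]$, $[\xi-\delta,\xi+\delta]$ and $[\xi+\delta,p]$. On the outer piece, Proposition \ref{prop:conv-cotangent-flow-calabi-psi} gives $x(s,t)\to\tilde\psi_\xi^{-1}(s)$ in $C^1$, so $\lim_t J_3(t,\delta)$ exhausts $\cV_\omega(\chi_\infty)$ as $\delta\to 0$. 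On the bubble piece, the uniform convergence $\psi(\cdot,t)\to\tilde\psi_\xi$ together with $\tilde\psi_\xi(1^+)=\xi$ forces $x(s,t)\to 1$ uniformly for $s\in[q,\xi-\delta]$; combined with the elementary bound $0\leq \sqrt{1+x_s^2}-1\leq x_s$ and the telescoping identity
\[
\int_q^{\xi-\delta} x_s(s,t)\,ds \;=\; x(\xi-\delta,t) - 1 \;\longrightarrow\; 0,
\]
this forces $\sqrt{1+x_s^2}\to 1$ in $L^1([q,\xi-\delta])$, so that $\lim_t J_1(t,\delta) = \int_q^{\xi-\delta}\sqrt{1+s^2}\,ds$. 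Letting $\delta\to 0$ this limit becomes $\cV_{\pi^*\omega_{\PP^1}}(\theta)$, invoking $\zeta=\xi$ already established in the proof of Theorem \ref{thm:conv-cotangent-flow-blow-up}. The intermediate band is negligible: $J_2(t,\delta)\leq \sqrt{b^2+p^2}\,\bigl(2\delta + x(\xi+\delta,t)-x(\xi-\delta,t)\bigr)$, whose $\limsup_t$ tends to $0$ as $\delta\to 0$.

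The main technical obstacle is justifying the change of variables $s=\psi(x,t)$, which requires strict monotonicity $\psi'(x,t)>0$ on $(1,b]$ uniformly in $t$. I expect this to follow from a parabolic maximum principle applied to $\psi'$, patterned on the argument in Lemma \ref{lem:dhym-calabi-monotonicity}. Note that $\psi'(x,t)$ is not uniformly bounded above as $x\to 1^+$ (indeed $\tilde\psi_\xi'(1^+)=\infty$), but the argument above only uses integral bounds on $x_s$, which are furnished by the telescoping identity and do not require any pointwise control.
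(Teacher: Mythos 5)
The paper does not actually write out a proof of this proposition (it is declared ``similar and in fact simpler'' than Proposition \ref{prop:lower-bound-j-energy} and skipped), so there is nothing to compare line by line. Your reduction to the one--dimensional integral $\int_1^b\sqrt{x^2+\psi^2}\sqrt{1+(\psi')^2}\,dx$, i.e.\ the arc length of $\Gamma_t\colon x\mapsto(x\psi,\tfrac12(\psi^2-x^2))$, is exactly the right framework, and your three--piece splitting correctly isolates the one genuine analytic difficulty: unlike the $J$-flow case (where $0<\psi'<C$ uniformly and dominated convergence in the $x$-variable suffices), here $\tilde\psi_\xi'(x)\sim(x-1)^{-1/2}$ blows up at the zero section, so the bubble region must be handled by a length/telescoping argument rather than pointwise domination. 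Your treatment of $J_1$ and $J_2$ via $\int x_s\,ds=x(\xi-\delta,t)-1\to0$ and $\sqrt{1+y^2}\le 1+y$ is correct, as is the exhaustion of $\cV_\omega(\chi_\infty)$ by $J_3$ (note that $\psi(\cdot,t)\le\tilde\psi_\xi$ keeps the $x$-domain of $J_3$ in a fixed compact subset of $(1,b]$, which you should say explicitly).

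The one genuine gap is the one you flag: the substitution $s=\psi(x,t)$, and the inequalities $0\le x_s$, $\sqrt{1+x_s^2}\le 1+x_s$, all require $\psi'(x,t)>0$, and this is established nowhere in the paper --- only the admissibility $(x\psi)'>0$ is shown to be preserved (Lemma \ref{lem:dimension-two-trace-condition}). Your proposed fix via a maximum principle for $\psi'$ ``patterned on Lemma \ref{lem:dhym-calabi-monotonicity}'' is not routine: differentiating \eqref{eq:evolution-eq-dhym-calabi-expanded} in $x$ produces zeroth--order terms in $\psi'$ with no favorable sign at an interior zero minimum of $\psi'$, so the argument does not transfer as stated. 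The clean repair is to change variables to $u=x\psi$ rather than $s=\psi$: then $du>0$ is exactly preserved admissibility, $dv=\cot\theta\,du$ along $\Gamma_t$, and $\cV_\omega(\chi(t))=\int_q^{bp}\sqrt{1+c(u,t)^2}\,du$ with $c=\cot\theta$ uniformly bounded and non-decreasing in $x$ (this monotonicity is precisely $\dot\psi\ge0$ from Lemma \ref{lem:dhym-calabi-monotonicity}, and is already exploited in the proof of Proposition \ref{prop:conv-cotangent-flow-calabi-psi}). The limit curve is the convex $C^1$ concatenation of the parabola $v=\tfrac12(u^2-1)$ on $[q,\xi]$ and the line of slope $\xi$ on $[\xi,bp]$, so $c(u,t)\to\min(u,\xi)$ pointwise and dominated convergence gives $\int_q^\xi\sqrt{1+u^2}\,du+\sqrt{1+\xi^2}\,(bp-\xi)$ directly, with no monotonicity of $\psi$ in $x$ needed. (Incidentally, this computation shows the first summand in the statement should read $\csc\theta_{min}\,(\alpha_\infty\cdot\beta)$ rather than $\cot\theta_{min}\,(\alpha_\infty\cdot\beta)$; your line--segment observation already detects this.)
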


The proof of the above  result is similar (and in fact simpler) than the proof of Theorem \ref{mainthm2} and \eqref{mainthem2-energy-conv}, and we skip it. Note on the other hand, that the analogue of Theorem 



\appendix

\section{Some intersection theoretic calculations} The aim of this appendix is to prove the following Proposition. Recall that $(M^n,\omega_M)$ is an $n$-dimensional K\"ahler manifold with $\omega_M \in -2\pi c_1(L)$ for some negative line bundle $\pi:L\rightarrow M$. We let $\pi: X=\PP(\sO\oplus L^{\oplus(m+1)})\rightarrow M $ be a projective bundle of dimension $m+n+1$. We consider K\"ahler classes $\al = \pi^*[\omega_M] + a[D_\infty]$ and $\be = \pi^*[\omega_M] + b[D_\infty]$ where $a,b>0$ and $D_\infty$ is the divisor at infinity given by  Recall that $$\mu_s = \frac{(1+a)^na^mb + nI_{m,n-1,s}(a)}{I_{m,n,s}(a)},$$ where $$I_{m,n,s}(x) = \int_s^xt^m(1+t)^n\,dt.$$

\begin{prop}\label{lem: intersection-number-blow-up}
Let $\tilde\pi:\tilde X = \Bl_{P_0}X\rightarrow X$ be the blow down map with exceptional divisor $E$, and $\tilde\al_s = \tilde\pi^*\al - sE$. Then $$\mu_s = (m+n+1) \frac{(\tilde\al_s)^{m+n}\cdot\tilde\pi^*\be}{(\tilde\al_s)^{m+n+1}}.$$ In particular, 
\begin{enumerate}
\item $\mu_0 = \mu(X,\al,\be)$, and
\item if $\mu\leq n$, then $\la$ is the unique real number $\la$ in $[0,a)$ (with $\la = 0$ when $\mu = n$) such that $$\zeta_{inv} = (m+n+1) \frac{(\tilde\al_\la)^{m+n}\cdot\tilde\pi^*\be}{(\tilde\al_\la)^{m+n+1}}.$$ 
\end{enumerate}
\end{prop}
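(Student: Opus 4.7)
My strategy is to represent both intersection numbers by integrals via the Calabi ansatz on $\tilde X$, then match to the definition of $\mu_s$. For $s\in[0,a)$ in the range where $\tilde\al_s$ is K\"ahler, a $U(m+1)$-invariant representative has the Calabi form $\tilde\chi = \pi^*\omega_M + \nddbar v(\rho)$ with $v$ smooth, strictly convex, and with momentum interval $v':\RR\to(s,a)$---the shift of the lower endpoint from $0$ to $s$ is the sole effect of subtracting $sE$ from $\tilde\pi^*\al$. For $\tilde\pi^*\be$ I would use the pullback of $\omega = \pi^*\omega_M + \nddbar u(\rho)$ with $u':\RR\to(0,b)$. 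Both sides of the claimed identity are polynomial in $s$ (since $I_{m,n,s}(a)$ is the integral of a polynomial over $[s,a]$), so it suffices to verify the identity on an open subinterval of $s$ and invoke polynomiality.

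The volume form factors as $\tilde\chi^{m+n+1} = \tfrac{(m+n+1)!}{n!m!}(1+v')^n(v')^m v''\cdot\pi^*\omega_M^n\wedge d\mu_{\mathrm{fib}}$, where $d\mu_{\mathrm{fib}}$ is the normalized radial/angular fibre volume. Integrating and substituting $x=v'(\rho)$ gives $(\tilde\al_s)^{m+n+1}=c_{m,n}\,I_{m,n,s}(a)$, where $c_{m,n}=(m+n+1)\binom{m+n}{n}\int_M\omega_M^n$. For the mixed product, the eigenvalues of $\tilde\pi^*\omega$ with respect to $\tilde\chi$ are $(1+u')/(1+v')$, $u'/v'$, and $u''/v''$ with multiplicities $n$, $m$, and $1$, so
$$(m+n+1)\,\tilde\chi^{m+n}\wedge\tilde\pi^*\omega = \Big(n\tfrac{1+u'}{1+v'}+m\tfrac{u'}{v'}+\tfrac{u''}{v''}\Big)\tilde\chi^{m+n+1}.$$
The first two summands integrate directly after the substitution $x=v'(\rho)$. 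The third, $\int_\RR u''(1+v')^n(v')^m\,d\rho$, I would treat by integration by parts: using $u'(-\infty)=0$, $u'(\infty)=b$, $v'(-\infty)=s$, $v'(\infty)=a$, the boundary term contributes $b(1+a)^na^m$, and the bulk integrand in the IBP cancels \emph{exactly} the $u'$-dependent pieces of the first two integrals, so the final answer is independent of $u$ (as it must be). Collecting yields
$$(m+n+1)(\tilde\al_s)^{m+n}\cdot\tilde\pi^*\be = c_{m,n}\big(nI_{m,n-1,s}(a)+b(1+a)^na^m\big),$$
and taking the ratio reproduces $\mu_s$ exactly.

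Item (1) follows by setting $s=0$. For item (2), when $\mu\leq n$, Corollary \ref{cor:mu-s-convexity} singles out a unique $\la\in[0,a)$ minimizing $s\mapsto\mu_s$ (characterized by $\mu_\la(1+\la)=n$), and by the formula just established, $\mu_\la$ coincides with the $U(m+1)$-invariant infimum defining $\zeta_{inv}$. I expect the main computational subtlety to be the integration-by-parts cancellation in the mixed-term integral (which is the geometric reason the answer depends only on the cohomology classes and not on $u$); once that is in place, verifying the Calabi-ansatz K\"ahler representative of $\tilde\al_s$ exists on an open $s$-interval and then invoking polynomiality in $s$ completes the argument.
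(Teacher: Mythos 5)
Your proof is correct and reaches the same key identities as the paper's, namely
\begin{align*}
(\tilde\al_s)^{m+n+1} &= c_{m,n}\,I_{m,n,s}(a),\qquad c_{m,n}=(m+n+1)\binom{m+n}{n}\int_M\omega_M^n,\\
(m+n+1)\,(\tilde\al_s)^{m+n}\cdot\tilde\pi^*\be &= c_{m,n}\big(nI_{m,n-1,s}(a)+b(1+a)^na^m\big),
\end{align*}
but by a genuinely different route. The paper works purely in the cohomology ring of the projective bundle and of the blow-up: it expands the powers of $\tilde\al_s=\tilde\pi^*\al-sE$, computes the intersection numbers $[\omega_M]^{n-l}\cdot\eta^{r-1+l}$ and $\theta^{r-2+l}\cdot(\tilde\pi^*\al|_{P_0})^{n-l}$ using the relations in $H^*(X)$ and $H^*(E)$ coming from Chern classes, and closes the computation with a binomial-coefficient identity (the inclusion--exclusion count in Lemma \ref{lem:combinatorial-identity}). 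Your argument instead computes the same numbers as integrals against a Calabi-ansatz representative of $\tilde\al_s$, after which the substitution $x=v'(\rho)$ and a single integration by parts (whose boundary term gives $b(1+a)^na^m$ and whose bulk cancels all $u$-dependence) yields the formulas directly. This is essentially the elementary approach the authors themselves flag as available for Lemma \ref{lem:mu-0} before opting to switch to intersection theory; you show it does generalize to $s>0$, provided one supplies (i) the existence of a smooth $U(m+1)$-invariant Calabi-ansatz K\"ahler form in $\tilde\al_s$ on $\tilde X$ for $s$ in an open interval, which amounts to noting that $\tilde\al_s$ is K\"ahler there and that the momentum map of such a form fills out $(s,a)$, and (ii) the polynomiality-in-$s$ argument to extend the identity to all of $[0,a)$. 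The trade-off is that your route avoids the combinatorial lemma and the Chern-class bookkeeping but needs these two auxiliary points; the paper's route is longer to set up but applies verbatim for every $s$ without a continuation step. Both are valid; I'd just make sure to state explicitly the asymptotics of $v$ near the two ends that guarantee smoothness across $E$ and $D_\infty$ (these are the Calabi conditions, with $v(\ln r)-s\ln r$ smooth across $r=0$), since that is where the claim that the momentum endpoint shifting from $0$ to $s$ corresponds exactly to subtracting $sE$ is cashed out.
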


We begin by proving the first part independently. 
\begin{lem}\label{lem:mu-0}
$\mu_0 = \mu(X,\al,\be) = (m+n+1) \frac{\al^{m+n}\cdot\be}{\al^{m+n+1}}$.
\end{lem}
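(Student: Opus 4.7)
The plan is to compute $\al^{m+n+1}$ and $\al^{m+n}\cdot\be$ explicitly using the Calabi ansatz, then compare with the ODE formula for $\mu_0$. I would pick smooth representatives $\chi = \pi^*\omega_M + \nddbar v(\rho) \in \al$ and $\omega = \pi^*\omega_M + \nddbar u(\rho) \in \be$ satisfying the Calabi ansatz, so that $v'$ (resp.\ $u'$) is a diffeomorphism $\RR \to (0,a)$ (resp.\ $(0,b)$) by the asymptotics forced by smoothness across $P_0$ and $D_\infty$. Combining the fiber-volume formula from the bubbling section
\[
\chi^{m+n+1} = \frac{(m+n+1)!}{n!}(1+v')^n (v')^m v''\,\pi^*\omega_M^n \wedge \frac{d\rho \wedge d\sigma_{2m+1}}{2\pi^{m+1}}
\]
with $\mathrm{vol}(S^{2m+1}) = 2\pi^{m+1}/m!$ and the substitution $x = v'(\rho)$ gives immediately
\[
\al^{m+n+1} = \frac{(m+n+1)!}{m!\,n!}\,d\,I_{m,n,0}(a).
\]

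For the mixed intersection, I would begin with the pointwise identity $(m+n+1)\chi^{m+n}\wedge\omega = (\tr_\chi\omega)\chi^{m+n+1}$ together with the explicit trace
\[
\tr_\chi\omega = n\frac{1+u'}{1+v'} + m\frac{u'}{v'} + \frac{u''}{v''}.
\]
Introducing $\psi$ by $\psi(v'(\rho)) := u'(\rho)$, the substitution $x = v'(\rho)$ converts $\tr_\chi\omega$ into precisely $\sigma[\psi](x)$ in the notation of the paper, so
\[
(m+n+1)\int_X \chi^{m+n}\wedge\omega = \frac{(m+n+1)!}{m!\,n!}\,d\int_0^a \sigma[\psi](x)(1+x)^n x^m\,dx.
\]
The key step is the elementary identity
\[
(1+x)^n x^m\, \sigma[\psi](x) = \frac{d}{dx}\!\left[(1+x)^n x^m \psi(x)\right] + n(1+x)^{n-1}x^m,
\]
which falls out of the definition of $\sigma[\psi]$. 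Integrating from $0$ to $a$ and using the boundary values $\psi(0) = 0$ and $\psi(a) = b$ — both dictated purely by the Calabi asymptotics — yields
\[
\al^{m+n}\cdot\be = \frac{(m+n)!}{m!\,n!}\,d\,\bigl[(1+a)^n a^m b + nI_{m,n-1,0}(a)\bigr].
\]

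Taking the ratio, the factorial prefactors telescope to give
\[
(m+n+1)\frac{\al^{m+n}\cdot\be}{\al^{m+n+1}} = \frac{(1+a)^n a^m b + nI_{m,n-1,0}(a)}{I_{m,n,0}(a)} = \mu_0,
\]
matching the expression for $\mu_s$ at $s = 0$. I do not anticipate any serious technical obstacle: the argument is essentially bookkeeping anchored by the Calabi volume formula and a single integration-by-parts identity. A pleasant built-in consistency check is that the final integral depends only on the endpoint values $\psi(0),\psi(a)$ of the auxiliary function, as it must since $\al^{m+n}\cdot\be$ is a cohomological quantity independent of the chosen Calabi representatives.
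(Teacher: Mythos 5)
Your proof is correct, and it follows precisely the route that the paper explicitly mentions but declines to take: in the opening of their proof of this lemma the authors write that ``the Lemma can be quite easily proved by simply expressing the integrals of $\chi^{m+n+1}$ and $\chi^{m+n}\wedge\omega$ in terms of $\psi$'' before opting instead for intersection theory. Your fiber-integration formula, the pointwise trace identity $(m+n+1)\chi^{m+n}\wedge\omega = (\La_\chi\omega)\chi^{m+n+1}$, the change of variables $x=v'(\rho)$, and the integration-by-parts identity for $(1+x)^nx^m\sigma[\psi]$ are all correct, and the constants ($\mathrm{vol}(S^{2m+1}) = 2\pi^{m+1}/m!$, the factorial prefactors) work out to match the paper's intersection-theoretic answer $\al^{m+n+1} = d(m+n+1)\binom{m+n}{n}I_{m,n,0}(a)$, $\al^{m+n}\cdot\be = d\binom{m+n}{n}[(1+a)^na^mb+nI_{m,n-1,0}(a)]$.

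The difference with the paper is one of strategy rather than substance. The paper proves the lemma by expanding $\al^{m+n+1}$ and $\al^{m+n}\cdot\be$ in the basis $[\omega_M]^{n-l}\cdot\eta^{r-1+l}$, computing those mixed products via the Chern relation $\eta^r + c_1(E)\eta^{r-1} + \cdots = 0$ and the combinatorial identity of Lemma \ref{lem:combinatorial-identity}; this machinery carries over verbatim to the blow-up $\tilde X = \Bl_{P_0}X$, which is why they choose it, since Proposition \ref{lem: intersection-number-blow-up} needs the analogous formulas with $I_{m,n,s}$ in place of $I_{m,n,0}$. Your approach is more elementary and self-contained for the case $s=0$: it requires no Chern class algebra and reduces everything to one integration by parts. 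It could also be adapted to the blow-up by choosing a Calabi-ansatz representative of $\tilde\al_s$ on $\tilde X$ with momentum image $[s,a]$, though one would then need to justify the volume formula on $\tilde X$ and the boundary behaviour near the exceptional divisor, which is roughly the same amount of work as the intersection-theoretic route. Your closing remark — that the final integral sees only the endpoint values $\psi(0)=0$, $\psi(a)=b$, as it must for a cohomological quantity — is a valid and worthwhile sanity check.
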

\begin{proof}
The Lemma can be quite easily proved by simply expressing the integrals of $\chi^{m+n+1}$ and $\chi^{m+n}\wedge\omega$ in terms of $\psi$. Instead we will compute using intersection theory of projective bundles. The reason is that this argument will generalize easily to prove the full proposition.

Recall that $r = m+2$ is the rank of the bundle $E = \sO_M\oplus L^{\oplus(m+1)}$. We let $d = [\omega_M]^n\cdot[M]$ be the degree of $-L$ on $M$. We claim that 
\begin{align}
\al^{n+r-1} &= d(n+r-1){n+r-2\choose n}I_{m,n}(a) \text{ and }\label{eq:identity-original-1}\\
\al^{n+r-2}\wedge\be &= d{n+r-2\choose n} \Big((1+a)^na^{r-2}b + nI_{m,n-1}(a)\Big).\label{eq:identity-original-2}
\end{align}
The Lemma clearly follows from these two claims. For simplicity, and without any loss of generality we may assume that $d = 1$. To prove the first claim we compute 
\begin{align*}
\al^{n+r-1} &= \Big([\omega_M]+a\eta\Big)^{n+r-1}\\
&= \sum_{l=0}^n{n+r-1\choose n-l}a^{r-1+l}[\omega_M]^{n-l}\cdot\eta^{r-1+l},
\end{align*}
where we used the fact that $[\omega_M]^k\cdot\eta^{n+r-1-k} = 0$ if $k>n$. Next, using Lemma \ref{lem:intersection-key-lemma} below, 
\begin{align*}
\al^{n+r-1} &= \sum_{l=0}^n{n+r-1\choose n-l} {r+l-2\choose l}a^{r-1+l}\\
&=(n+r-1){n+r-2\choose n} \sum_{l=0}^n{n\choose l}\frac{a^{r-1+l}}{r-1+l}\\
&= (n+r-1){n+r-2\choose n} I_{m,n}(a),
\end{align*}
where the final equality follows easily by applying the binomial theorem to expand $(1+x)^nx^{r-1}$ and integrating each term. To prove the second equality we proceed in a similar vein and compute 
\begin{align*}
\al^{n+r-2}\cdot\be &= \Big([\omega_M]+a\eta\Big)^{n+r-2}\cdot\Big([\omega_M]+ b\eta\Big)\\
&= \Big(\sum_{l=0}^n{n+r-2\choose n-l}a^{r-2+l}[\omega_M]^{n-l}\cdot\eta^{r-2+l}\Big)\cdot\Big([\omega_M] + b\eta\Big)\\
&=\sum_{l=0}^n{n+r-2\choose n-l}a^{r-2+l}[\omega_M]^{n-l+1}\cdot\eta^{r-2+l} +\sum_{l=0}^n{n+r-2\choose n-l}a^{r-2+l}b[\omega_M]^{n-l}\cdot\eta^{r-1+l}\\
&=\sum_{l=0}^{n}{n+r-2\choose n-l}{r+l-3\choose l-1}a^{r-2+l} + \sum_{l=0}^n{n+r-2\choose n-l}{r+l-2\choose l}a^{r-2+l}b\\
&= n{n+r-2\choose n}\sum_{l=1}^n{n-1\choose l-1}\frac{a^{r-2+l}}{r-2+l}+ {n+r-2\choose n}b\sum_{l=0}^n{n\choose l}a^{r-2+l} \\
&={n+r-2\choose n}\Big(nI_{m,n-1}(a)+(1+a)^na^{r-2}b\Big).
\end{align*} Here we have again used Lemma \ref{lem:intersection-key-lemma} in the fourth line, and the binomial theorem to evaluate $I_{m,n-1}(a)$.
\end{proof}
\begin{lem}\label{lem:intersection-key-lemma}
With the notation as above, and $l=0,1,\cdots,n$, $$[\omega_M]^{n-l}\cdot\eta^{r-1+l} = {r+l-2\choose l}d.$$
\end{lem}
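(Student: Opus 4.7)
The plan is to use the standard intersection theory of projective bundles, specifically the Grothendieck relation and the push-forward formula relating powers of $\eta = c_1(\sO_{\PP(E)}(1))$ to the Segre classes of $E = \sO_M \oplus L^{\oplus(m+1)}$.

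First, I would compute the total Segre class of $E$. Since $c(E) = c(\sO_M) \cdot c(L)^{m+1} = (1+\lambda)^{m+1}$, where $\lambda := c_1(L) = -[\omega_M]$, the total Segre class is
\[
s(E) = c(E)^{-1} = (1+\lambda)^{-(m+1)} = \sum_{l \geq 0} (-1)^l \binom{m+l}{l} \lambda^l = \sum_{l \geq 0} \binom{m+l}{l}[\omega_M]^l.
\]
In particular $s_l(E) = \binom{m+l}{l}[\omega_M]^l = \binom{r+l-2}{l}[\omega_M]^l$.

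Next, recalling that $\pi:X = \PP(E) \to M$ has relative dimension $r-1$, the defining property of Segre classes (equivalently, the Grothendieck relation $\sum_{i=0}^r (-1)^i \pi^*c_i(E) \eta^{r-i} = 0$ combined with induction) gives the push-forward identity
\[
\pi_*(\eta^{r-1+l}) = s_l(E)
\]
for all $l \geq 0$. Then by the projection formula, viewing $[\omega_M]$ on $X$ as $\pi^*[\omega_M]$,
\[
[\omega_M]^{n-l}\cdot\eta^{r-1+l} = \int_M [\omega_M]^{n-l}\cdot\pi_*(\eta^{r-1+l}) = \binom{r+l-2}{l}\int_M [\omega_M]^n = \binom{r+l-2}{l}d,
\]
which is precisely the claim.

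There is no real obstacle here: once the Segre class is identified and the push-forward formula invoked, the identity follows by a one-line computation. The only point requiring any care is the sign convention relating $\lambda = c_1(L)$ to $[\omega_M] = c_1(L^{-1}) = -\lambda$, which is what turns the alternating binomial coefficients in $(1+\lambda)^{-(m+1)}$ into the positive coefficients $\binom{m+l}{l}$ in terms of $[\omega_M]$.
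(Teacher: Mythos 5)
Your proof is correct and takes a genuinely different route from the paper's. The paper proves the lemma by induction on $l$, using the Grothendieck relation $\eta^r + c_1(E)\eta^{r-1} + \cdots + c_r(E) = 0$ together with a separately proved combinatorial identity (Lemma on alternating binomial sums, established via inclusion--exclusion). You instead invoke the push-forward formula $\pi_*(\eta^{r-1+l}) = s_l(E)$ and compute the Segre class $s(E) = c(E)^{-1} = (1-[\omega_M])^{-(r-1)}$ directly via the negative binomial series, then finish with the projection formula. The two arguments encode the same combinatorics --- the recursion from the Grothendieck relation pushed forward is precisely what defines the Segre classes, and the alternating identity the paper proves is what is hidden inside $s(E)c(E) = 1$ --- but yours packages it into a known formula and is shorter. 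One small caveat: you write the Grothendieck relation with alternating signs $\sum_i (-1)^i \pi^*c_i(E)\eta^{r-i} = 0$, which corresponds to the ``lines in $E$'' convention, whereas the paper uses $\textnormal{Proj}$ (quotients) and accordingly the sign-free relation $\sum_i \pi^*c_i(E)\eta^{r-i} = 0$; this does not affect your argument since the push-forward identity $\pi_*(\eta^{r-1+l}) = s_l(E)$ with $s(E) = c(E)^{-1}$ is the one compatible with the paper's convention and is what you actually use, but in a final write-up you should make the sign convention match the paper's.
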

The proof of the lemma relies on the following elementary identity of binomial coefficients which seems well known to combinatorists (cf. \cite[Identity 3.49]{Gould}). We include a short proof for the convenience of the reader.  
\begin{lem}\label{lem:combinatorial-identity}
For positive integers $q$ and $s$, $$\sum_{j=1}^{s+1}(-1)^{j-1}{s+1\choose j}{s+q-j\choose q-j} = {q+s\choose q}.$$
\end{lem}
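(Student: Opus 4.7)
The plan is to reduce the identity to a vanishing statement and prove that via generating functions. First, I would include the $j=0$ term in the sum: since $\binom{s+1}{0}\binom{s+q}{q-0} = \binom{s+q}{q}$, the lemma is equivalent to the vanishing identity
\[
\sum_{j=0}^{s+1}(-1)^{j}\binom{s+1}{j}\binom{s+q-j}{q-j} = 0 \qquad (q\geq 1).
\]
Under the usual convention that $\binom{a}{b}=0$ when $b<0$, one may also let the outer sum run through all $j\geq 0$ without changing its value, since $\binom{s+1}{j}=0$ for $j>s+1$.

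Next, using the symmetry $\binom{s+q-j}{q-j} = \binom{s+q-j}{s}$, I would recognize each factor as a coefficient extraction: by the standard generating-function identity
\[
\frac{1}{(1-x)^{s+1}} \;=\; \sum_{k\geq 0}\binom{s+k}{s}x^{k},
\]
we have $\binom{s+q-j}{s} = [x^{q-j}]\bigl((1-x)^{-(s+1)}\bigr)$. Substituting and pulling the coefficient extraction outside the finite sum, the expression becomes
\[
\sum_{j\geq 0}(-1)^{j}\binom{s+1}{j}\,[x^{q-j}]\frac{1}{(1-x)^{s+1}} \;=\; [x^{q}]\left((1-x)^{s+1}\cdot\frac{1}{(1-x)^{s+1}}\right) \;=\; [x^{q}]\,1.
\]

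The main (and only) step requiring a bit of care is this bookkeeping: verifying that collecting the alternating-sign binomial sum into $(1-x)^{s+1}$ is justified and that no nonzero terms are dropped (which is guaranteed by the vanishing conventions above). Since $q\geq 1$, the coefficient $[x^{q}](1)=0$, establishing the vanishing identity and hence the lemma. No hard step is anticipated; the argument is a three-line generating-function manipulation.
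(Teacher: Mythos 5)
Your proof is correct, but it takes a genuinely different route from the paper. The paper argues combinatorially: it interprets $\binom{q+s}{q}$ as the number of non-negative integer solutions to $x_0+\cdots+x_s=q$, observes that because $q>0$ every solution lies in $\bigcup_i E_i$ where $E_i=\{x_i\geq 1\}$, and applies inclusion--exclusion to $|\bigcup_i E_i|$ to obtain the alternating sum directly. You instead move the $j=0$ term across, rewrite $\binom{s+q-j}{q-j}=[x^{q-j}](1-x)^{-(s+1)}$, and collapse the alternating binomial sum into $(1-x)^{s+1}$, so the whole expression becomes $[x^q]\,1=0$ for $q\geq 1$. Both arguments are complete and the boundary conventions (terms with $j>q$ vanishing on each side) are handled consistently in each. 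The paper's inclusion--exclusion proof is elementary and self-contained, giving a bijective reading of each term; your generating-function proof is shorter and more mechanical, and it is the version that would more readily generalize to other convolution-type identities of the Vandermonde family.
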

\begin{proof}
The right hand side is simply the number of unordered integer solutions to $$x_0+x_1+\cdots+x_s = q,~x_i\geq0 ~ \forall i.$$ We now count this in a different way. Let $$E_i = \{(x_0,x_1,\cdots,x_s)~|~ x_0+x_1+\cdots+x_s = q,~x_i\geq 1\}.$$ Since $q>0$, clearly the total count is the cardinality of $\cup_{i=1}^s E_i.$ For any $j=1,\cdots,s$, let $0\leq i_1<i_j\cdots<i_j\leq s$, the cardinality of $\cap_{k=1}^jE_{i_k}$ is the number of solutions to $$x_0'+\cdots x_s' = q-j,$$ where $$x_i' =\begin{cases} x_i-1,~i = i_k\text{ for some k}\\x_i,~\text{otherwise},\end{cases}$$ and hence equals ${s+q-j\choose q-j}$. But there are exactly ${s+1\choose j}$ ways of choosing the indices $i_1,\cdots,i_j$, and the proof is completed by an application  of the inclusion-exclusion principle.   
\end{proof}
\begin{proof}[Proof of Lemma \ref{lem:intersection-key-lemma}]
Recall \cite{EH} that the cohomology ring $H^*(M)$ sits as a sub-ring in $H^*(X)$ via the pull-back $\pi^*$, and the latter is in fact  generated by adjoining the element $\eta  = [D_\infty]$ to $H^*(M)$ with a single relation $$\eta^r + c_1(E)\eta^{r-1}+\cdots + c_r(E) = 0.$$ By the sum formula for the Chern classes, $c_r(E) = 0$ and for $j=1,\cdots,r-1$ $$c_j(E) = (-1)^j{r-1\choose j}[\omega_M]^j,$$ so that $$\eta^r = \sum_{j=1}^{r-1}(-1)^{j-1}{r-1\choose j}[\omega_M]^j\cdot\eta^{r-j}.$$  We prove the lemma by induction on $l$. For $l=0$, the lemma is obvious. Suppose the lemma holds for $l=0,1,\cdots,q-1$. Then 
\begin{align*}
[\omega_M]^{n-q}\cdot\eta^{r-1+q} &=\sum_{j=1}^{r-1}(-1)^{j-1}{r-1\choose j}[\omega_M]^{n-(q-j)}\cdot\eta^{r-1+(q-j)}\\
&= \sum_{j=1}^{r-1}(-1)^{j-1}{r-1\choose j}{r-2+q-j\choose q-j}\\
&= {r-2+q\choose q}
\end{align*}
by the Lemma above applied to $s = r-2$. 

\end{proof}

\begin{proof}[Proof of Proposition \ref{lem: intersection-number-blow-up}]
We recall that for the blow-up, we have the following commutative diagram: 
\[
 \begin{tikzcd}
    E \arrow{r}{\tilde\pi}\arrow{d}{j} & P_0 \arrow{d}{i} \\
   \tilde X \arrow{r}{\tilde \pi} & X
  \end{tikzcd}\]
  
 The exceptional divisor $E = \PP(\mathcal{N}_{P_0\slash X})$ is simply the projectivization of the normal bundle of $P_0$ in $X$. We denote  the class $c_1(\sO_{ \PP(\mathcal{N}_{P_0\slash X})}(1))$ on $E$ by $\theta$. By general theory \footnote{Sections $\Ga$ of $\PP(E)$ are in one to one correspondence with surjective morphisms $\phi:E^*\rightarrow F$ where $F$ is a line bundle. To see this, for each $x\in M$, one gets a co-dimension one  hypersurface $\ker (\phi_x:E^*_x\rightarrow F_x)$ of $E_x^*$ which in turn gives a line in $E_x$. The normal bundle of $\Ga$ is then  $\ker \phi\otimes F^*$. In our setting, we have that $E = \sO\oplus L^{\oplus (m+1)}$ and $F = \sO$.} the normal bundle $\sN_{P_0/X}$ is simply $i(^*L)^{\oplus(m+1)} = (i^*L)^{\oplus(r-1)}$. We let $d = [\omega_M]^n\cdot[M]$ be the degree of $-L$ on $M$.

We claim that the following two identities hold: 
\begin{align}
\tilde\al_s^{n+r-1} &= d(n+r-1){n+r-2\choose n}I_{m,n,s}(a) \text{ and }\label{eq:identity-blow-up-1}\\
\tilde\al_s^{n+r-2}\wedge\tilde\be &= d{n+r-2\choose n} \Big((1+a)^na^{r-2}b + nI_{m,n-1,s}(a)\Big),\label{eq:identity-blow-up-2}
\end{align}
from which the proposition easily follows. By re-scaling, we may assume that $d = 1$. For simplicity we denote by $N = r-1+n$ the dimension of $\tilde X$. To prove the first identity we compute 
\begin{align*}
\tilde\al_s^{N} &= \Big(\pi^*\al - s[E]\Big)^{N}\\
&=\sum_{k=0}^{N}{N\choose k}(-1)^ks^k[E]^k(\pi^*\al)^{N-k}\\
&=\al^N+ \sum_{k=N-n}^N{N\choose k}(-1)^ks^k\int_E[E]^{k-1}(\pi^*\al)^{N-k}\\
&= \al^N - \sum_{k=N-n}^N{N\choose k}s^k\theta^{k-1}(\tilde\pi^*\al\Big|_{P_0})^{N-k},
\end{align*}
where we used the fact that $\tilde \pi^*\al$ is trivial in directions normal to $P_0$ in $E$, and so $[E]^k(\pi^*\al)^{N-k} = 0$ whenever $N-k>n$. Now we change indices $l=k-(N-n)$ to rewrite the above equation as 
\begin{equation}\label{eq:identity-blow-up-1-intermediate}
\tilde\al_s^{N} =  \al^N - \sum_{l=0}^n{N\choose n-l}s^{r-1+l}\theta^{r-2+l}\cdot(\tilde\pi^*\al\Big|_{P_0})^{n-l},
\end{equation} where the intersection numbers are now computed on $E$. We now claim that 
\begin{equation}\label{lem:intersection-key-lemma-blow-up}
\theta^{r-2+l}\cdot(\tilde\pi^*\al\Big|_{P_0})^{n-l} = {r-2+l\choose l}d,
\end{equation}
where as before $d = \langle[\omega_M]^n,[M]\rangle$. To see this, first note that if $s:M\rightarrow X$ is the section with image $P_0$, then since $P_0$ has no intersection with $D_\infty$, and since $\pi\circ s = id$, we have that $$\int_{P_0}(\tilde\pi^*\al)^n = \int_M\omega_M^n = d.$$ In particular this proves the above identity for $l=0$. We then proceed by induction and assume that the identity holds for $l = 0,1,\cdots,q-1$. Noting that $\theta$ satisfies the relation $$\theta^{r-1} + c_1(\sN_{P_0/X})\theta^{r-2}+\cdots+ c_{r-1}(\sN_{P_0/X}) = 0,$$ and that $$c_j(\sN_{P_0/X}) = (-1)^j{r-1\choose j}[i^*\omega_M]^j,$$ we have that 
\begin{align*}
\theta^{r-2+q}\cdot(\tilde\pi^*\al\Big|_{P_0})^{n-q}  &= \sum_{j=1}^{r-1}(-1)^{j-1}{r-1\choose j}\theta^{r-2+(q-j)}[i^*\omega_M]^{n-(q-j)}\\
&=  d\sum_{j=1}^{r-1}(-1)^{j-1}{r-1\choose j}{r-2+q-j\choose q-j}\\
&= {r-2+l\choose q}d,
\end{align*}
where we used the inductive hypothesis in the second line. Continuing with our proof of \eqref{eq:identity-blow-up-1}, and combining \eqref{eq:identity-blow-up-1-intermediate}, \eqref{lem:intersection-key-lemma-blow-up} and \eqref{eq:identity-original-1} with Lemma \ref{lem:combinatorial-identity} we see that 
\begin{align*}
\tilde\al^N &= \al^N - d\sum_{l=0}^n{N\choose n-l}s^{r-1+l}{r-2+l\choose l}\\
&= d(n+r-1){n+r-2\choose n}\Big(I_{m,n}(a)  - \sum_{l=0}^n{n\choose l}\frac{s^{r-1+l}}{r-1+l}\Big)\\
&= d(n+r-1){n+r-2\choose n}\Big(I_{m,n}(a) -I_{m,n}(s)\Big)\\
&= d(n+r-1){n+r-2\choose n}I_{m,n,s}(a). 
\end{align*}
The identity \eqref{eq:identity-blow-up-2} is proved similarly by making use of the identity  \eqref{eq:identity-original-2}.

\end{proof}

\section*{Acknowledgements} The authors would like to thank Hao Fang, Bin Guo, Vamsi Pingali, Lars Martin Sektnan, P. Sivaram, Jacob Sturm and Xiaowei Wang for useful discussions. They would also like to thank David Witt Nystr\"om for some clarifications on Zariski decomposition and regularity of the volume function on the big cone. Part of the work was done during the first and third named authors' respective visits to Mathematisches Forschungsinstitut Oberwolfach (MFO), and they would like to thank the organizers and staff for the invitation and hospitality.

\end{document}